\newtheorem{theorem}{Theorem}[section]
\newtheorem{corollary}[theorem]{Corollary}
\newtheorem{proposition}[theorem]{Proposition}
\numberwithin{equation}{section}
\newcommand{\aqprod}[3]{\left(#1;#2\right)_{#3}}
\newcommand{\jacprod}[2]{\left[#1;#2\right]_{\infty}}
\newcommand{\TwoTwoMatrix}[4]
{
	\begin{pmatrix}
		#1 & #2\\#3 & #4
	\end{pmatrix}
}
\newcommand{\Ceil}[1]{\left\lceil #1 \right\rceil}
\newcommand{\Floor}[1]{\left\lfloor #1 \right\rfloor}
\newcommand{\Fractional}[1]{\left\{#1\right\} }
\newcommand{\Jac}[2]{\left(\frac{#1}{#2}\right)}
\newcommand{\z}[2]{\zeta_{#1}^{#2}}
\newcommand{\tmu}{\widetilde{\mu}}
\newcommand{\kModular}[3]{\mathfrak{k}_{\left(#1,#2\right)}\left(#3\right)}
\newcommand{\mA}[1]{ \,{^{#1}}\hspace{-2pt}A}
\newcommand{\SLTwo}{\mbox{SL}_2(\mathbb{Z})}
\newcommand{\Z}{\mathbb{Z}}
\newcommand{\G}{\Gamma}
\definecolor{darkBlue}{RGB}{0,0,165}
\author{CHRIS JENNINGS-SHAFFER}
\address{Department of Mathematics, Oregon State University\\
Corvallis, Oregon 97331, USA
\endgraf jennichr@math.oregonstate.edu}
\author{HOLLY SWISHER}
\address{Department of Mathematics, Oregon State University\\
Corvallis, Oregon 97331, USA
\endgraf swisherh@math.oregonstate.edu}
\keywords{Number theory, partitions, overpartitions, ranks, rank differences, Maass forms, modular forms, mock modular forms}
\subjclass[2010]{Primary 11P82, 11F37, Secondary 11P81, 33D99}
\title{Mock modularity of the $M_d$-rank of overpartitions}
\begin{document}


\allowdisplaybreaks

\begin{abstract}
We investigate the modular properties of a new partition rank, the $M_d$-rank
of overpartitions. In fact this is an infinite family of ranks, indexed by
the positive integer $d$, that gives both the Dyson rank of overpartitions and the 
overpartition $M_2$-rank as special cases. The $M_d$-rank of overpartitions is
the holomorphic part of a certain harmonic Maass form of weight $\frac{1}{2}$.
We give the exact transformation of this harmonic Maass form along with a few
identities for the $M_d$-rank.
\end{abstract}

\maketitle

\section{Introduction}
\allowdisplaybreaks

The theory of mock modular forms began with the introduction of mock theta
functions in Ramanujan's last letter to Hardy. 
Currently we understand mock theta functions as special cases of mock modular forms, which we in turn
recognize as the holomorphic parts of harmonic Maass forms. Harmonic Maass forms
are most easily understood as classical half-integer weight modular forms with
relaxed analytic conditions. With the monumental thesis of Zwegers \cite{Zwegers}, 
we can now often take functions we expect to be mock modular, write them in terms of
basic building block functions, and complete them to harmonic Maass forms.
We can then work with these harmonic Maass forms in terms of their building blocks with ease, comparable to working with classical modular forms in terms of Dedekind's $\eta$-function and modular units.

Inherent to studying mock theta functions is the theory of partitions. A partition
of a nonnegative integer $n$ is a nonincreasing sequence of positive integers, called parts, 
that sum to $n$. We let $p(n)$ denote the number of partitions of $n$. 
As an example, $p(4)=5$ as the partitions of $4$ are
$4$, $3+1$, $2+2$, $2+1+1$, and $1+1+1+1$.
This 
definition is deceitful in its simplicity, as illustrated by the fact
that of the following two series, one is the generating function for $p(n)$ and
the other is a third order mock theta function of Ramanujan,
\begin{align*}
	&\sum_{n=0}^\infty
	\frac{q^{n^2}}{(1-q)^2(1-q^2)^2(1-q^3)^2\dotsm (1-q^n)^2},
	&&
	&\sum_{n=0}^\infty
	\frac{q^{n^2}}{(1+q)^2(1+q^2)^2(1+q^3)^2\dotsm (1+q^n)^2}.	
\end{align*}
We can recognize both of these series as special cases of the function $R(z;q)$
given by
\begin{align*}
	R(z;q)
	&=
	\sum_{n=0}^\infty	
	\frac{q^{n^2}}{(1-zq)(1-q/z)(1-zq^2)(1-q^2/z)\dotsm(1-zq^n)(1-q^n/z)}	
	.
\end{align*}
Other choices of $z$ give other third order mock theta functions, so we may call
$R(z;q)$ a universal mock theta function, but it also has another name. The
function $R(z;q)$ is actually the generating function for the rank of partitions
and was introduced by Dyson \cite{Dyson}. The rank of a partition is given by
taking the largest part of the partition and subtracting off the number of parts
of the partition. Now we see partitions and mock modular forms are somehow
linked.

Another partition function is the overpartition function. An overpartition of
$n$ is a partition of $n$ in which the first appearance of a part may be 
overlined. For example the overpartitions of $3$ are
$3$, $\overline{3}$, $2+1$, $2+\overline{1}$, $\overline{2}+1$, $\overline{2}+\overline{1}$,
$1+1+1$, and $\overline{1}+1+1$. We let $\overline{p}(n)$ denote the number of 
overpartitions of $n$. The previous example shows that $\overline{p}(3)=8$.
It turns out this function also fits well into the theory of $q$-series and modular 
forms. It particular the generating function is the rather elegant $\eta$-quotient
$\frac{\eta(2\tau)}{\eta(\tau)^2}$ and the inverse of this function generates 
the sequence of square numbers. This can be compared with the fact that the generating function for 
$p(n)$ is $\frac{1}{\eta(\tau)}$ and the inverse generates the sequence of pentagonal
numbers. Overpartitions also have ranks associated to them. There is the Dyson
rank of an overpartition, which ignores whether or not a part is overlined and
is just the largest part minus the number of parts. There is also the $M_2$-rank of
overpartitions, which is given by 
\begin{align*}
	\Ceil{\frac{l(\pi)}{2}} - \#(\pi) + \#(\pi_o) - \chi(\pi),
\end{align*}
where $l(\pi)$ is the largest part of $\pi$, 
$\#(\pi)$ is the number of parts,
$\#(\pi_o)$ is the number of
odd non-overlined parts, and $\chi(\pi) = 1$ when the largest part is odd and 
non-overlined and $\chi(\pi)=0$ otherwise.
Both of the generating functions of these ranks can also be considered as universal mock theta functions,
as specializations yield known mock theta functions.

In this article we consider a family of rank-type generating functions, the first of which is
the generating function for the Dyson rank for overpartitions and the second of which is the generating function for the $M_2$-rank of overpartitions. Generalizations and families of ranks is not unheard of, for example there is Garvan's $k$-rank \cite{Garvan3}, which is defined by the series
\begin{align*}
	\frac{1}{\aqprod{q}{q}{\infty}}
	\sum_{n=1}^\infty
	\frac{(-1)^{n-1} q^{\frac{n((2k-1)n-1)}{2}} (1-q^n)(1-q^{2n}) }
		{(1-zq^n)(1-z^{-1}q^n)}
	,
\end{align*}
and whose combinatorial interpetation is related to successive Durfee squares.
Also in \cite{Andrews2} Andrews introduced Durfee symbols and $k$-marked Durfee symbols, 
yielding ``Dyson-like'' ranks.

The function we consider is the $M_d$-rank of overpartitions, which is defined 
by the series
\begin{align}
	\label{EqDefinitionOfRank}
	\mathcal{O}_d(z;q)
	&=
		\sum_{n=0}^\infty\sum_{m=-\infty}^\infty M_d(m,n) z^m q^n
	\nonumber\\
	&=
		\frac{\aqprod{-q}{q}{\infty}}{\aqprod{q}{q}{\infty}}	
		\left(
			1
			+
			2\sum_{n=1}^\infty \frac{(1-z)(1-z^{-1})(-1)^n q^{n^2+dn}}
				{(1-zq^{dn})(1-z^{-1}q^{dn})}
		\right)		
	,
\end{align}
where here and throughout the article $q=e^{2\pi i\tau}$
and $d$ is a positive integer.
This series has been studied combinatorially by Morrill in
\cite{Morrill1}. Here we are studying the analytic properties of
$\mathcal{O}_{d}(z;q)$.

At this point we must define some notation.
For complex numbers $a, a_1, \dots,a_k,$ and $q$, with $|q|<1$, and
$n\in \mathbb{N}\cup\{\infty\}$, we let
\begin{align*}
	\aqprod{a}{q}{n}
	&=
	\prod_{k=0}^{n-1} (1-aq^k)
	, \\
	\aqprod{a_1,\dots,a_k}{q}{n}
	&=
	\aqprod{a_1}{q}{n}\dots \aqprod{a_k}{q}{n}	
	,
	\\
	[a_1,\dots,a_k;q]_n
	&=
	\aqprod{a_1,q/a_1, \dots, a_k,q/a_k}{q}{n}	
	.
\end{align*}

Our main result is that specializations of $z$ in $\mathcal{O}_d(z;q)$ 
give mock modular forms of weight 
$\frac{1}{2}$. This is the expected behavior of a rank-type function. In 
particular Bringmann and Ono established this behavior for the rank of partitions
\cite{BringmannOno}, which was further refined by Garvan \cite{Garvan2}.  Bringmann and Lovejoy established this behavior for the Dyson rank of overpartitions \cite{BringmannOnoRhoades}. To this end, we consider the following
function,
\begin{align}\label{DefinitionOfTildeO}
	\widetilde{\mathcal{O}}_d(a,b,c;\tau)
	&=
	\left\{\begin{array}{ll}
		\frac{(1+\z{c}{a}q^{b/c})}{(1-\z{c}{a}q^{b/c})}q^{-\frac{b^2}{c^2d^2}}
		\mathcal{O}_d(\z{c}{a}q^{\frac{b}{c}}; q)
		&\\
		+
		\left(
			\z{c}{a}q^{-\frac{d^2}{4}-\frac{b^2}{c^2d^2}+\frac{b}{c}}
			R\left( \tfrac{2a}{c} + (\tfrac{2b}{c}-d^2)\tau ; 2d^2\tau\right)
			-
			q^{-\frac{b^2}{c^2d^2}}
		\right)
		&
		\mbox{ if } d \mbox{ is odd}		
		,\\[2ex]
		\frac{(1+\z{c}{a}q^{b/c})}{(1-\z{c}{a}q^{b/c})}q^{-\frac{b^2}{c^2d^2}}
		\mathcal{O}_d(\z{c}{a}q^{\frac{b}{c}}; q) 
		&\\
		- 
		\left(
			\z{c}{\frac{a}{2}}q^{-\frac{d^2}{16}-\frac{b^2}{c^2d^2}+\frac{b}{2c}} 
			R\left( \left(\frac{a}{c}  -1\right) + \left(\frac{b}{c}-\frac{d^2}{4}\right)\tau ; \frac{d^2\tau}{2}\right) 
			+
			q^{-\frac{b^2}{c^2d^2}} 
		\right)
		&
		\mbox{ if } d \equiv 2 \pmod 4	
		,\\[2ex]
		\frac{(1+\z{c}{a}q^{b/c})}{(1-\z{c}{a}q^{b/c})}q^{-\frac{b^2}{c^2d^2}}\mathcal{O}_d(\z{c}{a}q^{\frac{b}{c}}; q) 
		&\\
		- 
		\left(
			i\z{c}{\frac{a}{2}}q^{-\frac{d^2}{16}-\frac{b^2}{c^2d^2}+\frac{b}{2c}} 
			R\left( \left(\frac{a}{c} -\frac12 \right) + \left(\frac{b}{c}-\frac{d^2}{4}\right)\tau ; \frac{d^2\tau}{2}\right) 
			+q^{-\frac{b^2}{c^2d^2}} 
		\right)
		&\mbox{ if } d\equiv 0 \pmod4		
		,
	\end{array}
	\right.
\end{align}
where here and throughout the rest of the article $\z{c}{a}=e^{\frac{2\pi ia}{c}}$ and 
the function $R(u;\tau)$ is not the rank as given earlier, but a function of
Zwegers later defined in (\ref{Rdef}).  We now state our main results in two cases, based on the parity of $d$.

\begin{theorem}\label{TheoremOverpartitionDRankModularity}
Suppose $a$, $b$, $c$, and $d$ are integers, 
$d$ is odd, and either $c\nmid 2a$ or $cd\nmid b$. 
Then the
following are true.
\begin{enumerate}
	\item\label{TheoremOverpartitionDRankModularityPartDOddMockModular}
	The function $\widetilde{\mathcal{O}}_d(a,b,c;\tau)$
	is a harmonic Maass form of weight $\frac{1}{2}$ on a certain
	congruence subgroup of $\SLTwo$.
	Furthermore the function
	\begin{align*}
		&
		\frac{(1+\z{c}{a}q^{\frac{b}{c}})}{(1-\z{c}{a}q^{\frac{b}{c}})}  
		q^{\frac{b^2}{c^2d^2}} 
		\mathcal{O}_d(\z{c}{a}q^{\frac{b}{c}};q)
		-
		q^{-\frac{b^2}{c^2d^2}}	
		+
		(-1)^n\z{c}{2na}\delta_{b,cd^2}
		\\&
		+
		2\sum_{m=1}^{|n|}
		(-1)^{m+n}\z{c}{(2n+1-{\rm sgn}(n)(2m-1))a}
		q^{ n\left(\frac{2b}{c}-d^2n-d^2\right) 
			- (2m-1){\rm sgn(n)}\left(\frac{b}{c}-d^2n-\frac{d^2}{2}\right)
			- m(m-1)d^2 - \frac{d^2}{2} + \frac{b}{c} - \frac{b^2}{c^2d^2}    
		}
		,		
	\end{align*}
	where $n=\Floor{\frac{b}{cd^2}}$ and $\delta_{b,cd^2}=1$ if $cd^2\mid b$
	and $\delta_{b,cd^2}=0$ otherwise,
	is the holomorphic part of $\widetilde{\mathcal{O}}_d(a,b,c;\tau)$,
	and as such is a mock modular form.
	In particular $\mathcal{O}_d(\z{c}{a};q)$ is a mock modular form.	
	
	\item\label{TheoremOverpartitionDRankModularityPartDOddDissection}
	We have that
	\begin{align*}
		\frac{(1+\z{c}{a})}{(1-\z{c}{a})}\mathcal{O}_d(\z{c}{a};q)
		-
		\sum_{r=1}^{c-1}(-1)^r\z{c}{-2ar}S_{d}(r,c;2c^2d^2\tau)
		&=		
		\widetilde{\mathcal{O}}_d(a,0,c;\tau)
		-
		\sum_{r=1}^{c-1}(-1)^r\z{c}{-2ar}\widetilde{S}_{d}(r,c;2c^2d^2\tau)
		.
	\end{align*}
	In particular, the function
	\begin{align*}
		\frac{\eta(\tau)^2}{\eta(2\tau)}\left(
			\frac{(1+\z{c}{a})}{(1-\z{c}{a})}\mathcal{O}_d(\z{c}{a};q)
			-
			\sum_{r=1}^{c-1}(-1)^r\z{c}{-2ar}S_{d}(r,c;2c^2d^2\tau)
		\right)
	\end{align*}
	is a weight $1$ modular form (without multiplier) on the congruence subgroup $\Gamma$,
	where 
	\begin{align*}
		\Gamma &=
		\left\{\begin{array}{cl}
			\Gamma_0(2c^2d^2)\cap\Gamma_1(\normalfont{\mbox{lcm}}(c,d))
			&
			\mbox{ if } c\equiv 1 \pmod{2}
			,\\
			\Gamma_0(4c^2d^2)\cap\Gamma_1(\normalfont{\mbox{lcm}}(c,d))
			&
			\mbox{ if } c\equiv 0 \pmod{2}
			.		
		\end{array}
		\right.
	\end{align*}	
\end{enumerate}
\end{theorem}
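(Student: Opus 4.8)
The plan is to recognize $\mathcal{O}_d(\z{c}{a}q^{b/c};q)$, after the rational prefactor $\frac{1+\z{c}{a}q^{b/c}}{1-\z{c}{a}q^{b/c}}$ and the $q$-power normalization of \eqref{DefinitionOfTildeO} are applied, as a specialization of Zwegers' $\mu$-function, and then to transport all modularity statements from Zwegers' completion theory. First I would manipulate the defining series \eqref{EqDefinitionOfRank}: the summand $\frac{(1-z)(1-z^{-1})(-1)^n q^{n^2+dn}}{(1-zq^{dn})(1-z^{-1}q^{dn})}$ has exactly the shape of a two-variable Appell--Lerch term once one rescales the nome $q\mapsto q^{d^2}$ (for odd $d$ this is why $R$ is evaluated at $2d^2\tau$) and substitutes $z=\z{c}{a}q^{b/c}$. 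After factoring off the $\eta$-quotient prefactor $\aqprod{-q}{q}{\infty}/\aqprod{q}{q}{\infty}$, which I treat separately, multiplying by $\frac{1+z}{1-z}$ should collapse the bracket to $\mu(u,v;2d^2\tau)$ for explicit arguments $u,v$ read off from $a,b,c,d$.

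For part (1), I would recall that Zwegers' completion $\widehat{\mu}(u,v;\tau)=\mu(u,v;\tau)+\frac{i}{2}R(u-v;\tau)$ is a real-analytic Jacobi form of weight $\frac12$, transforming correctly under $\SLTwo$ and under elliptic shifts of $u$ and $v$; the grafted term $R$ in \eqref{DefinitionOfTildeO} is precisely this $\frac{i}{2}R(u-v;\tau)$ piece. Thus $\widetilde{\mathcal{O}}_d(a,b,c;\tau)$ is, up to the modular $\eta$-quotient and elementary $q$-powers, a completed $\mu$-function, and composing the weight-$\frac12$ transformation of $\widehat{\mu}$ with that of the $\eta$-quotient yields that $\widetilde{\mathcal{O}}_d$ is a harmonic Maass form of weight $\frac12$ on a congruence subgroup. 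The hypotheses $c\nmid 2a$ or $cd\nmid b$ are exactly what keep the specialization $z=\z{c}{a}q^{b/c}$ off the poles $z=\pm q^{k}$ of $\mu$, so the form is genuinely defined.

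To extract the holomorphic part, I would split Zwegers' $R$-function into its holomorphic $q$-series part and its period-integral (non-holomorphic) part; the latter is the completion term, so the holomorphic part of $\widetilde{\mathcal{O}}_d$ is the $\mathcal{O}_d$-contribution plus the $q$-series part of $R$, as asserted. The explicit finite sum over $m=1,\dots,|n|$ with $n=\Floor{\frac{b}{cd^2}}$ and the boundary term $\delta_{b,cd^2}$ arise from a ``wrong-range'' correction: when $b/c$ is large the substitution $z=\z{c}{a}q^{b/c}$ drags the pole condition $zq^{dn}=1$ across finitely many indices $n$, and reconciling the Appell--Lerch sum over its natural range with the series defining $\mathcal{O}_d$ produces precisely these $2|n|+1$ theta-type terms. \emph{This is the step where I expect the bookkeeping to be heaviest and where sign and exponent errors are easiest to introduce}, so I would pin down the correction by a careful index shift rather than by guessing.

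For part (2) I would set $b=0$, so $z=\z{c}{a}$ is a root of unity with no $q$-power, and use the nome-rescaled transformation together with the decomposition of $R$ at $c$-division points. The functions $S_d(r,c;\tau)$ and their completions $\widetilde{S}_d(r,c;\tau)$ (supplied in the sequel) are Lambert-type series whose non-holomorphic completions are built from the same $R$-function; the combination $\sum_{r=1}^{c-1}(-1)^r\z{c}{-2ar}\widetilde{S}_d(r,c;2c^2d^2\tau)$ is engineered so that its period-integral part cancels exactly that of $\widetilde{\mathcal{O}}_d(a,0,c;\tau)$. Subtracting gives the stated identity with both sides holomorphic, the left side being the asserted combination of genuine $q$-series. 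Multiplying through by $\frac{\eta(\tau)^2}{\eta(2\tau)}$ cancels the overpartition prefactor $\aqprod{-q}{q}{\infty}/\aqprod{q}{q}{\infty}$ and raises the weight from $\frac12$ to $1$, producing a holomorphic weight-$1$ form. The remaining obstacle is then purely bookkeeping of multiplier systems: I would compose the half-integral multipliers of $\eta$, of $\widehat{\mu}$, and of the theta-parts to verify the multiplier is trivial and to isolate the level, which splits into the $\Gamma_0(2c^2d^2)$ and $\Gamma_0(4c^2d^2)$ cases according to the parity of $c$, with $\Gamma_1(\mathrm{lcm}(c,d))$ controlling the torsion arguments.
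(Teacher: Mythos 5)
Your high-level architecture --- complete via Zwegers' $\tmu$, split $R$ into a holomorphic $q$-series and a period integral, obtain the finite $m$-sum and the $\delta_{b,cd^2}$ term by shifting the argument of $R$ into its fundamental range, and for part (2) cancel the non-holomorphic parts against the $\widetilde{S}_d$ completions after a $c$-dissection of $R$ --- is the paper's strategy in outline. However, there is a genuine gap at your very first step, and it propagates. For odd $d>1$ the bracket in \eqref{EqDefinitionOfRank} does \emph{not} collapse to a single $\mu(u,v;2d^2\tau)$: to reconcile the exponent $q^{n^2}$ with the pole spacing $q^{dn}$ one must substitute $n\mapsto dn+k$, i.e.\ dissect the Lambert series modulo $d$, which produces $d$ distinct functions $\mu\left(u(z,d,k),v(d,k);2d^2\tau\right)$, $k=0,\dots,d-1$, each multiplied by a \emph{non-constant} theta quotient $\frac{\eta(2\tau)f_{2d^2,d^2+2dk}(\tau)}{\eta(\tau)^2}$, together with purely modular pieces $P_{d,k}(a,b,c;\tau)$ (built from Klein forms, arising via Chan's Lambert-series identities) that carry no $\mu$-content at all; this is the content of Propositions \ref{PropMdRankToMu} and \ref{prop:OtoMockModular}. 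Only for $d=1$ is the theta coefficient identically $1$, so your single-$\mu$ picture is literally correct only in the Dyson-rank case already treated by Bringmann--Lovejoy.

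This matters because your harmonicity argument --- ``completed $\mu$ times the modular $\eta$-quotient is a harmonic Maass form'' --- is false as stated: multiplying a harmonic Maass form by a non-constant weight-$0$ modular function destroys annihilation by $\Delta_{\frac{1}{2}}$ (the Laplacian produces a cross term), a point the paper flags explicitly in the proof of this theorem, where it notes that the individual products $\frac{\eta(2\tau)f_{2d^2,d^2+2dk}(\tau)}{\eta(\tau)^2}\widetilde{M}_{d,k}(a,b,c;2d^2\tau)$ are \emph{not} harmonic. What rescues $\widetilde{\mathcal{O}}_d$ is structure your proposal never isolates: all $d$ completion terms $\frac{i}{2}R(u-v;2d^2\tau)$ share the \emph{same} argument $u-v=\frac{2a}{c}+\left(\frac{2b}{c}-d^2\right)\tau$, independent of $k$, so the non-holomorphic contributions combine against the identity $\sum_{k=0}^{d-1}(-1)^kf_{2d^2,d^2+2dk}(\tau)=\frac{\eta(\tau)^2}{\eta(2\tau)}$ into the single $R$-term of \eqref{DefinitionOfTildeO}, and only this combined object is harmonic. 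Without this collapse, and without the transformation bookkeeping for the $f_{2d^2,\rho}$, the Klein forms in $P_{d,k}$, and the congruence conditions encoded in $\G_{a,b,c,d}$ and $\G_{a,b,c,d}'$ (Proposition \ref{prop:FullOTransf}), you obtain neither the harmonicity and cusp conditions in part (1) nor the weight-$1$ modularity on the stated groups in part (2), since the latter rests entirely on the transformation law for the full decomposition \eqref{eqn:OviaM}. The ingredients you do describe correctly (the index-shift corrections with $n=\Floor{\frac{b}{cd^2}}$, the $b=0$ dissection, the multiplier cancellation against $\frac{\eta(\tau)^2}{\eta(2\tau)}$) match the paper; it is the reduction of $\mathcal{O}_d$ to a single completed $\mu$ that fails.
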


\begin{theorem}\label{thm:devenMain}
Suppose $a$, $b$, $c$, and $d$ are integers, 
$d$ is even, and either $c\nmid 2a$ or $cd\nmid b$. 
Then the following are true.
\begin{enumerate}
\item\label{devenMockModular}
The function $\widetilde{\mathcal{O}}_d(a,b,c;\tau)$ is a harmonic Maass form of weight $\frac{1}{2}$ on a certain congruence subgroup of $\SLTwo$.  Furthermore the function
\begin{align*}
&\frac{(1+\z{c}{a}q^{\frac{b}{c}})}{(1-\z{c}{a}q^{\frac{b}{c}})}  q^{-\frac{b^2}{c^2d^2}} \mathcal{O}_d(\z{c}{a}q^{\frac{b}{c}};q)
- 
q^{-\frac{b^2}{c^2d^2}} + (-1)^{\frac{dn}{2}}\z{c}{an}\delta_{b,cd^2} 
\\&
+ 2 (-1)^{\delta_d+\frac{dn}{2}} i^{\delta_d} 
	\sum_{m=1}^{|n|} (-1)^{m} i^{\delta_d(2m-1){\rm sgn}(n)} \zeta_{2c}^{a(2n+1-{\rm sgn}(n)(2m-1))} 
	\\&\qquad\times	
	q^{n(\frac{b}{c} - \frac{d^2n}{4} -\frac{d^2}{4}) - (2m-1){\rm sgn}(n)(\frac{b}{2c} - \frac{d^2n}{4} - \frac{d^2}{8}) 
	-\frac{d^2}{4} m(m-1) -\frac{d^2}{8} + \frac{b}{2c} - \frac{b^2}{c^2d^2}}	
\end{align*}
where $n=\Floor{\frac{2b}{cd^2}}$, $\delta_{b,cd^2}=1$ if $cd^2\mid b$ and 
$\delta_{b,cd^2}=0$ otherwise, and $\delta_d=1$ if $d\equiv 0 \pmod 4$ and 
$\delta_d=0$ if $d\equiv 2 \pmod 4$, is the holomorphic part of 
$\widetilde{\mathcal{O}}_d(a,b,c;\tau)$, and as such is a mock modular form.  
In particular $\mathcal{O}_d(\z{c}{a};q)$ is a mock modular form.	
	
\item\label{devenDissection}
We have that
\begin{align*}
\frac{(1+\z{c}{a})}{(1-\z{c}{a})}\mathcal{O}_d(\z{c}{a};q) 
- 
\sum_{r=1}^{c-1}(-1)^{\frac{dr}{2}}\z{c}{-ar}S_{d}\left(r,c; \frac{c^2d^2\tau}{2}\right) 
&=	 
\widetilde{\mathcal{O}}_d(a,0,c;\tau) 
\\&\quad
- 
\sum_{r=1}^{c-1}(-1)^{\frac{dr}{2}} \z{c}{-ar}\widetilde{S}_{d}\left(r,c; \frac{c^2d^2\tau}{2}\right).
\end{align*}
In particular, the function
\[
\frac{\eta(\tau)^2}{\eta(2\tau)}\left(
	\frac{(1+\z{c}{a})}{(1-\z{c}{a})}\mathcal{O}_d(\z{c}{a};q) 
	- 
	\sum_{r=1}^{c-1}(-1)^{\frac{dr}{2}}\z{c}{-ar}S_{d}\left(r,c; \frac{c^2d^2\tau}{2}\right)
\right)
\]
is a weight $1$ modular form (without multiplier) on the congruence subgroup $\Gamma$, where
\begin{align*}
	\Gamma &=
	\left\{\begin{array}{cl}
		\Gamma_0(c^2d^2)\cap\Gamma_1(\normalfont{\mbox{lcm}}(c,2d))
		&
		\mbox{ if } d\equiv 2 \pmod{4}
		,\\
		\Gamma_0( \normalfont{\mbox{lcm}}(2,c)^2 d^2)\cap\Gamma_1(\normalfont{\mbox{lcm}}(2c,2d))
		&
		\mbox{ if } d\equiv 0 \pmod{4}
		.		
	\end{array}
	\right.
\end{align*}	
\end{enumerate}
\end{theorem}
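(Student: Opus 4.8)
The plan is to reduce $\widetilde{\mathcal{O}}_d(a,b,c;\tau)$ to Zwegers' completed Appell--Lerch sum $\widehat{\mu}(u,v;\tau)=\mu(u,v;\tau)+\tfrac{i}{2}R(u-v;\tau)$ and then read off both its modularity and its holomorphic part. First I would rewrite the defining series in \eqref{EqDefinitionOfRank}: the building block $\sum_{n}\frac{(1-z)(1-z^{-1})(-1)^nq^{n^2+dn}}{(1-zq^{dn})(1-z^{-1}q^{dn})}$ should, after standard partial-fraction and $q$-series manipulations together with a rescaling of the nome (replacing $q$ by a power governed by $d^2$) and the substitution turning $z=\z{c}{a}q^{b/c}$ into an additive elliptic variable, collapse to a specialization of $\mu$. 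The case split by the parity of $d$ in \eqref{DefinitionOfTildeO} --- modulus $2d^2\tau$ for $d$ odd versus $\tfrac{d^2\tau}{2}$ for $d$ even --- records exactly which sublattice the resulting $\mu$-function lives on, and the extra factor of $i$ present only when $d\equiv 0\pmod 4$ accounts for a half-integer shift of the elliptic variable relative to the $d\equiv 2\pmod 4$ case.

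Once $\mathcal{O}_d(\z{c}{a}q^{b/c};q)$ is written through $\mu$, the role of the $R$-term appearing in \eqref{DefinitionOfTildeO} becomes transparent: it is precisely the non-holomorphic completion $\tfrac{i}{2}R$ needed to pass from $\mu$ to $\widehat{\mu}$. I would then invoke the known transformation laws of $\widehat{\mu}$ under $\SLTwo$ (weight $\tfrac12$, with the quadratic exponential automorphy factor) in combination with the modularity of the overpartition prefactor $\frac{\aqprod{-q}{q}{\infty}}{\aqprod{q}{q}{\infty}}=\frac{\eta(2\tau)}{\eta(\tau)^2}$. Multiplying the two transformations and tracking the resulting multiplier systems shows that $\widetilde{\mathcal{O}}_d(a,b,c;\tau)$ is annihilated by the weight-$\tfrac12$ hyperbolic Laplacian and transforms correctly on a congruence subgroup of $\SLTwo$, which establishes part \eqref{devenMockModular}; the non-holomorphic completion $\tfrac{i}{2}R$ supplies the shadow, a unary theta function of weight $\tfrac32$. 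The hypothesis $c\nmid 2a$ or $cd\nmid b$ is exactly what keeps the specialization $\z{c}{a}q^{b/c}$ away from the poles of $\mu$ (equivalently of the rank generating function).

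For the explicit holomorphic part I would split $\widehat{\mu}=\mu+\tfrac{i}{2}R$ and further separate $R$ into its partial-theta (holomorphic) piece plus a period integral of the unary theta; the latter is the shadow and contributes nothing holomorphic. The finite sum in part \eqref{devenMockModular}, indexed by $m$ up to $|n|$ with $n=\Floor{\frac{2b}{cd^2}}$, arises from applying the elliptic transformation of $\mu$ repeatedly in order to move the $\tfrac{b}{c}$-shift of the argument back into a fundamental parallelogram for the lattice $\Z+\tfrac{d^2}{2}\tau\,\Z$; each shift releases one theta-quotient term, and assembling these with the correct signs $(-1)^m$, roots of unity $\zeta_{2c}^{(2n+1-{\rm sgn}(n)(2m-1))a}$, and the $i^{\delta_d}$-factors reproduces the stated expression, with $\delta_d$ toggling between the $d\equiv 2$ and $d\equiv 0\pmod 4$ subcases.

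Finally, for the dissection in part \eqref{devenDissection}, I would set $b=0$ in the transformation of part \eqref{devenMockModular} and cancel the non-holomorphic contributions using the auxiliary functions $S_d(r,c;\cdot)$ and their completions $\widetilde{S}_d(r,c;\cdot)$, which are arranged so that $\sum_{r=1}^{c-1}(-1)^{dr/2}\z{c}{-ar}\widetilde{S}_d\!\left(r,c;\tfrac{c^2d^2\tau}{2}\right)$ carries the same shadow as $\widetilde{\mathcal{O}}_d(a,0,c;\tau)$. The difference is then a genuine weight-$\tfrac12$ modular form, and multiplying by $\frac{\eta(\tau)^2}{\eta(2\tau)}$ --- the reciprocal of the overpartition generating function, itself of weight $\tfrac12$ --- yields a holomorphic weight $1$ form; pinning down its level and trivial multiplier on $\G$ is a finite check on the $\eta$-quotient and theta multipliers, with the two cases of $\G$ reflecting $d\equiv 2$ versus $d\equiv 0\pmod4$. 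I expect the main obstacle to be the bookkeeping in the holomorphic-part computation of paragraph three: correctly iterating the elliptic transformation and matching every sign, root of unity, and $q$-exponent uniformly across both subcases, rather than any single conceptual step.
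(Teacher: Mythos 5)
Your proposal is correct and follows essentially the same route as the paper's proof: you express $\mathcal{O}_d(\z{c}{a}q^{b/c};q)$ through Zwegers' $\mu$ so that the $R$-term in \eqref{DefinitionOfTildeO} is exactly the completion to $\tmu$, obtain modularity from Zwegers' transformation laws combined with the $\eta$- and generalized-$\eta$ multipliers, read off the holomorphic part by elliptic-shift bookkeeping (the paper shifts the error term $R$ via \eqref{eq:Rshift}, which is equivalent to your repeated $\mu$-shifts), and prove part (2) by dissecting $R$ into $c$ pieces that cancel against the completions $\widetilde{S}$. The only detail your sketch defers that the paper must handle explicitly is the subcase $d\equiv 0\pmod 4$ of part (2), where the dissection forces the $S$-functions to appear with doubled arguments $S(2r,2c;\cdot)$ rather than $S(r,c;\cdot)$.
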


The divisibility conditions $c\nmid 2a$ or $cd\nmid b$ in Theorems \ref{TheoremOverpartitionDRankModularity} and \ref{thm:devenMain} are to assure
necessary functions defined later on are well defined and do not have any poles. 
The function $S_{d}(r,c;\tau)$ is a certain Lambert series defined in 
\eqref{EqDefinitionOfS} and $\widetilde{S}_{d}(r,c;\tau)$ is a certain 
harmonic Maass form defined in \eqref{EqDefinitionOfSTilde}. 
The point of interest with part 2 of Theorems
\ref{TheoremOverpartitionDRankModularity} and \ref{thm:devenMain}
is that they provide the necessary
information to determine the $c$-dissection of $\mathcal{O}_{d}(\z{c}{a};q)$.
This is a standard question for ranks. Identities equivalent to the $5$ and
$7$ dissections of the rank of partitions at $z=\z{5}{}$ and $z=\z{7}{}$ were
given by Atkin and Swinnerton-Dyer \cite{AtkinSwinnerton-Dyer},
identities equivalent to the $3$ and $5$ dissections of both the overpartition Dyson rank and 
overpartition $M_2$-rank at $z=\z{3}{}$ and $z=\z{5}{}$ were given by Lovejoy and Osburn 
\cite{LovejoyOsburn1,LovejoyOsburn2}, and the first author gave the $7$ 
dissection of the overpartition rank at $z=\z{7}{}$ \cite{JenningsShaffer2}.
To demonstrate the utility of our theorems, we deduce the $3$-dissection of 
$\mathcal{O}_{3}(\z{3}{};q)$ in the theorem below.

\begin{theorem}\label{TheoremOverpartition3Rank3Dissection}
Suppose $\z{3}{}$ is a primitive third root of unity. Then
\begin{align*}
	\mathcal{O}_3(\z{3}{};q)
	&=
	A_{0}(q^3) + qA_{1}(q^3) + q^2A_{2}(q^3)
,
\end{align*}
where
\begin{align*}
	A_0(q)
	&=
		-\frac{6q^{15}}{\aqprod{q^{54}}{q^{54}}{\infty}\jacprod{q^{27}}{q^{54}}}
		\sum_{n=-\infty}^\infty
		\frac{ (-1)^n q^{27n^2+54n} }{1-q^{54n+18}}
		\\&\quad
		+
		\frac{\aqprod{q^6}{q^6}{\infty} \aqprod{q^{54}}{q^{54}}{\infty}^2   }
		{\aqprod{q}{q}{\infty} \aqprod{q^3}{q^3}{\infty} \jacprod{q}{q^6}}
		\Bigg(
			-3\frac{\jacprod{q^{9},q^{12},q^{21},q^{27}}{q^{54}}}{\jacprod{q^{3},q^{24}}{q^{54}}}
			-
			6q\frac{\jacprod{q^{9},q^{12},q^{15},q^{27}}{q^{54}}}{\jacprod{q^{3},q^{24}}{q^{54}}}
			\\&\quad		
			+
			4\frac{\jacprod{q^{6},q^{12},q^{18},q^{24},q^{27}}{q^{54}}}{\jacprod{q^{3},q^{15},q^{21}}{q^{54}}}
			+
			3q^{3}\frac{\jacprod{q^{6},q^{9},q^{15},q^{27}}{q^{54}}}{\jacprod{q^{3},q^{24}}{q^{54}}}
			-
			2q^{3}\frac{\jacprod{q^{6},q^{9},q^{12},q^{18},q^{24}}{q^{54}}}{\jacprod{q^{3},q^{15},q^{21}}{q^{54}}}
			\\&\quad		
			+
			12q\frac{\jacprod{q^{9},q^{15},q^{18}}{q^{54}}}{\jacprod{q^{6}}{q^{54}}}
			-
			6q\frac{\jacprod{q^{9},q^{12},q^{27}}{q^{54}}}{\jacprod{q^{6}}{q^{54}}}
			+
			12q\frac{\jacprod{q^{15},q^{21},q^{24}}{q^{54}}}{\jacprod{q^{18}}{q^{54}}}
			\\&\quad		
			-
			6q\frac{\jacprod{q^{9},q^{24},q^{27}}{q^{54}}}{\jacprod{q^{12}}{q^{54}}}
			+
			12q^{4}\frac{\jacprod{q^{9},q^{15},q^{21},q^{24}}{q^{54}}}{\jacprod{q^{18},q^{27}}{q^{54}}}
		\Bigg)
	,\\
	A_1(q)
	&=
		\frac{\aqprod{q^6}{q^6}{\infty} \aqprod{q^{54}}{q^{54}}{\infty}^2   }
		{\aqprod{q}{q}{\infty} \aqprod{q^3}{q^3}{\infty} \jacprod{q}{q^6}}
		\Bigg(
			6\frac{\jacprod{q^{6},q^{15},q^{21},q^{27}}{q^{54}}}{\jacprod{q^{3},q^{24}}{q^{54}}}
			-
			4\frac{\jacprod{q^{6},q^{12},q^{18},q^{24}}{q^{54}}}{\jacprod{q^{3},q^{15}}{q^{54}}}
			\\&\quad
			-
			2q\frac{\jacprod{q^{6},q^{12},q^{18},q^{24}}{q^{54}}}{\jacprod{q^{3},q^{21}}{q^{54}}}
			+
			6q\frac{\jacprod{q^{9},q^{12},q^{21}}{q^{54}}}{\jacprod{q^{6}}{q^{54}}}
			+
			2q^{2}\frac{\jacprod{q^{15},q^{18},q^{21}}{q^{54}}}{\jacprod{q^{24}}{q^{54}}}
			\\&\quad			
			+
			4q^{2}\frac{\jacprod{q^{9},q^{15},q^{24}}{q^{54}}}{\jacprod{q^{12}}{q^{54}}}
			-
			2q^{5}\frac{\jacprod{q^{9},q^{12},q^{21}}{q^{54}}}{\jacprod{q^{24}}{q^{54}}}
		\Bigg)
	,\\
	A_2(q)
	&=
		\frac{\aqprod{q^6}{q^6}{\infty}^3 \aqprod{q^{54}}{q^{54}}{\infty}^2   }
		{\aqprod{q}{q}{\infty} \aqprod{q^3}{q^3}{\infty}^3 }
		\bigg(
			12\frac{\jacprod{q^{6},q^{15},q^{21},q^{27}}{q^{54}}}{\jacprod{q^{3},q^{24}}{q^{54}}}
			-
			8\frac{\jacprod{q^{6},q^{12},q^{18},q^{24}}{q^{54}}}{\jacprod{q^{3},q^{15}}{q^{54}}}
			\\&\quad	
			-
			4q\frac{\jacprod{q^{6},q^{12},q^{18},q^{24}}{q^{54}}}{\jacprod{q^{3},q^{21}}{q^{54}}}
			+
			12q\frac{\jacprod{q^{9},q^{12},q^{21}}{q^{54}}}{\jacprod{q^{6}}{q^{54}}}
			+
			4q^{2}\frac{\jacprod{q^{15},q^{18},q^{21}}{q^{54}}}{\jacprod{q^{24}}{q^{54}}}
			\\&\quad			
			+
			8q^{2}\frac{\jacprod{q^{9},q^{15},q^{24}}{q^{54}}}{\jacprod{q^{12}}{q^{54}}}
			-
			4q^{5}\frac{\jacprod{q^{9},q^{12},q^{21}}{q^{54}}}{\jacprod{q^{24}}{q^{54}}}
		\Bigg)
	.
\end{align*}
\end{theorem}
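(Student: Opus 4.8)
The plan is to specialize the dissection identity of Theorem~\ref{TheoremOverpartitionDRankModularity}(\ref{TheoremOverpartitionDRankModularityPartDOddDissection}) to $d=c=3$, $a=1$, to pin down the resulting weight~$1$ modular form explicitly as a combination of theta quotients, and then to separate residue classes modulo~$3$ to read off $A_0,A_1,A_2$.

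First I would take $d=c=3$ and $a=1$, so $\zeta_3$ is a primitive third root of unity and $2c^2d^2=162$. Using $(1-\zeta_3)(1-\zeta_3^{-1})=3$ and $(1-\zeta_3q^{3n})(1-\zeta_3^{-1}q^{3n})=\frac{1-q^{9n}}{1-q^{3n}}$, the definition \eqref{EqDefinitionOfRank} collapses to
\begin{align*}
	\mathcal{O}_3(\zeta_3;q)
	&=
	\frac{\aqprod{-q}{q}{\infty}}{\aqprod{q}{q}{\infty}}
	\left(1+6\sum_{n=1}^\infty\frac{(-1)^nq^{n^2+3n}(1-q^{3n})}{1-q^{9n}}\right),
\end{align*}
which already exhibits the Appell--Lerch shape that will reappear in $A_0$. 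Since $\frac{\eta(2\tau)}{\eta(\tau)^2}=\frac{\aqprod{-q}{q}{\infty}}{\aqprod{q}{q}{\infty}}$, the ``In particular'' clause of Theorem~\ref{TheoremOverpartitionDRankModularity}(\ref{TheoremOverpartitionDRankModularityPartDOddDissection}) tells me that
\begin{align*}
	G(\tau)
	&=
	\frac{\eta(\tau)^2}{\eta(2\tau)}
	\left(\frac{1+\zeta_3}{1-\zeta_3}\mathcal{O}_3(\zeta_3;q)-\sum_{r=1}^{2}(-1)^r\z{3}{-2r}S_3(r,3;162\tau)\right)
\end{align*}
is a holomorphic weight~$1$ modular form on $\Gamma_0(162)\cap\Gamma_1(3)$, so I may work inside the finite dimensional space $M_1\!\left(\Gamma_0(162)\cap\Gamma_1(3)\right)$.

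Next I would identify $G$ explicitly. The generalized eta quotients $\jacprod{\,\cdot\,}{q^{54}}$ (Klein forms) and the $\eta$-quotient prefactors occurring in the statement are modular of known weight with cusp orders given by the standard order formula; assembling them into a spanning set for the relevant weight~$1$ space and forming the candidate obtained by writing the conjectured right-hand side of the theorem in $\tau$ via $q\mapsto q^3$ and multiplying by $\eta(\tau)^2/\eta(2\tau)$, I would prove equality with $G$ by showing the difference is a holomorphic weight~$1$ form whose $q$-expansion vanishes past the Sturm bound for $\Gamma_0(162)\cap\Gamma_1(3)$, and hence is zero. The cusp-order computation simultaneously certifies holomorphy of the candidate and controls the valence count.

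Finally I would extract the dissection. Solving the identity gives
\begin{align*}
	\mathcal{O}_3(\zeta_3;q)
	&=
	\frac{1-\zeta_3}{1+\zeta_3}\left(G(\tau)\frac{\eta(2\tau)}{\eta(\tau)^2}+\sum_{r=1}^{2}(-1)^r\z{3}{-2r}S_3(r,3;162\tau)\right).
\end{align*}
The Lambert series $S_3(r,3;162\tau)$ are supported on $q$-powers in a single class modulo~$3$ and, after combining $r=1,2$, contribute exactly the term $-\tfrac{6q^{15}}{\aqprod{q^{54}}{q^{54}}{\infty}\jacprod{q^{27}}{q^{54}}}\sum_n\tfrac{(-1)^nq^{27n^2+54n}}{1-q^{54n+18}}$ of $A_0(q^3)$; separating the theta-quotient part $\frac{1-\zeta_3}{1+\zeta_3}G(\tau)\eta(2\tau)/\eta(\tau)^2$ into the three residue classes then produces the remaining terms of $A_0$ together with all of $A_1$ and $A_2$. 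The main obstacle is the middle step: discovering the exact linear combination of theta quotients representing $G$ and certifying it, since level $162$ forces a large Sturm bound and a delicate cusp analysis of the generalized eta quotients; by contrast the residue-class separation of both the Lambert series and the theta quotients, while lengthy, is routine.
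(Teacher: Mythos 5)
Your proposal follows the paper's own proof essentially step for step: the paper likewise invokes Theorem \ref{TheoremOverpartitionDRankModularity}(\ref{TheoremOverpartitionDRankModularityPartDOddDissection}) to make the completed function (with the $S_3(r,3;162\tau)$ terms) modular, matches it against the explicit generalized-eta-quotient expression equivalent to the dissection (with Theorem \ref{TheoremDOddCOddGetaQuotients} supplying the multiplier verification you gloss as ``modular of known weight''), and closes with the valence formula, bounding the orders at the cusps of $\Gamma_0(162)\cap\Gamma_1(9)$ below (by $-119$ in total) and checking $q$-expansion coefficients through $q^{120}$ --- exactly your Sturm-type finite verification with the pole orders at cusps folded in. The only cosmetic differences are that the paper divides down to weight $0$ modular functions rather than working in a space of weight $1$ forms, and works on $\Gamma_1(9)$ (forced by the eta quotients) rather than $\Gamma_1(3)$.
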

The rest of the article is organized as follows. In Section 2 we recall several 
useful modular forms, state the definition of a harmonic Maass form, and 
introduce the functions of Zwegers that are associated to harmonic Maass forms. 
In Section 3 we give a series of identities to rewrite 
$\mathcal{O}_{d}(z;q)$ in terms of known mock modular forms and recognize
$\widetilde{\mathcal{O}}_{d}(a,b,c;\tau)$ as the modular completion. In Section
4 we work out modular transformation formulas for various functions associated
to $\widetilde{\mathcal{O}}_{d}(a,b,c;\tau)$. In Section 5 we define the functions
$S(r,c;\tau)$ and $\widetilde{S}(r,c;\tau)$, and work out their modular 
properties. In Section 6 we give the proofs of Theorems \ref{TheoremOverpartitionDRankModularity}
and \ref{thm:devenMain}. In Section 7 we prove Theorem \ref{TheoremOverpartition3Rank3Dissection},
which is the $3$-dissection of $\mathcal{O}_{3}(\z{3}{};q)$. Lastly, in
Section 8, we give two additional results. The first result is that the
$q^{2n+1}$ terms of $\mathcal{O}_{4}(z;q)$ can be written as an infinite
product. The second result is that certain differences of $\mathcal{O}_{d}(z;q)$
and $\mathcal{O}_{2d}(z;q)$ will always be modular, rather than mock modular.

\section{Preliminaries}

In order to prove our main results, we require some background information about modular forms and harmonic Maass forms.  We will assume the reader is familiar with classical modular forms; for more detailed background see \cite{DS, Ono1}.

\subsection{Modular Forms}

One of the most famous modular forms is Dedekind's eta function, defined for $\tau \in \mathcal{H}$ 
(the complex upper half-plane) by 
\[
\eta(\tau) = q^{\frac{1}{24}}\aqprod{q}{q}{\infty}.
\]
For a matrix  $A=\begin{psmallmatrix}\alpha&\beta\\\gamma&\delta\end{psmallmatrix}\in\SLTwo$,
$\eta(A\tau)$ transforms as
\begin{equation}\label{eq:eta_mult}
\eta(A\tau) = \nu(A)\sqrt{\gamma\tau+\delta} \,\eta(\tau),
\end{equation}
where $\nu(A)$ is a $24$th root of unity which can be described in terms of Dedekind sums.

We will also make use of two particular families of modular forms, the Klein forms $\kModular{a_1}{a_2}{\tau}$ described in \cite{KubertLang} and the the generalized eta functions $f_{N,\rho}(\tau)$ studied by Biagioli \cite{Biagioli}.  We describe these two families below.

First, for $a_1,a_2 \in \mathbb{Q}$ the Klein forms $\kModular{a_1}{a_2}{\tau}$ are given by
\begin{align*}
	\kModular{a_1}{a_2}{\tau}
	&=
	-q^{\frac{1}{2}B_2(a_1)-\frac{1}{12} }\exp(\pi ia_2(a_1-1) )\frac{\jacprod{\zeta}{q}}{\aqprod{q}{q}{\infty}^2}	
	,
\end{align*}
where $B_2(x) = x^2-x+\frac{1}{6}$, and 
$\zeta = \exp(2\pi i(a_1\tau + a_2))$. 
For  $A=\begin{psmallmatrix}\alpha&\beta\\\gamma&\delta\end{psmallmatrix}\in\SLTwo$, we have that
\begin{align*}
	\kModular{a_1}{a_2}{A\tau}
	&=
	(\gamma\tau+\delta)^{-1}\kModular{a_1\alpha+a_2\gamma}{a_1\beta+a_2\delta}{\tau}
,
\end{align*}
and for integers $b_1$ and $b_2$
\begin{align}\label{EqTShift}
	\kModular{a_1+b_1}{a_2+b_2}{\tau}
	&=
	(-1)^{b_1b_2+b_1+b_2}\exp(-\pi i(b_1a_2-b_2a_1)) \kModular{a_1}{a_2}{\tau}	
.
\end{align}
Additionally, $\kModular{a_1}{a_2}{\tau}$ is holomorphic on $\mathcal{H}$ and has
no zeros nor poles on $\mathcal{H}$. Thus $\kModular{a_1}{a_2}{\tau}$ is a
modular form of weight $-1$ on some congruence subgroup of $\SLTwo$.

Next, as in work of Biagioli \cite{Biagioli}, we consider the generalized eta functions given by
\begin{align}\label{eqn:f_Ndef}
	f_{N,\rho}(\tau)
	&=
		q^{\frac{(N-2\rho)^2}{8N}}\aqprod{q^\rho,q^{N-\rho},q^N}{q^N}{\infty}
	,
\end{align}
where $N$ and $\rho$ are integers, $N$ is positive, and $N\nmid \rho$.
We have that 
\begin{equation}\label{eq:f_mod}
f_{N,\rho}(\tau)=f_{N,-\rho}(\tau)=f_{N,\rho+N}(\tau).
\end{equation} 
Moreover (see \cite[Lemma 2.1]{Biagioli}) for $
A=\begin{psmallmatrix}\alpha&\beta\\\gamma&\delta\end{psmallmatrix}\in\Gamma_0(N)$ 
we have
\begin{align}\label{EqBiagioliTransform}
	f_{N,\rho}(A\tau)
	&=
		(-1)^{\rho\beta + \Floor{\frac{\rho\alpha}{N}} + \Floor{\frac{\rho}{N}} }
		\exp\left(\frac{\pi i\alpha\beta\rho^2}{N}\right)
		\nu( \mA{N} )^{3}
		\sqrt{\gamma\tau+\delta}\,		
		f_{N,\alpha\rho}(\tau)
.
\end{align}
Here and the throughout the article, the matrix $\mA{m}$
is defined as
\begin{align*}
 	\mA{m} & = \TwoTwoMatrix{\alpha}{m\beta}{\gamma/m}{\delta}
	.
\end{align*}
The utility of the notation $\mA{m}$ is in the fact that $mA\tau=\mA{m}(m\tau)$.
Additionally, $f_{N,\rho}(\tau)$ is holomorphic on $\mathcal{H}$ and has
no zeros nor poles on $\mathcal{H}$. Thus $f_{N,\rho}(\tau)$ is a
modular form of weight $\frac{1}{2}$ on some subgroup of $\SLTwo$.

It will also be useful for us to recall some important facts about the 
multiplier systems for certain modular forms. 
Recall the $\eta$-multiplier $\nu(A)$, defined by \eqref{eq:eta_mult}.  
Noting for odd $m$ we have that $\frac{\eta(2m^2\tau)^3}{\eta(2\tau)^3}$ is
a modular form of weight zero on $\Gamma_0(2m^2)$ (see \cite[Thm. 1.64]{Ono1}), 
we see that $\nu( \mA{2m^2})^3=\nu(\mA{2})^3$ on $\Gamma_0(2m^2)$. We will use this
fact often without mention.
A convenient form for the $\nu(A)$, is given by \cite[Ch. 4, Thm. 2]{Knopp}
\begin{align}
	\label{EqEtaMultipler}
	\nu(A)
	&=
	\left\{
	\begin{array}{ll}
		\big(\frac{\delta}{|\gamma|} \big)
		\exp\left(\frac{\pi i}{12}\left(
			(\alpha+\delta)\gamma - \beta\delta(\gamma^2-1) - 3\gamma		
		\right)\right)
		&
		\mbox{ if } \gamma \equiv 1 \pmod{2},
		\\				
		\Jac{\gamma}{\delta}
		\exp\left(\frac{\pi i}{12}\left(
			(\alpha+\delta)\gamma - \beta\delta(\gamma^2-1) + 3\delta - 3 - 3\gamma\delta		
		\right)\right)
		&
		\mbox{ if } \delta\equiv 1\pmod{2},
	\end{array}
	\right.
\end{align}
where $\Jac{\gamma}{\delta}$ is the generalized Legendre symbol as in
\cite{Shimura1}.
One can verify that when $m$ is even, 
$\nu(\mA{2m^2})^3=i^{-\alpha\beta}\exp\left(-\frac{\pi i\gamma\delta}{8m^2}\right)\nu(\mA{2})^3$
for $A\in\Gamma_0(4m^2)$, and so 
$\nu(\mA{2m^2})^3=i^{-\alpha\beta}\nu(\mA{2})^3$
for $A\in\Gamma_0(16m^2)$.

We will also make use of the multiplier system for the eta quotient $\frac{\eta(2\tau)}{\eta(\tau)^2}$, which is the generating function for overpartitions.  This is given in the following proposition.

\begin{proposition}\label{prop:eta_transf}
For $A=\begin{pmatrix}\alpha & \beta \\ \gamma & \delta \end{pmatrix}\in\Gamma_0(2)$ we have
\begin{align*}
	\frac{\eta(2A\tau)}{\eta(A\tau)^2}
	&=
	(-1)^{\beta+\frac{\alpha-1}{2}} i^{-\alpha\beta}
	\nu( \mA{2} )^{-3}
	(\gamma\tau+\delta)^{-\frac{1}{2}}
	\frac{\eta(2\tau)}{\eta(\tau)^2}
	.	
\end{align*}
\end{proposition}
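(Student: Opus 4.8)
The plan is to reduce the stated multiplier identity to the basic $\eta$-transformation \eqref{eq:eta_mult} and then to a finite computation with roots of unity governed by \eqref{EqEtaMultipler}. First I would apply \eqref{eq:eta_mult} to the denominator directly, giving $\eta(A\tau)^2 = \nu(A)^2(\gamma\tau+\delta)\eta(\tau)^2$. For the numerator I would use the observation recorded after the definition of $\mA{m}$, namely $2A\tau = \mA{2}(2\tau)$, together with the fact that $\mA{2} = \TwoTwoMatrix{\alpha}{2\beta}{\gamma/2}{\delta} \in \SLTwo$ whenever $A \in \Gamma_0(2)$ (the lower-left entry $\gamma/2$ is an integer and the determinant remains $1$). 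Applying \eqref{eq:eta_mult} to $\mA{2}$ at the point $2\tau$ gives $\eta(2A\tau) = \nu(\mA{2})\sqrt{\gamma\tau+\delta}\,\eta(2\tau)$. Dividing, the proposition collapses to the single claim
\begin{align*}
\frac{\nu(\mA{2})}{\nu(A)^2} = (-1)^{\beta+\frac{\alpha-1}{2}} i^{-\alpha\beta} \nu(\mA{2})^{-3}, \qquad\text{equivalently}\qquad \nu(\mA{2})^4\,\nu(A)^{-2} = (-1)^{\beta+\frac{\alpha-1}{2}} i^{-\alpha\beta}.
\end{align*}

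Next I would evaluate both multipliers with \eqref{EqEtaMultipler}. Since $\gamma$ is even and $\det A = 1$, both $\alpha$ and $\delta$ are odd, so the $\delta \equiv 1 \pmod 2$ branch of \eqref{EqEtaMultipler} applies to $A$; because $\mA{2}$ shares the same odd lower-right entry $\delta$, the same branch applies to it, making the computation uniform. The Jacobi-symbol prefactors then contribute $\Jac{\gamma/2}{\delta}^{4}\Jac{\gamma}{\delta}^{-2} = 1$ and drop out entirely, so only the exponential parts survive. Writing $\gamma = 2g$ and combining the two exponents, I expect the bracketed quantity to telescope: the $(\alpha+\delta)g$ and $g\delta$ contributions cancel and the $\beta\delta$ terms combine, leaving
\begin{align*}
\nu(\mA{2})^4\,\nu(A)^{-2} = \exp\!\left(\tfrac{\pi i}{2}(\beta\delta + \delta - 1)\right) = i^{\,\beta\delta+\delta-1}.
\end{align*}

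It then remains to verify $i^{\,\beta\delta+\delta-1} = (-1)^{\beta+\frac{\alpha-1}{2}} i^{-\alpha\beta}$, which, after writing $(-1)^k = i^{2k}$ and comparing exponents, is equivalent to the congruence
\begin{align*}
\beta(\alpha+\delta-2) + (\delta-\alpha) \equiv 0 \pmod 4.
\end{align*}
Since $\alpha,\delta$ are odd, both $\alpha+\delta-2$ and $\delta-\alpha$ are even, and the congruence reduces modulo $2$ to $\bigl(\tfrac{\alpha-1}{2}+\tfrac{\delta-1}{2}\bigr)(\beta+1) \equiv 0$. Here the determinant relation $\alpha\delta = 1 + \beta\gamma = 1 + 2\beta g$ finishes the argument: if $\alpha+\delta \equiv 0 \pmod 4$ then $\{\alpha,\delta\} \equiv \{1,3\} \pmod 4$, forcing $\alpha\delta \equiv 3 \pmod 4$ and hence $\beta g$ odd, so $\beta$ is odd and $(\beta+1)$ is even; while if $\alpha+\delta \equiv 2 \pmod 4$ the other factor $\tfrac{\alpha-1}{2}+\tfrac{\delta-1}{2}$ is already even. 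The main obstacle is not conceptual but the bookkeeping in the middle steps: one must select the correct branch of \eqref{EqEtaMultipler}, confirm the Jacobi symbols cancel under the fourth and inverse-square powers, and carry the exponent simplification without sign errors, after which the closing parity argument via the determinant is short once the congruence has been isolated.
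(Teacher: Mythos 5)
Your proof is correct, but it takes a genuinely different route from the paper's. The paper's proof is two lines: it observes that $\frac{\eta(2\tau)}{\eta(\tau)^2} = \frac{1}{f_{2,1}(\tau)}$ and then quotes Biagioli's transformation law \eqref{EqBiagioliTransform} with $N=2$, $\rho=1$ (using $f_{2,\alpha}=f_{2,1}$ for odd $\alpha$), so that the whole multiplier $(-1)^{\beta+\frac{\alpha-1}{2}}i^{\alpha\beta}\nu(\mA{2})^{3}$ is delivered prepackaged and the proposition follows by taking reciprocals. You instead work directly with $\eta$: applying \eqref{eq:eta_mult} to the numerator via $2A\tau=\mA{2}(2\tau)$ and to the denominator, you reduce the claim to the root-of-unity identity $\nu(\mA{2})^{4}\nu(A)^{-2}=(-1)^{\beta+\frac{\alpha-1}{2}}i^{-\alpha\beta}$, which you then verify from Knopp's explicit formula \eqref{EqEtaMultipler}. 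I checked your computation: the Jacobi symbols indeed vanish under the fourth and inverse-square powers (they are $\pm1$ since $\gcd(\gamma,\delta)=1$), the exponent arithmetic with $\gamma=2g$ gives exactly $i^{\beta\delta+\delta-1}$, the target congruence $\beta(\alpha+\delta-2)+(\delta-\alpha)\equiv 0\pmod 4$ is the right one, and your closing parity argument via $\alpha\delta=1+2\beta g$ is sound (when $\alpha+\delta\equiv 0\pmod 4$ one gets $\alpha\delta\equiv 3\pmod 4$, forcing $\beta g$ odd). What each approach buys: the paper's proof is shorter and fits the rest of the article, which leans on \eqref{EqBiagioliTransform} repeatedly for the $f_{N,\rho}$ functions; yours is self-contained at the level of $\eta$ itself, bypasses the generalized eta machinery entirely, and isolates an explicit relation between $\nu(\mA{2})$ and $\nu(A)$ on $\Gamma_0(2)$ that could be reused independently, at the cost of the multiplier bookkeeping you carried out.
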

\begin{proof}
We begin by noting that
\begin{align*}
	\frac{\eta(2\tau)}{\eta(\tau)^2}
	&=
	\frac{1}{f_{2,1}(\tau)}
	.
\end{align*}
However by (\ref{EqBiagioliTransform}) we have
\begin{align*}
	f_{2,1}(A\tau)
	&=
		(-1)^{\beta+\frac{\alpha-1}{2}}i^{\alpha\beta}\nu(\mA{2})^3 
		\sqrt{\gamma\tau+\delta} f_{2,1}(\tau)
	.
\end{align*}
\end{proof}

\subsection{Harmonic Maass Forms}
Here we recall some basic facts about harmonic Maass forms. We begin with
their definition. This definition is essentially that of Bruinier
and Funke in \cite{BF}. First we recall that for $k\in\frac{1}{2}\Z$, the weight
$k$ hyperbolic Laplacian operator $\Delta_{k}$ is defined as
$$
\Delta_k = -y^2\left(\frac{\partial^2}{\partial x^2}+\frac{\partial^2}{\partial y^2} \right)
+iky\left(\frac{\partial}{\partial x}+i\frac{\partial}{\partial y}\right),
$$
where here and throughout the article $\tau=x+iy$.

Given $k\in \frac{1}{2}\Z$, a finite index subgroup $\Gamma\subset\SLTwo$, and 
a multiplier $\psi$ (so $\psi:\Gamma\rightarrow\mathbb{C}$ with $|\psi|=1$),
a \emph{harmonic Maass form} of weight $k$ for the subgroup $\Gamma$, with multiplier 
$\psi$, is any smooth function $\widehat{f}:\mathcal{H}\rightarrow\mathbb{C}$
such that the following three properties hold.
\begin{enumerate}
\item
For all $A=\begin{psmallmatrix}a & b\\c & d\end{psmallmatrix} \in \Gamma$ and 
$\tau \in \mathcal{H}$ we have $\widehat{f}(A \tau) = \psi(A)(c\tau+d)^{k} \widehat{f}(\tau)$. 
\item
We have that $\Delta_k(\widehat{f})=0$. 
\item
The function $\widehat{f}$ has linear exponential growth at the cusps in the 
following form. For each 
$A=\begin{psmallmatrix}a & b\\c & d\end{psmallmatrix}\in\SLTwo$, there exists a polynomial 
$p_{A}(\tau)\in \mathbb{C}[q^{-\frac{1}{N}}]$ (with $N$ a positive integer) such that 
$(c\tau+d)^{-k}\widehat{f}(\tau) - p_{A}(\tau) = O(e^{-\epsilon y})$ as $y\rightarrow \infty$ 
for some $\epsilon > 0$. 
\end{enumerate}

Throughout the article, we shall use the phrase ``linear exponential growth'' to mean this 
more restrictive condition in (3) rather than the more relaxed condition $O(e^{\epsilon y})$. We note that this relaxed condition is instead used for ``harmonic Maass forms of moderate growth''.
The Fourier series of a harmonic Maass form $\widehat{f}$ of weight $k$ naturally 
decomposes as the sum of a holomorphic and a non-holomorphic part.  Using 
terminology of Zagier \cite{Zagier}, the holomorphic part $f$ is called a 
\emph{mock modular form} of weight $k$.  
Furthermore, a harmonic Maass form $\widehat{f}$ of weight $k$ is mapped to a 
classical modular form of weight $2-k$ by the differential operator
$\xi_{k}=2iy^{k}\cdot\overline{\frac{\partial}{\partial{\overline{\tau}}}}$,
which is called the \emph{shadow map}. Here the image of $\widehat{f}$ 
under $\xi_{k}$ is called the \emph{shadow} of $f$.
In the special case when $k=\frac12$ and the shadow is a 
weight $\frac{3}{2}$ unary theta function,
$f$ (the holomorphic part) is called a \emph{mock theta function}.

Zwegers \cite{Zwegers} gave a method for constructing mock theta functions with
shadow related to the function $g_{a,b}(\tau)$, which is defined for $a,b\in\mathbb{R}$ by
\begin{align*}
	g_{a,b}(\tau) 
	&=
	\sum_{n\in\mathbb{Z}+a}
	n\exp( \pi in^2\tau + 2\pi inb )
.
\end{align*}
Zwegers did this by considering functions $\mu(u,v;\tau)$, which he defines for $u,v,z\in \mathbb{C}$, $\tau\in\mathcal{H}$, and
$u,v\not\in\mathbb{Z}+\tau\mathbb{Z}$ by
\begin{equation}\label{def_mu}
\mu(u,v;\tau) = \frac{\exp(\pi iu)}{\vartheta(v;\tau)} \sum_{n=-\infty}^\infty \frac{ (-1)^n \exp(\pi in(n+1)\tau + 2\pi inv) }{ 1 - \exp(2\pi in\tau + 2\pi iu ) }, 
\end{equation}
where
\begin{equation}\label{def_varTheta} 
\vartheta(z;\tau) 
= 
	\sum_{n\in \mathbb{Z}+\frac{1}{2}} \exp\left( \pi in^2\tau + 2\pi in\left(z+\frac{1}{2}\right) \right)
=
	-iq^{\frac{1}{8}}e^{-\pi iz}\aqprod{e^{2\pi iz},e^{-2\pi iz}q,q}{q}{\infty}
	.
\end{equation}

We note that $\mu(u,v;\tau)$ satisfies the following transformation formula (see \cite[Prop. 1.4]{Zwegers})
\begin{equation}\label{EqMuTranformation}
\mu(u,v;\tau) = -e^{2\pi i(u-v) - \pi i\tau}\mu(u,v+\tau;\tau) - ie^{ \pi i(u-v) - \frac{\pi i\tau}{4}},
\end{equation}
and $\vartheta(z;\tau)$ satisfies the transformation formula (see \cite[(80.31)]{Rademacher})
\begin{equation}\label{EqVarThetaTranformation}
\vartheta(z;\tau) = -e^{\pi i\tau + 2\pi iz}\vartheta(z+\tau;\tau).
\end{equation}

For $u,z\in\mathbb{C}$, $y=\mbox{Im}(\tau)$, and $a=\mbox{Im}(u)/\mbox{Im}(\tau)$, define
\begin{equation} \label{Rdef}
R(u;\tau) = \sum_{n\in \mathbb{Z}+\frac{1}{2}}\left( \mbox{sgn}(n) - E( (n+a)\sqrt{2y} )  \right)(-1)^{n-\frac{1}{2}}\exp( -\pi in^2\tau - 2\pi inu ), 
\end{equation}
where
\begin{equation}\label{eqn:Edef} 
E(z)  = 2\int_0^z \exp(-\pi w^2)dw.
\end{equation}

Then $\mu(u,v;\tau)$ can be completed to $\tilde{\mu}(u,v;\tau)$, for 
$u,v\notin\mathbb{Z}+\tau\mathbb{Z}$, by defining 
\begin{align*}
	\tilde{\mu}(u,v;\tau)
	&=
		\mu(u,v;\tau) + \frac{i}{2}R(u-v;\tau)	
	.
\end{align*}
The following essential properties are from \cite[Theorem 1.11]{Zwegers}.  If $k,l,m,n$ are integers then
\begin{align}
	\label{EqZTheorem1.11P1}
	\tmu\left(u+k\tau+l, v+m\tau+n; \tau \right)
	&=
	(-1)^{k+l+m+n}
	\exp\left(
		\pi i\tau(k-m)^2 + 2\pi i(k-m)(u-v)	
	\right)
	\tmu(u,v;\tau)
	.
\end{align}
Moreover if $A=\TwoTwoMatrix{\alpha}{\beta}{\gamma}{\delta}\in\SLTwo$, then
\begin{align}
	\label{EqZTheorem1.11P2}
	\tmu\left(
		\frac{u}{\gamma\tau+\delta},
		\frac{v}{\gamma\tau+\delta};
		A\tau		
	\right)
	&=
	\nu(A)^{-3}
	\exp\left(
		-\frac{\pi i\gamma(u-v)^2}{\gamma\tau+\delta}
	\right)	
	\sqrt{\gamma\tau+\delta}\,
	\tmu\left(u, v;\tau\right)	
.
\end{align}

Zwegers showed (see \cite[Theorem 1.16 (1)]{Zwegers}) that
\begin{align}\label{EqRToIntegral}
	R(a\tau-b;\tau)
	&=
	-\exp(\pi ia^2\tau - 2\pi ia(b+\tfrac{1}{2} )  )
	\int_{-\overline{\tau}}^{i\infty}
	\frac{g_{a+ \frac{1}{2} , b+ \frac{1}{2} }(z)}{\sqrt{-i(z+\tau)}}dz
,
\end{align}
for $a\in(-\frac{1}{2},\frac{1}{2})$. 
Following the proof we find
that for $a=-\frac{1}{2}$ we instead have
\begin{align}\label{EqRToIntegralBoundaryCase}
	R(-\tfrac{\tau}{2}-b;\tau)
	&=
	\exp( \tfrac{\pi i\tau}{4}  +\pi ib  )
	-
	\exp( \tfrac{\pi i\tau}{4}  +\pi i(b+\tfrac{1}{2} )  )
	\int_{-\overline{\tau}}^{i\infty}
	\frac{g_{0, b+\frac{1}{2} }(z)}{\sqrt{-i(z+\tau)}}dz
.
\end{align}

Moreover, Zwegers \cite[Proposition 1.9]{Zwegers} states that
\begin{align}\label{eqn:prop1.9Z}
	R(u;\tau) + \exp\left( -2\pi iu-\pi i\tau\right) R(u+\tau;\tau)
	&=
		2\exp\left( -\pi iu - \tfrac{\pi i\tau}{4} \right)
	,\\
	R(-u;\tau)
	&=
		R(u;\tau) \nonumber
	.
\end{align}
From \eqref{eqn:prop1.9Z} we deduce that for $n\in\mathbb{Z}$
\begin{multline}  \label{eq:Rshift}
R(u+n\tau;\tau)
=
	(-1)^{n}\exp\left(2\pi inu + \pi in^2\tau \right)
	R(u;\tau) 
	\\ 
	+
	2\exp\left( 2\pi inu + \pi in^2\tau \right)
	\sum_{m=1}^{|n|}
	(-1)^{m+n}\exp\left(
		-\pi i(2m-1)\mbox{sgn}(n)u - \pi im(m-1)\tau - \frac{\pi i\tau}{4}		
	\right)	
.
\end{multline}

One can check that for $a\in\mathbb{Q}$,
\begin{align}\label{eqn:growth_check}
R(a\tau-b) &= p(\tau) + O(y^{-\frac{1}{2}}e^{-\pi a^2y - \epsilon y}),
\end{align}
as $y\rightarrow\infty$, where $\epsilon>0$ and $p(\tau)$ is some 
rational function of a fractional power of $q$. As such,
for $u_1,u_2,v_1,v_2\in\mathbb{Q}$, we find that
$q^{-\frac{(u_1-v_1)^2}{2}}\tmu\left( u_1\tau+u_2,v_1\tau+v_2  ;\tau\right)$
meets the prescribed growth conditions of a harmonic Maass form.

\subsection{Invariant orders at cusps}\label{sec:invariant}

We recall for a modular form $f$ on some congruence subgroup $\Gamma$, 
the invariant order at
$\infty$ is the least power of $q$ appearing in the $q$-expansion at 
$i\infty$. That is, if
\begin{align*}
	f(\tau) &= \sum_{m=m_0}^\infty a(m)\exp(2\pi i \tau m/N )
,
\end{align*}
and $a(m_0)\not=0$, then the invariant order is $m_0/N$. For a modular form,
this is always a finite number.
For a harmonic Maass form, we cannot take such an expansion,
however we can do so for the holomorphic part.
If $f$ is a modular form of weight $k$,
$\gcd(\alpha,\gamma)=1$, and 
$A=\begin{psmallmatrix}\alpha&\beta\\\gamma&\delta\end{psmallmatrix}\in\SLTwo$,
then the invariant order of $f$ at the cusp $\frac{\alpha}{\gamma}$ is the invariant
order at $\infty$ of $(\gamma\tau+\delta)^{-k}f(A\tau)$. In the same fashion, 
if $f$ is a harmonic Maass form, then we define the invariant order of the
holomorphic part of $f$ at the cusp $\frac{\alpha}{\gamma}$ as the  is the invariant
order at $\infty$ of $(\gamma\tau+\delta)^{-k}f(A\tau)$. This value is independent of the choice of
$A$.


For a real number $w$, we let $\Floor{w}$ denote the greatest 
integer less than or equal to $w$ and $\Fractional{w}$ the fractional part of 
$w$. That is, 
$w=\Floor{w}+\Fractional{w}$, $\Floor{w}\in\mathbb{Z}$, and 
$0\le\Fractional{w}<1$.
We will make use of the following corollary in which a lower bound is 
established for the invariant order of the holomorphic part of a harmonic Maass 
form at $i\infty$. 

\begin{corollary}[\cite{JenningsShaffer2} Cor. 6.2]\label{PropTMuOrders}
If $f(\tau)=q^{\alpha}\tmu(u_1\tau+u_2,v_1\tau+v_2;\tau)$ is a harmonic Maass form,
with $u_i,v_i\in\mathbb{R}$, then the lowest power of 
$q$ appearing in the expansion of the holomorphic part of
$f(\tau)$ is at least $\alpha+\tilde{\nu}(u_1,v_1)$, where
\begin{align*}
	\tilde{\nu}(u,v)
	&=
		\frac{1}{2}\left( \Floor{u}-\Floor{v} \right)^2
		+
		\left(\Floor{u}-\Floor{v}\right)\left(\Fractional{u}-\Fractional{v}\right)		
		+
		k( u , v )
	,
	\\
	k(u, v)
	&=
		\left\{
		\begin{array}{cc}
			\vspace{5pt}
			\nu(\Fractional{u},\Fractional{v}) 
				& \mbox{ if } \Fractional{u}-\Fractional{v}\not=\pm\frac{1}{2} ,
			\\
			\min\left( \frac{1}{8},\nu(\Fractional{u},\Fractional{v}) \right)  
				&\mbox{ if } \Fractional{u}-\Fractional{v}=\pm\frac{1}{2} ,	
		\end{array}\right.
	\\
	\nu(u,v)
	&=
		\left\{
		\begin{array}{cc}
			\vspace{5pt}			
			\frac{u+v}{2}-\frac{1}{8} & \mbox{ if } u+v \le 1 ,
			\\
			\frac{7}{8}-\frac{u+v}{2} & \mbox{ if } u+v > 1.
		\end{array}\right.
\end{align*}
\end{corollary}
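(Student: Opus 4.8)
The plan is to track the lowest power of $q$ in the holomorphic part of $f$ by writing $\tmu=\mu+\frac{i}{2}R$ and analyzing the genuinely holomorphic function $\mu$ and the holomorphic Fourier contribution of the completion term $R$ separately, after first normalizing the lattice data $u_1,v_1$ to lie in $[0,1)$. First I would apply the elliptic transformation \eqref{EqZTheorem1.11P1} with $k=\Floor{u_1}$, $m=\Floor{v_1}$, and $l=n=0$ to obtain
\[
\tmu(u_1\tau+u_2,v_1\tau+v_2;\tau)=(-1)^{\Floor{u_1}+\Floor{v_1}}\,q^{\frac12(\Floor{u_1}-\Floor{v_1})^2+(\Floor{u_1}-\Floor{v_1})(\Fractional{u_1}-\Fractional{v_1})}\,e^{2\pi i(\Floor{u_1}-\Floor{v_1})(u_2-v_2)}\,\tmu(u',v';\tau),
\]
where $u'=\Fractional{u_1}\tau+u_2$ and $v'=\Fractional{v_1}\tau+v_2$. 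The explicit power of $q$ produced here is exactly the first two summands of $\tilde\nu(u_1,v_1)$, so it remains to show that the holomorphic part of $\tmu(u',v';\tau)$ has order at least $k(u_1,v_1)$.

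Next I would compute the order of the holomorphic function $\mu(u',v';\tau)$ directly from \eqref{def_mu}. The prefactor $e^{\pi iu'}$ contributes $q^{\Fractional{u_1}/2}$; the reciprocal theta quotient $\vartheta(v';\tau)^{-1}$, expanded via the product in \eqref{def_varTheta}, contributes $q^{-\frac18+\Fractional{v_1}/2}$ to leading order, since $\Fractional{v_1}\in[0,1)$ forces each infinite product factor to begin at $q^0$ with leading coefficient $1$; and the Lambert-type sum contributes $q^{\min(0,\,1-\Fractional{u_1}-\Fractional{v_1})}$, the minimum being attained at the index $n=0$ or $n=-1$. Adding these three contributions yields order $\nu(\Fractional{u_1},\Fractional{v_1})$, with the split governed precisely by whether $\Fractional{u_1}+\Fractional{v_1}\le 1$ or $>1$, matching the definition of $\nu$.

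Finally I would bound the holomorphic Fourier part of $R(u'-v';\tau)$, where $u'-v'=(\Fractional{u_1}-\Fractional{v_1})\tau+(u_2-v_2)$ so that $a=\Fractional{u_1}-\Fractional{v_1}$. Using \eqref{Rdef}, \eqref{eqn:Edef}, and the asymptotic identification of the holomorphic part recorded in \eqref{eqn:growth_check}, the coefficient $\mbox{sgn}(n)-E((n+a)\sqrt{2y})$ of the $n$th term approaches the constant $\mbox{sgn}(n)-\mbox{sgn}(n+a)$, and this limit vanishes for every $n\in\Z+\frac12$ when $|a|<\frac12$; hence $R$ contributes nothing holomorphic in the non-boundary case and the order of $\tmu(u',v';\tau)$ is $\nu(\Fractional{u_1},\Fractional{v_1})$, as required for $k(u_1,v_1)$. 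In the boundary case $a=\pm\frac12$ the single index $n=\mp\frac12$ has $n+a=0$, so $E(0)=0$ survives as a genuine constant and produces a holomorphic term of $R$ at $q^{1/8}$; since any cancellation against $\mu$ can only raise the order, the holomorphic part then has order at least $\min(\frac18,\nu(\Fractional{u_1},\Fractional{v_1}))$, which is exactly $k(u_1,v_1)$ in this case. I expect the delicate step to be this last one: correctly separating the constant (holomorphic) piece of each error-function coefficient of $R$ from its exponentially decaying tail, identifying which indices $n$ survive for each value of $a$, and confirming that the leading coefficient of the theta quotient in the second step is genuinely nonzero so that the order of $\mu$ is not inadvertently raised.
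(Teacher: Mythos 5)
The paper offers no proof to compare against: Corollary \ref{PropTMuOrders} is quoted verbatim from \cite{JenningsShaffer2}, so your argument can only be judged on its own merits. Your first two steps are sound: the elliptic shift \eqref{EqZTheorem1.11P1} with $k=\Floor{u_1}$, $m=\Floor{v_1}$ produces exactly the terms $\frac12(\Floor{u_1}-\Floor{v_1})^2+(\Floor{u_1}-\Floor{v_1})(\Fractional{u_1}-\Fractional{v_1})$, and your order count for $\mu(u',v';\tau)$ (prefactor $q^{\Fractional{u_1}/2}$, theta factor $q^{\Fractional{v_1}/2-\frac18}$ with nonvanishing leading coefficient because $v'\notin\Z+\tau\Z$, Lambert sum of order at least $\min(0,1-\Fractional{u_1}-\Fractional{v_1})$ attained at $n=0$ or $n=-1$) correctly yields the bound $\nu(\Fractional{u_1},\Fractional{v_1})$.

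The gap is in your third step. After normalization, $a=\Fractional{u_1}-\Fractional{v_1}$ ranges over all of $(-1,1)$, not over $[-\frac12,\frac12]$. Your assertion that $\mathrm{sgn}(n)-\mathrm{sgn}(n+a)$ vanishes for every $n\in\Z+\frac12$ is true only when $|a|<\frac12$, so your conclusion ``hence $R$ contributes nothing holomorphic in the non-boundary case'' silently identifies the corollary's non-boundary case ($a\neq\pm\frac12$) with $|a|<\frac12$. When $\frac12<|a|<1$, the index $n=-\frac12\mathrm{sgn}(a)$ has $\mathrm{sgn}(n)\neq\mathrm{sgn}(n+a)$, and (equivalently, via \eqref{eqn:prop1.9Z} applied to shift $a$ into $(-\frac12,\frac12)$ before invoking \eqref{EqRToIntegral}) $R$ acquires the genuinely holomorphic term $2e^{\pm\pi i(u_2-v_2)}q^{\frac{|a|}{2}-\frac18}$. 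This term can lie strictly below the claimed bound: for $\Fractional{u_1}=\frac45$, $\Fractional{v_1}=\frac1{10}$ one gets a holomorphic contribution at exponent $\frac{7}{20}-\frac18=0.225$, while $k(u_1,v_1)=\nu\left(\frac45,\frac1{10}\right)=0.325$; since the expansion of $\mu$ begins at $q^{0.325}$, no cancellation can rescue the bound. So this is not a routine omission you can patch: carried out correctly, your method gives the weaker bound $\min\left(\nu(\Fractional{u_1},\Fractional{v_1}),\frac{|a|}{2}-\frac18\right)$ in that regime, which is strictly smaller than the quoted $k(u,v)$ whenever both fractional parts are positive. Either the cited statement carries an implicit restriction to $\left|\Fractional{u_1}-\Fractional{v_1}\right|\le\frac12$ (which is the situation in which it is actually applied) or the definition of $k(u,v)$ requires a third case; your proof must at minimum make that restriction explicit and either handle or exclude the range $\frac12<|a|<1$.
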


We also recall the following result of Biagioli \cite{Biagioli} which gives the 
invariant orders of $f_{N,\rho}(\tau)$.
\begin{proposition}[\cite{Biagioli} Lemma 3.2]\label{PropBiagioliInvariantOrder}
Suppose $N,\rho,\alpha, \gamma$ are integers such that $N$ is positive, $N\nmid\rho$, and $\gcd(\alpha,\gamma)=1$. Then
the invariant order of $f_{N,\rho}(\tau)$
at the cusp $\frac{\alpha}{\gamma}$ is
\begin{align*}
	&
		\frac{\gcd(N,\gamma)^2}{2N}
		\left(
			\Fractional{ \tfrac{\alpha\rho}{\gcd(N,\gamma)} } -\frac{1}{2}		
		\right)^2
.
\end{align*}
\end{proposition}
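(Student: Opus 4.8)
The plan is to reduce the statement to locating the smallest power of $q$ in a single Jacobi theta series, and to compute that power by transporting the series from the cusp $\frac{\alpha}{\gamma}$ back to $i\infty$. First I would apply the Jacobi triple product to the definition \eqref{eqn:f_Ndef}, collapsing the three Pochhammer symbols into the unary theta representation
\[
f_{N,\rho}(\tau) = \sum_{n=-\infty}^{\infty} (-1)^n q^{\frac{N}{2}\left(n + \frac{\rho}{N} - \frac{1}{2}\right)^2} = i\,q^{\frac{\rho^2}{2N}}\,\vartheta(\rho\tau;N\tau),
\]
where the second equality comes from the product form of $\vartheta$ in \eqref{def_varTheta}. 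The hypothesis $N\nmid\rho$ is exactly what forces $f_{N,\rho}\not\equiv 0$, since otherwise the pairing $n\leftrightarrow 1-n$ cancels every term. At the cusp $i\infty$ one reads off the minimal exponent directly, which already matches the formula when $\gamma=0$ (so $\gcd(N,\gamma)=N$), and this frames the whole problem as a minimization over a shifted quadratic form.

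Next, the invariant order at $\frac{\alpha}{\gamma}$ is the leading $q$-exponent of $(\gamma\tau+\delta)^{-1/2}f_{N,\rho}(A\tau)$ for $A=\TwoTwoMatrix{\alpha}{\beta}{\gamma}{\delta}\in\SLTwo$. The difficulty is that $\vartheta(\rho A\tau;NA\tau)$ involves the non-integral scaling $NA\tau$, so the given $\Gamma_0(N)$ law \eqref{EqBiagioliTransform} does not apply. The key move is to put $\TwoTwoMatrix{N\alpha}{N\beta}{\gamma}{\delta}$, a matrix of determinant $N$, into Hermite normal form $U\,\TwoTwoMatrix{d}{b}{0}{N/d}$ with $U\in\SLTwo$; comparing first columns shows $d$ is forced to equal $\gcd(N\alpha,\gamma)=\gcd(N,\gamma)$, using $\gcd(\alpha,\gamma)=1$. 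This is precisely where $\gcd(N,\gamma)$ enters. Setting $\sigma=\frac{d\tau+b}{N/d}$ gives $NA\tau=U\sigma$ with $\sigma\to i\infty$ as $\tau\to i\infty$, so I can apply the transformation law for $\vartheta$ under $\SLTwo$ (with multiplier $\nu(U)^3$) to rewrite $\vartheta(\rho A\tau;NA\tau)$ as an automorphy factor times $\vartheta(z;\sigma)$, where a short computation gives $z=\frac{\rho}{N/d}(\alpha\tau+\beta)$ and $r\sigma+s=\frac{d}{N}(\gamma\tau+\delta)$.

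Then I would check that the automorphy factors telescope. The half-integral weight factor cancels against $(\gamma\tau+\delta)^{-1/2}$ up to the constant $(d/N)^{1/2}$; and the two Gaussian exponentials $e^{\pi i\rho^2 A\tau/N}$ (from Step 1) and $e^{\pi i r z^2/(r\sigma+s)}$ (from the $\vartheta$ transformation) combine, using $\alpha\delta-\beta\gamma=1$, into the single clean power $q^{\rho^2\alpha^2/(2N)}$, which exactly cancels the completion-of-square constant of the $\vartheta(z;\sigma)$ series. What survives is $\sum_{n\in\Z+1/2}(\text{phase})\,q^{\frac{d^2}{2N}\left(n+\frac{\alpha\rho}{d}\right)^2}$, and since the squared distance from $\frac{\alpha\rho}{d}+\frac12$ to $\Z$ equals $\left(\Fractional{\frac{\alpha\rho}{d}}-\frac12\right)^2$, its minimal exponent is the claimed $\frac{\gcd(N,\gamma)^2}{2N}\left(\Fractional{\frac{\alpha\rho}{\gcd(N,\gamma)}}-\frac12\right)^2$.

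The main obstacle is the bookkeeping of that telescoping together with the need to ensure the leading coefficient does not vanish. When $\frac{\alpha\rho}{d}\notin\Z$ the minimum is attained at a single index with a unimodular phase, so the order is exactly the formula value. When $d\mid\rho$, however, the minimum is attained by the two terms $n=\pm\frac12-\frac{\alpha\rho}{d}$, whose phases could in principle cancel. I would rule this out by using the quasi-periodicity \eqref{EqVarThetaTranformation} to strip off the integer translation, reducing to $\vartheta(c;\sigma)$ for a constant $c$; full cancellation of the two leading terms would force $c\in\Z$ and hence $\vartheta(c;\sigma)\equiv 0$, contradicting $f_{N,\rho}\not\equiv 0$. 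Thus the non-vanishing guaranteed by $N\nmid\rho$ is exactly what secures that the stated exponent is the genuine invariant order.
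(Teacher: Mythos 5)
The paper itself does not prove this proposition: it is imported wholesale from Biagioli (his Lemma 3.2), so there is no internal proof to compare yours against. Judged on its own, your argument is correct, and it is the same style of argument the paper uses for its analogous order computations (Propositions \ref{PropDOddMTildeInvariantOrder}, \ref{PropDOddPInvariantOrder}, and \ref{PropSMassFormAndInvariantOrder}): factor the determinant-$N$ matrix as an $\SLTwo$ element times an upper-triangular matrix, transport to $i\infty$, and read off the minimal exponent. I checked the key computations: the triple-product identity does give
\begin{align*}
	f_{N,\rho}(\tau)
	=
	\sum_{n\in\Z}(-1)^n q^{\frac{N}{2}\left(n+\frac{\rho}{N}-\frac{1}{2}\right)^2}
	=
	i\,q^{\frac{\rho^2}{2N}}\vartheta(\rho\tau;N\tau);
\end{align*}
the first column of the Hermite factorization forces $d=\gcd(N\alpha,\gamma)=\gcd(N,\gamma)$ since $\gcd(\alpha,\gamma)=1$; and the two Gaussian factors indeed combine via $1+\gamma(\alpha\tau+\beta)=\alpha(\gamma\tau+\delta)$ into $\exp\left(\pi i\alpha\rho^2(\alpha\tau+\beta)/N\right)$, which is $q^{\alpha^2\rho^2/(2N)}$ times the harmless constant phase $e^{\pi i\alpha\beta\rho^2/N}$ (you suppressed this phase, but it does not affect orders). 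The resulting series $\sum_{n\in\Z+\frac12}(\mbox{phase})\,q^{\frac{d^2}{2N}(n+\frac{\alpha\rho}{d})^2}$ has minimal exponent $\frac{d^2}{2N}\left(\Fractional{\frac{\alpha\rho}{d}}-\frac12\right)^2$ as claimed. Most importantly, you correctly identified and closed the one genuine pitfall of this approach: when $d\mid\rho$ the minimum is attained twice, and one must rule out cancellation of the two leading coefficients. Your reduction to $\vartheta(c;\sigma)$ via quasi-periodicity, with the observation that cancellation forces $c\in\Z$ and hence $\vartheta(c;\sigma)\equiv 0$, contradicting the non-vanishing of the infinite product defining $f_{N,\rho}$ (this is where $N\nmid\rho$ is used), is exactly right: the two leading terms sum to a nonzero multiple of $\sin(\pi c)$. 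The only cosmetic caveat is that your proof invokes the $\SLTwo$ transformation of $\vartheta$ (Zwegers' Proposition 1.3(2)), which the paper never states, but that is a standard result and perfectly legitimate to cite.
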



In order to prove Theorem \ref{TheoremOverpartition3Rank3Dissection}, we will 
use the valence formula for modular functions which is described as follows.
Suppose $f$ is a modular function on some congruence subgroup 
$\Gamma\subset\SLTwo$.
Suppose $A=\begin{psmallmatrix}\alpha&\beta\\\gamma&\delta\end{psmallmatrix} \in\SLTwo$,
we then have a cusp $\zeta=A(\infty)=\frac{\alpha}{\gamma}$.
We let $ord(f;\zeta)$ denote the invariant order of $f$ at
$\zeta$. We define the width of $\zeta$ with respect to $\Gamma$
as $width_\Gamma(\zeta):=w$, where $w$ is the least positive integer such that
$A\begin{psmallmatrix}1&w\\0&1\end{psmallmatrix}A^{-1}\in\Gamma$. We then define
the order of $f$ at $\zeta$ with respect to $\Gamma$ as
$ORD_\Gamma(f;\zeta)=ord(f;\zeta) width_\Gamma(\zeta)$.
For $z\in\mathcal{H}$ we let $ord(f;z)$ denote the order of $f$ at $z$ as a
meromorphic function. We then define the order of $f$ at $z$ with respect to
$\Gamma$ as $ORD_\Gamma(f;z)=ord(f;z)/m$ where $m$ is the order of $z$ as a
fixed point of $\Gamma$ (so $m=1$, $2$, or $3$).
If $f$ is not the zero function 
and $\mathcal{D}\subset\mathcal{H}\cup\mathbb{Q}\cup\{\infty\}$ is a
fundamental domain for the action of $\Gamma$ on $\mathcal{H}$ along 
with a complete set of inequivalent cusps for the action,
then
\begin{align}\label{eqn:valence}
	\sum_{\zeta \in \mathcal{D}} ORD_\Gamma(f;\zeta) = 0
.
\end{align}

\section{Generating Function for the $M_d$-rank of overpartitions}

We recall $d$ is a positive integer.
We denote the generating function of the $M_d$-rank of overpartitions by
\begin{align*}
	\mathcal{O}_d(z;q)
	&=
		\sum_{n=0}^\infty\sum_{m=-\infty}^\infty M_d(m,n) z^m q^n
	\\
	&=
		\frac{\aqprod{-q}{q}{\infty}}{\aqprod{q}{q}{\infty}}	
		\left(
			1
			+
			2\sum_{n=1}^\infty \frac{(1-z)(1-z^{-1})(-1)^n q^{n^2+dn}}
				{(1-zq^{dn})(1-z^{-1}q^{dn})}
		\right)
	\\
	&=
		\frac{\aqprod{-q}{q}{\infty}}{\aqprod{q}{q}{\infty}}	
		\sum_{n=-\infty}^\infty \frac{(1-z)(1-z^{-1})(-1)^n q^{n^2+dn}}
			{(1-zq^{dn})(1-z^{-1}q^{dn})}
	.
\end{align*}

From the fact that
\begin{align*}
	\frac{(1-z)(1-z^{-1})q^{dn}}{(1-zq^{dn})(1-z^{-1}q^{dn}) }
	&=
		1
		-
		\frac{(1-q^{dn})}{(1+q^{dn})}
		\sum_{m=0}^\infty z^{m}q^{dmn}
		-
		\frac{(1-q^{dn})}{(1+q^{dn})}
		\sum_{m=1}^\infty z^{-m}q^{dmn}		
	,
\end{align*}
we quickly deduce that
\begin{align*}
	\sum_{n=1}^\infty M_d(m,n) q^n
	&=
		2\frac{\aqprod{-q}{q}{\infty}}{\aqprod{q}{q}{\infty}}
		\sum_{n=1}^\infty
		\frac{(-1)^{n+1}(1-q^{dn})q^{n^2+d|m|n} }{(1+q^{dn})}
	,
\end{align*}
for all $m$. Similarly, if we define
\begin{align*}
M_d(r,m,n) &= \sum_{k\equiv r\pmod{m}} M_d(k,n)
\end{align*}
then for $m\ge0$ we find that
\begin{align*}	
	\sum_{n=1}^\infty M_d(r,m,n) q^n
	&=
		2\frac{\aqprod{-q}{q}{\infty}}{\aqprod{q}{q}{\infty}}
		\sum_{n=1}^\infty
		\frac{(-1)^{n+1}(1-q^{dn})q^{n^2}(q^{drn}+q^{d(m-r)n})  } 
		{(1+q^{dn})(1-q^{dmn})}
	.
\end{align*}

\begin{proposition}\label{PropM2RankAsLambertSeries}
We have that
\begin{align*}
	\mathcal{O}_d(z;q)
	&=
		\frac{(1-z)\aqprod{-q}{q}{\infty}}{(1+z)\aqprod{q}{q}{\infty}}
		\sum_{n=-\infty}^\infty
		\frac{(-1)^n q^{n^2} (1+zq^{dn})}{(1-zq^{dn})}
	.
\end{align*}
\end{proposition}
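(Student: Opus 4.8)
The goal is to rewrite the symmetric Lambert-type series
\[
\sum_{n=-\infty}^\infty \frac{(1-z)(1-z^{-1})(-1)^n q^{n^2+dn}}{(1-zq^{dn})(1-z^{-1}q^{dn})}
\]
appearing in the definition of $\mathcal{O}_d(z;q)$ as the asymmetric series
\[
\frac{(1-z)}{(1+z)}\sum_{n=-\infty}^\infty \frac{(-1)^n q^{n^2}(1+zq^{dn})}{(1-zq^{dn})}.
\]
Since the prefactor $\frac{\aqprod{-q}{q}{\infty}}{\aqprod{q}{q}{\infty}}$ is common to both sides, the entire proposition reduces to a single identity between the two bilateral sums. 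The natural strategy is to manipulate the summand by partial fractions, exploiting the symmetry $n\mapsto -n$, rather than attempting any modular-forms machinery; this is a purely $q$-series manipulation.

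\textbf{Key steps.}
First I would clear the factor $(1-z^{-1})$ in the summand on the left, writing $(1-z^{-1})=-z^{-1}(1-z)$ and $(1-z^{-1}q^{dn})=-z^{-1}q^{dn}(1-zq^{-dn})\cdot z q^{-dn}$—more cleanly, multiply numerator and denominator of the factor $\tfrac{1-z^{-1}}{1-z^{-1}q^{dn}}$ by $z$ to obtain $\tfrac{z-1}{z-q^{dn}}$. This recasts the left-hand summand as
\[
\frac{(1-z)(z-1)(-1)^n q^{n^2+dn}}{(1-zq^{dn})(z-q^{dn})}
=
\frac{-(1-z)^2(-1)^n q^{n^2+dn}}{(1-zq^{dn})(z-q^{dn})}.
\]
Second, I would decompose the rational factor $\frac{q^{dn}}{(1-zq^{dn})(z-q^{dn})}$ in partial fractions with respect to the two simple poles, which will produce a difference of terms of the shape $\frac{1}{1-zq^{dn}}$ and $\frac{1}{z-q^{dn}}$ with coefficients that are rational in $z$. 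Third—and this is where the asymmetry emerges—I would apply the substitution $n\mapsto -n$ to the term involving $\frac{1}{z-q^{dn}}$, using that the prefactor $(-1)^n q^{n^2}$ is invariant under $n\mapsto-n$ while $\frac{1}{z-q^{-dn}}$ transforms into a multiple of $\frac{1}{1-zq^{dn}}$ after clearing. Recombining the two halves of the bilateral sum should collapse everything onto a single family of denominators $1-zq^{dn}$ and produce the claimed numerator $1+zq^{dn}$, with the overall constant $\frac{1-z}{1+z}$ appearing from the $z$-dependent partial-fraction coefficients.

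\textbf{The main obstacle.}
The delicate point is bookkeeping the $z$-dependent constants through the partial-fraction split and the $n\mapsto-n$ reindexing so that the factor $\frac{1-z}{1+z}$ emerges exactly, with no spurious $n=0$ boundary term surviving. In particular the $n=0$ term must be checked separately: on the right it contributes $\frac{1-z}{1+z}\cdot\frac{1+z}{1-z}=1$, matching the summand $\frac{(1-z)(1-z^{-1})}{(1-z)(1-z^{-1})}=1$ on the left, so the symmetry argument must be arranged to respect this term rather than double-count or drop it. I would therefore verify convergence (so that reindexing a bilateral sum is legitimate) and confirm the $n=0$ contribution by direct substitution before pairing $n$ with $-n$ in the remaining terms. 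Once the reindexing is justified, the identity should follow by straightforward algebra.
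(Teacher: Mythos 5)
Your proposal is correct and takes essentially the same route as the paper: the paper's proof likewise rests on the partial-fraction identity $\frac{-(1-z)(1+z)}{z(1-zq^{dn})(1-z^{-1}q^{dn})} = \frac{z}{1-zq^{dn}} - \frac{z^{-1}}{1-z^{-1}q^{dn}}$ (your decomposition is the same one, since $\frac{1}{z-q^{dn}} = \frac{z^{-1}}{1-z^{-1}q^{dn}}$), followed by the substitution $n\mapsto -n$ in the sum over the second family of denominators to collapse everything onto $1-zq^{dn}$. The bookkeeping you flag works out exactly as you predict, with the factor $\frac{1-z}{1+z}$ emerging from the partial-fraction coefficients and no boundary-term issues in the bilateral reindexing.
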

\begin{proof}
To begin we notice that
\begin{align*}
	\frac{-(1-z)(1+z)}{z(1-zq^{dn})(1-z^{-1}q^{dn})}
	&=
		\frac{z}{1-zq^{dn}}
		-
		\frac{z^{-1}}{1-z^{-1}q^{dn}}
	,
\end{align*}
and so
\begin{align*}
	\mathcal{O}_d(z;q)
	&=
		\frac{z\aqprod{-q}{q}{\infty}}{(1+z)\aqprod{q}{q}{\infty}}	
		\sum_{n=-\infty}^\infty
		(1-z^{-1})(-1)^{n+1} q^{n^2+dn}
		\left(
			\frac{z}{1-zq^{dn}}
			-
			\frac{z^{-1}}{1-z^{-1}q^{dn}}
		\right)
	\\
	&=
		\frac{(1-z)\aqprod{-q}{q}{\infty}}{(1+z)\aqprod{q}{q}{\infty}}	
		\sum_{n=-\infty}^\infty
		\frac{(-1)^{n} q^{n^2+dn}z}{1-zq^{dn}}
		-
		\frac{(1-z)\aqprod{-q}{q}{\infty}}{(1+z)\aqprod{q}{q}{\infty}}	
		\sum_{n=-\infty}^\infty
		\frac{(-1)^{n} q^{n^2+dn}z^{-1}}{1-z^{-1}q^{dn}}
.
\end{align*}
In the second series we let $n\mapsto -n$ to find that
\begin{align*}
	\mathcal{O}_d(z;q)
	&=
		\frac{(1-z)\aqprod{-q}{q}{\infty}}{(1+z)\aqprod{q}{q}{\infty}}	
		\sum_{n=-\infty}^\infty
		\frac{(-1)^{n} q^{n^2+dn}z}{1-zq^{dn}}
		-
		\frac{(1-z)\aqprod{-q}{q}{\infty}}{(1+z)\aqprod{q}{q}{\infty}}	
		\sum_{n=-\infty}^\infty
		\frac{(-1)^{n} q^{n^2-dn}z^{-1}}{1-z^{-1}q^{-dn}}
	\\
	&=
		\frac{(1-z)\aqprod{-q}{q}{\infty}}{(1+z)\aqprod{q}{q}{\infty}}	
		\sum_{n=-\infty}^\infty
		\frac{(-1)^{n} q^{n^2}(1+zq^{dn}) }{1-zq^{dn}}
.
\end{align*}
\end{proof}

\begin{corollary}\label{CorollaryMdRankInitialForm}
Suppose $d$ is odd, then we have
\begin{align*}
	\mathcal{O}_d(z;q)
	&=
		\frac{(1-z)\aqprod{-q}{q}{\infty}}{(1+z)\aqprod{q}{q}{\infty}}	
		\sum_{k=0}^{d-1}
		(-1)^k q^{k^2}
		\sum_{n=-\infty}^\infty
		\frac{(-1)^{n} q^{d^2n^2+2dkn}(1+zq^{d^2n+dk})^2 }{(1-z^2q^{2d^2n+2dk})}
.
\end{align*}
Suppose $d$ is even, then we have
\begin{align*}
	\mathcal{O}_d(z;q)
	&=
		\frac{(1-z)\aqprod{-q}{q}{\infty}}{(1+z)\aqprod{q}{q}{\infty}}
		\sum_{k=0}^{\frac{d}{2}-1}
		(-1)^{k} q^{k^2}
		\sum_{n=-\infty}^\infty
		\frac{ (-1)^{\frac{d}{2}n} q^{\frac{d^2}{4}n^2 + dkn} (1+zq^{\frac{d^2}{2}n+dk})  }
			{(1-zq^{\frac{d^2}{2}n+dk})}
.
\end{align*}
\end{corollary}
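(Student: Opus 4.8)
The plan is to start from the single Lambert series in Proposition \ref{PropM2RankAsLambertSeries} and split its summation index into arithmetic progressions whose common difference is chosen so that the completion of the square $N^2$ produces exactly the quadratic exponents appearing in the claimed formulas.

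First, for $d$ odd, I would write the summation index $N$ as $N=dn+k$ with $k$ ranging over $0,1,\dots,d-1$ and $n$ over all of $\mathbb{Z}$; this is a bijection of $\{0,\dots,d-1\}\times\mathbb{Z}$ onto $\mathbb{Z}$. Substituting gives $N^2=d^2n^2+2dkn+k^2$ and $dN=d^2n+dk$, so that $q^{N^2}=q^{k^2}q^{d^2n^2+2dkn}$ and $q^{dN}=q^{d^2n+dk}$. The sign $(-1)^N=(-1)^{dn+k}$ simplifies using that $d$ is odd, whence $(-1)^{dn}=(-1)^n$ and so $(-1)^N=(-1)^k(-1)^n$. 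Pulling the $k$-dependent factor $(-1)^kq^{k^2}$ outside the inner sum leaves the expression $\frac{(-1)^n q^{d^2n^2+2dkn}(1+zq^{d^2n+dk})}{1-zq^{d^2n+dk}}$. Finally I would convert this to the stated form using the elementary identity $\frac{1+w}{1-w}=\frac{(1+w)^2}{1-w^2}$ with $w=zq^{d^2n+dk}$, which produces the factor $\frac{(1+zq^{d^2n+dk})^2}{1-z^2q^{2d^2n+2dk}}$ and completes the odd case.

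For $d$ even I would instead write $N=\frac{d}{2}n+k$ with $k$ ranging over $0,1,\dots,\frac{d}{2}-1$ and $n$ over $\mathbb{Z}$, again a bijection onto $\mathbb{Z}$. Here $N^2=\frac{d^2}{4}n^2+dkn+k^2$ and $dN=\frac{d^2}{2}n+dk$, matching the exponents in the claimed formula, while $(-1)^N=(-1)^{\frac{d}{2}n}(-1)^k$. No analogue of the $\frac{1+w}{1-w}$ rewrite is needed in this case, and the stated identity follows after extracting $(-1)^kq^{k^2}$ from the inner sum.

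I expect no serious obstacle here: the argument is a reindexing of a series that is absolutely convergent for $|q|<1$ and $z$ avoiding the poles $zq^{dn}=1$, so rearranging it into finitely many subseries is justified. The only points requiring genuine care are the parity bookkeeping for $(-1)^N$, which is precisely what forces the split into the two cases according to the parity of $d$, and the choice of modulus, namely $d$ when $d$ is odd versus $\frac{d}{2}$ when $d$ is even, which is dictated by the requirement that the resulting quadratic exponent ($d^2n^2+2dkn$, respectively $\frac{d^2}{4}n^2+dkn$) arise from the completion of $N^2$.
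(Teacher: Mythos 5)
Your proposal is correct and is essentially identical to the paper's proof: the paper likewise substitutes $n\mapsto dn+k$ (for $d$ odd) and $n\mapsto\frac{d}{2}n+k$ (for $d$ even) into Proposition \ref{PropM2RankAsLambertSeries} and simplifies, with your identity $\frac{1+w}{1-w}=\frac{(1+w)^2}{1-w^2}$ being exactly the unstated ``simplification'' in the odd case. Your added remarks on sign bookkeeping and convergence are fine but not points of divergence from the paper's argument.
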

\begin{proof}
When $d$ is odd, we let $n\mapsto dn+k$ 
for $k=0,1,\dotsc, d-1$ in Proposition \ref{PropM2RankAsLambertSeries}
and simplify to obtain the result. When $d$ is even, we instead let 
$n\mapsto \frac{d}{2}n+k$ for $k=0,1,\dotsc,\frac{d}{2}-1$.
\end{proof}

The purpose of writing $\mathcal{O}_d(z;q)$ in this form is that the generalized
Lambert series can be easily rewritten in terms of Zwegers' function
$\mu(u,v;\tau)$ from Chapter 1 of \cite{Zwegers}. One could also use another
rearrangement of the series and use the functions from Chapter 3 of \cite{Zwegers}
or the functions $A_\ell(u,v;\tau)$ of \cite{Zwegers2}.
To allow for uniform proofs, we only use $\mu(u,v;\tau)$.
The exact form of $\mathcal{O}_d(z;q)$ in terms of $\mu(u,z;\tau)$ will
depend on $\gcd(d,4)$.

We let $u(z,d,k)$ and $v(d,k)$ be given
\begin{align}\label{eqn:uvdefs}
	u(z,d,k)
	&=
	\left\{\begin{array}{cl}
		\frac{\log(z)}{2\pi i} + dk\tau
		& \mbox{ if } \gcd(d,2) = 2,
		\\[.5em]
		\frac{\log(z)}{\pi i} + 2dk\tau
		& \mbox{ if } \gcd(d,2) = 1,
	\end{array}
	\right.
	\\
	v(d,k)
	&=
	\left\{\begin{array}{cl}
		\frac{1}{2} + \left( \frac{d^2}{4}+dk \right)\tau
		& \mbox{ if } \gcd(d,4) = 4,
		\\[.5em]
		1 + \left( \frac{d^2}{4}+dk \right)\tau
		& \mbox{ if } \gcd(d,4) = 2,
		\\[.5em]
		\left( d^2 + 2dk \right)\tau
		& \mbox{ if } \gcd(d,4) = 1,
	\end{array}
	\right.	
\end{align}
and so
\begin{align*}
	\exp(2\pi iu(z,d,k))
	&=
	\left\{\begin{array}{cl}
		zq^{dk}
		& \mbox{ if } \gcd(d,2) = 2,
		\\[.5em]
		z^2 q^{2dk}
		& \mbox{ if } \gcd(d,2) = 1,
	\end{array}
	\right.
	\\
	\exp(2\pi iv(d,k))
	&=
	\left\{\begin{array}{cl}
		-q^{\frac{d^2}{4}+dk }
		& \mbox{ if } \gcd(d,4) = 4,
		\\[.5em]
		q^{\frac{d^2}{4}+dk }
		& \mbox{ if } \gcd(d,4) = 2,
		\\[.5em]
		q^{d^2+2dk }
		& \mbox{ if } \gcd(d,4) = 1.
	\end{array}
	\right.	
\end{align*}

\begin{proposition}\label{PropMdRankToMu}
Suppose $d$ is odd, then
\begin{align*}
	\mathcal{O}_d(z;q)
	&=
		\frac{(1-z)\aqprod{-q}{q}{\infty}}{(1+z)\aqprod{q}{q}{\infty}}
		\sum_{k=0}^{d-1}
			(-1)^k q^{k^2 + dk} 
			\vartheta\left( v(d,k); 2d^2\tau  \right)
			\left(
				2z\mu\left( u(z,d,k), v(d,k); 2d^2\tau    \right)			
				+				
				iq^{\frac{d^2}{4}}
			\right)
		\\&\quad
		+
		\frac{2z(1-z)\aqprod{-q}{q}{\infty}\aqprod{q^{2d^2}}{q^{2d^2}}{\infty}^2}
			{(1+z)\aqprod{q}{q}{\infty}\jacprod{z^2}{q^{2d^2}}}
		\\&\quad		
		+
		\frac{2z(1-z)\aqprod{-q}{q}{\infty}\aqprod{q^{2d^2}}{q^{2d^2}}{\infty}^2\jacprod{z^2q^{2d^2}}{q^{2d^2}}}
			{(1+z)\aqprod{q}{q}{\infty}}
		\sum_{k=1}^{\frac{d-1}{2}}
		\frac{(-1)^k q^{k^2 + dk} \jacprod{q^{4dk}}{q^{2d^2}}  }
			{\jacprod{z^2q^{2dk}, z^{-2}q^{2dk}, q^{2dk}}{q^{2d^2}}}
.
\end{align*}
Suppose $d$ is even, then
\begin{align*}
	&\mathcal{O}_d(z;q)
	\\
	&=
		\frac{(1-z)\aqprod{-q}{q}{\infty}}{(1+z)\aqprod{q}{q}{\infty}}
		\sum_{k=0}^{\frac{d}{2}-1}
			(-1)^k q^{k^2 + \frac{dk}{2}} 
			\vartheta\left( v(d,k); \tfrac{d^2\tau}{2}  \right)
			\left(
				2z^{\frac{1}{2}}\mu\left( u(z,d,k), v(d,k); \tfrac{d^2\tau}{2} \right)
				-
				(-1)^{\frac{d}{2}}i^{1-\frac{4}{\gcd(d,4)}}q^{\frac{d^2}{16}} 
			\right)	
.
\end{align*}
\end{proposition}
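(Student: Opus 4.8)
The plan is to recognize each inner generalized Lambert series in Corollary \ref{CorollaryMdRankInitialForm} as a shift of Zwegers' function $\mu$ through its series definition \eqref{def_mu}, and then to repair the mismatch between the two series using the elliptic shift formulas \eqref{EqMuTranformation} and \eqref{EqVarThetaTranformation}. Writing $\tau'=\frac{d^2\tau}{2}$ for even $d$ and $\tau'=2d^2\tau$ for odd $d$, the parameters $u(z,d,k)$ and $v(d,k)$ of \eqref{eqn:uvdefs} are chosen so that $\exp(2\pi iu)$ matches the base in the denominator and $\exp(2\pi in\tau')$ matches the period of that denominator. Comparing with $\vartheta(v;\tau')\mu(u,v;\tau') = \exp(\pi iu)\sum_n (-1)^n\exp(\pi in(n+1)\tau'+2\pi inv)/(1-\exp(2\pi in\tau'+2\pi iu))$ then shows the denominators agree exactly, while the Gaussian numerators agree only after replacing $v(d,k)$ by $v(d,k)-\tau'$. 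Thus the relevant Lambert series equals $\exp(-\pi iu)\vartheta(v(d,k)-\tau';\tau')\mu(u,v(d,k)-\tau';\tau')$, and a single further application of \eqref{EqMuTranformation}--\eqref{EqVarThetaTranformation} rewrites this in terms of $\mu(u,v(d,k);\tau')$ together with an explicit boundary term coming from the inhomogeneous summand $-i\exp(\pi i(u-v)-\frac{\pi i\tau'}{4})$ in \eqref{EqMuTranformation}.

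For even $d$ this is the entire computation. First I would use $\frac{1+w}{1-w}=-1+\frac{2}{1-w}$ with $w=zq^{\frac{d^2}{2}n+dk}$ to split the inner sum of Corollary \ref{CorollaryMdRankInitialForm} into a unary theta function $\Theta_k=\sum_n(-1)^{\frac{d}{2}n}q^{\frac{d^2}{4}n^2+dkn}$ and twice a Lambert series. The Lambert series is handled as above to yield $2z^{\frac12}q^{\frac{dk}{2}}\vartheta(v(d,k);\tau')\mu(u,v(d,k);\tau')$ plus a boundary multiple of $\vartheta(v(d,k);\tau')$; a short index shift $n\mapsto n+1$ in the series \eqref{def_varTheta} for $\vartheta(v(d,k);\tau')$ identifies $\Theta_k$ with a power of $q$ times $\vartheta(v(d,k);\tau')$, so that $\Theta_k$ and the boundary term combine into the single constant $-(-1)^{\frac{d}{2}}i^{1-\frac{4}{\gcd(d,4)}}q^{\frac{d^2}{16}}$ recorded in the statement. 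The two sign cases $\gcd(d,4)=4$ and $\gcd(d,4)=2$ are tracked through $\exp(2\pi iv(d,k))=\pm q^{\frac{d^2}{4}+dk}$ and give the stated $i^{1-\frac{4}{\gcd(d,4)}}$.

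For odd $d$ the denominator naturally has period $q^{d^2}$, only half of the modular period $q^{2d^2}$, so the squared form $\frac{(1+zq^{d^2n+dk})^2}{1-z^2q^{2d^2n+2dk}}$ must be kept rather than reduced. Expanding the numerator as $(1+z^2q^{2d^2n+2dk})+2zq^{d^2n+dk}$ separates the inner sum into a symmetric part and a cross part. The symmetric part, whose denominator and Gaussian now share the period $q^{2d^2}$, is treated exactly as in the even case and produces the mock piece $2z\mu(u(z,d,k),v(d,k);2d^2\tau)+iq^{\frac{d^2}{4}}$ after combining the boundary term with the accompanying theta function. The cross part is $V_k=2zq^{dk}\sum_n \frac{(-1)^nq^{d^2n(n+1)+2dkn}}{1-z^2q^{2d^2n+2dk}}$, in which the numerator carries the half-period factor $q^{d^2n}$; this forces $V_k$ to be genuinely modular, and I would evaluate it in closed form as a quotient of Jacobi theta products by expanding it in partial fractions over its poles $z^2=q^{-2d^2n-2dk}$ (equivalently, via the theta-quotient summation of a balanced bilateral Appell series). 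Folding $k\mapsto d-k$ then collapses $\sum_{k=0}^{d-1}$ of these closed forms into the lone term with $\jacprod{z^2}{q^{2d^2}}$ in the denominator (coming from $k=0$) together with the sum $\sum_{k=1}^{\frac{d-1}{2}}$ appearing in the statement.

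The main obstacle is precisely the cross part $V_k$ in the odd case: both producing the correct theta-quotient evaluation and then checking that the $k\mapsto d-k$ symmetrization reproduces exactly the listed products, with the right powers of $q$ and the single isolated $\jacprod{z^2}{q^{2d^2}}$ term, is where essentially all of the bookkeeping lives. By contrast, the matching to $\mu$ and its repair by \eqref{EqMuTranformation} are uniform and routine, and the numerous multiplicative constants (the signs, powers of $i$, and powers of $q$) are pinned down simply by recording the half-period and branch choices made in $u(z,d,k)$ and $v(d,k)$.
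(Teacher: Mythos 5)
Your handling of the even-$d$ case and of the symmetric part for odd $d$ is sound and is essentially the paper's own argument: one recognizes the Lambert series as $\vartheta\mu$ evaluated at $v(d,k)-\tau'$ via \eqref{def_mu}, applies \eqref{EqMuTranformation} and \eqref{EqVarThetaTranformation} once, and identifies the leftover unary theta $\Theta_k$ with a multiple of $\vartheta(v(d,k);\tau')$; whether one splits $\frac{1+w}{1-w}$ as $-1+\frac{2}{1-w}$ (your way) or as $\frac{1}{1-w}+\frac{w}{1-w}$ (the paper's way) is immaterial, and your constants come out correctly.

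The gap is in the cross part $V_k$ for odd $d$. Your premise that the half-period factor $q^{d^2 n}$ ``forces $V_k$ to be genuinely modular,'' so that each $V_k$ separately admits a closed form as a quotient of theta products which you afterwards fold via $k\mapsto d-k$, is false whenever $d\nmid k$. Write $Q=q^{2d^2}$ and $x=q^{2dk}=Q^{k/d}$, so that the $k$-th cross sum is $F_x(z^2x)$ where
\begin{align*}
F_x(w)&=\sum_{n=-\infty}^{\infty}\frac{(-1)^n Q^{\frac{n(n+1)}{2}}x^n}{1-wQ^n}
.
\end{align*}
Shifting the summation index gives the functional equation
\begin{align*}
F_x(wQ)&=-\frac{w}{x}\,F_x(w)-\frac{1}{x}\,\jacprod{x}{Q}\aqprod{Q}{Q}{\infty}
,
\end{align*}
whereas every quotient of Jacobi theta products in $w$ satisfies the \emph{homogeneous} equation $T(wQ)=Cw^{j}T(w)$ with no additive term. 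The remainder $\jacprod{x}{Q}\aqprod{Q}{Q}{\infty}$ vanishes if and only if $x$ is an integral power of $Q$, i.e.\ if and only if $d\mid k$. So for $0<k<d$ the function $V_k$ is only quasi-elliptic in $z^2$ — it is a genuine Appell--Lerch (mock) sum, with completion exceeding it by a nonzero multiple of $R(u-v;\tau')$ where $e^{2\pi i(u-v)}=z^2$ — and no theta-quotient evaluation of it exists; your pole-matching/partial-fraction argument breaks at exactly this point, since pole data determines a function only within the class of elliptic functions. What is true, and what the paper uses (Chan's Theorem 2.1, in the forms \eqref{EqChanIdent} and \eqref{eqn_usefulprod}), is that all the $V_k$ share the same value of $u-v$, hence proportional obstruction terms, so the \emph{paired} combination $(-1)^kq^{k^2}(\cdot)_k+(-1)^{d-k}q^{(d-k)^2}(\cdot)_{d-k}$ does satisfy the homogeneous equation and equals the theta quotient appearing in the statement (the lone $k=0$ term being the crank-type evaluation with $\jacprod{z^2}{q^{2d^2}}$ in the denominator). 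The folding $k\mapsto d-k$ therefore cannot be deferred to the end as bookkeeping; it must be built into the evaluation itself, because the individual closed forms you propose to fold do not exist. (Only for $d=1$, where no such $k$ occurs, does your argument go through as written.)
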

\begin{proof}
We begin with the case when $d$ is odd.
By Corollary \ref{CorollaryMdRankInitialForm} we have that
\begin{align*}
	\mathcal{O}_d(z;q)
	&=
		\frac{(1-z)\aqprod{-q}{q}{\infty}}{(1+z)\aqprod{q}{q}{\infty}}
		\sum_{k=0}^{d-1} (-1)^k q^{k^2}
		\sum_{n=-\infty}^{\infty}
		\frac{(-1)^n q^{d^2n^2 + 2dkn}}{(1-z^2q^{2d^2n+2dk})}	
		\\&\quad
		+
		\frac{2z(1-z)\aqprod{-q}{q}{\infty}}{(1+z)\aqprod{q}{q}{\infty}}
		\sum_{k=0}^{d-1} (-1)^k q^{k^2}
		\sum_{n=-\infty}^{\infty}
		\frac{(-1)^n q^{d^2n^2 + d^2n + 2dkn + dk}}{(1-z^2q^{2d^2n+2dk})}	
		\\&\quad
		+	
		\frac{z^2(1-z)\aqprod{-q}{q}{\infty}}{(1+z)\aqprod{q}{q}{\infty}}
		\sum_{k=0}^{d-1} (-1)^k q^{k^2}
		\sum_{n=-\infty}^{\infty}
		\frac{(-1)^n q^{d^2n^2 + 2d^2n + 2dkn+2dk}}{(1-z^2q^{2d^2n+2dk})}	
	.
\end{align*}

It turns out the middle sum can be expressed entirely in terms of theta 
functions. Considering the special case of Theorem 2.1 of \cite{Chan} when $r=0$ and $s=1$, letting $q\mapsto q^{2d^2}$ and $b=z^2$ gives that
\begin{align*}
	\sum_{n=-\infty}^\infty
	\frac{(-1)^n q^{d^2n^2 + d^2n}}{1-z^2q^{2d^2n}}
	&=
	\frac{\aqprod{q^{2d^2}}{q^{2d^2}}{\infty}^2}{\jacprod{z^2}{q^{2d^2}}}
,
\end{align*}
which accounts for the $k=0$ term of the middle sum.
Next the special case of Theorem 2.1 of \cite{Chan} when $r=1$ and $s=2$ states that
\begin{align*}
	\frac{\jacprod{a}{q}\aqprod{q}{q}{\infty}^2}{\jacprod{b_1,b_2}{q}}
	&=
		\frac{\jacprod{a/b_1}{q}}{\jacprod{b_2/b_1}{q}}
		\sum_{n=-\infty}^\infty
		\frac{(-1)^n q^{\frac{n(n+1)}{2}} (a/b_2)^n  }
			{1 - b_1q^n}
		+
		\frac{\jacprod{a/b_2}{q}}{\jacprod{b_1/b_2}{q}}
		\sum_{n=-\infty}^\infty
		\frac{(-1)^n q^{\frac{n(n+1)}{2}} (a/b_1)^n  }
			{1 - b_2q^n}
.
\end{align*}
Noting that
$\frac{1}{\jacprod{b_1/b_2}{q}} = -\frac{b_2}{b_1\jacprod{b_2/b_1}{q}}$,
we have that
\begin{align}\label{EqChanIdent}
	&\frac{\jacprod{a, b_2/b_1}{q} \aqprod{q}{q}{\infty}^{ 2} }
		{\jacprod{b_1,b_2}{q}}
	\nonumber\\
	&=
		\jacprod{a/b_1}{q}
		\sum_{n=-\infty}^\infty
		\frac{(-1)^n q^{\frac{n(n+1)}{2}} (a/b_2)^n }
			{1-b_1q^n}
		-
		\frac{b_2}{b_1}\jacprod{a/b_2}{q}
		\sum_{n=-\infty}^\infty
		\frac{(-1)^n q^{\frac{n(n+1)}{2}} (a/b_1)^n }
			{1-b_2q^n}
.
\end{align}
In (\ref{EqChanIdent}), for $k\not\equiv 0\pmod{d}$,  
we let
$q\mapsto q^{2d^2}$, $b_1=z^2q^{2dk}$, $b_2=z^2q^{2d(d-k)}$,
$a=z^2q^{2d^2}$, and simplify to find that
\begin{align}
	\label{eqn_usefulprod}
	\frac{ \jacprod{z^2q^{2d^2}, q^{4dk}}{q^{2d^2}} \aqprod{q^{2d^2}}{q^{2d^2}}{\infty}^2}
		{\jacprod{z^2q^{2dk}, z^{-2}q^{2dk}, q^{2dk}}{q^{2d^2}}} 
	&= 
	\sum_{n\in\Z} 
	\frac{(-1)^n q^{d^2n^2+d^2n+2dkn}}{1-z^2 q^{2d^2n + 2dk}} 
	- 
	q^{2d(d-2k)}
	\sum_{n\in\Z} 
	\frac{(-1)^nq^{d^2n^2+d^2n+2d(d-k)n}}{1-z^2 q^{2d^2n + 2d(d-k)}}
	. 
\end{align}
From \eqref{eqn_usefulprod} we see that
\begin{align*}
	&
	(-1)^k q^{k^2} 
	\sum_{n\in\Z} \frac{(-1)^n q^{d^2n^2 + d^2n + 2dkn + dk}}{1-z^2q^{2d^2n+2dk}} 
	+ 
	(-1)^{d-k} q^{(d-k)^2} 
	\sum_{n\in\Z} \frac{(-1)^n q^{d^2n^2 + d^2n + 2d(d-k)n + d(d-k)}}{1-z^2q^{2d^2n+2d(d-k)}} 
	\\
	&= 
		\frac{ (-1)^k q^{k^2 + 2dk} 
			\jacprod{z^2q^{2d^2}, q^{4dk}}{q^{2d^2}} \aqprod{q^{2d^2}}{q^{2d^2}}{\infty}^2}
		{\jacprod{z^2q^{2dk}, z^{-2}q^{2dk}, q^{2dk}}{q^{2d^2}}}.
\end{align*}
Noting that when $1\leq k \leq \frac{d-1}2$, we have 
$\frac{d+1}2 \leq d-k \leq d-1$,
altogether we obtain that
\begin{align*}
	&
	\frac{2z(1-z)\aqprod{-q}{q}{\infty}}{(1+z)\aqprod{q}{q}{\infty}} 
	\sum_{k=0}^{d-1} 
	(-1)^kq^{k^2} 
	\sum_{n\in\Z} \frac{(-1)^nq^{d^2n^2+d^2n+2dkn+dk}}{1-z^2q^{2d^2n+2dk}} 
	\\
	&= 
		\frac{2z(1-z)\aqprod{-q}{q}{\infty}}{(1+z)\aqprod{q}{q}{\infty}} 
		\left[
			\frac{\aqprod{q^{2d^2}}{q^{2d^2}}{\infty}^2}{\jacprod{z^2}{q^{2d^2}}} 
			+ 
			\sum_{k=1}^{\frac{d-1}2} 
			\frac{(-1)^k q^{k^2+dk} \jacprod{z^2q^{2d^2}, q^{4dk}}{q^{2d^2}}\aqprod{q^{2d^2}}
			{q^{2d^2}}{\infty}^2}{\jacprod{z^2q^{2dk}, z^{-2}q^{2dk}, q^{2dk}}{q^{2d^2}}} \right]
	.
\end{align*}

Using (\ref{def_mu}) we have that
\begin{align*}
	\mu\left( u(z,k,d), v(d,k) - 2d^2\tau ; 2d^2\tau   \right)
	&=
		\frac{zq^{dk}}{\vartheta\left( v(d,k) - 2d^2\tau; 2d^2\tau \right)}
		\sum_{n=-\infty}^\infty
		\frac{ (-1)^n q^{d^2n^2 + 2dkn}}
		{1-z^2q^{2d^2n+2dk}}
	,\\
	\mu\left( u(z,k,d), v(d,k); 2d^2\tau   \right)
	&=
		\frac{zq^{dk}}{\vartheta\left( v(d,k); 2d^2\tau \right)}
		\sum_{n=-\infty}^\infty
		\frac{ (-1)^n q^{d^2n^2 + 2d^2n +2dkn}}
		{1-z^2q^{2d^2n+2dk}}
	.
\end{align*}
But by (\ref{EqVarThetaTranformation}) and (\ref{EqMuTranformation})
\begin{align*}
	\vartheta\left( v(d,k)-2d^2\tau ; 2d^2\tau \right)
	&=
		-q^{2dk}\vartheta\left( v(d,k); 2d^2\tau \right)
	,\\
	\mu\left( u(z,d,k), v(d,k)-2d^2\tau; 2d^2\tau \right)	
	&=
		-z^2\mu\left( u(z,d,k), v(d,k); 2d^2\tau  \right)
		-
		izq^{\frac{d^2}{4}}
	.
\end{align*}
We then have that
\begin{align*}
	\sum_{n=-\infty}^\infty
	\frac{(-1)^n q^{d^2n^2 + 2dkn}}
		{1-z^2q^{2d^2n +2dk }}
	&=
		z^{-1}q^{-dk}
		\vartheta\left(v(d,k)-2d^2\tau; 2d^2\tau \right)
		\mu\left( u(z,d,k), v(d,k) -2d^2\tau; 2d^2\tau  \right)
	\\
	&=
		zq^{dk}
		\vartheta\left(v(d,k); 2d^2\tau \right)
		\mu\left( u(z,d,k), v(d,k); 2d^2\tau  \right)
		+
		iq^{\frac{d^2}{4}+dk}\vartheta\left(v(d,k); 2d^2\tau \right)
.
\end{align*}
Lastly,
\begin{align*}
	z^2\sum_{n=-\infty}^\infty
	\frac{(-1)^n q^{d^2n^2 + 2d^2n + 2dkn + 2dk}}
		{1-z^2q^{2d^2n + 2dk}}
	&=
		zq^{dk}
		\vartheta\left( v(d,k); 2d^2\tau  \right)
		\mu\left( u(z,d,k), v(d,k); 2d^2\tau  \right)
	,	
\end{align*}
and so we arrive at
\begin{align*}
	\mathcal{O}_d(z;q)
	&=
		\frac{(1-z)\aqprod{-q}{q}{\infty}}
		{(1+z)\aqprod{q}{q}{\infty}	}
		\sum_{k=0}^{d-1}
		(-1)^k q^{k^2+dk} 
		\vartheta\left( v(d,k) ; 2d^2\tau  \right)
		\left( 
			2z\mu\left( u(z,d,k), v(d,k); 2d^2\tau  \right)
			+
			iq^{\frac{d^2}{4}}  
		\right)	
		\\&\quad	
		+
		\frac{2z(1-z)\aqprod{-q}{q}{\infty}\aqprod{q^{2d^2}}{q^{2d^2}}{\infty}^2}
		{(1+z)\aqprod{q}{q}{\infty}\jacprod{z^2}{q^{2d^2}}	}
		\\&\quad		
		+
		\frac{2z(1-z)\aqprod{-q}{q}{\infty}\aqprod{q^{2d^2}}{q^{2d^2}}{\infty}^2\jacprod{z^2q^{2d^2}}{q^{2d^2}}}
		{(1+z)\aqprod{q}{q}{\infty}}
		\sum_{k=0}^{\frac{d-1}{2}}
		\frac{ (-1)^k q^{k^2+dk} \jacprod{q^{4dk}}{q^{2d^2}} }
		{\jacprod{ z^2q^{2dk}, z^{-2}q^{2dk}, q^{2dk} }{q^{2d^2}}}
	.
\end{align*}
This establishes the case when $d$ is odd.

Now suppose that $d$ is even.
Temporarily we will need the additional notation
that
$
	v^{-}(d,k)
	=
		\frac{2}{\gcd(d,4)} 
		+
		(
			-\tfrac{d^2}{4}
			+dk
		)\tau
	,
$
so $e^{2\pi iv^{-}(d,k)} = (-1)^{1+\frac{d}{2}}q^{-\frac{d^2}{4}+dk}$.

We note that by \eqref{def_mu},
\begin{align*}
	\mu\left(u(z,d,k), v(d,k); \tau \right)
	&=
		\frac{z^{\frac{1}{2}}q^{\frac{dk}{2}}}
			{\vartheta(v(d,k); \frac{d^2\tau}{2} )}	
		\sum_{n=-\infty}^\infty
		\frac{ (-1)^{\frac{d}{2}n} q^{\frac{d^2}{4}n^2 + \frac{d^2}{2}n + dkn } }
			{1-zq^{\frac{d^2}{2}n+dk} }
	,\\
	\mu\left(u(z,d,k), v^{-}(d,k); \tau \right)
	&=	
		\frac{z^{\frac{1}{2}}q^{\frac{dk}{2}}}
			{\vartheta(v^{-}(d,k); \frac{d^2\tau}{2} )}	
		\sum_{n=-\infty}^\infty
		\frac{ (-1)^{\frac{d}{2}n} q^{\frac{d^2}{4}n^2 + dkn} }
			{1-zq^{\frac{d^2}{2}n+dk} }
.
\end{align*}
Thus
\begin{align*}
	\mathcal{O}(z;q)
	&=
		\frac{(1-z)\aqprod{-q}{q}{\infty}}{(1+z)\aqprod{q}{q}{\infty}}
		\sum_{k=0}^{\frac{d}{2}-1}
			(-1)^k q^{k^2}
		\left(
			z^{-\frac{1}{2}}q^{-\frac{dk}{2}}
			\vartheta(v^{-}_{d,k}, \tfrac{d^2\tau}{2})
			\mu\left( u_{z,d,k}, v^{-}_{d,k}; \tfrac{d^2\tau}{2}  \right)		
			\right.\\&\qquad\left.			
			+
			z^{\frac{1}{2}}q^{\frac{dk}{2}}
			\vartheta(v_{d,k}, \tfrac{d^2\tau}{2})
			\mu\left( u_{z,d,k}, v_{d,k}; \tfrac{d^2\tau}{2}  \right)		
		\right)
.
\end{align*}
However $v(d,k) = v^{-}(d,k)+\frac{d^2\tau}{2}$,
so by (\ref{EqVarThetaTranformation}) and (\ref{EqMuTranformation}) we have that
\begin{align*}
	\vartheta\left( v^-_{d,k}; \tfrac{d^2\tau}{2}  \right)
	&=
		(-1)^{\frac{d}{2}}q^{dk}\vartheta\left( v_{d,k}; \tfrac{d^2\tau}{2}  \right)
	,\\
	\mu\left( u_{z,d,k}, v^-_{d,k}; \tfrac{d^2\tau}{2}   \right)
	&=
		(-1)^{\frac{d}{2}} z \mu\left( u_{z,d,k}, v_{d,k}; \tfrac{d^2\tau}{2}   \right)
		-
		i^{1-\frac{4}{gcd(d,4)}} z^{\frac{1}{2}} q^{\frac{d^2}{16}}
	.
\end{align*}
This gives us that
\begin{align*}
	z^{-\frac{1}{2}}q^{-\frac{dk}{2}}
	\vartheta(v^{-}(d,k), \tfrac{d^2\tau}{2})
	\mu\left( u(z,d,k), v^{-}(d,k); \tfrac{d^2\tau}{2}  \right)		
	&=
		z^{\frac{1}{2}}q^{\frac{dk}{2}}
		\vartheta(v(d,k), \tfrac{d^2\tau}{2})
		\mu\left( u(z,d,k), v(d,k); \tfrac{d^2\tau}{2}  \right)		
		\\&\quad
		-
		(-1)^{\frac{d}{2}}i^{1-\frac{4}{\gcd(d,4)}} q^{\frac{d^2}{16}+\frac{dk}{2}}
		\vartheta(v(d,k), \tfrac{d^2\tau}{2})
.
\end{align*}
Thus for even $d$ we have that
\begin{align*}
	&\mathcal{O}_d(z;q)
	\\
	&=
		\frac{(1-z)\aqprod{-q}{q}{\infty}}{(1+z)\aqprod{q}{q}{\infty}}
		\sum_{k=0}^{\frac{d}{2}-1}
			(-1)^k q^{k^2 + \frac{dk}{2}} 
			\vartheta\left( v(d,k); \tfrac{d^2\tau}{2}  \right)
			\left(
				2z^{\frac{1}{2}}\mu\left( u(z,d,k), v(d,k); \tfrac{d^2\tau}{2} \right)
				-
				(-1)^{\frac{d}{2}}i^{1-\frac{4}{\gcd(d,4)}}q^{\frac{d^2}{16}} 
			\right)	
.
\end{align*}
\end{proof}

\subsection{Rewriting the generating function in terms of harmonic Maass forms}

In order to rewrite the functions occurring in Proposition \ref{PropMdRankToMu}, we make the following definition.  Let 
\begin{equation}\label{def:Mtilde}
\widetilde{M}_{d,k}(a,b,c;\tau) := 
\begin{cases}
2\z{c}{a} q^{-\frac{(2b-cd^2)^2}{8c^2d^4}} \tmu\left(\frac{2a}{c} + \left( \frac{b}{cd^2}+\frac{k}{d} \right)\tau, \left(\frac{1}{2} + \frac{k}{d} \right)\tau; \tau \right) & \text{ if } d \text{ odd}, \\
 2\zeta_{2c}^a q^{-\frac{(4b-cd^2)^2}{8c^2d^4}} \tmu\left(\frac{a}c + (\frac{2b}{cd^2} + \frac{2k}{d})\tau, 1 + (\frac12 + \frac{2k}d)\tau; \tau \right)      & \text{ if } d \equiv 2 \pmod 4, \\
  2\zeta_{2c}^a q^{-\frac{(4b-cd^2)^2}{8c^2d^4}} \tmu\left(\frac{a}c + (\frac{2b}{cd^2} + \frac{2k}{d})\tau, \frac12 + (\frac12 + \frac{2k}d)\tau; \tau \right)     & \text{ if } d\equiv 0 \pmod 4, 
\end{cases}
\end{equation} 
where in each case $a$, $b$, $c$, and $k$ are integers such that the 
corresponding $\tmu(u,v;\tau)$ satisfies $u,v\not\in \Z + \Z\tau$.  
From here to the end of the article, any mention of 
$\widetilde{M}_{d,k}(a,b,c;\tau)$ implicitly makes this assumption
about $a$, $b$, $c$ and $k$.
We note 
that
\begin{equation}\label{eqn:evaluatedMtilde}
\begin{cases}
\widetilde{M}_{d,k}(a,b,c;2d^2\tau) 
= 
	2\z{c}{a}q^{\frac{b}{c}} q^{ -\frac{d^2}{4}- \frac{b^2}{c^2d^2} } 
	\tmu\left(  u( \z{c}{a}q^{\frac{b}{c}},d,k ), v(d,k); 2d^2\tau \right)  
	& \text{ if } d \text{ odd}
, \\
\widetilde{M}_{d,k}\left(a,b,c;\frac{d^2\tau}{2}\right) 
= 
	2\zeta_{2c}^a q^{\frac{b}{2c}-\frac{d^2}{16} - \frac{b^2}{c^2d^2}} 
	\tmu\left( u(\zeta_c^a q^\frac{b}c, d, k), v(d,k); \frac{d^2\tau}{2} \right)  
	& \text{ if } d \text{ even},
\end{cases}
\end{equation}
and that the corresponding $\mu$ functions are of the form appearing in the 
expansion of $\mathcal{O}_d(\zeta_c^a q^\frac{b}c; q)$ given in 
Propositions \ref{PropMdRankToMu}. This enables us to rewrite 
$\mathcal{O}_d(\zeta_c^a q^\frac{b}c; q)$ in terms of modular objects.

Additionally, when $d$ is odd, we define $P_{d,k}(a,b,c;\tau)$ by
\begin{align}\label{def:Pdk}
P_{d,k}(a,b,c;\tau)
&=
\begin{cases}\displaystyle	
	\frac{2\exp\left(\tfrac{2\pi iab}{c^2d^2}\right)   
		\eta(2\tau) f_{2d^2,4dk}(\tau) \kModular{-\frac{b}{cd^2}}{-\frac{2a}{c}}{2d^2\tau} }
	{\eta(\tau)^2 f_{2d^2,2dk}(\tau) \kModular{\frac{b}{cd^2}+\frac{k}{d}}{\frac{2a}{c}}{2d^2\tau}
			\kModular{-\frac{b}{cd^2}+\frac{k}{d}}{-\frac{2a}{c}}{2d^2\tau}}
	&
	\mbox{if } d\nmid k
	,\\[3ex] \displaystyle
	2\exp\left(\tfrac{2\pi iab}{c^2d^2}\right)
	\frac{ \eta(2\tau) }
	{\eta(\tau)^2 \kModular{\frac{b}{cd^2}}{\frac{2a}{c}}{2d^2\tau}}
	&
	\mbox{if }d\mid k,
\end{cases}
\end{align}
where $a$, $b$, $c$, and $k$ are integers chosen to avoid any division by zero,
From here to the end of the article, any mention of 
$P_{d,k}(a,b,c;\tau)$ implicitly makes this assumption
about $a$, $b$, $c$ and $k$.
One can use the properties of $\mathfrak{k}$ and $f$ to see that the value 
of $P_{d,k}$ depends on $k$ modulo $d$ rather than the exact value of $k$.

\begin{proposition}\label{prop:OtoMockModular}
If $d$ is odd, then
\begin{multline*}
\frac{(1+\z{c}{a}q^{b/c})}{(1-\z{c}{a}q^{b/c})}q^{-\frac{b^2}{c^2d^2}} \mathcal{O}_d(\z{c}{a}q^{\frac{b}{c}}; q)
= i\sum_{k=0}^{d-1} \frac{(-1)^{k+1} \eta(2\tau) f_{2d^2, d^2+2dk}(\tau)}{\eta(\tau)^2} \widetilde{M}_{d,k}(a,b,c;2d^2\tau) \\
+ \sum_{k=0}^{\frac{d-1}{2}} (-1)^{k+1} P_{d,k}(a,b,c;\tau)
- \left( \z{c}{a}q^{-\frac{d^2}{4}-\frac{b^2}{c^2d^2}+\frac{b}{c}} R\left( \tfrac{2a}{c} + (\tfrac{2b}{c}-d^2)\tau ; 2d^2\tau\right)
- q^{-\frac{b^2}{c^2d^2}} \right).
\end{multline*}
If $d\equiv 2 \pmod 4$, then
\begin{align*}
\frac{(1+\z{c}{a}q^{b/c})}{(1-\z{c}{a}q^{b/c})}q^{-\frac{b^2}{c^2d^2}}\mathcal{O}_d(\z{c}{a}q^{\frac{b}{c}}; q) 
&= 
i\sum_{k=0}^{\frac{d}2-1} (-1)^{k} \frac{\eta(2\tau) f_{\frac{d^2}{2}, \frac{d^2}{4}+dk}(\tau)}{\eta(\tau)^2} \cdot \widetilde{M}_{d,k}\left(a,b,c;\frac{d^2\tau}{2}\right) \\&\quad
+ \left(\z{c}{\frac{a}{2}}q^{-\frac{d^2}{16}-\frac{b^2}{c^2d^2}+\frac{b}{2c}}
R\left( \left(\tfrac{a}{c} -1\right) + \left(\tfrac{b}{c}-\tfrac{d^2}{4}\right)\tau ; \tfrac{d^2\tau}{2}\right)
+q^{-\frac{b^2}{c^2d^2}} \right).
\end{align*}
If $d\equiv 0 \pmod 4$, then 
\begin{align*}
\frac{(1+\z{c}{a}q^{b/c})}{(1-\z{c}{a}q^{b/c})}q^{-\frac{b^2}{c^2d^2}}\mathcal{O}_d(\z{c}{a}q^{\frac{b}{c}}; q) 
&= 
\sum_{k=0}^{\frac{d}2-1} (-1)^{k+1} \frac{ \eta(2\tau) \eta(\frac{d^2\tau}{2})^2 f_{d^2, \frac{d^2}{2}+2dk}(\tau)} {\eta(\tau)^2 \eta(d^2\tau) f_{\frac{d^2}{2}, \frac{d^2}{4}+dk}(\tau)} 
\cdot \widetilde{M}_{d,k}\left(a,b,c;\frac{d^2\tau}{2}\right) \\&\quad
+ \left( i \z{c}{\frac{a}{2}}q^{-\frac{d^2}{16}-\frac{b^2}{c^2d^2}+\frac{b}{2c}}
R\left( \left(\tfrac{a}{c} -\tfrac12\right) + \left(\tfrac{b}{c}-\tfrac{d^2}{4}\right)\tau ; \tfrac{d^2\tau}{2}\right)
+q^{-\frac{b^2}{c^2d^2}} \right).
\end{align*}
\end{proposition}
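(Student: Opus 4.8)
The plan is to start from the $\mu$-expansions of $\mathcal{O}_d(z;q)$ in Proposition \ref{PropMdRankToMu}, set $z=\z{c}{a}q^{b/c}$, multiply through by $\frac{(1+\z{c}{a}q^{b/c})}{(1-\z{c}{a}q^{b/c})}q^{-b^2/(c^2d^2)}$, and then replace each $\mu$ by its completion via $\mu(u,v;\tau)=\tmu(u,v;\tau)-\frac{i}{2}R(u-v;\tau)$. Multiplying by $\frac{1+z}{1-z}$ cancels the $\frac{1-z}{1+z}$ in every prefactor, leaving the overall factor $\frac{\aqprod{-q}{q}{\infty}}{\aqprod{q}{q}{\infty}}=\frac{\eta(2\tau)}{\eta(\tau)^2}$ (in the odd case the two purely-theta sums retain an extra factor $z$). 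The $\tmu$-part of the first sum is recognized as $\widetilde{M}_{d,k}$ through the evaluated form \eqref{eqn:evaluatedMtilde}, so the whole argument reduces to identifying three families of coefficients: the prefactor of $\widetilde{M}_{d,k}$, the non-holomorphic $R$-term together with the constant, and the Klein-form terms $P_{d,k}$.

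First I would establish the prefactor identity
\[
q^{k^2+dk+\frac{d^2}{4}}\,\vartheta\!\left(v(d,k);2d^2\tau\right)=-i\,f_{2d^2,d^2+2dk}(\tau)
\]
in the odd case, together with its analogues for $d\equiv2,0\pmod4$, by expanding $\vartheta$ through its triple product \eqref{def_varTheta} and simplifying the resulting $B_2$-type $q$-exponent. Combined with $\frac{\eta(2\tau)}{\eta(\tau)^2}$ this produces exactly the coefficient $i(-1)^{k+1}\frac{\eta(2\tau)f_{2d^2,d^2+2dk}(\tau)}{\eta(\tau)^2}$ of $\widetilde{M}_{d,k}$ appearing in the statement; the $d\equiv0\pmod4$ case additionally requires rewriting a $\vartheta$ with half-integral characteristic as the $\eta$- and $f$-quotient recorded there, which accounts for its more complicated prefactor.

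The key simplification for the non-holomorphic and constant pieces is that $u(z,d,k)-v(d,k)$ is independent of $k$: from \eqref{eqn:uvdefs} it equals $\frac{2a}{c}+(\frac{2b}{c}-d^2)\tau$ for odd $d$ and $(\frac{a}{c}-\frac{2}{\gcd(d,4)})+(\frac{b}{c}-\frac{d^2}{4})\tau$ for even $d$, matching the argument of $R$ in the statement in each case. Hence all $d$ (resp.\ $\frac{d}{2}$) copies of $R(u-v;\cdot)$ coincide, and both the coefficient of this common $R$ and the constant term coming from the $iq^{d^2/4}$ (resp.\ $(-1)^{d/2}i^{1-4/\gcd(d,4)}q^{d^2/16}$) pieces of Proposition \ref{PropMdRankToMu} collapse onto the single theta dissection identity
\[
\frac{\eta(2\tau)}{\eta(\tau)^2}\sum_{k=0}^{d-1}(-1)^k f_{2d^2,d^2+2dk}(\tau)=1,
\]
which I would prove by writing $f_{2d^2,d^2+2dk}(\tau)=\sum_{m}(-1)^mq^{(dm+k)^2}$ (again via the triple product), summing over $0\le k\le d-1$, substituting $n=dm+k$, using that $d$ is odd so $(-1)^{m+k}=(-1)^n$, and recognizing $\sum_n(-1)^nq^{n^2}=f_{2,1}(\tau)$ together with $\frac{\eta(2\tau)}{\eta(\tau)^2}=\frac{1}{f_{2,1}(\tau)}$ from the proof of Proposition \ref{prop:eta_transf}. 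The even cases use the corresponding dissection of the relevant theta constant, with the half-power $z^{1/2}=\zeta_{2c}^aq^{b/(2c)}$ supplying the $\zeta_{2c}^a$ in the $R$-coefficient and the residual $i$- and sign-factors dictated by \eqref{eqn:evaluatedMtilde}.

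Finally, for odd $d$ the two purely-theta sums of Proposition \ref{PropMdRankToMu} must be matched to $\sum_{k=0}^{(d-1)/2}(-1)^{k+1}P_{d,k}(a,b,c;\tau)$ from \eqref{def:Pdk}. For $k=0$ I would expand $\kModular{b/(cd^2)}{2a/c}{2d^2\tau}$ via its definition, observe that its internal root of unity is $\zeta=z^2=\jacprod{z^2}{q^{2d^2}}$'s argument, and check that the $B_2$-exponent contributes $q^{-b^2/(c^2d^2)+b/c}$ and the phase contributes $\z{c}{a}$, so that $q^{-b^2/(c^2d^2)}$ times the $k=0$ term equals $-P_{d,0}$; the $1\le k\le\frac{d-1}{2}$ terms are handled identically, with the generalized eta factors $f_{2d^2,4dk}$ and $f_{2d^2,2dk}$ absorbing $\jacprod{q^{4dk}}{q^{2d^2}}$ and $\jacprod{q^{2dk}}{q^{2d^2}}$. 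I expect the main obstacle to be precisely this bookkeeping of roots of unity and fractional $q$-powers in the Klein-form and generalized-eta matchings (and the half-integral-characteristic $\vartheta$ for $d\equiv0\pmod4$), rather than any conceptual difficulty; the theta dissection identity above is the one genuinely new ingredient, and once it is in hand it forces all the non-holomorphic data into the single $R$-term and constant recorded in the statement.
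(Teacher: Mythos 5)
Your proposal is correct and follows essentially the same route as the paper's own proof: completing the $\mu$'s from Proposition \ref{PropMdRankToMu} to $\tmu$'s, recognizing them as $\widetilde{M}_{d,k}$ via \eqref{eqn:evaluatedMtilde}, converting the $\vartheta$-prefactors to $f_{2d^2,d^2+2dk}$ (and the $\eta$/$f$-quotient in the $d\equiv 0\pmod 4$ case), matching the pure theta terms with $P_{d,k}$, and collapsing the $k$-independent $R$-terms and constants via the dissection identity $\frac{\eta(\tau)^2}{\eta(2\tau)}=\sum_{k=0}^{d-1}(-1)^k f_{2d^2,d^2+2dk}(\tau)$, which is exactly the identity the paper invokes (you merely derive it in the reverse direction).
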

\begin{proof}
This proposition is little more than converting our notation.
First, when $d$ is odd, with $z=\z{c}{a}q^{\frac{b}{c}}$ we have
\begin{align*}
	u(z,d,k)
	&=
		\frac{2a}{c}+\frac{2b\tau}{c}+2dk\tau
	, \\
	v(d,k)
	&=
		d^2\tau + 2dk\tau
	.
\end{align*}

To begin we note that
\begin{align*}
	&
	q^{-\frac{d^2}{4}-\frac{b^2}{c^2d^2}}
	\left(
		2z\mu\left( u(z,d,k), v(d,k) ;2d^2\tau\right)
		+
		iq^{\frac{d^2}{4}}
	\right)
	\\
	&=
		q^{-\frac{d^2}{4}-\frac{b^2}{c^2d^2}}
		\left(
			2z\tmu\left( u(z,d,k), v(d,k) ;2d^2\tau\right)
			-
			izR\left( \tfrac{2a}{c} + \tfrac{2b\tau}{c} - d^2\tau; 2d^2\tau \right)			
			+
			iq^{\frac{d^2}{4}}
		\right)
	\\
	&=
		\widetilde{M}_{d,k}(a,b,c;2d^2\tau)
		-
		i\z{c}{a}q^{-\frac{d^2}{4}-\frac{b^2}{c^2d^2}+\frac{b}{c}}
		R\left( \tfrac{2a}{c} + \tfrac{2b\tau}{c} - d^2\tau; 2d^2\tau \right)			
		+
		iq^{-\frac{b^2}{c^2d^2}}
.
\end{align*}
Next since
\begin{align}\label{eqn:EtaIdentity}
	\frac{\aqprod{-q}{q}{\infty}}{\aqprod{q}{q}{\infty}}
	&=
	\frac{\aqprod{q^2}{q^2}{\infty}}{\aqprod{q}{q}{\infty}^2}
	=
	\frac{\eta(2\tau)}{\eta(\tau)^2}
	,
\end{align}
we find that
\begin{align*}
	\frac{(-1)^k q^{k^2+dk+\frac{d^2}{4}} 
	\aqprod{-q}{q}{\infty}\vartheta\left( v(d,k);2d^2\tau \right)}
	{\aqprod{q}{q}{\infty}}
	&=
		\frac{ (-1)^{k+1}i \eta(2\tau) f_{2d^2, d^2+2dk}(\tau)}{\eta(\tau)^2}			
	,\\
	\frac{2z q^{-\frac{b^2}{c^2d^2}} \aqprod{-q}{q}{\infty} \aqprod{q^{2d^2}}{q^{2d^2}}{\infty}^2 }
		{\aqprod{q}{q}{\infty} \jacprod{z^2}{q^{2d^2}}}
	&=
		-\frac{2\exp\left(\frac{2\pi iab}{c^2d^2}\right)\eta(2\tau)}
		{\eta(\tau)^2 \kModular{\frac{b}{cd^2}}{\frac{2a}{c}}{2d^2\tau}}
	=
	-	
	P_{d,0}(a,b,c;\tau)
,
\end{align*}
and similarly
\begin{align*}
	\frac{2z(-1)^k q^{k^2+dk-\frac{b^2}{c^2d^2}} \aqprod{-q}{q}{\infty} 
		\aqprod{q^{2d^2}}{q^{2d^2}}{\infty}^2 \jacprod{z^2q^{2d^2}, q^{4dk}}{q^{2d^2}}}
	{\aqprod{q}{q}{\infty}\jacprod{z^2q^{2dk}, z^{-2}q^{2dk}, q^{2dk}}{q^{2d^2}}}
	&=
		(-1)^{k+1} P_{d,k}(a,b,c;\tau)
	.
\end{align*}
Thus by Proposition \ref{PropMdRankToMu} we have
\begin{align*}
	&
	\frac{(1+\z{c}{a}q^{b/c})}{(1-\z{c}{a}q^{b/c})}q^{-\frac{b^2}{c^2d^2}}
	\mathcal{O}_d(\z{c}{a}q^{\frac{b}{c}}; q)
	\\
	&=
		i\sum_{k=0}^{d-1}
		\frac{(-1)^{k+1} \eta(2\tau) f_{2d^2, d^2+2dk}(\tau)}{\eta(\tau)^2}
		\widetilde{M}_{d,k}(a,b,c;2d^2\tau)
		+
		\sum_{k=0}^{\frac{d-1}{2}} 
		(-1)^{k+1}		
		P_{d,k}(a,b,c;\tau)
		\\&\quad		
		+
		\sum_{k=0}^{d-1} 
		\frac{(-1)^{k+1} \eta(2\tau) f_{2d^2, d^2+2dk}(\tau)}{\eta(\tau)^2}
		\left(
			\z{c}{a}q^{-\frac{d^2}{4}-\frac{b^2}{c^2d^2}+\frac{b}{c}}
			R\left( \tfrac{2a}{c} + (\tfrac{2b}{c}-d^2)\tau ; 2d^2\tau\right)
			-
			q^{-\frac{b^2}{c^2d^2}}
		\right)
.
\end{align*}
However, for $d$ odd we notice that
\begin{align*}
	\frac{\eta(\tau)^2}{\eta(2\tau)}
	&=
		\sum_{n=-\infty}^\infty (-1)^n q^{n^2}
	=
		\sum_{k=0}^{d-1}
		(-1)^k q^{k^2}
		\sum_{n=-\infty}^\infty (-1)^{n} q^{d^2n^2+2dkn}
	\\
	&=
		\sum_{k=0}^{d-1}
		(-1)^kq^{k^2}
		\aqprod{q^{d^2+2dk},q^{d^2-2dk},q^{2d^2}}{q^{2d^2}}{\infty}	
	=
		\sum_{k=0}^{d-1}
		(-1)^k f_{2d^2,d^2+2dk}(\tau)
.
\end{align*}
Thus
\begin{align*}
	&
	\frac{(1+\z{c}{a}q^{b/c})}{(1-\z{c}{a}q^{b/c})}q^{-\frac{b^2}{c^2d^2}}
	\mathcal{O}_d(\z{c}{a}q^{\frac{b}{c}}; q)
	\\
	&=
		i\sum_{k=0}^{d-1}
		\frac{(-1)^{k+1} \eta(2\tau) f_{2d^2, d^2+2dk}(\tau)}{\eta(\tau)^2}
		\widetilde{M}_{d,k}(a,b,c;2d^2\tau)
		+
		\sum_{k=0}^{\frac{d-1}{2}} 
		(-1)^{k+1}		
		P_{d,k}(a,b,c;\tau)
		\\&\quad		
		-
		\left(
			\z{c}{a}q^{-\frac{d^2}{4}-\frac{b^2}{c^2d^2}+\frac{b}{c}}
			R\left( \tfrac{2a}{c} + (\tfrac{2b}{c}-d^2)\tau ; 2d^2\tau\right)
			-
			q^{-\frac{b^2}{c^2d^2}}
		\right)
,
\end{align*}
which is our goal.

The proof when $d\equiv 2 \pmod 4$ is very similar to the case when $d$ is odd, so we omit it. 
When $d\equiv 0 \pmod 4$,  we have that
\begin{align*}
\mathcal{O}_d(\z{c}{a}q^{\frac{b}{c}}; q) 
&= 
	\frac{(1-\z{c}{a}q^{b/c})}{(1+\z{c}{a}q^{b/c})}\frac{\eta(2\tau)}{\eta(\tau)^2} 
	\sum_{k=0}^{\frac{d}2-1} (-1)^{k}q^{k^2+\frac{dk}{2}} \vartheta \left(\frac12+\left(\frac{d^2}{4} + dk\right)\tau;\frac{d^2\tau}{2}\right) 
	\\&\quad\times 
 	\left( 2\z{c}{\frac{a}{2}}q^{\frac{b}{2c}} 
 		\mu\left( u(\zeta_c^a q^\frac{b}c, d, k), v(d,k); \frac{d^2\tau}{2}\right) 
 		-
 		q^{\frac{d^2}{16}} 
 	\right).
\end{align*}
Analyzing the theta function in this case requires a bit more finesse, but 
upon noting that
\begin{align*}
\vartheta\left(\frac12+\left(\frac{d^2}{4} + dk\right)\tau;\frac{d^2\tau}{2}\right) 
&= 
	-q^{-\frac{d^2}{16}-\frac{dk}{2}} 
	\left(-q^{\frac{d^2}{4}+dk}, -q^{\frac{d^2}{4}-dk},q^{\frac{d^2}{2}}; q^{\frac{d^2}{2}}\right) 
\\
&=
	-q^{-\frac{d^2}{16}-\frac{dk}{2} -k^2}\frac{\eta(\frac{d^2\tau}{2})^2f_{d^2, \frac{d^2}{2}+2dk}(\tau)}
	{\eta(d^2\tau)f_{\frac{d^2}{2}, \frac{d^2}{4}+dk}(\tau)},
\end{align*}
we see the calculations will indeed be similar to before, so we omit the details for this case also.
\end{proof}

We now notice that the definition of $\widetilde{\mathcal{O}}_d(a,b,c;\tau)$ from the introduction
was made in order to analyze the functions occurring in Proposition \ref{prop:OtoMockModular},
as now we have that
\begin{align}\label{eqn:OviaM}
&\widetilde{\mathcal{O}}_d(a,b,c;\tau) 
\nonumber\\
&=
\begin{cases}\displaystyle
	i\sum_{k=0}^{d-1} \frac{(-1)^{k+1} \eta(2\tau) f_{2d^2, d^2+2dk}(\tau)}{\eta(\tau)^2} 
	\widetilde{M}_{d,k}(a,b,c;2d^2\tau)  
	+
	\sum_{k=0}^{\frac{d-1}{2}} (-1)^{k+1} P_{d,k}(a,b,c;\tau)  
	& 
	\text{if } d \text{ is odd}
	, \\\displaystyle
	i\sum_{k=0}^{\frac{d}2-1} (-1)^{k} \frac{\eta(2\tau) f_{\frac{d^2}{2}, \frac{d^2}{4}+dk}(\tau)}{\eta(\tau)^2} 
	\widetilde{M}_{d,k}\left(a,b,c;\frac{d^2\tau}{2}\right) 
	& 
	\text{if } d\equiv 2 \pmod 4
	,\\\displaystyle
	\sum_{k=0}^{\frac{d}2-1} (-1)^{k+1} 
	\frac{ \eta(2\tau) \eta(\frac{d^2\tau}{2})^2 f_{d^2, \frac{d^2}{2}+2dk}(\tau)} 
		{\eta(\tau)^2 \eta(d^2\tau) f_{\frac{d^2}{2}, \frac{d^2}{4}+dk}(\tau)} 
	\widetilde{M}_{d,k}\left(a,b,c;\frac{d^2\tau}{2}\right) 
	& 
	\text{if } d\equiv 0 \pmod 4.
\end{cases}
\end{align}
From here to the end of the article, any mention of 
$\widetilde{\mathcal{O}}_{d}(a,b,c;\tau)$ implicitly makes the
assumption that $a$, $b$, and $c$ are chosen so that each
$\widetilde{M}_{d,k}(a,b,c;\tau)$ and $P_{d,k}(a,b,c;\tau)$ is well defined.
One can check that this assumption is equivalent to the condition that $c\nmid 2a$ or $cd\nmid b$.

\subsection{Rewriting the error terms}

We now rewrite the error terms involving $R(u;\tau)$ occurring in the 
original definition of $\widetilde{\mathcal{O}}_d(a,b,c;\tau)$.
We begin with the case when $d$ is odd.

\begin{proposition}\label{PropositionOddDMdRankModularAndNonModular}
Suppose $d$ is odd, and let $n=\Floor{\frac{b}{cd^2}}$.
If $cd^2\mid b$, then
\begin{align*}
	\widetilde{\mathcal{O}}_d(a,b,c;\tau)
	&=
		\frac{(1+\z{c}{a}q^{b/c})}{(1-\z{c}{a}q^{b/c})}q^{-\frac{b^2}{c^2d^2}}
		\mathcal{O}_d(\z{c}{a}q^{\frac{b}{c}}; q)		
		-
		q^{-\frac{b^2}{c^2d^2}}
		+
		(-1)^{n}\z{c}{2na}
		\\&\quad		
		+
		2\sum_{m=1}^{|n|}
		(-1)^{m+n}\z{c}{(2n+1-{\rm sgn}(n)(2m-1))a}
		q^{d^2\left( \frac{{\rm sgn}(n)(2m-1)}{2}-m(m-1)-\frac{1}{2} \right)}
		\\&\quad		
		-
		(-1)^{n}i\sqrt{2}d\z{c}{2an}
		\int_{-\overline{\tau}}^{i\infty}
			\frac{g_{0,\frac{1}{2}-\frac{2a}{c}}(2d^2w)}
			{\sqrt{-i(w+\tau)}}		
		dw
	;
\end{align*}
in particular if $b=0$ then
\begin{align*}
	\widetilde{\mathcal{O}}_d(a,0,c;\tau)
	&=
		\frac{(1+\z{c}{a})}{(1-\z{c}{a})}
		\mathcal{O}_d(\z{c}{a}; q)
		-
		i\sqrt{2}d
		\int_{-\overline{\tau}}^{i\infty}
			\frac{g_{0,\frac{1}{2}-\frac{2a}{c}}(2d^2w)}
			{\sqrt{-i(w+\tau)}}		
		dw
	.
\end{align*}
If $cd^2\nmid b$, then
\begin{align*}
	&\widetilde{\mathcal{O}}_d(a,b,c;\tau)
	=
		\frac{(1+\z{c}{a}q^{b/c})}{(1-\z{c}{a}q^{b/c})}q^{-\frac{b^2}{c^2d^2}}
		\mathcal{O}_d(\z{c}{a}q^{\frac{b}{c}}; q)
		-
		q^{-\frac{b^2}{c^2d^2}}	
		\\&\quad
		+
		2\sum_{m=1}^{|n|}
		(-1)^{m+n}\z{c}{(2n+1-{\rm sgn}(n)(2m-1))a}
		q^{ n\left(\frac{2b}{c}-d^2n-d^2\right) 
			- (2m-1){\rm sgn(n)}\left(\frac{b}{c}-d^2n-\frac{d^2}{2}\right)
			- m(m-1)d^2 - \frac{d^2}{2} + \frac{b}{c} - \frac{b^2}{c^2d^2}    
		}		
		\\&\quad
		-
		i\sqrt{2}d\exp\left(2\pi i\left( \tfrac{2ab}{c^2d^2}-\tfrac{b}{2cd^2} \right)\right)
		\int_{-\overline{\tau}}^{i\infty}
			\frac{g_{\frac{b}{cd^2}-n,\frac{1}{2}-\frac{2a}{c}} (2d^2w)}
			{\sqrt{-i(w+\tau)}}
		dw	
	;
\end{align*}
in particular if $0<b<cd^2$ then
\begin{align*}
	&\widetilde{\mathcal{O}}_d(a,b,c;\tau)
	\\
	&=
		\frac{(1+\z{c}{a}q^{b/c})}{(1-\z{c}{a}q^{b/c})}q^{-\frac{b^2}{c^2d^2}}
		\mathcal{O}_d(\z{c}{a}q^{\frac{b}{c}}; q)
		-
		q^{-\frac{b^2}{c^2d^2}}	
		-
		i\sqrt{2}d\exp\left(2\pi i\left( \tfrac{2ab}{c^2d^2}-\tfrac{b}{2cd^2} \right)\right)
		\int_{-\overline{\tau}}^{i\infty}
			\frac{g_{\frac{b}{cd^2},\frac{1}{2}-\frac{2a}{c}} (2d^2w)}
			{\sqrt{-i(w+\tau)}}
		dw	
	.
\end{align*}
\end{proposition}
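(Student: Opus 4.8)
The plan is to start from the definition \eqref{DefinitionOfTildeO} of $\widetilde{\mathcal{O}}_d(a,b,c;\tau)$ for odd $d$ and to rewrite \emph{only} the Zwegers error term $\z{c}{a}q^{-\frac{d^2}{4}-\frac{b^2}{c^2d^2}+\frac{b}{c}}R\left(\tfrac{2a}{c}+(\tfrac{2b}{c}-d^2)\tau;2d^2\tau\right)$, leaving the leading factor and the $-q^{-\frac{b^2}{c^2d^2}}$ untouched. The first step is a bookkeeping rewriting of the argument: setting $\tau'=2d^2\tau$, one has $\tfrac{2a}{c}+(\tfrac{2b}{c}-d^2)\tau=\tfrac{2a}{c}+\left(\tfrac{b}{cd^2}-\tfrac12\right)\tau'$, so the coefficient of $\tau'$ equals $\tfrac{b}{cd^2}-\tfrac12$. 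Writing $\tfrac{b}{cd^2}=n+\beta_0$ with $n=\Floor{\tfrac{b}{cd^2}}$ and $\beta_0=\Fractional{\tfrac{b}{cd^2}}$ isolates the integer part, and the goal is to peel off $n\tau'$ so that the reduced argument $u_0=\tfrac{2a}{c}+\left(\beta_0-\tfrac12\right)\tau'$ has its $\tau'$-coefficient lying in $\left[-\tfrac12,\tfrac12\right)$.

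The second step is to apply the shift formula \eqref{eq:Rshift} with this $n$, which expresses $R(u_0+n\tau';\tau')$ as $(-1)^n\exp(2\pi inu_0+\pi in^2\tau')R(u_0;\tau')$ plus the explicit finite sum over $1\le m\le|n|$. Multiplying through by the prefactor $\z{c}{a}q^{-\frac{d^2}{4}-\frac{b^2}{c^2d^2}+\frac{b}{c}}$ and collecting roots of unity, the factors $\z{c}{a}\cdot\z{c}{2an}\cdot\z{c}{-(2m-1)\mathrm{sgn}(n)a}=\z{c}{(2n+1-\mathrm{sgn}(n)(2m-1))a}$ assemble, and the $q$-powers recombine, into exactly the displayed finite sum. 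At this point the two cases diverge: if $cd^2\nmid b$ then $\beta_0\in(0,1)$, so the $\tau'$-coefficient of $u_0$ lies in the open interval $(-\tfrac12,\tfrac12)$ and \eqref{EqRToIntegral} applies directly to $R(u_0;\tau')$; whereas if $cd^2\mid b$ then $\beta_0=0$ and the coefficient equals $-\tfrac12$, which is precisely the boundary value handled by \eqref{EqRToIntegralBoundaryCase}.

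In both cases, matching $u_0=\tfrac{2a}{c}+(\beta_0-\tfrac12)\tau'$ against the standard form $a'\tau'-b'$ gives $a'=\beta_0-\tfrac12$ and $b'=-\tfrac{2a}{c}$, so the $g$-indices become $a'+\tfrac12=\beta_0=\tfrac{b}{cd^2}-n$ and $b'+\tfrac12=\tfrac12-\tfrac{2a}{c}$, matching the subscripts in the claimed integrals. The substitution $z=2d^2w$ converts $\int_{-\overline{\tau'}}^{i\infty}\frac{g(z)}{\sqrt{-i(z+\tau')}}\,dz$ into $\sqrt{2}\,d\int_{-\overline{\tau}}^{i\infty}\frac{g(2d^2w)}{\sqrt{-i(w+\tau)}}\,dw$, since $dz=2d^2\,dw$ and $\sqrt{-i(z+\tau')}=\sqrt{2}\,d\,\sqrt{-i(w+\tau)}$, producing the prefactor $\sqrt{2}\,d$. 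The ``in particular'' assertions are then the specializations $b=0$ (a subcase of $cd^2\mid b$ with $n=0$, so both the constant $(-1)^n\z{c}{2na}$ and the finite sum vanish) and $0<b<cd^2$ (a subcase of $cd^2\nmid b$ with $n=0$).

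I expect the main obstacle to be neither conceptual nor the integral substitution, but rather the careful bookkeeping of the exponential prefactors. Two checks are delicate. First, in the integral term for $cd^2\nmid b$ one must verify that all $\tau$-dependence cancels among the prefactor, the shift factor $(-1)^n\exp(2\pi inu_0+\pi in^2\tau')$, and the factor $-\exp(\pi ia'^2\tau'-2\pi ia'(b'+\tfrac12))$ coming from \eqref{EqRToIntegral}, and that the surviving constant collapses to $-i\sqrt{2}\,d\exp\!\left(2\pi i\left(\tfrac{2ab}{c^2d^2}-\tfrac{b}{2cd^2}\right)\right)$; here the factor $-i$ materializes as $\exp(\tfrac{3\pi i}{2})$ from combining $\exp(\tfrac{\pi i}{2})$, the sign $-1$ in \eqref{EqRToIntegral}, and $(-1)^n$. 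Second, in the case $cd^2\mid b$ one must confirm that the leftover constant from the boundary term \eqref{EqRToIntegralBoundaryCase}, after multiplication by the shift factor and prefactor, reduces (all $q$-powers cancelling) to exactly $(-1)^n\z{c}{2na}$, and that under $\tfrac{b}{c}=nd^2$ the general finite-sum exponent specializes to $d^2\!\left(\tfrac{\mathrm{sgn}(n)(2m-1)}{2}-m(m-1)-\tfrac12\right)$, as claimed.
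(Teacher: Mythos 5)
Your proposal follows essentially the same route as the paper's own proof: isolate the Zwegers error term, split off the integer part $n=\Floor{\frac{b}{cd^2}}$ using the shift formula \eqref{eq:Rshift}, apply \eqref{EqRToIntegral} in the interior case $cd^2\nmid b$ and \eqref{EqRToIntegralBoundaryCase} in the boundary case $cd^2\mid b$, and rescale the integral via $z=2d^2w$ to produce the factor $\sqrt{2}\,d$; the delicate cancellations you flag are exactly the ones carried out in the paper and they do work out as you predict. One tiny correction: in the $b=0$ specialization the constant $(-1)^n\z{c}{2na}$ equals $1$ rather than vanishing---it cancels against the term $-q^{-\frac{b^2}{c^2d^2}}=-1$---but this does not affect the argument.
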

\begin{proof}
We are to determine an alternate form for the error in \eqref{DefinitionOfTildeO}, namely
\begin{align*}
	\z{c}{a}q^{-\frac{d^2}{4}-\frac{b^2}{c^2d^2}+\frac{b}{c}}
	R\left( \tfrac{2a}{c}+\left(\tfrac{2b}{c}-d^2\right)\tau; 2d^2\tau \right)
	-
	q^{-\frac{b^2}{c^2d^2}}	
.
\end{align*}

We then set $n=\Floor{\frac{b}{cd^2}}$ so that
\begin{align*}
	b &= (b-ncd^2) + ncd^2,
\end{align*}
and then from \eqref{eq:Rshift} we have that
\begin{align*}
	&R\left(
		\tfrac{2a}{c} + \left(\tfrac{2b}{c}-d^2\right)\tau ; 2d^2\tau	
	\right)
	\\
	&=
		R\left(
			\tfrac{2a}{c} + \left(\tfrac{b}{cd^2}-\tfrac{1}{2}\right)2d^2\tau ; 2d^2\tau	
		\right)	
	\\
	&=
		R\left(
			\tfrac{2a}{c} + \left(\tfrac{b}{cd^2}-n-\tfrac{1}{2}\right)2d^2\tau + 2d^2n\tau ; 2d^2\tau	
		\right)	
	\\
	&=
		(-1)^{n}
		\exp\left(
			2\pi in\left( \tfrac{2a}{c}+\left(\tfrac{2b}{c}-d^2n-d^2\right)\tau\right)  
		\right)
		R\left(
			\tfrac{2a}{c} + \left(\tfrac{b}{cd^2}-n-\tfrac{1}{2}\right)2d^2\tau ; 2d^2\tau	
		\right)	
		\\&\quad
		+
		2
		\exp\left(
			2\pi in\left( \tfrac{2a}{c}+\left(\tfrac{2b}{c}-d^2n-d^2\right)\tau\right)  
		\right)
		\\&\quad\times
		\sum_{m=1}^{|n|}
		(-1)^{m+n}\exp\left(
			-\pi i(2m-1)\mbox{sgn}(n)\left( \tfrac{2a}{c}+\left(\tfrac{2b}{c}-2d^2n-d^2\right)\tau\right)
			- 2\pi im(m-1)d^2\tau - \tfrac{\pi id^2\tau}{2}		
		\right)	
	\\
	&=
		(-1)^{n}
		\z{c}{2an}
		q^{n\left(\frac{2b}{c}-d^2n-d^2\right)}
		R\left(
			\tfrac{2a}{c} + \left(\tfrac{b}{cd^2}-n-\tfrac{1}{2}\right)2d^2\tau ; 2d^2\tau	
		\right)	
		\\&\quad
		+
		2
		\z{c}{2an}
		q^{n\left(\frac{2b}{c}-d^2n-d^2\right)}
		\sum_{m=1}^{|n|}
		(-1)^{m+n}
		\z{c}{-{\rm sgn}(n)(2m-1)a }		
		q^{ -(2m-1) {\rm sgn}(n)  \left( \frac{b}{c} - d^2n - \frac{d^2}{2}  \right) 
			- m(m-1)d^2 - \frac{d^2}{4}
		}	
.
\end{align*}

If $cd^2\mid b$, then $-\frac{1}{2} = \frac{b}{cd^2}-n-\frac{1}{2}$,
and so by (\ref{EqRToIntegralBoundaryCase}) we have that
\begin{align*}
	R\left(  
		\tfrac{2a}{c} + \left(\tfrac{b}{cd^2}-n-\tfrac{1}{2} \right)2d^2\tau ; 2d^2\tau
	\right)
	&=
	R\left(  
		\tfrac{2a}{c} - d^2\tau ; 2d^2\tau
	\right)
	\\	
	&=
		\z{c}{-a}q^{\frac{d^2}{4}}
		-
		i\sqrt{2}d\z{c}{-a}q^{\frac{d^2}{4}}
		\int_{-\overline{\tau}}^{i\infty}
			\frac{g_{0, \frac{1}{2}-\frac{2a}{c} } (2d^2w) }
			{\sqrt{-i(w+\tau)}}			
		dw
.
\end{align*}
Thus for $cd^2\mid b$ we have that
\begin{align*}
	&
	\z{c}{a}q^{-\frac{d^2}{4}-\frac{b^2}{c^2d^2}+\frac{b}{c}}
	R\left( \tfrac{2a}{c}+\left(\tfrac{2b}{c}-d^2\right)\tau; 2d^2\tau \right)
	-
	q^{-\frac{b^2}{c^2d^2}}	
	\\
	&=
		-q^{-\frac{b^2}{c^2d^2}}
		+
		(-1)^{n} \z{c}{(2n+1)a} q^{-\frac{d^2}{4}}
		R\left( \tfrac{2a}{c} - d^2\tau; 2d^2\tau	\right)
		\\&\quad		
		+
		2\z{c}{(2n+1)a} q^{-\frac{d^2}{4}}
		\sum_{m=1}^{|n|}
		(-1)^{m+n}\z{c}{-{\rm sgn}(n)(2m-1)a}
		q^{d^2\left( \frac{{\rm sgn}(n)(2m-1)}{2}-m(m-1)-\frac{1}{4} \right)}
	\\
	&=
		-q^{-\frac{b^2}{c^2d^2}}
		+
		(-1)^{n}\z{c}{2na}
		+
		2\sum_{m=1}^{|n|}
		(-1)^{m+n}\z{c}{(2n+1-{\rm sgn}(n)(2m-1))a}
		q^{d^2\left( \frac{{\rm sgn}(n)(2m-1)}{2}-m(m-1)-\frac{1}{2} \right)}
		\\&\quad		
		-
		(-1)^{n}i\sqrt{2}d\z{c}{2an}
		\int_{-\overline{\tau}}^{i\infty}
			\frac{g_{0,\frac{1}{2}-\frac{2a}{c}}(2d^2w)}
			{\sqrt{-i(w+\tau)}}		
		dw
	.
\end{align*}

If $cd^2\nmid b$ we instead have that $-\frac{1}{2} < \frac{b}{cd^2}-n-\frac{1}{2} < \frac{1}{2}$,
and so by (\ref{EqRToIntegral}) we have that
\begin{align*}
	&R\left(  
		\tfrac{2a}{c} + \left(\tfrac{b}{cd^2}-n-\tfrac{1}{2} \right)2d^2\tau ; 2d^2\tau
	\right)
	\\
	&=
		-\exp\left(
			\pi i\left(\tfrac{b}{cd^2}-n-\tfrac{1}{2}\right)^2 2d^2\tau		
			-			
			2\pi i\left(\tfrac{b}{cd^2}-n-\tfrac{1}{2}\right)\left(\tfrac{1}{2}-\tfrac{2a}{c}\right)		
		\right)
		\int_{-\overline{2d^2\tau}}^{i\infty}
			\frac{g_{\frac{b}{cd^2}-n, \frac{1}{2}-\frac{2a}{c} } (w) }
			{\sqrt{-i(w+2d^2\tau)}}			
		dw
	\\
	&=
		(-1)^{n+1}i\sqrt{2}d\z{c}{-(2n+1)a}			
		\exp\left( 2\pi i\left(
			\tfrac{2ab}{c^2d^2} - \tfrac{b}{2cd^2}
		\right)\right)
		q^{d^2\left( \frac{b}{cd^2}-n-\frac{1}{2} \right)^2}
		\int_{-\overline{\tau}}^{i\infty}
			\frac{g_{\frac{b}{cd^2}-n, \frac{1}{2}-\frac{2a}{c} } (2d^2w) }
			{\sqrt{-i(w+\tau)}}			
		dw
.
\end{align*}
Thus for $cd^2\nmid b$, we have that
\begin{align*}
	&
	\z{c}{a}q^{-\frac{d^2}{4}-\frac{b^2}{c^2d^2}+\frac{b}{c}}
	R\left( \tfrac{2a}{c}+\left(\tfrac{2b}{c}-d^2\right)\tau; 2d^2\tau \right)
	-
	q^{-\frac{b^2}{c^2d^2}}	
	\\
	&=
		-
		q^{-\frac{b^2}{c^2d^2}}	
		\\&\quad
		+
		2\sum_{m=1}^{|n|}
		(-1)^{m+n}\z{c}{(2n+1-{\rm sgn}(n)(2m-1))a}
		q^{ n\left(\frac{2b}{c}-d^2n-d^2\right) 
			- (2m-1){\rm sgn(n)}\left(\frac{b}{c}-d^2n-\frac{d^2}{2}\right)
			- m(m-1)d^2 - \frac{d^2}{2} + \frac{b}{c} - \frac{b^2}{c^2d^2}    
		}		
		\\&\quad
		-
		i\sqrt{2}d\exp\left(2\pi i\left( \tfrac{2ab}{c^2d^2}-\tfrac{b}{2cd^2} \right)\right)
		\int_{-\overline{\tau}}^{i\infty}
			\frac{g_{\frac{b}{cd^2}-n,\frac{1}{2}-\frac{2a}{c}} (2d^2w)}
			{\sqrt{-i(w+\tau)}}
		dw	
	.
\end{align*}
\end{proof}

\noindent Following are the cases when $d$ is even. Since the proofs are quite
similar to the proof when $d$ is odd, we only state the results.

\begin{proposition}\label{prop:Oerr2}
Suppose $d\equiv 2 \pmod 4$, and let $n=\lfloor \frac{2b}{cd^2} \rfloor$.  If $cd^2 \mid 2b$, then 
\begin{align*}
\widetilde{\mathcal{O}}_d(a,b,c;\tau)
&= 
	\frac{(1+\z{c}{a}q^{b/c})}{(1-\z{c}{a}q^{b/c})}q^{-\frac{b^2}{c^2d^2}}\mathcal{O}_d(\z{c}{a}q^{\frac{b}{c}}; q) 
	- q^{\frac{-b^2}{c^2d^2}} + (-1)^n \zeta_c^{an} 
	\\&\quad 
	+ 2 \sum_{m=1}^{|n|} (-1)^{m+n} \zeta_{2c}^{a(2n+1-{\rm sgn}(n)(2m-1))} q^{\frac{d^2}{8}((2m-1){\rm sgn}(n) -2m(m-1) -1)} 
	\\&\quad
	 - \frac{d}{\sqrt{2}} i (-1)^n \zeta_c^{an} 
	 \int_{-\bar{\tau}}^{i\infty} \frac{g_{0,\frac32 -\frac{a}{c}}(\frac{d^2w}{2})}{\sqrt{-i(w+\tau)}}dw;
\end{align*}
in particular, if $b=0$, then
\[
\widetilde{\mathcal{O}}_d(a,0,c;\tau)
= 
	\frac{(1+\z{c}{a})}{(1-\z{c}{a})}\mathcal{O}_d(\z{c}{a}; q) 
	- 
	\frac{d}{\sqrt{2}} i \int_{-\bar{\tau}}^{i\infty} \frac{g_{0,\frac32 -\frac{a}{c}}(\frac{d^2w}{2})}{\sqrt{-i(w+\tau)}}dw.
\]
If $cd^2 \nmid 2b$, then 
\begin{align*}
&\widetilde{\mathcal{O}}_d(a,b,c;\tau)
= 
	\frac{(1+\z{c}{a}q^{b/c})}{(1-\z{c}{a}q^{b/c})}q^{-\frac{b^2}{c^2d^2}}\mathcal{O}_d(\z{c}{a}q^{\frac{b}{c}}; q) 
	- q^{\frac{-b^2}{c^2d^2}} 
	\\&\quad
	+2 \sum_{m=1}^{|n|} (-1)^{m+n} \zeta_{2c}^{a(2n+1-{\rm sgn}(n)(2m-1))} 
	q^{n(\frac{b}{c} - \frac{d^2n}{4} -\frac{d^2}{4}) - (2m-1){\rm sgn}(n)(\frac{b}{2c} 
		- \frac{d^2n}{4} - \frac{d^2}{8}) -\frac{d^2}{4} m(m-1) -\frac{d^2}{8} + \frac{b}{2c} - \frac{b^2}{c^2d^2}} 
	\\&\quad
	- \frac{d}{\sqrt{2}} i \exp\left(2\pi i \left(\tfrac{2ab}{c^2d^2} - \tfrac{3b}{cd^2}\right)\right) 
	\int_{-\bar{\tau}}^{i\infty} \frac{g_{\frac{2b}{cd^2} -n,\frac32 -\frac{a}{c}}(\frac{d^2w}{2})}{\sqrt{-i(w+\tau)}}dw;
\end{align*}
in particular, if $0<b<cd^2/2$, then
\begin{align*}
\widetilde{\mathcal{O}}_d(a,b,c;\tau)
&= 
	\frac{(1+\z{c}{a}q^{b/c})}{(1-\z{c}{a}q^{b/c})}q^{-\frac{b^2}{c^2d^2}}\mathcal{O}_d(\z{c}{a}q^{\frac{b}{c}}; q) 
	- q^{\frac{-b}{c^2d^2}} 
	\\&\quad
	- \frac{d}{\sqrt{2}} i \exp\left(2\pi i \left(\tfrac{2ab}{c^2d^2} - \tfrac{3b}{cd^2}\right)\right) 
	\int_{-\bar{\tau}}^{i\infty} \frac{g_{\frac{2b}{cd^2},\frac32 -\frac{a}{c}}(\frac{d^2w}{2})}{\sqrt{-i(w+\tau)}}dw.
\end{align*}
\end{proposition}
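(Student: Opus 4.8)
The plan is to mirror the proof of Proposition~\ref{PropositionOddDMdRankModularAndNonModular} verbatim, now working with the $d\equiv 2\pmod 4$ branch of the definition \eqref{DefinitionOfTildeO}. The only quantity that needs rewriting is the error term
\[
	\z{c}{\frac{a}{2}}q^{-\frac{d^2}{16}-\frac{b^2}{c^2d^2}+\frac{b}{2c}}
	R\left( \left(\tfrac{a}{c}-1\right) + \left(\tfrac{b}{c}-\tfrac{d^2}{4}\right)\tau ; \tfrac{d^2\tau}{2}\right)
	+ q^{-\frac{b^2}{c^2d^2}}.
\]
First I would record that the modulus of $R$ is $\tfrac{d^2\tau}{2}$ and that the $\tau$-coefficient in the argument equals $\left(\tfrac{2b}{cd^2}-\tfrac12\right)\tfrac{d^2}{2}$, so that with $n=\Floor{\frac{2b}{cd^2}}$ the argument splits as $\left(\tfrac{a}{c}-1\right)+\left(\tfrac{2b}{cd^2}-n-\tfrac12\right)\tfrac{d^2\tau}{2}+n\cdot\tfrac{d^2\tau}{2}$. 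Applying the shift formula \eqref{eq:Rshift} with period $\tfrac{d^2\tau}{2}$ peels off the $n$ copies, producing a leading factor $(-1)^n$ times an exponential, a reduced copy of $R$ whose $\tau$-coefficient now lies in $\bigl[-\tfrac12,\tfrac12\bigr)\tfrac{d^2}{2}$, and exactly the finite sum over $1\le m\le|n|$ that yields the $(-1)^{m+n}\zeta_{2c}^{a(2n+1-{\rm sgn}(n)(2m-1))}$ terms in the statement.

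Next I would split on the divisibility hypothesis. If $cd^2\mid 2b$ then $\tfrac{2b}{cd^2}-n-\tfrac12=-\tfrac12$, so the reduced $R$ is the boundary type $R\bigl(-\tfrac{\tau'}{2}-b';\tau'\bigr)$ with $\tau'=\tfrac{d^2\tau}{2}$ and $b'=1-\tfrac{a}{c}$, and I would invoke \eqref{EqRToIntegralBoundaryCase}; this is precisely what produces both the extra constant term $(-1)^n\z{c}{an}$ and the Eichler integral of $g_{0,\frac32-\frac{a}{c}}$ carrying the coefficient $-\tfrac{d}{\sqrt2}i(-1)^n\z{c}{an}$. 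If instead $cd^2\nmid 2b$ then $-\tfrac12<\tfrac{2b}{cd^2}-n-\tfrac12<\tfrac12$, and I would use the generic representation \eqref{EqRToIntegral}: writing the reduced argument as $A\cdot\tfrac{d^2\tau}{2}-B$ with $A=\tfrac{2b}{cd^2}-n-\tfrac12$ and $B=1-\tfrac{a}{c}$, the integrand index becomes $g_{A+\frac12,\,B+\frac12}=g_{\frac{2b}{cd^2}-n,\,\frac32-\frac{a}{c}}$, matching the statement. In both cases the substitution $z=\tfrac{d^2w}{2}$ turns $\int_{-\overline{d^2\tau/2}}^{i\infty}$ into $\int_{-\overline{\tau}}^{i\infty}$ and pulls out the Jacobian $\sqrt{d^2/2}=\tfrac{d}{\sqrt2}$ standing in front of every integral in the conclusion.

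Finally I would collect the accumulated prefactors. The Gaussian exponential $\exp\bigl(\pi iA^2\tfrac{d^2\tau}{2}\bigr)=q^{A^2d^2/4}$ from \eqref{EqRToIntegral} and the $q$-power from the shift telescope exactly against $\z{c}{a/2}q^{-\frac{d^2}{16}-\frac{b^2}{c^2d^2}+\frac{b}{2c}}$, leaving no surviving $q$-power on the constant and integral terms; meanwhile the half-integer root of unity reorganizes, after using $\z{c}{a/2}=\zeta_{2c}^{a}$ and the factor $\exp\bigl(-2\pi iA(\tfrac32-\tfrac{a}{c})\bigr)$, into the single phase $\exp\bigl(2\pi i(\tfrac{2ab}{c^2d^2}-\tfrac{3b}{cd^2})\bigr)$, with the lone $i$ coming from $\exp(3\pi i/2)$. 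The special subcases $b=0$ and $0<b<cd^2/2$ follow immediately by setting $n=0$. I expect the only genuine difficulty to be this bookkeeping of the root-of-unity and $q$-power prefactors---in particular verifying that the half-integral shift $\zeta_{2c}^{a}$ cancels against the $\zeta_{2c}^{-a}$ generated by the constant $1-\tfrac{a}{c}$ in the $v$-coordinate, so that every surviving phase lands on $\zeta_{2c}$ or $\z{c}{}$ as claimed; the analytic content is identical to the odd case and introduces no new ideas.
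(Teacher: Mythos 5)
Your proposal is correct and is exactly the route the paper intends: the paper omits this proof, stating only that it follows the odd case, and your plan mirrors the proof of Proposition \ref{PropositionOddDMdRankModularAndNonModular} with the correct substitutions (shift by $n=\Floor{\frac{2b}{cd^2}}$ via \eqref{eq:Rshift} with period $\frac{d^2\tau}{2}$, then \eqref{EqRToIntegralBoundaryCase} or \eqref{EqRToIntegral} according to whether $cd^2\mid 2b$, and the change of variable $z=\frac{d^2w}{2}$ producing the Jacobian $\frac{d}{\sqrt{2}}$). Your phase bookkeeping claims also check out: the index $g_{\frac{2b}{cd^2}-n,\frac32-\frac{a}{c}}$, the cancellation of all $q$-powers on the constant and integral terms, and the surviving phase $\exp\left(2\pi i\left(\tfrac{2ab}{c^2d^2}-\tfrac{3b}{cd^2}\right)\right)$ with the factor $i$ coming from $\exp\left(\tfrac{3\pi i}{2}\right)$ are all as claimed.
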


\begin{proposition}\label{prop:Oerr4}
Suppose $d\equiv 0 \pmod 4$, and let $n=\lfloor \frac{2b}{cd^2} \rfloor$.  If $cd^2 \mid 2b$, then 
\begin{align*}
\widetilde{\mathcal{O}}_d(a,b,c;\tau)
&= 
	\frac{(1+\z{c}{a}q^{b/c})}{(1-\z{c}{a}q^{b/c})}q^{-\frac{b^2}{c^2d^2}}\mathcal{O}_d(\z{c}{a}q^{\frac{b}{c}}; q)
	- q^{\frac{-b^2}{c^2d^2}}  + \zeta_{c}^{an} 
	\\&\quad
	-2i \sum_{m=1}^{|n|} (-1)^{m}i^{(2m-1){\rm sgn}(n)} \zeta_{2c}^{a(2n+1-{\rm sgn}(n)(2m-1))}
		q^{\frac{d^2}{8}((2m-1){\rm sgn}(n) -2m(m-1) -1)} 
	\\&\quad
	-\frac{d}{\sqrt{2}} i \zeta_{c}^{an}  
	\int_{-\bar{\tau}}^{i\infty} \frac{g_{0,1-\frac{a}{c}}(\frac{d^2w}{2})}{\sqrt{-i(w+\tau)}}dw;
\end{align*}
in particular, if $b=0$, then
\[
\widetilde{\mathcal{O}}_d(a,0,c;\tau)
= 
	\frac{(1+\z{c}{a})}{(1-\z{c}{a})}\mathcal{O}_d(\z{c}{a}; q) 
	-
	\frac{d}{\sqrt{2}} i \int_{-\bar{\tau}}^{i\infty} \frac{g_{0,1-\frac{a}{c}}(\frac{d^2w}{2})}{\sqrt{-i(w+\tau)}}dw.
\]
If $cd^2 \nmid 2b$, then 
\begin{align*}
\widetilde{\mathcal{O}}_d(a,b,c;\tau)
&= 
	\frac{(1+\z{c}{a}q^{b/c})}{(1-\z{c}{a}q^{b/c})}q^{-\frac{b^2}{c^2d^2}}\mathcal{O}_d(\z{c}{a}q^{\frac{b}{c}}; q)
	- q^{\frac{-b^2}{c^2d^2}} 
	\\&\quad
	-2i \sum_{m=1}^{|n|} (-1)^{m}i^{(2m-1){\rm sgn}(n)} \zeta_{2c}^{a(2n+1-{\rm sgn}(n)(2m-1))} 
	\\&\qquad\times
	q^{n(\frac{b}{c} - \frac{d^2n}{4} -\frac{d^2}{4}) - (2m-1){\rm sgn}(n)(\frac{b}{2c} - \frac{d^2n}{4} - \frac{d^2}{8}) 
		-\frac{d^2}{4} m(m-1) -\frac{d^2}{8} + \frac{b}{2c} - \frac{b^2}{c^2d^2}} 
	\\&\quad
	-\frac{d}{\sqrt{2}}  i \exp\left(2\pi i \left(\tfrac{2ab}{c^2d^2} - \tfrac{2b}{cd^2}\right)\right) 
	\int_{-\bar{\tau}}^{i\infty} \frac{g_{\frac{2b}{cd^2} -n, 1-\frac{a}{c}}(\frac{d^2w}{2})}{\sqrt{-i(w+\tau)}}dw;
\end{align*}
in particular, if $0<b<cd^2/2$, then
\begin{align*}
\widetilde{\mathcal{O}}_d(a,b,c;\tau)
&= 
	\frac{(1+\z{c}{a}q^{b/c})}{(1-\z{c}{a}q^{b/c})}q^{-\frac{b^2}{c^2d^2}}\mathcal{O}_d(\z{c}{a}q^{\frac{b}{c}}; q)
	- q^{\frac{-b^2}{c^2d^2}} 
	\\&\quad
	-\frac{d}{\sqrt{2}}  i \exp\left(2\pi i \left(\tfrac{2ab}{c^2d^2} - \tfrac{2b}{cd^2}\right)\right) 
	\int_{-\bar{\tau}}^{i\infty} \frac{g_{\frac{2b}{cd^2}, 1-\frac{a}{c}}(\frac{d^2w}{2})}{\sqrt{-i(w+\tau)}}dw.
\end{align*}
\end{proposition}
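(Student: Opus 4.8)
\emph{Proof proposal.}
The plan is to mirror the proof of Proposition \ref{PropositionOddDMdRankModularAndNonModular}, the only structural differences being the half-integer shifts appearing in the argument of $R$ and the modular parameter $\frac{d^2\tau}{2}$ in place of $2d^2\tau$. From the definition \eqref{DefinitionOfTildeO} in the case $d\equiv 0\pmod 4$, the task is to find an alternate form for the error term
\[
i\z{c}{\frac{a}{2}}q^{-\frac{d^2}{16}-\frac{b^2}{c^2d^2}+\frac{b}{2c}}
R\left( \left(\tfrac{a}{c} -\tfrac12\right) + \left(\tfrac{b}{c}-\tfrac{d^2}{4}\right)\tau ; \tfrac{d^2\tau}{2}\right)
+ q^{-\frac{b^2}{c^2d^2}}.
\]
First I would rewrite the coefficient of $\tau$ in units of the period $\frac{d^2\tau}{2}$, namely $\frac{b}{c}-\frac{d^2}{4}=\left(\frac{2b}{cd^2}-\frac12\right)\frac{d^2}{2}$, and set $n=\Floor{\frac{2b}{cd^2}}$ so that the argument becomes $\left(\frac{a}{c}-\frac12\right)+\left(\frac{2b}{cd^2}-n-\frac12\right)\frac{d^2\tau}{2}+n\cdot\frac{d^2\tau}{2}$, with the reduced coefficient $\frac{2b}{cd^2}-n-\frac12$ lying in $[-\frac12,\frac12)$.

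Next, applying the shift formula \eqref{eq:Rshift} with modular variable $\frac{d^2\tau}{2}$ peels off the $n$ copies of the period, producing the prefactor $(-1)^n$ together with the finite sum over $1\le m\le |n|$. In contrast with the odd case, the constant $-\frac12$ in $\frac{a}{c}-\frac12$ means that each exponential $\exp\!\left(-\pi i(2m-1)\mathrm{sgn}(n)u\right)$ in \eqref{eq:Rshift} contributes an extra factor $i^{(2m-1)\mathrm{sgn}(n)}$, while the constant $\frac{a}{c}$ produces the half-spaced root of unity $\z{2c}{a(\cdots)}$; combined with the overall factor $i\z{c}{a/2}=i\z{2c}{a}$ and the prefactor $\exp(2\pi inu)$ this assembles into the root $\z{2c}{a(2n+1-\mathrm{sgn}(n)(2m-1))}$ and the phases $i^{(2m-1)\mathrm{sgn}(n)}$, accounting for the $-2i$ in front of the stated sum.

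Then I would split into the two cases. When $cd^2\mid 2b$ the reduced argument has $\tau$-coefficient exactly $-\frac12$ (in units of $\frac{d^2\tau}{2}$), so I would invoke the boundary formula \eqref{EqRToIntegralBoundaryCase}, which supplies both a surviving constant—assembling, after simplification using $\frac{b}{c}=\frac{d^2n}{2}$, into the $+\z{c}{an}$ term—and the integral of $g_{0,1-\frac{a}{c}}$. When $cd^2\nmid 2b$ the reduced coefficient lies strictly in $(-\frac12,\frac12)$, so I would instead use \eqref{EqRToIntegral} with parameters $A=\frac{2b}{cd^2}-n-\frac12$ and $B=\frac12-\frac{a}{c}$, giving the integrand $g_{A+\frac12,\,B+\frac12}=g_{\frac{2b}{cd^2}-n,\,1-\frac{a}{c}}$. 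In either case the integral initially runs over the variable $\frac{d^2\tau}{2}$; the substitution $w\mapsto\frac{d^2}{2}w$ returns it to an integral in $\tau$ and yields the Jacobian factor $\sqrt{d^2/2}=\frac{d}{\sqrt2}$, explaining the $-\frac{d}{\sqrt2}i$ prefactor of the integral term.

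The main obstacle is purely the phase bookkeeping: one must carefully collect the powers of $i$ and the half-integer roots of unity $\z{2c}$ generated by the two half-integer shifts—in both the constant and the $\tau$-part of the argument—and reconcile them against the overall prefactor $i\z{c}{a/2}q^{-\frac{d^2}{16}-\frac{b^2}{c^2d^2}+\frac{b}{2c}}$ and the exponential prefactors produced by \eqref{eq:Rshift}, \eqref{EqRToIntegral}, and \eqref{EqRToIntegralBoundaryCase}. Once these phases are matched—exactly as in the odd case but with the extra factor of $i$ and the $-\frac12$ shifts—the remaining exponents in $q$ simplify to those displayed in the statement, and the particular cases $b=0$ and $0<b<cd^2/2$ follow immediately by taking $n=0$.
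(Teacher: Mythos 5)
Your proposal is correct and follows essentially the same route as the paper: the paper itself omits this proof, stating only that the even cases "are quite similar to the proof when $d$ is odd," and your argument is precisely that adaptation—reducing the $\tau$-coefficient modulo the period $\frac{d^2\tau}{2}$ via \eqref{eq:Rshift}, then invoking \eqref{EqRToIntegralBoundaryCase} when $cd^2\mid 2b$ and \eqref{EqRToIntegral} otherwise, with the substitution $w\mapsto \frac{d^2 w}{2}$ supplying the factor $\frac{d}{\sqrt{2}}$. The phase bookkeeping you outline (the extra $i^{(2m-1)\mathrm{sgn}(n)}$ from the $-\frac12$ shift, the $\z{2c}{a(2n+1-\mathrm{sgn}(n)(2m-1))}$ roots, and the cancellation of $(-1)^n$ against the shift formula's $(-1)^{m+n}$) checks out against the stated exponents and prefactors.
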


\section{ Modular interpretation of $\mathcal{O}_d(\zeta_c^a q^\frac{b}c; q)$}

In this section we obtain explicit transformation formulas for the functions 
$\widetilde{\mathcal{O}}_d(a,b,c;\tau)$.  In light of \eqref{eqn:OviaM} we 
begin by studying transformation formulas for the functions 
$\widetilde{M}_{d,k}(a,b,c;\tau)$.  First, we note the following elliptic 
properties of these functions.

\begin{proposition}\label{PropDOddMShifts}
We have that
\begin{align*}
	\widetilde{M}_{d,k+d}(a,b,c;\tau)
	&=
		\widetilde{M}_{d,k}(a,b,c;\tau)
	,\\
	\widetilde{M}_{d,k}(a+c,b,c;\tau)
	&=
		\widetilde{M}_{d,k}(a,b,c;\tau)
	.
\end{align*}	
Moreover if $d$ is odd,
\[
\widetilde{M}_{d,k}(a,b+cd^2,c;\tau)= - \z{c}{2a}\widetilde{M}_{d,k}(a,b,c;\tau),
\]
and if $d$ is even,
\[
\widetilde{M}_{d,k}\left(a,b+\frac{cd^2}{2},c;\tau\right)= (-1)^{\frac{d}{2}} \z{c}{a}\widetilde{M}_{d,k}(a,b,c;\tau).
\]

\end{proposition}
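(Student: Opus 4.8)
The plan is to read off all four identities directly from Zwegers' elliptic transformation \eqref{EqZTheorem1.11P1} for $\tmu$, while tracking the Gaussian prefactor $q^{-(\cdots)}$ in \eqref{def:Mtilde} separately. In each case I would write out $\widetilde{M}_{d,k}$ after the indicated shift, observe that the two arguments of $\tmu$ differ from their unshifted values by a vector in $\Z\tau+\Z$, and apply \eqref{EqZTheorem1.11P1} with the appropriate integer parameters, which I rename $(\kappa,\lambda,\mu,\nu)$ to avoid clashing with the subscript $k$. The transformation then contributes the sign $(-1)^{\kappa+\lambda+\mu+\nu}$ and the factor $\exp(\pi i\tau(\kappa-\mu)^2+2\pi i(\kappa-\mu)(u-v))$, and the whole proof reduces to evaluating these and combining with the prefactor.

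For the first two identities the computation is immediate. Replacing $k$ by $k+d$ shifts \emph{both} arguments of $\tmu$ by the same lattice vector ($\tau$ when $d$ is odd, $2\tau$ when $d$ is even), so $\kappa=\mu$; the exponential factor is trivial and the sign is $+1$, and since the prefactor does not involve $k$ this gives $\widetilde{M}_{d,k+d}=\widetilde{M}_{d,k}$. Replacing $a$ by $a+c$ shifts only the first argument of $\tmu$ by the integer $2$ (odd $d$) or $1$ (even $d$), so $\kappa-\mu=0$ and the exponential is again trivial; the sign is $+1$ in the odd case, while in the even case it is $-1$ but the prefactor $\zeta_{2c}^{a}$ itself flips sign under $a\mapsto a+c$, and the two cancel, leaving the function unchanged.

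The two $b$-shift identities carry the real content. Here I would replace $b$ by $b+cd^2$ (odd $d$) or $b+\tfrac{cd^2}{2}$ (even $d$); this shifts only the first argument of $\tmu$ by $\tau$, so $\kappa=1$, $\mu=0$, and \eqref{EqZTheorem1.11P1} produces the sign $-1$ together with $\exp(\pi i\tau+2\pi i(u-v))$. The key point is that the $\tau$-dependent part of this exponential is exactly $q^{b/(cd^2)}$ (odd) or $q^{2b/(cd^2)}$ (even), and that this precisely cancels the change in the prefactor: since $(2b+cd^2)^2-(2b-cd^2)^2=8bcd^2$ in the odd case and $(4b+cd^2)^2-(4b-cd^2)^2=16bcd^2$ in the even case, the prefactor contributes the reciprocal $q$-power. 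After this cancellation only a root of unity survives, coming from the constant ($\tau=0$) part of $u-v$: this is $\z{c}{2a}$ for odd $d$, $\z{c}{a}$ for $d\equiv 2\pmod 4$, and $-\z{c}{a}$ for $d\equiv 0\pmod 4$, the extra sign in the last case reflecting that the second argument of $\tmu$ has constant term $\tfrac12$ rather than $1$. Combining with the overall $-1$ from the elliptic sign yields $-\z{c}{2a}$, then $-\z{c}{a}=(-1)^{d/2}\z{c}{a}$, then $+\z{c}{a}=(-1)^{d/2}\z{c}{a}$ respectively, matching the claim.

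The main obstacle is purely bookkeeping: keeping the sign and root-of-unity conventions straight across the three congruence classes of $d$, and verifying cleanly that the $\tau$-part of the $\tmu$-multiplier cancels the shift in the quadratic prefactor. No new analytic input is needed beyond \eqref{EqZTheorem1.11P1} and the definition \eqref{def:Mtilde}.
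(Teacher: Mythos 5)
Your proposal is correct and is exactly the paper's approach: the paper's entire proof is that all four identities ``follow immediately from (\ref{EqZTheorem1.11P1}),'' and your write-up simply supplies the bookkeeping (lattice shifts, signs, and cancellation of the $q$-power against the Gaussian prefactor) that the paper leaves implicit. All of your case computations, including the sign cancellations in the $a\mapsto a+c$ shift for even $d$ and the extra $-1$ from the constant term $\tfrac12$ when $d\equiv 0\pmod 4$, check out.
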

\begin{proof}
These all follow immediately from (\ref{EqZTheorem1.11P1}).
\end{proof}

We now determine a transformation for $\widetilde{M}_{d,k}(a,b,c;\tau)$, under the action of $\SLTwo$, in terms of $\tmu(u,v;\tau)$.

\begin{proposition}\label{prop:MtransformationSL2}
Let $A=\begin{pmatrix}\alpha & \beta \\ \gamma & \delta \end{pmatrix}\in\SLTwo$.  Then, when $d$ is odd,
\begin{align*}
\widetilde{M}_{d,k}(a,b,c;A\tau) 
&= 
	2\z{c}{a} \exp\left( -\tfrac{\pi i }{c^2} 
	\left( \tfrac{\alpha\beta(2b-cd^2)^2 }{4d^4}	+ 4a^2\gamma\delta+ \tfrac{2a\beta\gamma(2b-cd^2)}{d^2}	 \right)\right) 
	\nu(A)^{-3} \sqrt{\gamma\tau+\delta} 
	\\&\quad\times
	q^{-\frac{(4ad^2\gamma + 2b\alpha - cd^2\alpha)^2}{8d^4c^2}} 
	\tmu\left( \tfrac{2a\delta}{c} + \tfrac{b\beta}{cd^2} + \tfrac{k\beta}{d} + \left(\tfrac{2ad^2\gamma+b\alpha}{cd^2}+\tfrac{k\alpha}{d}\right)\tau
		, \left( \tfrac{1}{2} + \tfrac{k}{d} \right)\alpha\tau + \tfrac{\beta}{2} + \tfrac{k\beta}{d}; \tau \right)
	,
\end{align*}
when $d\equiv 2 \pmod 4$,
\begin{align*}
&\widetilde{M}_{d,k}\left(a,b,c;A\tau\right) 
=
	2\zeta_c^{\frac{a}{2}} \exp\left(-\tfrac{\pi i}{c^2} 
	\left(\tfrac{\alpha\beta(4b-cd^2)^2}{4d^4} + \gamma\delta(a-c)^2 + \tfrac{\beta\gamma(a-c)(4b-cd^2)}{d^2} \right) \right) 
	\nu(A)^{-3} \sqrt{\gamma\tau+\delta} 
	\\&~\times
	q^{- \frac{(2d^2\gamma(a-c) + \alpha(4b-cd^2))^2}{8c^2d^4}} 
	\tmu\left( \left(\tfrac{a\delta}{c} + \tfrac{2b\beta}{cd^2} + \tfrac{2k\beta}{d} \right) 
		+ \left(\tfrac{a\gamma}{c} + \tfrac{2b\alpha}{cd^2} + \tfrac{2k\alpha}{d} \right) \tau
		, \left(\delta+ \tfrac{\beta}{2} + \tfrac{2k\beta}{d}\right) + \left( \gamma + \tfrac{\alpha}{2} + \tfrac{2k\alpha}{d}\right) \tau 
		; \tau \right),
\end{align*}
and when $d \equiv 0 \pmod 4$, 
\begin{align*}
&\widetilde{M}_{d,k}\left(a,b,c;A\tau\right) 
= 
	2\zeta_c^{\frac{a}{2}}\exp\left(-\frac{\pi i}{c^2} 
	\left( \tfrac{\alpha\beta(4b-cd^2)^2}{4d^4} + \tfrac{\gamma\delta(2a-c)^2}{4} + \tfrac{\beta\gamma(2a-c)(4b-cd^2)}{2d^2} \right) \right) 
	\nu(A)^{-3} \sqrt{\gamma\tau+\delta}
	\\&~ \times
	q^{- \frac{(d^2\gamma(2a-c) + \alpha(4b-cd^2))^2}{8c^2d^4}} 
 	\tmu\left( \left(\tfrac{a\delta}{c} + \tfrac{2b\beta}{cd^2} + \tfrac{2k\beta}{d} \right) + \left(\tfrac{a\gamma}{c} 
 	+ \tfrac{2b\alpha}{cd^2} + \tfrac{2k\alpha}{d} \right) \tau
 		, \left(\tfrac{\delta}{2} + \tfrac{\beta}{2} + \tfrac{2k\beta}{d}\right) 
 		+ \left( \tfrac{\gamma}{2} + \tfrac{\alpha}{2} + \tfrac{2k\alpha}{d}\right) \tau ; \tau  \right)
 	.
\end{align*}
\end{proposition}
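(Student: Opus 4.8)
The plan is to reduce everything to the single modular transformation \eqref{EqZTheorem1.11P2} for $\tmu$, handling the three parity cases of $d$ in a uniform way. Starting from the definition \eqref{def:Mtilde} with $\tau$ replaced by $A\tau$, each argument of $\tmu$ is an affine function of $A\tau$, say of the shape $P + Q\cdot A\tau$. Since $A\tau = \frac{\alpha\tau+\beta}{\gamma\tau+\delta}$, I would clear the denominator and write each argument as $\frac{\tilde u}{\gamma\tau+\delta}$ with $\tilde u = P(\gamma\tau+\delta) + Q(\alpha\tau+\beta)$, an affine function of $\tau$. Expanding $\tilde u$ and $\tilde v$ and collecting the constant and $\tau$-linear coefficients immediately reproduces the first and second arguments of $\tmu$ claimed in each formula; in particular the $\frac{k}{d}$- (respectively $\frac{2k}{d}$-) contributions distribute exactly as stated.

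Next I would apply \eqref{EqZTheorem1.11P2} with $u=\tilde u$, $v=\tilde v$, which produces the factor $\nu(A)^{-3}\sqrt{\gamma\tau+\delta}$, the Gaussian factor $\exp\left(-\frac{\pi i\gamma(\tilde u-\tilde v)^2}{\gamma\tau+\delta}\right)$, and the function $\tmu(\tilde u,\tilde v;\tau)$. The remaining work is to merge this Gaussian factor with the prefactor $q^{-(\cdots)}$ from \eqref{def:Mtilde}: under $\tau\mapsto A\tau$ that prefactor becomes $\exp\left(-\frac{\pi i s^2(\alpha\tau+\beta)}{\gamma\tau+\delta}\right)$, where $s$ is chosen so that the prefactor exponent is $-\frac{s^2}{2}$, and then to show that the combined exponent is affine in $\tau$.

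The crucial simplification, identical in all three cases, is the shape of $\tilde u-\tilde v$: the $k$-dependent terms cancel, leaving $\tilde u-\tilde v = A_0(\gamma\tau+\delta)+s(\alpha\tau+\beta)$ for explicit $A_0,s$. The two terms carrying a $\frac{1}{\gamma\tau+\delta}$ denominator then combine to
\[
-\frac{\pi i s^2(\alpha\tau+\beta)}{\gamma\tau+\delta}\bigl(\gamma(\alpha\tau+\beta)+1\bigr),
\]
and since $\alpha\delta-\beta\gamma=1$ gives $\gamma(\alpha\tau+\beta)+1=\alpha(\gamma\tau+\delta)$, this collapses to $-\pi i s^2\alpha(\alpha\tau+\beta)$, which is affine in $\tau$. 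Hence the full exponent is affine: its constant part assembles into the $\tau$-independent factor $\exp\left(-\frac{\pi i}{c^2}(\cdots)\right)$, and completing the square shows the coefficient of $\tau$ in the exponent equals $-\pi i\left(s\alpha+A_0\gamma\right)^2$, so the corresponding factor is $q^{-\frac12(s\alpha+A_0\gamma)^2}$, which is precisely the stated power of $q$.

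Finally I would record the three cases by inserting the appropriate data: $A_0=\frac{2a}{c}$ with $s=\frac{2b-cd^2}{2cd^2}$ when $d$ is odd, $A_0=\frac{a-c}{c}$ with $s=\frac{4b-cd^2}{2cd^2}$ when $d\equiv 2\pmod 4$, and $A_0=\frac{2a-c}{2c}$ with $s=\frac{4b-cd^2}{2cd^2}$ when $d\equiv 0\pmod 4$ (also using $\z{2c}{a}=\z{c}{a/2}$ to match the prefactor). The computation is then uniform, and the main obstacle is purely the bookkeeping: in each case one must check that the collected constant coefficient equals the stated argument of the exponential and that $\left(s\alpha+A_0\gamma\right)^2$ reproduces the stated numerator in the $q$-exponent. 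These are routine but error-prone algebraic identities; all of the conceptual content sits in the affineness argument above.
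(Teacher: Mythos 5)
Your proposal is correct and is essentially the paper's own proof: the paper likewise applies \eqref{EqZTheorem1.11P2} after writing each argument of $\tmu$ over the common denominator $\gamma\tau+\delta$, then merges the resulting Gaussian factor with the transformed prefactor using $\gamma(\alpha\tau+\beta)+1=\alpha(\gamma\tau+\delta)$ and simplifies (the paper carries this out explicitly only for $d$ odd, declaring the even cases similar). Your uniform $(A_0,s)$ bookkeeping across the three parity cases is just a tidier organization of the same computation, and your case data and the resulting exponential and $q$-power exponents all check out against the stated formulas.
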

\begin{proof}
We only give the proof for $d$ odd, as the other cases are similar.
When $d$ is odd, we see by (\ref{EqZTheorem1.11P2}) that
\begin{align*}
	&\widetilde{M}_{d,k}(a,b,c;A\tau)
	\\	
	&=
		2\z{c}{a}\exp\left(-\tfrac{\pi i(2b-cd^2)^2A\tau}{4d^4c^2}\right)
		\tmu\left(
			\tfrac{2a}{c} + \left(\tfrac{b}{cd^2}+\tfrac{k}{d}\right)A\tau,
			\left( \tfrac{1}{2} + \tfrac{k}{d} \right)A\tau ; A\tau	
		\right)
	\\
	&=
		2\z{c}{a}\exp\left(-\tfrac{\pi i(2b-cd^2)^2A\tau}{4d^4c^2}\right)
		\exp\left(	
			-\tfrac{\pi i\gamma}{\gamma\tau+\delta} 
			\left(  
			\tfrac{2a(\gamma\tau+\delta)}{c} + (\tfrac{b}{cd^2}-\tfrac{1}{2})(\alpha\tau+\beta)
			 \right)^2
		\right)
		\nu(A)^{-3}
		\sqrt{\gamma\tau+\delta}
		\\&\quad\times
		\tmu\left(
			\tfrac{2a(\gamma\tau+\delta)}{c} + \left(\tfrac{b}{cd^2}+\tfrac{k}{d}\right)(\alpha\tau+\beta),
			\left( \tfrac{1}{2} + \tfrac{k}{d} \right)(\alpha\tau+\beta) ; \tau	
		\right)
	\\
	&=
		2\z{c}{a}
		\exp\left( -\tfrac{\pi i(\alpha^2\tau+\alpha\beta)(2b-cd^2)^2 }{4d^4c^2} \right)
		\exp\left(	
			-\pi i\gamma 
			\left(  
				\tfrac{4a^2(\gamma\tau+\delta)}{c^2}			
				+
				\tfrac{2a(\alpha\tau+\beta)(2b-cd^2)}{c^2d^2}						
			 \right)
		\right)
		\nu(A)^{-3}
		\sqrt{\gamma\tau+\delta}
		\\&\quad\times		
		\tmu\left(
			\tfrac{2a(\gamma\tau+\delta)}{c} + \left(\tfrac{b}{cd^2}+\tfrac{k}{d}\right)(\alpha\tau+\beta),
			\left( \tfrac{1}{2} + \tfrac{k}{d} \right)(\alpha\tau+\beta) ; \tau	
		\right)	
	.
\end{align*}
The result then follows after simplifying.
\end{proof}

In order to obtain transformation properties for the 
$\widetilde{M}_{d,k}(a,b,c;\tau)$ functions at the appropriate parameters 
coming from \eqref{eqn:OviaM}, it will be useful to first define the following 
groups and note their implications for elements.  We define
\[
\G_{a,b,c,d} :=   \G_0\left( \frac{c^2d^2}{\gcd(a^2,c^2d^2)} \right)  \cap \G_1 \left( \frac{c}{\gcd(a,c)} \right)   \cap \G^0 \left(\frac{c}{\gcd(b,c)}  \right) \cap \G^0 \left(\frac{c^2d^2}{\gcd(b^2,c^2d^2)}  \right). 
\]
Thus if $A=\begin{psmallmatrix}\alpha&\beta\\\gamma&\delta\end{psmallmatrix}\in\G_{a,b,c,d}$, 
then the entries in $A$ satisfy the following congruences:
\begin{align}\label{eqn:c1}
a^2\gamma &\equiv 0\pmod{c^2d^2},& 
a(\alpha -1) \equiv a(\delta -1) &\equiv 0 \pmod{c}
,\nonumber\\
b\beta & \equiv 0 \pmod{c},&  
b^2\beta &\equiv 0 \pmod{c^2d^2}.  
\end{align} 
Furthermore, we define 
\[
\G_{a,b,c,d}' := \begin{cases}
\G_0\left( \frac{2cd^2}{\gcd(a,2cd^2)} \right) \cap \G_1 \left(\frac{c^2d^2}{\gcd(ab,c^2d^2)}  \right) \cap \G_1\left( \frac{2cd^2}{\gcd(b,2cd^2)} \right)  & \text{ when } d \text{ odd}, \\
\G_0\left( \frac{cd^2}{\gcd(a,cd^2)} \right) \cap \G_1 \left(\frac{c^2d^2}{\gcd(2ab,c^2d^2)}  \right) \cap \G_1\left( \frac{cd^2}{\gcd(b,cd^2)} \right)  & \text{ when } d\equiv 2 \pmod{4}, \\
\G_0\left( \frac{cd^2}{2\gcd(a,\frac{cd^2}{2})} \right) \cap \G_1 \left(\frac{c^2d^2}{\gcd(2ab,c^2d^2)}  \right) \cap \G_1\left( \frac{cd^2}{2\gcd(b,\frac{cd^2}{2})} \right)  & \text{ when }d\equiv 0 \pmod{4},
\end{cases}
\]
so that if $A=\begin{psmallmatrix}\alpha&\beta\\\gamma&\delta\end{psmallmatrix}\in\G_{a,b,c,d}'$, 
then when $d$ is odd we have that
\begin{align}\label{eqn:codd}
a\gamma &\equiv 0 \pmod {cd^2}
,&  
ab(\alpha -1) \equiv ab(\delta -1) &\equiv 0 \pmod{c^2d^2}
,\nonumber\\ 
b(\alpha-1) \equiv b(\delta -1) & \equiv 0 \pmod{cd^2};
\end{align}
when $d\equiv 2\pmod{4}$ we have that
\begin{align}\label{eqn:c2}
a\gamma &\equiv 0 \pmod {cd^2}  
,&
2ab(\alpha -1) \equiv 2ab(\delta -1) &\equiv 0 \pmod{c^2d^2} 
,\nonumber\\
b(\alpha-1) \equiv b(\delta -1) & \equiv 0 \pmod{cd^2} ;
\end{align}
and when $d\equiv 0\pmod{4}$ we have that
\begin{align}\label{eqn:c0}
a\gamma &\equiv 0 \pmod {cd^2/2}
,&
2ab(\alpha -1) \equiv 2ab(\delta -1) &\equiv 0 \pmod{c^2d^2}
,\nonumber\\
b(\alpha-1) \equiv b(\delta -1) & \equiv 0 \pmod{cd^2/2} 
. 
\end{align}

We now give transformations for $\widetilde{M}_{d,k}(a,b,c;\tau)$
as a corollary of Proposition \ref{prop:MtransformationSL2}, depending on the $2$-divisibility of $d$.

\begin{corollary}\label{cor:Mtransf1}
If $d$ is odd
and $A=\begin{pmatrix}\alpha & \beta \\ \gamma & \delta \end{pmatrix}
\in \Gamma_0(2d^2)\cap \G_{a,b,c,d} \cap \G_{a,b,c,d}' \cap \G_1\left(\frac{d}{\gcd(d,k)} \right)$, then
\begin{align*}
	\widetilde{M}_{d,k}(a,b,c; 2d^2A\tau)
	&=
		(-1)^{\beta + \frac{\alpha-1}{2}}
		i^{-\alpha\beta}		
		\,\nu( \mA{2} )^{-3} \sqrt{\gamma\tau+\delta}
		\widetilde{M}_{d,k}(a, b, c; 2d^2\tau)		
	.
\end{align*}
If $d \equiv 2 \pmod{4}$ and $A=\begin{pmatrix}\alpha & \beta \\ \gamma & \delta \end{pmatrix}\in \G_0(\frac{d^2}{2}) \cap \G_{a,b,c,d} \cap \G_{a,b,c,d}' \cap \G_1\left(\frac{d}{\gcd(d,k)} \right)$, then
\begin{align*}
\widetilde{M}_{d,k}\left(a,b,c; \frac{d^2}{2}A\tau\right) 
&= 
	(-1)^{\beta + \frac{\alpha-1}{2}} i^{-\alpha \beta} \nu(\mA{2})^{-3} \sqrt{\gamma\tau+\delta} 
	\widetilde{M}_{d,k}\left(a,b,c; \frac{d^2\tau}{2}\right).
\end{align*}
If $d\equiv 0 \pmod{4}$ and $A=\begin{pmatrix}\alpha & \beta \\ \gamma & \delta \end{pmatrix}\in\Gamma_0(4d^2)\cap \G_{a,b,c,d} \cap \G_{a,b,c,d}' \cap \G_1\left(\frac{d}{\gcd(d,k)} \right)$, then
\begin{align*}
\widetilde{M}_{d,k}\left(a,b,c; \frac{d^2}{2}A\tau\right) 
=  
	(-1)^{\beta+\frac{\alpha -1}{2}}i^{-\alpha\beta} \nu(\mA{2})^{-3} \sqrt{\gamma\tau+\delta} 
	\widetilde{M}_{d,k}\left(a, b ,c; \frac{d^2\tau}{2}\right).
\end{align*}
\end{corollary}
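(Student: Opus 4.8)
I plan to treat the three cases uniformly, carrying out the $d$ odd case in detail and indicating the modifications for $d$ even. The key observation is the identity $2d^2A\tau = \mA{2d^2}(2d^2\tau)$, and since $A\in\Gamma_0(2d^2)$ the matrix $\mA{2d^2}=\begin{psmallmatrix}\alpha & 2d^2\beta\\ \frac{\gamma}{2d^2} & \delta\end{psmallmatrix}$ lies in $\SLTwo$ (its determinant is still $\alpha\delta-\beta\gamma=1$). Thus I would apply Proposition \ref{prop:MtransformationSL2} with the matrix $B=\mA{2d^2}$ evaluated at the point $2d^2\tau$. This expresses $\widetilde{M}_{d,k}(a,b,c;2d^2A\tau)$ as an explicit product of an exponential prefactor, a Gaussian $q$-power, a factor $\nu(\mA{2d^2})^{-3}$, the automorphy factor $\sqrt{\gamma\tau+\delta}$ (which is exactly $\sqrt{\frac{\gamma}{2d^2}(2d^2\tau)+\delta}$), and a single $\tmu(u_0';v_0';2d^2\tau)$.

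The next step is to show that the arguments $u_0',v_0'$ of this $\tmu$ differ from the arguments $\frac{2a}{c}+(\frac{b}{cd^2}+\frac{k}{d})(2d^2\tau)$ and $(\frac12+\frac{k}{d})(2d^2\tau)$ appearing in $\widetilde{M}_{d,k}(a,b,c;2d^2\tau)$ by an element of $\Z+\Z(2d^2\tau)$. Writing out the differences after substituting $\beta_B=2d^2\beta$ and $\gamma_B=\frac{\gamma}{2d^2}$, one finds that the constant part of the first difference is integral precisely because $c\mid a(\delta-1)$ and $c\mid b\beta$ (from \eqref{eqn:c1}), while its $\tau$-coefficient is integral because $cd^2\mid a\gamma$ and $cd^2\mid b(\alpha-1)$ (from \eqref{eqn:codd}) together with $d\mid k(\alpha-1)$, the last following from $A\in\Gamma_1(\frac{d}{\gcd(d,k)})$ and $\gcd(d,k)\mid k$. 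For the second argument, integrality of the $\tau$-coefficient requires $\frac{\alpha-1}{2}+\frac{k(\alpha-1)}{d}\in\Z$, which holds since $\alpha$ is odd (because $\alpha\delta\equiv1\pmod{2d^2}$ forces $\alpha\delta$ odd) and $d\mid k(\alpha-1)$. With lattice membership established, I would invoke the elliptic transformation \eqref{EqZTheorem1.11P1} to collapse $\tmu(u_0';v_0';2d^2\tau)$ back to the $\tmu$ in $\widetilde{M}_{d,k}(a,b,c;2d^2\tau)$, at the cost of one further sign and Gaussian factor; this is the same mechanism underlying Proposition \ref{PropDOddMShifts}.

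It then remains to simplify the accumulated scalar. For the multiplier I would use that, since $d$ is odd, $\nu(\mA{2d^2})^{3}=\nu(\mA{2})^{3}$ on $\Gamma_0(2d^2)$ as recorded in the preliminaries, so $\nu(\mA{2d^2})^{-3}=\nu(\mA{2})^{-3}$, which is exactly the multiplier appearing in the statement. \textbf{The main obstacle is the bookkeeping of the remaining exponential and root-of-unity factors:} one must verify that the product of the $\exp\!\left(-\tfrac{\pi i}{c^2}(\cdots)\right)$ prefactor from Proposition \ref{prop:MtransformationSL2}, the two Gaussian factors $q^{-\frac{(2a\gamma+2b\alpha-cd^2\alpha)^2}{8d^4c^2}}$ (from Proposition \ref{prop:MtransformationSL2} after substituting $\gamma_B=\frac{\gamma}{2d^2}$) and $q^{-\frac{(2b-cd^2)^2}{8c^2d^4}}$ (from $\widetilde{M}_{d,k}(a,b,c;2d^2\tau)$), both with $q=e^{2\pi i\cdot 2d^2\tau}$, and the factor produced by \eqref{EqZTheorem1.11P1}, collapses to exactly $(-1)^{\beta+\frac{\alpha-1}{2}}i^{-\alpha\beta}$. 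I expect this to be a purely arithmetic computation driven by the determinant relation $\alpha\delta-\beta\gamma=1$ and the congruences \eqref{eqn:c1} and \eqref{eqn:codd}, in which all $\tau$-dependent exponentials cancel and the surviving constant is the claimed one. Comparison with the multiplier of $\tfrac{\eta(2\tau)}{\eta(\tau)^2}$ in Proposition \ref{prop:eta_transf} provides a useful consistency check, as that is precisely the multiplier in the statement.

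Finally, for the even cases I would proceed identically using $\frac{d^2}{2}A\tau=\mA{d^2/2}(\frac{d^2}{2}\tau)$ and the appropriate line of Proposition \ref{prop:MtransformationSL2}, replacing \eqref{eqn:c1}, \eqref{eqn:codd} by \eqref{eqn:c1} together with \eqref{eqn:c2} or \eqref{eqn:c0}, and $\Gamma_0(2d^2)$ by $\Gamma_0(\frac{d^2}{2})$ or $\Gamma_0(4d^2)$. Writing $d=2d'$, we have $\frac{d^2}{2}=2d'^2$; when $d\equiv2\pmod4$ the integer $d'$ is odd and again $\nu(\mA{d^2/2})^{-3}=\nu(\mA{2})^{-3}$ on $\Gamma_0(\frac{d^2}{2})$, whereas when $d\equiv0\pmod4$ the integer $d'$ is even and instead $\nu(\mA{d^2/2})^{-3}=i^{\alpha\beta}\nu(\mA{2})^{-3}$ on $\Gamma_0(4d^2)$. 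In this last case the extra $i^{\alpha\beta}$ must be absorbed by the correspondingly different exponential factors so that the same multiplier $(-1)^{\beta+\frac{\alpha-1}{2}}i^{-\alpha\beta}$ emerges; the additional $\frac12$ in the second $\tmu$-argument for $d\equiv0\pmod4$ contributes to the elliptic shift and is handled exactly as above.
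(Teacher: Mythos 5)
Your proposal is correct and follows essentially the paper's own argument: apply Proposition \ref{prop:MtransformationSL2} to $\mA{2d^2}$ at the point $2d^2\tau$, use the congruences \eqref{eqn:c1} and \eqref{eqn:codd} (together with $d\mid k(\alpha-1)$ from $\G_1(\frac{d}{\gcd(d,k)})$ and the oddness of $\alpha$) with the elliptic transformation \eqref{EqZTheorem1.11P1} to collapse the resulting $\tmu$ back to the original one, and then invoke the identities $\nu(\mA{2d^2})^3=\nu(\mA{2})^3$ (respectively the $i^{\alpha\beta}$-twisted identity when $d\equiv 0\pmod 4$) to obtain the stated multiplier. The only cosmetic difference is that you perform the lattice shift of both $\tmu$-arguments in a single step, whereas the paper first shifts partially to land on $\widetilde{M}_{d,k\alpha}(a\delta+b\beta,\,a\gamma+b\alpha,\,c;2d^2\tau)$ and then reduces the parameters back to $(a,b,k)$ via Proposition \ref{PropDOddMShifts}; the underlying computation and required congruences are identical.
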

\begin{proof}
First we consider when $d$ is odd. Since 
$A=\begin{psmallmatrix}\alpha&\beta\\\gamma&\delta\end{psmallmatrix}\in\Gamma_0(2d^2)$, by 
Proposition \ref{prop:MtransformationSL2} we have that
\begin{align*}
	&\widetilde{M}_{d,k}(a,b,c; 2d^2 A\tau)
	=
	\widetilde{M}_{d,k}(a,b,c; \mA{2d^2} 2d^2\tau)
	\\
	&=
		2\z{c}{a} 
		\exp\left(-\tfrac{\pi i}{c^2d^2}\left( 
			\tfrac{\alpha\beta(2b-cd^2)^2}{2}
			+		
			2a^2\gamma\delta	
			+
			2a\beta\gamma(2b-cd^2)			
		\right)\right)
		\nu(\mA{2d^2})^{-3} \sqrt{\gamma\tau+\delta}
		\\&\quad\times
		q^{ - \frac{(\alpha(2b-cd^2) + 2a\gamma)^2}{4c^2d^2} }		
		\tmu\left( 
			\tfrac{2a\delta+2b\beta}{c} + 2kd\beta + \tfrac{2a\gamma+2b\alpha}{c}\tau + 2dk\alpha\tau, 
			d^2\alpha\tau + 2dk\alpha\tau + d^2\beta + 2kd\beta			
			; 2d^2\tau 
		\right)	
.
\end{align*}
To proceed we note that $\alpha\equiv 1\pmod{2}$, so writing
$d^2\alpha\tau = d^2\tau + \frac{(\alpha-1)}{2}2d^2\tau$, together with 
(\ref{EqZTheorem1.11P1}) gives that
\begin{align*}
	&\tmu\left( 
		\tfrac{2a\delta+2b\beta}{c} + 2kd\beta + \tfrac{2a\gamma+2b\alpha}{c}\tau + 2dk\alpha\tau,  
		d^2\alpha\tau + 2dk\alpha\tau  + d^2\beta + 2kd\beta ; 2d^2\tau		
	\right)
	\\
	&=
		(-1)^{\beta+\frac{\alpha-1}{2}}	
		\exp\left( -\tfrac{2\pi i(\alpha-1)(a\delta+b\beta)}{c}	\right)	
		q^{ \frac{d^2(\alpha-1)^2}{4} + \frac{d^2(\alpha-1)}{2} - \frac{(\alpha-1)(a\gamma+b\alpha)}{c} }
		\\&\quad\times		
		\tmu\left( 
			\tfrac{2a\delta+2b\beta}{c} + \tfrac{2a\gamma+2b\alpha}{c}\tau + 2dk\alpha\tau,  
			d^2\tau + 2dk\alpha\tau ; 2d^2\tau		
		\right)
.
\end{align*}
Along with the fact that $\nu(\mA{2d^2})^3=\nu(\mA{2})^3$, we then have that
\begin{align*}
	&\widetilde{M}_{d,k}(a,b,c; 2d^2 A\tau)
	\\
	&=
		2(-1)^{\beta + \frac{\alpha-1}{2}}
		i^{-\alpha\beta}
		\z{c}{a - (\alpha-1)(a\delta+b\beta) + a\beta(b+\gamma)}
		\z{c^2d^2}{-b^2\alpha\beta-a^2\gamma\delta-2ab\beta\gamma} 
		\,\nu(\mA{2})^{-3} \sqrt{\gamma\tau+\delta}
		\\&\quad\times
		q^{ - \frac{ (2a\gamma+2b\alpha-cd^2)^2}{4c^2d^2} }		
		\tmu\left( \tfrac{2a\delta+2b\beta}{c}
			+\tfrac{2a\gamma+2b\alpha}{c}\tau + 2dk\alpha\tau, 
			d^2\tau + 2dk\alpha\tau; 2d^2\tau 
		\right),	
\end{align*}		
and so we see that 
\begin{align}\label{PropDOddMTransformationInitial}
\widetilde{M}_{d,k}(a,b,c; 2d^2 A\tau)
&= 
	(-1)^{\beta + \frac{\alpha-1}{2}}i^{-\alpha\beta}	
	\z{c}{a - \alpha(a\delta+b\beta) + a\beta(b+\gamma)}\z{c^2d^2}{-b^2\alpha\beta-a^2\gamma\delta-2ab\beta\gamma} 
	\nonumber\\&\quad\times
	\nu(\mA{2})^{-3} \sqrt{\gamma\tau+\delta} \widetilde{M}_{d,k\alpha}(a\delta+b\beta, a\gamma+b\alpha, c; 2d^2\tau)
.		
\end{align}

But now considering our full group $\Gamma_0(2d^2)\cap \G_{a,b,c,d} \cap \G_{a,b,c,d}' \cap \G_1\left(\frac{d}{\gcd(d,k)} \right)$, we observe $k\alpha \equiv k\pmod{d}$, as well as the congruences from \eqref{eqn:c1} and \eqref{eqn:codd}.  Together with Proposition \ref{PropDOddMShifts}, this allows us to simplify \eqref{PropDOddMTransformationInitial} to obtain 
\begin{align*}
	&\widetilde{M}_{d,k}(a,b,c;2d^2A\tau)
	\\
	&=	
		(-1)^{\beta + \frac{\alpha-1}{2} + \frac{a\gamma}{cd^2} + \frac{b(\alpha-1)}{cd^2}}
		i^{-\alpha\beta}		
		\z{c}{a - \alpha(a\delta+b\beta) + a\beta(b+\gamma) + \frac{2a^2\gamma}{cd^2} + \frac{2ab(\alpha-1)}{cd^2}  }
		\z{c^2d^2}{-b^2\alpha\beta - a^2\gamma\delta - 2ab\beta\gamma } 
		\,\nu(\mA{2})^{-3} 
		\\&\quad\times		
		\sqrt{\gamma\tau+\delta}
		\widetilde{M}_{d,k}(a, b, c; 2d^2\tau)		
	\\
	&=
		(-1)^{\beta + \frac{\alpha-1}{2} }
		i^{-\alpha\beta}		
		\,\nu( \mA{2} )^{-3} \sqrt{\gamma\tau+\delta}
		\widetilde{M}_{d,k}(a, b, c; 2d^2\tau)		
\end{align*}
as desired.

The proofs when $d$ is even are much the same. However,
when $d\equiv 2 \pmod{4}$ we have $\frac{d^2}{2}=2m^2$ for odd $m$, and so 
the discussion preceding \eqref{EqEtaMultipler} gives that 
$\nu(\mA{\frac{d^2}{2}})^{3}= \nu(\mA{2})^{3}$ for 
$A\in \Gamma_0(\frac{d^2}{2})$.
When $d\equiv 0\pmod{4}$ we have  that $\frac{d^2}{2}=2m^2$ for even $m$, and 
so the discussion proceeding \eqref{EqEtaMultipler} gives that
$\nu(\mA{\frac{d^2}{2}})^{3}= i^{-\alpha\beta}\nu(\mA{2})^{3}$ for 
$A\in \Gamma_0(4d^2)$.
\end{proof}

\subsection{Maass form properties for $\widetilde{M}_{d,k}(a,b,c;\tau)$}

Furthermore, we obtain Maass form properties for the functions 
$\widetilde{M}_{d,k}(a,b,c;\tau)$ at the appropriate parameters coming from 
\eqref{eqn:OviaM}.  We first consider when $d$ is odd.

\begin{proposition}\label{PropDOddMdkMaassForm}
Suppose $d$ is odd. Then $\widetilde{M}_{d,k}(a,b,c;2d^2\tau)$ has at most linear exponential growth at
the cusps and is annihilated by $\Delta_{\frac{1}{2}}$.
\end{proposition}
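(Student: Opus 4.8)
The plan is to recognize $\widetilde{M}_{d,k}(a,b,c;2d^2\tau)$ as a scaling of Zwegers' completed building block and to transport the two analytic conditions through the map $\tau\mapsto 2d^2\tau$. Reading off \eqref{def:Mtilde} with modular variable $\sigma$ and writing $q_\sigma=e^{2\pi i\sigma}$, set
\[
G(\sigma):=\widetilde{M}_{d,k}(a,b,c;\sigma)=2\z{c}{a}\,q_\sigma^{-\frac{(2b-cd^2)^2}{8c^2d^4}}\,\tmu\!\left(u_2+u_1\sigma,\,v_2+v_1\sigma;\,\sigma\right),
\]
where $u_1=\frac{b}{cd^2}+\frac{k}{d}$, $u_2=\frac{2a}{c}$, $v_1=\frac12+\frac{k}{d}$, $v_2=0$ are rational and $u_1-v_1=\frac{2b-cd^2}{2cd^2}$, so $G$ is exactly the function $q_\sigma^{-\frac{(u_1-v_1)^2}{2}}\tmu(u_1\sigma+u_2,v_1\sigma+v_2;\sigma)$ discussed at the end of Section~2 and in Corollary~\ref{PropTMuOrders} (with $\tau$ there replaced by $\sigma$). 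Since $\widetilde{M}_{d,k}(a,b,c;2d^2\tau)=G(2d^2\tau)$, it suffices to show that $G$ is annihilated by $\Delta_{\frac12}$ and has at most linear exponential growth at every cusp, and then that $\tau\mapsto2d^2\tau$ preserves both.

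For harmonicity I would split $\tmu=\mu+\frac{i}{2}R$. The term $q_\sigma^{-\frac{(u_1-v_1)^2}{2}}\mu(u_2+u_1\sigma,v_2+v_1\sigma;\sigma)$ is holomorphic on $\mathcal{H}$ (the standing hypothesis $c\nmid 2a$ or $cd\nmid b$ rules out the vanishing of the relevant denominators), and holomorphic functions are annihilated by $\Delta_{\frac12}$. For the $R$-term, the argument $u-v=(u_1-v_1)\sigma+(u_2-v_2)$ has the shape $a'\sigma-b'$ with $a',b'\in\mathbb{Q}$; the normalizing factor $q_\sigma^{-\frac{(u_1-v_1)^2}{2}}=e^{-\pi i(u_1-v_1)^2\sigma}$ is precisely what cancels the holomorphic exponential in \eqref{EqRToIntegral}, so that $q_\sigma^{-\frac{(u_1-v_1)^2}{2}}R((u_1-v_1)\sigma+(u_2-v_2);\sigma)$ is a constant multiple of the non-holomorphic Eichler integral $\int_{-\overline{\sigma}}^{i\infty}\frac{g_{a'+\frac12,b'+\frac12}(z)}{\sqrt{-i(z+\sigma)}}\,dz$ (using \eqref{EqRToIntegralBoundaryCase} in the boundary case $u_1-v_1=-\tfrac12$). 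Such an integral is a weight-$\frac12$ harmonic Maass form, hence killed by $\Delta_{\frac12}$, so $\Delta_{\frac12}G=0$. To descend to $\widetilde{M}_{d,k}(a,b,c;2d^2\tau)$, note that $\tau\mapsto 2d^2\tau$ is, up to a nonzero constant, the weight-$\frac12$ action of $\begin{psmallmatrix}d\sqrt2&0\\0&1/(d\sqrt2)\end{psmallmatrix}\in SL_2(\mathbb{R})$; since $\Delta_{\frac12}$ commutes with the weight-$\frac12$ slash of $SL_2(\mathbb{R})$, we conclude $\Delta_{\frac12}\,\widetilde{M}_{d,k}(a,b,c;2d^2\tau)=0$.

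For the growth condition I must show that for each $A=\begin{psmallmatrix}\alpha&\beta\\\gamma&\delta\end{psmallmatrix}\in\SLTwo$ the function $(\gamma\tau+\delta)^{-1/2}\widetilde{M}_{d,k}(a,b,c;2d^2A\tau)$ differs from a polynomial in a fractional power of $q^{-1}$ by a term that is $O(e^{-\epsilon y})$ as $y\to\infty$. Since $2d^2A\tau$ tends to the cusp $2d^2\alpha/\gamma$ of the $\sigma$-plane, this is precisely the growth of $G$ at an arbitrary cusp. I would handle it as in the reduction underlying Corollary~\ref{PropTMuOrders}: choose $C\in\SLTwo$ carrying $i\infty$ to that cusp, apply \eqref{EqZTheorem1.11P2} to return the period variable of $\tmu$ to $i\infty$ and \eqref{EqZTheorem1.11P1} to return the elliptic arguments to a fixed parallelogram, which again produces a constant times $\sqrt{\gamma\tau+\delta}\,q^{r}\tmu(\text{linear in }\tau;\tau)$ with rational arguments. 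The $\mu$-part contributes only finitely many negative $q$-powers through its Lambert series, while the $R$-part is governed by \eqref{eqn:growth_check}, which expresses $R$ as a rational function of a fractional power of $q$ plus $O(y^{-1/2}e^{-\epsilon y})$. Thus $G$, and hence its pullback $\widetilde{M}_{d,k}(a,b,c;2d^2\tau)$, has at most linear exponential growth at every cusp.

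I expect the main obstacle to be the bookkeeping in this last step: one must verify that composing the period transformation \eqref{EqZTheorem1.11P2} with the elliptic shifts \eqref{EqZTheorem1.11P1} (the data already organized in Proposition~\ref{prop:MtransformationSL2}) always returns a $\tmu$ whose linear arguments have \emph{rational} coefficients, so that \eqref{eqn:growth_check} applies verbatim and no genuinely exponentially growing term survives. The scaling by $2d^2$ causes no trouble here, since it sends cusps to cusps and rational arguments to rational arguments.
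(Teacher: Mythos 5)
Your argument is correct in substance and, for the growth condition, is essentially the paper's own: one factors $2d^2A\tau=LB\tau$ with $L\in\SLTwo$ and $B$ upper triangular (this is exactly the bookkeeping you defer to Proposition \ref{prop:MtransformationSL2}), lands on a nonzero constant times $q^{-\frac{(u_1-v_1)^2}{2}}\tmu(u_1\tau+u_2,v_1\tau+v_2;\tau)$ evaluated at $B\tau$ with rational data, and invokes \eqref{eqn:growth_check}. Where you genuinely diverge is the harmonicity step. The paper verifies directly that $\sqrt{\operatorname{Im}(\tau)}\,\frac{\partial}{\partial\overline{\tau}}$ of the relevant $R$-term is holomorphic in $\overline{\tau}$, citing Zwegers' Lemma 1.8 (his closed formula for $\partial_{\overline{\tau}}R$), which applies for \emph{arbitrary} real parameters; you instead pass through the Eichler-integral representation \eqref{EqRToIntegral} together with the harmonicity of $\int_{-\overline{\sigma}}^{i\infty}\frac{g_{a'+\frac12,b'+\frac12}(z)}{\sqrt{-i(z+\sigma)}}\,dz$, and the clean observation that $\Delta_{\frac12}$ commutes with the real scaling $\tau\mapsto 2d^2\tau$ (which lets you avoid chain-rule computations the paper performs implicitly). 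Both routes work, but yours has a caveat you should patch: \eqref{EqRToIntegral} is only valid for $a\in(-\tfrac12,\tfrac12)$, with \eqref{EqRToIntegralBoundaryCase} covering $a=-\tfrac12$, while in your setting $u_1-v_1=\tfrac{b}{cd^2}-\tfrac12$ leaves this range as soon as $b<0$ or $b\geq cd^2$. So your assertion that the normalized $R$-term \emph{is} a constant multiple of an Eichler integral is literally false for general $b$; you must first apply the shift \eqref{eq:Rshift} to move $u_1-v_1$ into the fundamental range, at the cost of finitely many holomorphic exponential terms, which are harmless since they are annihilated by $\Delta_{\frac12}$. This is a one-line repair rather than a structural flaw, and with it your proof is complete; the trade-off is that the paper's use of Lemma 1.8 avoids the case analysis entirely, while your route makes the non-holomorphic part's Eichler-integral structure (used later in Proposition \ref{PropositionOddDMdRankModularAndNonModular}) visible already at this stage.
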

\begin{proof}
We first check the growth condition at the cusps. Suppose $\alpha$ and $\gamma$
are integers with $\gcd(\alpha,\gamma)=1$, then take
$A=\begin{psmallmatrix}\alpha&\beta\\\gamma&\delta\end{psmallmatrix}\in\SLTwo$.   To obtain the growth of $\widetilde{M}_{d,k}(a,b,c;2d^2\tau)$ at $\tau=\frac{\alpha}{\gamma}$, we study the growth of 
$\widetilde{M}_{d,k}(a,b,c;2d^2A\tau)$ at infinity. We set
$g=\gcd(2d^2,\gamma)$, $m=\frac{2d^2\alpha}{g}$, and $u=\frac{\gamma}{g}$. We 
then take
$L=\begin{psmallmatrix}m&n\\u&v\end{psmallmatrix}\in\SLTwo$
and set
\begin{align*}
	B
	&=
		L^{-1}\TwoTwoMatrix{2d^2\alpha}{2d^2\beta}{\gamma}{\delta}
	=
		\TwoTwoMatrix{g}{2d^2\beta v- \delta n}{0}{2d^2/g}
.
\end{align*}
By Proposition \ref{prop:MtransformationSL2} we have
\begin{align}\label{EqDOddMdkCuspCondition}
	(\gamma\tau+\delta)^{-\frac{1}{2}}\widetilde{M}_{d,k}(a,b,c;2d^2A\tau)
	&=
		(\gamma\tau+\delta)^{-\frac{1}{2}}\widetilde{M}_{d,k}(a,b,c;LB\tau)
	\nonumber\\	
	&=
		\varepsilon_1
		\exp\left( -2\pi iB\tau\left(
			\tfrac{(4ad^2u+2bm-cd^2m)^2}{8d^4c^2}	
		\right)\right)
		\nonumber\\&\quad\times		
		\tmu\left(
			\tfrac{2av}{c}+\tfrac{bn}{cd^2}+\tfrac{kn}{d}
			+
			\left(\tfrac{2ad^2u+bm}{cd^2}+\tfrac{km}{d}\right)B\tau,
			\left(\tfrac{1}{2}+\tfrac{k}{d}\right)mB\tau 
			+ 
			\tfrac{n}{2}+\tfrac{kn}{d}; B\tau		
		\right)
,
\end{align}
\sloppy where $\varepsilon_1$ is some nonzero constant. Noting
$\exp(2\pi iB\tau) = \varepsilon_2 q^{\frac{g^2}{2d^2}}$, where
$|\varepsilon_2|=1$, we see that 
$(\gamma\tau+\delta)^{-\frac{1}{2}}\widetilde{M}_{d,k}(a,b,c;2d^2A\tau)$
has at worst linear exponential growth as $\mbox{Im}(\tau)\rightarrow\infty$,
since it is of the form
$q^{-\frac{(u_1-v_1)^2}{2}}\tmu\left(u_1\tau+u_2,v_1\tau+v_2 ;\tau\right)$ as in \eqref{eqn:growth_check}.

\fussy
To verify $\widetilde{M}_{d,k}(a,b,c;2d^2\tau)$ is annihilated by 
$\Delta_{\frac{1}{2}}$, we just need to verify that
the function
\begin{align*}
	\sqrt{\mbox{Im}(\tau)}\frac{\partial}{\partial\overline{\tau}}
	q^{\frac{b}{c}-\frac{d^2}{4}-\frac{b^2}{c^2d^2}}
	R\left( (\tfrac{b}{cd^2}-\tfrac{1}{2})2d^2\tau + \tfrac{2a}{c} ; 2d^2\tau \right)
\end{align*}
is holomorphic in $\overline{\tau}$. Using Lemma 1.8 of \cite{Zwegers} we find 
that
\begin{align*}
	\frac{\partial}{\partial\overline{\tau}}
	R\left( (\tfrac{b}{cd^2}-\tfrac{1}{2})2d^2\tau + \tfrac{2a}{c} ; 2d^2\tau \right)
	&=
	\frac{1}{\sqrt{\mbox{Im}(\tau)}}
	\exp\left( 2\pi i(\tau - \overline{\tau})\left(
		\tfrac{b^2}{c^2d^2}-\tfrac{b}{c}+\tfrac{d^2}{4}
	\right)\right)
	A(\overline{\tau})
,
\end{align*}
where $A(\overline{\tau})$ is a series defining a holomorphic function of $\overline{\tau}$.
Thus the result follows.
\end{proof}

We next consider when $d$ is even.

\begin{proposition}\label{prop:devenMaass}
Suppose $d$ is even. Then $\widetilde{M}_{d,k}(a,b,c;\frac{d^2\tau}{2})$ has at most linear exponential growth at the cusps and is annihilated by $\Delta_{\frac12}$.
\end{proposition}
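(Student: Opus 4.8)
The plan is to mirror the proof of Proposition \ref{PropDOddMdkMaassForm} essentially line for line, replacing the internal modular variable $2d^2\tau$ by $\frac{d^2\tau}{2}$ and invoking the even rows of Proposition \ref{prop:MtransformationSL2} in place of the odd one. I would carry the cases $d \equiv 2 \pmod 4$ and $d \equiv 0 \pmod 4$ along simultaneously, since their transformation formulas have identical shape and differ only in the explicit arguments of $\tmu$, so the growth and Laplacian analyses are structurally the same in both.

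For the growth condition, fix $A=\begin{psmallmatrix}\alpha&\beta\\\gamma&\delta\end{psmallmatrix}\in\SLTwo$ with $\gcd(\alpha,\gamma)=1$, and study $\widetilde{M}_{d,k}(a,b,c;\frac{d^2}{2}A\tau)$ as $\mbox{Im}(\tau)\to\infty$. Setting $g=\gcd(\frac{d^2}{2},\gamma)$, $m=\frac{d^2\alpha}{2g}$, and $u=\frac{\gamma}{g}$, I would take $L=\begin{psmallmatrix}m&n\\u&v\end{psmallmatrix}\in\SLTwo$ and factor $\begin{psmallmatrix}\frac{d^2\alpha}{2}&\frac{d^2\beta}{2}\\\gamma&\delta\end{psmallmatrix}=LB$ with $B=\begin{psmallmatrix}g&\ast\\0&d^2/(2g)\end{psmallmatrix}$ upper triangular, noting that $\frac{d^2}{2}A\tau=LB\tau$ and $\exp(2\pi iB\tau)=\varepsilon_2 q^{2g^2/d^2}$ with $|\varepsilon_2|=1$. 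Applying the relevant even case of Proposition \ref{prop:MtransformationSL2} then expresses $(\gamma\tau+\delta)^{-\frac12}\widetilde{M}_{d,k}(a,b,c;\frac{d^2}{2}A\tau)$, after the substitution $\tau\mapsto B\tau$, in the shape $\varepsilon_1 q^{-\frac{(u_1-v_1)^2}{2}}\tmu(u_1\tau+u_2,v_1\tau+v_2;\tau)$. By the growth estimate \eqref{eqn:growth_check} this has at worst linear exponential growth at infinity, and since $A$ is arbitrary, $\widetilde{M}_{d,k}(a,b,c;\frac{d^2\tau}{2})$ has at most linear exponential growth at every cusp.

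For annihilation by $\Delta_{\frac12}$, I would use that $\mu(u,v;\tau)$ is holomorphic in $\tau$, so the only non-holomorphic contribution to $\widetilde{M}_{d,k}(a,b,c;\frac{d^2\tau}{2})$ is the $R$-term in $\tmu=\mu+\frac{i}{2}R(u-v;\cdot)$. By \eqref{eqn:evaluatedMtilde} this reduces to verifying that
\[
\sqrt{\mbox{Im}(\tau)}\,\frac{\partial}{\partial\overline{\tau}}\,q^{\frac{b}{2c}-\frac{d^2}{16}-\frac{b^2}{c^2d^2}}R\!\left(\left(\tfrac{a}{c}-\epsilon\right)+\left(\tfrac{b}{c}-\tfrac{d^2}{4}\right)\tau;\tfrac{d^2\tau}{2}\right)
\]
is holomorphic in $\overline{\tau}$, where $\epsilon=1$ when $d\equiv 2\pmod 4$ and $\epsilon=\frac12$ when $d\equiv 0\pmod 4$ (these being the values of $u-v$ recorded in \eqref{DefinitionOfTildeO}). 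Applying Lemma 1.8 of \cite{Zwegers}, the $\overline{\tau}$-derivative of $R$ produces $\frac{1}{\sqrt{\mbox{Im}(\tau)}}\exp(2\pi i(\tau-\overline{\tau})(\cdots))$ times a series holomorphic in $\overline{\tau}$; the exponent in the prefactor is exactly the negative of the $\tau$-coefficient of the displayed power of $q$, so the two combine to leave a holomorphic function of $\overline{\tau}$, precisely as in the odd case.

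The computation is entirely routine given Proposition \ref{PropDOddMdkMaassForm}; the one point that demands care is the bookkeeping of the chain-rule constant in Lemma 1.8 of \cite{Zwegers} now that the internal modular variable is $\frac{d^2\tau}{2}$ rather than $2d^2\tau$, together with tracking the two residue classes of $d$ modulo $4$ through Proposition \ref{prop:MtransformationSL2}, so that the explicit power of $q$ in \eqref{eqn:evaluatedMtilde} matches the argument of $R$ and the non-holomorphic prefactor cancels exactly.
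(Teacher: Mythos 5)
Your proposal is correct and takes essentially the same approach as the paper: factor $\frac{d^2}{2}A\tau = LB\tau$ with $B$ upper triangular (your $g=\gcd(\frac{d^2}{2},\gamma)$ equals the paper's $\gcd(\frac{d^2\alpha}{2},\gamma)$ since $\gcd(\alpha,\gamma)=1$), apply Proposition \ref{prop:MtransformationSL2} and \eqref{eqn:growth_check} for the growth condition, and use Lemma 1.8 of Zwegers to cancel the non-holomorphic prefactor against the power of $q$ for the Laplacian condition. In fact the paper's own proof only records the matrix setup and defers the rest to the odd case, so your write-up supplies exactly the details (including the correct exponent $\frac{b^2}{c^2d^2}-\frac{b}{2c}+\frac{d^2}{16}$ cancellation) that the paper leaves implicit.
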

\begin{proof}
First, suppose $\alpha$ and $\gamma$ are integers such that $\gcd(\alpha,\gamma)=1$, and 
take $A=\begin{psmallmatrix}\alpha&\beta\\\gamma&\delta\end{psmallmatrix}\in\SLTwo$.  
To obtain the growth of $\widetilde{M}_{d,k}(a,b,c;\frac{d^2\tau}{2})$ at 
$\tau=\frac{\alpha}{\gamma}$, we study the growth of 
$\widetilde{M}_{d,k}(a,b,c;\frac{d^2}{2}A\tau)$ at infinity.  
Let $g=\gcd(\frac{d^2\alpha}{2}, \gamma)$ so that $m=\frac{d^2\alpha}{2g}$ and 
$u=\frac{\gamma}{g}$ are relatively prime, and take 
$L=\begin{psmallmatrix}m&n\\u&v\end{psmallmatrix}\in\SLTwo$.  Set
\[
B=L^{-1}\TwoTwoMatrix{\frac{d^2\alpha}{2}}{\frac{d^2\beta}{2}}{\gamma}{\delta} = \TwoTwoMatrix{g}{\frac{d^2\beta}{2} v- \delta n}{0}{\frac{d^2}{2g}},
\]
so that $\frac{d^2}{2}A\tau = LB\tau$.  
We then use Proposition \ref{prop:MtransformationSL2}
and find the proof follows much the same as
the proof of Proposition \ref{PropDOddMdkMaassForm}.
\end{proof}

We next determine invariant orders at cusps of the holomorphic parts of the $\widetilde{M}_{d,k}$ functions.

\begin{proposition}\label{PropDOddMTildeInvariantOrder}
Suppose $\alpha$ and $\gamma$ are integers and $\gcd(\alpha,\gamma)=1$.
If $d$ is odd, then
the invariant order of the holomorphic part of $\widetilde{M}_{d,k}(a,b,c;2d^2\tau)$
at the cusp $\frac{\alpha}{\gamma}$ is at least
\begin{align*}
	\frac{g^2}{2d^2}
	\left(
		-\frac{(4ad^2u+2bm-cd^2m)^2}{8d^4c^2}
		+
		\widetilde{\nu}\left(
			\frac{2ad^2u+bm}{cd^2}+\frac{km}{d},
			\frac{m}{2}+\frac{km}{d}
		\right)
	\right)
,
\end{align*}
where $g=\gcd(2d^2,\gamma)$, $m=\frac{2d^2\alpha}{g}$, and $u=\frac{\gamma}{g}$.  

Similarly, if $d$ is even, then the invariant order of the holomorphic part of $\widetilde{M}_{d,k}(a,b,c;\frac{d^2}{2}A\tau)$ at the cusp $\frac{\alpha}{\gamma}$ is at least 
\[
\frac{2g^2}{d^2} \left( - \frac{(2d^2(a-c)u + (4b-cd^2)m)^2}{8c^2d^4} + \widetilde{\nu}\left(\tfrac{au}{c} 
+ \tfrac{2bm}{cd^2}+ \tfrac{2km}{d}, u + \tfrac{m}{2}+\tfrac{2km}{d} \right)  \right)
\]
when $d\equiv 2 \pmod 4$, 
and 
\[
\frac{2g^2}{d^2} \left( - \frac{(d^2(2a-c)u + (4b-cd^2)m)^2}{8c^2d^4} + \widetilde{\nu}\left(\tfrac{au}{c} 
+ \tfrac{2bm}{cd^2}+ \tfrac{2km}{d}, \tfrac{u}{2} + \tfrac{m}{2}+\tfrac{2km}{d} \right)  \right)
\]
when $d\equiv 0 \pmod 4$, where in both cases $g=\gcd(\tfrac{d^2\alpha}{2}, \gamma)$, $m=\tfrac{d^2\alpha}{2g}$, and $\frac{\gamma}{g}$.
\end{proposition}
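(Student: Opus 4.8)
The plan is to reduce the computation of the invariant order of the holomorphic part of $\widetilde{M}_{d,k}(a,b,c;2d^2\tau)$ at the cusp $\frac{\alpha}{\gamma}$ to an application of Corollary \ref{PropTMuOrders}, using the explicit transformation already worked out in the proof of Proposition \ref{PropDOddMdkMaassForm}. Recall that to compute the invariant order at $\frac{\alpha}{\gamma}$ we must expand $(\gamma\tau+\delta)^{-1/2}\widetilde{M}_{d,k}(a,b,c;2d^2A\tau)$ at $i\infty$, where $A=\begin{psmallmatrix}\alpha&\beta\\\gamma&\delta\end{psmallmatrix}\in\SLTwo$. Equation \eqref{EqDOddMdkCuspCondition} already provides exactly this expansion in the odd case: with $g=\gcd(2d^2,\gamma)$, $m=\frac{2d^2\alpha}{g}$, $u=\frac{\gamma}{g}$, and $B=\begin{psmallmatrix}g& *\\0& 2d^2/g\end{psmallmatrix}$, we have
\[
(\gamma\tau+\delta)^{-\frac12}\widetilde{M}_{d,k}(a,b,c;2d^2A\tau)
=
\varepsilon_1
\exp\!\left(-2\pi iB\tau\cdot\tfrac{(4ad^2u+2bm-cd^2m)^2}{8d^4c^2}\right)
\tmu\!\left(U B\tau + U',\; V B\tau + V';\; B\tau\right),
\]
where $U=\frac{2ad^2u+bm}{cd^2}+\frac{km}{d}$, $V=\frac12 m+\frac{km}{d}$, and $U',V'$ are the constant parts appearing in \eqref{EqDOddMdkCuspCondition}, and $\varepsilon_1$ has modulus $1$.

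First I would substitute $\exp(2\pi iB\tau)=\varepsilon_2 q^{g^2/(2d^2)}$ (with $|\varepsilon_2|=1$) to rewrite the entire expression as a unimodular constant times $q^{g^2\alpha_0/(2d^2)}\,\tmu(U B\tau + U', V B\tau + V'; B\tau)$, where $\alpha_0 = -\frac{(4ad^2u+2bm-cd^2m)^2}{8d^4c^2}$ is the exponent coming from the leading exponential factor. The point is that $B\tau$ differs from a constant multiple of $\tau$ only by a translation and overall scaling that has already been absorbed; more precisely, since $B$ is upper triangular the variable $B\tau=\frac{g^2}{2d^2}\tau + \text{const}$, so the $\tmu$ argument is literally of the form $q^{\alpha}\tmu(u_1\sigma+u_2, v_1\sigma+v_2;\sigma)$ in the variable $\sigma=B\tau$ with $u_1=U$, $v_1=V$. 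I would then apply Corollary \ref{PropTMuOrders} directly: the lowest power of $q$ (in the variable $\sigma$, hence scaled by $g^2/(2d^2)$ back to $\tau$) in the holomorphic part is at least $\alpha_0+\widetilde{\nu}(U,V)$, and multiplying through by the scaling factor $g^2/(2d^2)$ yields precisely the claimed bound
\[
\frac{g^2}{2d^2}\left(-\frac{(4ad^2u+2bm-cd^2m)^2}{8d^4c^2}+\widetilde{\nu}\!\left(\tfrac{2ad^2u+bm}{cd^2}+\tfrac{km}{d},\ \tfrac{m}{2}+\tfrac{km}{d}\right)\right).
\]

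For the even cases I would repeat this verbatim, but starting from the transformation in the proof of Proposition \ref{prop:devenMaass}: there $g=\gcd(\frac{d^2\alpha}{2},\gamma)$, $m=\frac{d^2\alpha}{2g}$, $u=\frac{\gamma}{g}$, and $B=\begin{psmallmatrix}g&*\\0&d^2/(2g)\end{psmallmatrix}$, so that $\exp(2\pi iB\tau)=\varepsilon_2 q^{2g^2/d^2}$, accounting for the outer factor $\frac{2g^2}{d^2}$. Feeding the corresponding entries of the $d\equiv 2$ and $d\equiv 0\pmod4$ formulas of Proposition \ref{prop:MtransformationSL2} into Corollary \ref{PropTMuOrders} gives the two stated bounds, with the $\widetilde\nu$ arguments and the leading quadratic exponents matching exactly the $\frac{d^2}{2}A\tau=LB\tau$ decomposition. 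The only genuinely delicate point, and the one I would treat most carefully, is bookkeeping the constant translations $U',V'$ in the $\tmu$ argument: Corollary \ref{PropTMuOrders} is stated for $\tmu(u_1\tau+u_2,v_1\tau+v_2;\tau)$ and its bound $\widetilde\nu$ depends only on the $\tau$-coefficients $u_1,v_1$ through their floors and fractional parts, so I must confirm that the real constant shifts $U',V'$ do not alter $\widetilde\nu(U,V)$ — which is exactly why the corollary is phrased in terms of $u_1,v_1$ alone. Everything else is the routine algebraic simplification of the quadratic exponent, which I would not grind through here.
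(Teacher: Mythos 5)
Your proposal is correct and follows essentially the same route as the paper's proof: both invoke the decomposition $2d^2A\tau = LB\tau$ and the expansion \eqref{EqDOddMdkCuspCondition} (respectively its even-$d$ analogue from Proposition \ref{prop:devenMaass}), note that $\exp(2\pi iB\tau)=\varepsilon_2 q^{g^2/(2d^2)}$ (resp.\ $\varepsilon_2 q^{2g^2/d^2}$), and then apply Corollary \ref{PropTMuOrders} in the variable $B\tau$. Your extra remark that the constant shifts in the $\tmu$ arguments are immaterial because the bound $\widetilde{\nu}$ depends only on the $\tau$-coefficients is exactly the reason the paper can cite the corollary without further comment.
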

\begin{proof}
In the case when $d$ is odd, as in Proposition \ref{PropDOddMdkMaassForm} we let
$A=\begin{psmallmatrix}\alpha&\beta\\\gamma&\delta\end{psmallmatrix}, 
L=\begin{psmallmatrix}m&n\\u&v\end{psmallmatrix}\in\SLTwo$
and
\begin{align*}
	B
	&=
		L^{-1}\TwoTwoMatrix{2d^2\alpha}{2d^2\beta}{\gamma}{\delta}
	=
		\TwoTwoMatrix{g}{2d^2\beta v- \delta n}{0}{2d^2/g}
.
\end{align*}
As in (\ref{EqDOddMdkCuspCondition}), we have
\begin{align*}
	(\gamma\tau+\delta)^{-\frac{1}{2}}\widetilde{M}_{d,k}(a,b,c;2d^2A\tau)	
	&=
		\varepsilon_1
		\exp\left( -2\pi iB\tau\left(
			\tfrac{(4ad^2u+2bm-cd^2m)^2}{8d^4c^2}	
		\right)\right)
		\\&\quad		
		\tmu\left(
			\tfrac{2av}{c}+\tfrac{bn}{cd^2}+\tfrac{kn}{d}
			+
			\left(\tfrac{2ad^2u+bm}{cd^2}+\tfrac{km}{d}\right)B\tau,
			\left(\tfrac{1}{2}+\tfrac{k}{d}\right)mB\tau 
			+ 
			\tfrac{n}{2}+\tfrac{kn}{d}; B\tau		
		\right)
,
\end{align*}
\sloppy where $\varepsilon_1$ is some nonzero constant. Noting
$\exp(2\pi iB\tau) = \varepsilon_2 q^{\frac{g^2}{2d^2}}$, where
$|\varepsilon_2|=1$, the result then follows from
Corollary \ref{PropTMuOrders}.  The cases when $d$ is even follow similarly. 
\end{proof}

In the case when $d$ is odd, we see the functions $P_{d,k}(a,b,c;\tau)$, which 
were defined in \eqref{def:Pdk}, appearing along side the 
$\widetilde{M}_{d,k}(a,b,c;\tau)$ functions in Proposition 
\ref{prop:OtoMockModular}.  Thus we also need to study their modular properties.

\begin{proposition}\label{PropositionPTransformation}
Suppose $d$ is odd and 
$A=\begin{pmatrix}\alpha & \beta \\ \gamma & \delta \end{pmatrix}
\in \Gamma_0(2d^2)\cap \G_{a,b,c,d} \cap \G_{a,b,c,d}' \cap \G_1\left(\frac{d}{\gcd(d,k)} \right)$.  Then
\begin{align*}
	P_{d,k}(a,b,c;A\tau)
	&=
		(-1)^{\beta+\frac{(\alpha-1)}{2}}
		i^{-\alpha\beta}
		\nu( \mA{2} )^{-3}		
		\sqrt{\gamma\tau+\delta}
		P_{d,k}(a,b,c;\tau)
.
\end{align*}
\end{proposition}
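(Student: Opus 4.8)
The plan is to transform $P_{d,k}(a,b,c;\tau)$ by transforming each of its modular building blocks separately and then verifying that the accumulated automorphy factors collapse to the claimed multiplier. Recall from \eqref{def:Pdk} that, up to the $\tau$-independent constant $2\exp(2\pi iab/(c^2d^2))$, the function $P_{d,k}(a,b,c;\tau)$ is a quotient of the overpartition $\eta$-quotient $\frac{\eta(2\tau)}{\eta(\tau)^2}$, the generalized $\eta$-functions $f_{2d^2,4dk}(\tau)$ and $f_{2d^2,2dk}(\tau)$, and Klein forms $\kModular{\cdot}{\cdot}{2d^2\tau}$; in the case $d\mid k$ only the $\eta$-quotient and a single Klein form survive. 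Since this constant is independent of $\tau$, it is identical in $P_{d,k}(a,b,c;A\tau)$ and $P_{d,k}(a,b,c;\tau)$ and so drops out of the comparison.

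For the $\eta$-quotient I would apply Proposition \ref{prop:eta_transf}, valid since $A\in\Gamma_0(2d^2)\subset\Gamma_0(2)$, which already contributes the target factor $(-1)^{\beta+\frac{\alpha-1}{2}}i^{-\alpha\beta}\nu(\mA{2})^{-3}(\gamma\tau+\delta)^{-\frac{1}{2}}$. For the generalized $\eta$-functions I would use \eqref{EqBiagioliTransform} with $N=2d^2$; here the new index $\alpha\rho$ returns to $\rho$ modulo $2d^2$ because $A\in\Gamma_1(d/\gcd(d,k))$ forces $\frac{d}{\gcd(d,k)}\mid(\alpha-1)$ and hence $d\mid(\alpha-1)k$, so by \eqref{eq:f_mod} we recover $f_{2d^2,4dk}$ and $f_{2d^2,2dk}$, and moreover $\nu(\mA{2d^2})^3=\nu(\mA{2})^3$. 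For the Klein forms I would use $2d^2A\tau=\mA{2d^2}(2d^2\tau)$, noting $\mA{2d^2}\in\SLTwo$ precisely because $2d^2\mid\gamma$, and apply the Klein-form transformation with the matrix $\mA{2d^2}$: each factor contributes $(\gamma\tau+\delta)^{-1}$ and sends $\kModular{a_1}{a_2}{2d^2\tau}$ to $\kModular{a_1\alpha+a_2\gamma/(2d^2)}{2d^2a_1\beta+a_2\delta}{2d^2\tau}$; the shift relation \eqref{EqTShift} then returns the transformed parameters to their original values, modulo $\Z^2$, at the cost of roots of unity.

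A weight count confirms the correct power of $(\gamma\tau+\delta)$: in the $d\nmid k$ case the three Klein forms contribute $(\gamma\tau+\delta)^{+1}$ in total, the two $f$-functions cancel, and the $\eta$-quotient supplies $(\gamma\tau+\delta)^{-\frac{1}{2}}$, for a net $(\gamma\tau+\delta)^{\frac{1}{2}}$; the $d\mid k$ case is identical, its single denominator Klein form giving $(\gamma\tau+\delta)^{+1}$. The multiplier from the $\eta$-quotient is already the claimed one, so the proof reduces to checking that every remaining root of unity cancels: the signs and exponentials from the two applications of \eqref{EqBiagioliTransform}, and the phases produced by \eqref{EqTShift} in each Klein form. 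I expect this bookkeeping to be the main obstacle, and I would handle it exactly as in the proof of Corollary \ref{cor:Mtransf1}, reducing the accumulated phases using the congruences \eqref{eqn:c1} and \eqref{eqn:codd} defining $\G_{a,b,c,d}$ and $\G_{a,b,c,d}'$ together with $\alpha\equiv1\pmod{d/\gcd(d,k)}$. Because the $f$-multipliers and Klein-form phases enter $P_{d,k}$ in a balanced numerator/denominator pattern, many cancellations are immediate, and the surviving factor is forced to be trivial by the divisibility conditions just as in Corollary \ref{cor:Mtransf1}.
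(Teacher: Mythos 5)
Your proposal is correct and follows essentially the same route as the paper's proof: transform the $\eta$-quotient by Proposition \ref{prop:eta_transf} (which already supplies the entire claimed multiplier), transform the ratio $f_{2d^2,4dk}/f_{2d^2,2dk}$ by \eqref{EqBiagioliTransform} using $\alpha\equiv 1\pmod{d/\gcd(d,k)}$, and transform the Klein forms with the matrix $\mA{2d^2}$ followed by the shift relation \eqref{EqTShift}, reducing the accumulated roots of unity to $1$ via the congruences \eqref{eqn:c1} and \eqref{eqn:codd}. This is precisely the paper's argument, down to the weight count and the final appeal to the containment of the group in the relevant congruence subgroups.
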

\begin{proof}
We only give the proof for the case when $d\nmid k$, and note that the proof
for the case when $d\mid k$ is simpler.
By Proposition \ref{prop:eta_transf}, on the subgroup $\Gamma_0(2d^2)$ we have that
\begin{align*}
	\frac{\eta(2A\tau)}{\eta(A\tau)^2}
	&=
		(-1)^{\beta + \frac{\alpha-1}{2}} 
		i^{-\alpha\beta} 
		\nu( \mA{2} )^{-3}
		(\gamma\tau+\delta)^{-\frac{1}{2}}
		\frac{\eta(2\tau)}{\eta(\tau)^2}
.
\end{align*}	
On the subgroup $\Gamma_0(2d^2)\cap \Gamma_1\left(\frac{d}{\gcd(d,k)}\right)$
we have $4dk\alpha \equiv 4dk \pmod{2d^2}$, and $2dk\alpha \equiv 2dk \pmod{2d^2}$, so that
\begin{align*}
	\frac{f_{2d^2,4dk}(A\tau)}{f_{2d^2,2dk}(A\tau)}
	&=
		(-1)^{ \Floor{\frac{2k\alpha}{d}} +\Floor{\frac{k\alpha}{2d}}+\Floor{\frac{2k}{d}}+\Floor{\frac{k}{d}}  }
		\frac{f_{2d^2,4dk}(\tau)}{f_{2d^2,2dk}(\tau)}
	=
		\frac{f_{2d^2,4dk}(\tau)}{f_{2d^2,2dk}(\tau)}
	.
\end{align*}
On $\Gamma_0(2d^2)\cap \G_{a,b,c,d} \cap \G_{a,b,c,d}' \cap \G_1\left(\frac{d}{\gcd(d,k)} \right)$ we have that
\begin{align*}
	\kModular{-\frac{b}{cd^2}}{-\frac{2a}{c}}{ \mA{2d^2} 2d^2\tau }
	&=
		(\gamma\tau+\delta)^{-1}
		\kModular{-\frac{b\alpha}{cd^2}-\frac{a\gamma}{cd^2} }{ -\frac{2b\beta}{c} - \frac{2a\delta}{c}  }{2d^2\tau}
	\\
	&=
		(-1)^{\frac{a\gamma}{cd^2} + \frac{b(\alpha-1)}{cd^2}}
		\z{c}{-\frac{ab(\alpha-1)}{cd^2}-\frac{a^2\gamma}{cd^2}+\frac{ab(\delta-1)}{cd^2}}
		\z{c^2d^2}{b^2\beta}
		(\gamma\tau+\delta)^{-1}
		\kModular{-\frac{b}{cd^2} }{ -\frac{2a}{c}  }{2d^2\tau}
	,
\end{align*}	
and additionally
\begin{align*}
	&\kModular{\frac{b}{cd^2}+\frac{k}{d}}{\frac{2a}{c}}{ \mA{2d^2} 2d^2\tau}^{-1}
	\kModular{-\frac{b}{cd^2}+\frac{k}{d}}{-\frac{2a}{c}}{ \mA{2d^2} 2d^2\tau}^{-1}
	\\
	&=
		(\gamma\tau+\delta)^2
		\kModular{ \frac{b\alpha}{cd^2} + \frac{k\alpha}{d} + \frac{a\gamma}{cd^2} }
			{\frac{2b\beta}{c}+2kd\beta+\frac{2a\delta}{c}}   
			{2d^2\tau}^{-1}
		\kModular{ -\frac{b\alpha}{cd^2} + \frac{k\alpha}{d} - \frac{a\gamma}{cd^2} }
			{-\frac{2b\beta}{c}+2kd\beta-\frac{2a\delta}{c}}   
			{2d^2\tau}^{-1}
	\\
	&=
		\z{c}{\frac{2ab(\alpha-1)}{cd^2} + \frac{2a^2\gamma}{cd^2} - \frac{2ab(\delta-1)}{cd^2}}
		\z{c^2d^2}{-2b^2\beta}
		(\gamma\tau+\delta)^2
		\kModular{ \frac{b}{cd^2} + \frac{k}{d}}{\frac{2a}{c}}{2d^2\tau}^{-1}
		\kModular{ -\frac{b}{cd^2} + \frac{k}{d}}{-\frac{2a}{c}}{2d^2\tau}^{-1}
.
\end{align*}

\sloppy
Thus
\begin{align*}
	P_{d,k}(a,b,c;A\tau)
	&=
		(-1)^{\beta+\frac{(\alpha-1)}{2} + \frac{a\gamma}{cd^2} + \frac{b(\alpha-1)}{cd^2}}
		i^{-\alpha\beta}
		\z{c}{\frac{ab(\alpha-1)}{cd^2} + \frac{a^2\gamma}{cd^2} - \frac{ab(\delta-1)}{cd^2}}
		\z{c^2d^2}{-b^2\beta}
		\nu( \mA{2} )^{-3}		
		\sqrt{\gamma\tau+\delta}
		P_{d,k}(a,b,c;A\tau)
	\\
	&=
		(-1)^{\beta+\frac{(\alpha-1)}{2}}
		i^{-\alpha\beta}
		\nu( \mA{2} )^{-3}		
		\sqrt{\gamma\tau+\delta}
		P_{d,k}(a,b,c;\tau)
	,
\end{align*}
where the last equality follows from the fact that $\Gamma_0(2d^2)\cap \G_{a,b,c,d} \cap \G_{a,b,c,d}' \cap \G_1\left(\frac{d}{\gcd(d,k)} \right)$ is contained in
$
\Gamma_0\left(\frac{2cd^2}{\gcd(a, 2cd^2)}\right)
\Gamma_1\left(\frac{2cd^2}{\gcd(b, 2cd^2)}  \right)
\Gamma_1\left(\frac{c^2d^2}{\gcd(ab, c^2d^2)}  \right)
\Gamma_0\left(\frac{c^2d^2}{\gcd(a^2, c^2d^2)}\right)
\Gamma^0\left(\frac{c^2d^2}{\gcd(b^2, c^2d^2)}  \right)
$.	
	
\fussy	
\end{proof}

\begin{proposition}\label{PropDOddPInvariantOrder}
Suppose $d$ is odd, and $\alpha, \gamma$ are integers with $\gcd(\alpha,\gamma)=1$. 
Then
the invariant order of $P_{d,k}(a,b,c;\tau)$
at the cusp $\frac{\alpha}{\gamma}$ is
\begin{align*}
	&
	-\frac{1}{12} 
	+ 
	\frac{\gcd(2,\gamma)^2}{48}
	+
	\frac{g^2}{4d^2}
	\bigg(
		\Fractional{ \tfrac{4dk\alpha}{g} }^2 - \Fractional{ \tfrac{4dk\alpha}{g} } 
		-
		\Fractional{ \tfrac{2dk\alpha}{g} }^2 + \Fractional{ \tfrac{2dk\alpha}{g} } 
		+
		\Fractional{-\tfrac{bm}{cd^2}-\tfrac{2au}{c}}^2 -\Fractional{-\tfrac{bm}{cd^2}-\tfrac{2au}{c}} 
		\\&\quad			
		-
		\Fractional{\tfrac{bm}{cd^2}+\tfrac{km}{d}+\tfrac{2au}{c}}^2 +\Fractional{\tfrac{bm}{cd^2}+\tfrac{km}{d}+\tfrac{2au}{c}} 
		-
		\Fractional{-\tfrac{bm}{cd^2}+\tfrac{km}{d}-\tfrac{2au}{c}}^2 +\Fractional{-\tfrac{bm}{cd^2}+\tfrac{km}{d}-\tfrac{2au}{c}} 
	\bigg)
,
\end{align*}
where $g=\gcd(2d^2,\gamma)$, $m=\frac{2d^2\alpha}{g}$, and $u=\frac{\gamma}{g}$.
\end{proposition}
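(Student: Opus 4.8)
The plan is to treat $P_{d,k}(a,b,c;\tau)$, in the generic case $d\nmid k$, as a product and quotient of holomorphic non-vanishing modular forms and to compute the invariant order at $\frac{\alpha}{\gamma}$ of each factor separately. From \eqref{def:Pdk}, $P_{d,k}$ is the nonzero constant $2\exp(2\pi iab/(c^2d^2))$ times
\[
\frac{\eta(2\tau)}{\eta(\tau)^2}\cdot\frac{f_{2d^2,4dk}(\tau)}{f_{2d^2,2dk}(\tau)}\cdot\frac{\kModular{-\frac{b}{cd^2}}{-\frac{2a}{c}}{2d^2\tau}}{\kModular{\frac{b}{cd^2}+\frac{k}{d}}{\frac{2a}{c}}{2d^2\tau}\kModular{-\frac{b}{cd^2}+\frac{k}{d}}{-\frac{2a}{c}}{2d^2\tau}}.
\]
Since each of $\eta$, $f_{N,\rho}$, and $\kModular{a_1}{a_2}{\tau}$ is holomorphic and non-vanishing on $\mathcal{H}$, and since for $f=\prod_i f_i^{\pm1}$ the expansion $(\gamma\tau+\delta)^{-k}f(A\tau)=\prod_i\bigl((\gamma\tau+\delta)^{-k_i}f_i(A\tau)\bigr)^{\pm1}$ has leading $q$-exponent equal to the signed sum of the leading exponents of the factors (the leading Fourier coefficients multiply), the invariant order of $P_{d,k}$ at $\frac{\alpha}{\gamma}$ is the sum of the invariant orders of the numerator factors minus those of the denominator factors, with the constant prefactor contributing $0$.

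For the eta quotient I will use the standard value $\frac{\gcd(N,\gamma)^2}{24N}$ for the invariant order of $\eta(N\tau)$ at $\frac{\alpha}{\gamma}$, which follows from \eqref{eq:eta_mult} by the same type of factorization $NA\tau=LB\tau$ used below for the Klein forms. This gives $\frac{\gcd(2,\gamma)^2}{48}$ from $\eta(2\tau)$ and $\frac{1}{12}$ from $\eta(\tau)^2$, producing the contribution $-\frac1{12}+\frac{\gcd(2,\gamma)^2}{48}$. For the two generalized eta factors I apply Proposition \ref{PropBiagioliInvariantOrder} with $N=2d^2$, so that $\gcd(N,\gamma)=g$ and the orders of $f_{2d^2,4dk}$ and $f_{2d^2,2dk}$ are $\frac{g^2}{4d^2}\bigl(\Fractional{\frac{4dk\alpha}{g}}-\frac12\bigr)^2$ and $\frac{g^2}{4d^2}\bigl(\Fractional{\frac{2dk\alpha}{g}}-\frac12\bigr)^2$; taking their difference and expanding $(\Fractional{x}-\frac12)^2=\Fractional{x}^2-\Fractional{x}+\frac14$ cancels the constant terms and yields exactly the first four terms inside the large parenthesis.

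The crux is the three Klein forms, for which I mimic the cusp analysis of Proposition \ref{PropDOddMdkMaassForm}. Setting $g=\gcd(2d^2,\gamma)$, $m=\frac{2d^2\alpha}{g}$, $u=\frac{\gamma}{g}$ and choosing $L=\TwoTwoMatrix{m}{n}{u}{v}\in\SLTwo$, I factor $2d^2A\tau=LB\tau$ with $B=\TwoTwoMatrix{g}{2d^2\beta v-\delta n}{0}{2d^2/g}$, so that $\exp(2\pi iB\tau)=\varepsilon q^{g^2/(2d^2)}$ with $|\varepsilon|=1$. The Klein transformation law gives $\kModular{a_1}{a_2}{LB\tau}=(uB\tau+v)^{-1}\kModular{a_1m+a_2u}{a_1n+a_2v}{B\tau}$, reducing the invariant order of $\kModular{a_1}{a_2}{2d^2\tau}$ at $\frac{\alpha}{\gamma}$ to the order at $\infty$, measured in $q_B=\exp(2\pi iB\tau)$, of the single Klein form $\kModular{a_1m+a_2u}{a_1n+a_2v}{B\tau}$, rescaled by $g^2/(2d^2)$. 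For a single Klein form $\kModular{r}{s}{\tau}$, I use \eqref{EqTShift} to reduce $r$ to $\Fractional{r}\in[0,1)$ and read the order from the definition: the infinite product $\frac{\jacprod{\zeta}{q}}{\aqprod{q}{q}{\infty}^2}$ with $\zeta=e^{2\pi is}q^{\Fractional{r}}$ has constant term $1$ and hence $q$-order $0$, so the order equals the exponent of the leading prefactor, $\frac12B_2(\Fractional{r})-\frac1{12}=\frac12(\Fractional{r}^2-\Fractional{r})$. Thus the invariant order of $\kModular{a_1}{a_2}{2d^2\tau}$ at $\frac{\alpha}{\gamma}$ is $\frac{g^2}{4d^2}\bigl(\Fractional{a_1m+a_2u}^2-\Fractional{a_1m+a_2u}\bigr)$; evaluating at the three index pairs from the display above and attaching the signs $+,-,-$ produces the remaining six terms, and summing all contributions gives the stated formula.

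The main obstacle is this single Klein-form order computation, specifically verifying that the infinite product contributes $q$-order exactly $0$, i.e. that its leading coefficient does not vanish. The only way it could vanish is the degenerate case $\Fractional{r}=0$ together with $s\in\Z$, in which the factor $1-\zeta$ is zero; this is precisely excluded by the standing hypothesis that each $P_{d,k}(a,b,c;\tau)$ be well defined, equivalently $c\nmid 2a$ or $cd\nmid b$. Finally, the case $d\mid k$ needs no new work: then $P_{d,k}$ has no generalized eta factors and only one Klein form in the denominator, the quantities $\Fractional{\frac{4dk\alpha}{g}}$ and $\Fractional{\frac{2dk\alpha}{g}}$ both vanish, and (since $\frac{k}{d}\in\Z$) the numerator Klein term cancels the first denominator Klein term, leaving exactly the same formula; I will simply record this rather than repeat the argument.
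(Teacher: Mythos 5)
Your proposal is correct and follows essentially the same route as the paper's proof: the same factorization $2d^2A\tau=LB\tau$ reducing each Klein form's order at the cusp to $\frac{g^2}{4d^2}\left(\Fractional{a_1m+a_2u}^2-\Fractional{a_1m+a_2u}\right)$, the same appeal to Proposition \ref{PropBiagioliInvariantOrder} for the generalized eta factors and to the known order of $\eta(2\tau)/\eta(\tau)^2$, and the same signed summation of the factor orders. Your added justifications (the order of a single Klein form at $\infty$ via \eqref{EqTShift}, the nonvanishing of leading coefficients, and the exclusion of the degenerate case) merely make explicit what the paper leaves implicit; the only slip is immaterial: in the $d\mid k$ case the numerator Klein form cancels the second, not the first, denominator Klein factor.
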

\begin{proof}
As in the proofs of Propositions \ref{PropDOddMdkMaassForm} and \ref{prop:devenMaass}, we let
$A=\begin{psmallmatrix}\alpha&\beta\\\gamma&\delta\end{psmallmatrix}, 
L=\begin{psmallmatrix}m&n\\u&v\end{psmallmatrix}\in\SLTwo$
and
\begin{align*}
	B
	&=
		L^{-1}\TwoTwoMatrix{2d^2\alpha}{2d^2\beta}{\gamma}{\delta}
	=
		\TwoTwoMatrix{g}{2d^2\beta v- \delta n}{0}{2d^2/g}
.
\end{align*}
We note that
\begin{align*}
	(\gamma\tau+\delta)\kModular{a_1}{a_2}{2d^2 A\tau}
	&=
		(\gamma\tau+\delta)\kModular{a_1}{a_2}{LB\tau}
	=
		\frac{2d^2}{g}\kModular{a_1m+a_2u}{a_1n+a_2v}{B\tau}
	,
\end{align*}
so that the invariant order of $\kModular{a_1}{a_2}{2d^2\tau}$ at the cusp
$\frac{\alpha}{\gamma}$ is
\begin{align*}
	\frac{g^2}{2d^2}
	\left(
		\frac{1}{2}B_2\left(\Fractional{a_1m+a_2u}\right) - \frac{1}{12}
	\right)
	&=
	\frac{g^2}{4d^2}
	\left(
		\Fractional{a_1m+a_2u}^2 -\Fractional{a_1m+a_2u} 
	\right)
	.
\end{align*}
By Proposition \ref{PropBiagioliInvariantOrder}, the invariant order of $f_{N,\rho}(\tau)$ at 
$\frac{\alpha}{\gamma}$ is
\begin{align*}
	\frac{\gcd(N,\gamma)^2}{2N}\left(\Fractional{\frac{\alpha\rho}{\gcd(N,\gamma)}}-\frac{1}{2} \right)^2.
\end{align*}
Additionally it well known that the invariant order of $\frac{\eta(2\tau)}{\eta(\tau)^2}$
at the cusp $\frac{\alpha}{\gamma}$ is
\begin{align*}
	-\frac{1}{12} + \frac{\gcd(2,\gamma)^2}{48}
.
\end{align*}
We can now deduce the invariant order of $P_{d,k}(a,b,c;\tau)$ at the cusp 
$\frac{\alpha}{\gamma}$.

For $k=0$ we find that the invariant order of $P_{d,0}(a,b,c;\tau)$ at the cusp
$\frac{\alpha}{\gamma}$ is
\begin{align*}
	&
	-\frac{1}{12} 
	+ 
	\frac{\gcd(2,\gamma)^2}{48}
	-
	\frac{g^2}{4d^2}
	\left(
		\Fractional{\tfrac{bm}{cd^2}+\tfrac{2au}{c}}^2 -\Fractional{\tfrac{bm}{cd^2}+\tfrac{2au}{c}} 
	\right)
	.
\end{align*}

For $d\nmid k$ we find that the invariant order of $P_{d,k}(a,b,c;\tau)$ at the cusp
$\frac{\alpha}{\gamma}$ is
\begin{align*}
	&
	-\frac{1}{12} 
	+ 
	\frac{\gcd(2,\gamma)^2}{48}
	+
	\frac{g^2}{4d^2}
	\left(
		\Fractional{ \tfrac{4dk\alpha}{g} } - \frac{1}{2} 
	\right)^2
	-
	\frac{g^2}{4d^2}
	\left(
		\Fractional{ \tfrac{2dk\alpha}{g} } - \frac{1}{2} 
	\right)^2
	+
	\frac{g^2}{4d^2}
	\left(
		\Fractional{-\tfrac{bm}{cd^2}-\tfrac{2au}{c}}^2 -\Fractional{-\tfrac{bm}{cd^2}-\tfrac{2au}{c}} 
	\right)
	\\&\quad	
		-
		\frac{g^2}{4d^2}
		\left(
			\Fractional{\tfrac{bm}{cd^2}+\tfrac{km}{d}+\tfrac{2au}{c}}^2 -\Fractional{\tfrac{bm}{cd^2}+\tfrac{km}{d}+\tfrac{2au}{c}} 
		\right)
		-
		\frac{g^2}{4d^2}
		\left(
			\Fractional{-\tfrac{bm}{cd^2}+\tfrac{km}{d}-\tfrac{2au}{c}}^2 -\Fractional{-\tfrac{bm}{cd^2}+\tfrac{km}{d}-\tfrac{2au}{c}} 
		\right)
	\\
	&=
		-\frac{1}{12} 
		+ 
		\frac{\gcd(2,\gamma)^2}{48}
		+
		\frac{g^2}{4d^2}
		\bigg(
			\Fractional{ \tfrac{4dk\alpha}{g} }^2 - \Fractional{ \tfrac{4dk\alpha}{g} } 
			-
			\Fractional{ \tfrac{2dk\alpha}{g} }^2 + \Fractional{ \tfrac{2dk\alpha}{g} } 
			+
			\Fractional{-\tfrac{bm}{cd^2}-\tfrac{2au}{c}}^2 -\Fractional{-\tfrac{bm}{cd^2}-\tfrac{2au}{c}} 
			\\&\quad			
			-
			\Fractional{\tfrac{bm}{cd^2}+\tfrac{km}{d}+\tfrac{2au}{c}}^2 +\Fractional{\tfrac{bm}{cd^2}+\tfrac{km}{d}+\tfrac{2au}{c}} 
			-
			\Fractional{-\tfrac{bm}{cd^2}+\tfrac{km}{d}-\tfrac{2au}{c}}^2 +\Fractional{-\tfrac{bm}{cd^2}+\tfrac{km}{d}-\tfrac{2au}{c}} 
		\bigg)
.
\end{align*}
\end{proof}

\subsection{Transformations of $\widetilde{\mathcal{O}}$}

We are now able to prove transformation formulas for $\widetilde{\mathcal{O}}_{d}(a,b,c;\tau)$.

\begin{proposition}\label{prop:FullOTransf}
Suppose
\[
A=\begin{pmatrix}\alpha & \beta \\ \gamma & \delta \end{pmatrix}\in  
\begin{cases}
\G_0(2d^2) \cap \G_{a,b,c,d} \cap \G_{a,b,c,d}' \cap \G_1(d) & \text{ if } d \text { odd}, \\
\G_0(d^2/2) \cap \G_{a,b,c,d} \cap \G_{a,b,c,d}' \cap \G_1(2d) & \text{ if } d\equiv 2 \pmod 4,\\
\G_0(4d^2) \cap \G_{a,b,c,d} \cap \G_{a,b,c,d}' \cap \G_1(2d)& \text{ if } d\equiv 0 \pmod 4.
\end{cases}
\]
Then
\[
\widetilde{\mathcal{O}}_{d}(a,b,c;A\tau) = 
(-1)^{\beta + \frac{\alpha-1}{2}} i^{-\alpha \beta} \nu(\mA{2})^{-3} \sqrt{\gamma\tau+\delta} \widetilde{\mathcal{O}}_d(a,b,c;\tau) 
.
\]
Moreover, for such $A$,
we have that 
\[
\frac{\eta(A\tau)^2}{\eta(2A\tau)} \widetilde{\mathcal{O}}_{d}(a,b,c;A\tau) 
= (\gamma\tau+\delta)\frac{\eta(\tau)^2}{\eta(2\tau)}\widetilde{\mathcal{O}}_{d}(a,b,c;\tau).
\]		
\end{proposition}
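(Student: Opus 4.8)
The plan is to exploit the decomposition \eqref{eqn:OviaM}, which writes $\widetilde{\mathcal{O}}_d(a,b,c;\tau)$ as a finite linear combination of products of the modular forms $\frac{\eta(2\tau)}{\eta(\tau)^2}$ and generalized eta quotients $f_{N,\rho}(\tau)$ with the functions $\widetilde{M}_{d,k}(a,b,c;\cdot)$, together with the functions $P_{d,k}(a,b,c;\tau)$ when $d$ is odd. Since the prescribed group is contained in each of the groups appearing in Corollary \ref{cor:Mtransf1}, Proposition \ref{PropositionPTransformation}, and Proposition \ref{prop:eta_transf}, and contains $\Gamma_1(d)$ (resp.\ $\Gamma_1(2d)$), I can transform every factor under $A$ and verify that each summand acquires the single multiplier $\mu_0 := (-1)^{\beta+\frac{\alpha-1}{2}}i^{-\alpha\beta}\nu(\mA{2})^{-3}\sqrt{\gamma\tau+\delta}$. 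The $P_{d,k}$ terms are immediate from Proposition \ref{PropositionPTransformation}, so the real work is confined to the $\widetilde{M}_{d,k}$ terms.

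For the $d$ odd term $\frac{\eta(2\tau)}{\eta(\tau)^2}f_{2d^2,d^2+2dk}(\tau)\widetilde{M}_{d,k}(a,b,c;2d^2\tau)$, I apply Proposition \ref{prop:eta_transf} to the eta quotient, the Biagioli transformation \eqref{EqBiagioliTransform} to $f_{2d^2,d^2+2dk}$ (valid since $A\in\Gamma_0(2d^2)$, where $\nu(\mA{2d^2})^3=\nu(\mA{2})^3$), and Corollary \ref{cor:Mtransf1} to $\widetilde{M}_{d,k}$. Multiplying the three factors, the powers of $\nu(\mA{2})^{3}$ collapse to $\nu(\mA{2})^{-3}$, the half-powers of $\gamma\tau+\delta$ combine to $(\gamma\tau+\delta)^{1/2}$, and the $(-1)$-powers from the eta quotient and from $\widetilde{M}_{d,k}$ multiply to $(-1)^{\alpha-1}=1$ because $\alpha$ is odd (as $\gamma\equiv0\pmod{2d^2}$). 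It then remains to check that the Biagioli sign and exponential supply exactly the surviving factor $(-1)^{\beta+\frac{\alpha-1}{2}}i^{-\alpha\beta}$ after cancelling the doubled $i^{-2\alpha\beta}$. The two key simplifications are that $\exp\!\left(\frac{\pi i\alpha\beta(d^2+2dk)^2}{2d^2}\right)=\exp\!\left(\frac{\pi i\alpha\beta d^2}{2}\right)=i^{\alpha\beta}$ since $d^2\equiv1\pmod 8$, and that the Biagioli sign reduces to $(-1)^{\beta+\frac{\alpha-1}{2}}$ upon using $\alpha\equiv1\pmod d$ from $\Gamma_1(d)$ to control the floor terms. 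Finally, since $\alpha(d^2+2dk)\equiv d^2+2dk\pmod{2d^2}$, the identity \eqref{eq:f_mod} returns the $f$-index to $d^2+2dk$, while Corollary \ref{cor:Mtransf1} keeps the $\widetilde{M}$-index at $k$; thus the sum transforms term-by-term and I obtain $\widetilde{\mathcal{O}}_d(a,b,c;A\tau)=\mu_0\,\widetilde{\mathcal{O}}_d(a,b,c;\tau)$.

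The two even cases follow the same template applied to the corresponding summands in \eqref{eqn:OviaM}, the difference being that the relevant eta-multiplier identities now read $\nu(\mA{d^2/2})^3=\nu(\mA{2})^3$ for $A\in\Gamma_0(d^2/2)$ when $d\equiv2\pmod4$, and $\nu(\mA{d^2/2})^3=i^{-\alpha\beta}\nu(\mA{2})^3$ for $A\in\Gamma_0(4d^2)$ when $d\equiv0\pmod4$; in the latter case one must additionally transform the factor $\eta(\tfrac{d^2\tau}{2})^2/\eta(d^2\tau)$ and the quotient $f_{d^2,\cdot}/f_{\frac{d^2}{2},\cdot}$. I expect the main obstacle to be precisely this root-of-unity bookkeeping in the $d\equiv0\pmod4$ case, where the extra $i^{-\alpha\beta}$ in the eta-multiplier and the additional generalized eta factors must be shown to conspire, via the $\Gamma_1(2d)$ congruences, to reproduce the same $\mu_0$; the clean parity cancellations and the reduction $d^2\equiv1\pmod8$ available when $d$ is odd are here replaced by analogous but more delicate computations.

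The final (\emph{Moreover}) identity is then a one-line consequence. Taking reciprocals in Proposition \ref{prop:eta_transf} gives $\frac{\eta(A\tau)^2}{\eta(2A\tau)}=(-1)^{\beta+\frac{\alpha-1}{2}}i^{\alpha\beta}\nu(\mA{2})^{3}(\gamma\tau+\delta)^{1/2}\frac{\eta(\tau)^2}{\eta(2\tau)}$, using $(-1)^{-\beta-\frac{\alpha-1}{2}}=(-1)^{\beta+\frac{\alpha-1}{2}}$. Multiplying this by the transformation $\widetilde{\mathcal{O}}_d(a,b,c;A\tau)=\mu_0\,\widetilde{\mathcal{O}}_d(a,b,c;\tau)$ just established, the factors $(-1)^{2\beta+(\alpha-1)}=1$, $i^{\alpha\beta}i^{-\alpha\beta}=1$, and $\nu(\mA{2})^{3}\nu(\mA{2})^{-3}=1$ all cancel, while the two half-powers of $\gamma\tau+\delta$ combine to $\gamma\tau+\delta$, yielding $\frac{\eta(A\tau)^2}{\eta(2A\tau)}\widetilde{\mathcal{O}}_d(a,b,c;A\tau)=(\gamma\tau+\delta)\frac{\eta(\tau)^2}{\eta(2\tau)}\widetilde{\mathcal{O}}_d(a,b,c;\tau)$, as claimed. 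This last step shows the eta quotient cleanly trivializes the multiplier, which is exactly the point of recording the weight-one modular form in Theorems \ref{TheoremOverpartitionDRankModularity} and \ref{thm:devenMain}.
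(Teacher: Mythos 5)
Your overall strategy coincides with the paper's: decompose $\widetilde{\mathcal{O}}_d$ through \eqref{eqn:OviaM}, transform each factor using Proposition \ref{prop:eta_transf}, the Biagioli formula \eqref{EqBiagioliTransform}, Corollary \ref{cor:Mtransf1}, and (for $d$ odd) Proposition \ref{PropositionPTransformation}, then verify that every summand acquires the common multiplier $(-1)^{\beta+\frac{\alpha-1}{2}}i^{-\alpha\beta}\nu(\mA{2})^{-3}\sqrt{\gamma\tau+\delta}$. Your $d$ odd computation is correct and is essentially the paper's, including the two key reductions $\exp\left(\frac{\pi i\alpha\beta(d^2+2dk)^2}{2d^2}\right)=i^{\alpha\beta}$ (from $d^2\equiv 1\pmod 8$) and the use of $\alpha\equiv 1\pmod{d}$ both to evaluate the Biagioli sign and to return the index $\alpha(d^2+2dk)$ to $d^2+2dk$ modulo $2d^2$; the $d\equiv 2\pmod 4$ case is also fine, since $\nu(\mA{d^2/2})^3=\nu(\mA{2})^3$ on $\Gamma_0(d^2/2)$ is the only multiplier fact needed there. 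Your closing argument for the ``Moreover'' statement agrees with the paper's.

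The gap is the $d\equiv 0\pmod 4$ case, which you defer with ``the factors must be shown to conspire'' --- but that conspiracy is exactly the nontrivial content of this proposition, and your odd-case template does not supply it. What the paper proves there is structurally different from the other cases: the entire weight-zero prefactor $\frac{\eta(2\tau)\eta(\frac{d^2\tau}{2})^2 f_{d^2,\frac{d^2}{2}+2dk}(\tau)}{\eta(\tau)^2\eta(d^2\tau)f_{\frac{d^2}{2},\frac{d^2}{4}+dk}(\tau)}$ is \emph{invariant} under the given group (equation \eqref{cor:mult4}), so that $\widetilde{M}_{d,k}$ alone contributes the full multiplier. Establishing this requires writing $\frac{\eta(\frac{d^2\tau}{2})^2}{\eta(d^2\tau)}=f_{d^2,\frac{d^2}{2}}(\tau)$ so that \eqref{EqBiagioliTransform} applies to all three generalized eta factors; the identity $\nu(\mA{d^2/2})^3=i^{-\alpha\beta}\nu(\mA{2})^3$ on $\Gamma_0(4d^2)$ (which you do quote); and, crucially, disposing of the resulting factor $\nu(\mA{d^2})^6$ via Theorem 1.64 of \cite{Ono1}: since $\frac{\eta(d^2\tau)^6}{\eta(8\tau)^6}$ is a weight $0$ form on $\Gamma_0(4d^2)$, one has $\nu(\mA{d^2})^6=\nu(\mA{8})^6$, from which $\frac{\nu(\mA{d^2})^6}{\nu(\mA{2})^3\nu(\mA{d^2/2})^3}=(-1)^\beta i^{\alpha\beta}$. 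Without this device (or an equivalent evaluation of $\nu(\mA{d^2})^6$), the root-of-unity bookkeeping you flag as the ``main obstacle'' does not close, so this case remains unproven in your proposal.
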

\begin{proof}

In order to understand how $\widetilde{\mathcal{O}}_d(a,b,c;\tau)$ transforms when $d$ odd, we see from \eqref{eqn:OviaM} that we need to consider how
\[
\frac{\eta(2\tau) f_{2d^2, d^2+2dk}(\tau)}{\eta(\tau)^2}
\]
transforms.  Proposition \ref{prop:eta_transf} gives the transformation for $\frac{\eta(2\tau)}{\eta(\tau)^2}$, so it remains to consider $ f_{2d^2, d^2+2dk}(\tau)$.
Since $A \in\Gamma_0(2d^2)\cap\Gamma_1(d)$, after simplifications we deduce from (\ref{EqBiagioliTransform}) that
\begin{align*}
	&f_{2d^2,d^2+2dk}(A\tau)
	\\
	&=
		(-1)^{ (d^2+2dk)\beta + \Floor{\frac{(d^2+2dk)\alpha}{2d^2}} + \Floor{\frac{(d^2+2dk)}{2d^2}} }
		\exp\left( \frac{ \pi i\alpha\beta(d^2+2dk)^2}{2d^2}  \right)
		\nu( \mA{2d^2} )^3
		\sqrt{\gamma\tau+\delta}
		f_{2d^2,d^2+2dk}(\tau)	
	\\
	&=
		(-1)^{ \beta + \frac{(\alpha-1)}{2} }
		i^{ \alpha\beta }
		\nu( \mA{2}  )^3
		\sqrt{\gamma\tau+\delta}
		f_{2d^2,d^2+2dk}(\tau)	
	.
\end{align*}
From this, together with Corollary \ref{cor:Mtransf1}, Proposition \ref{PropositionPTransformation},
Proposition \ref{prop:eta_transf}, and \eqref{eqn:OviaM}, we obtain the stated 
transformation for $\widetilde{\mathcal{O}}_d(a,b,c;\tau)$
after cancellations.


In order to understand how $\widetilde{\mathcal{O}}_d(a,b,c;\tau)$ transforms when $d\equiv 2 \pmod4$, we see from \eqref{eqn:OviaM} that we need to consider how 
\[
\frac{\eta(2\tau)f_{\frac{d^2}{2}, \frac{d^2}{4}+dk}(\tau)}{\eta(\tau)^2}  
\]
transforms.  Again it remains only to consider $f_{\frac{d^2}{2}, \frac{d^2}{4}+dk}(\tau)$.  
Since $A\in\Gamma_0(\frac{d^2}{2})\cap\Gamma_1(2d)$, by
\eqref{EqBiagioliTransform}, after cancellations  we find  that
\begin{align*}
f_{\frac{d^2}{2}, \frac{d^2}{4} + dk}(A\tau) 
&= 
	(-1)^{( d^2/4 +dk)\beta + \lfloor \frac{\alpha( d+4k)}{2d} \rfloor + \lfloor \frac{(d+4k)}{2d} \rfloor }   
	\exp\left( \frac{2\pi i \alpha\beta \left( d^2/4 +dk \right)^2}{d^2} \right)  
	\nu( \mA{\frac{d^2}{2}} )^3  \sqrt{\gamma\tau+\delta} f_{\frac{d^2}{2}, \frac{d^2}{4} + dk}(\tau)
\\
&= 
	(-1)^{\beta +\frac{\alpha-1}{2} } i^{\alpha \beta} \nu( \mA{2} )^3  
	\sqrt{\gamma\tau+\delta} f_{\frac{d^2}{2}, \frac{d^2}{4} + dk}(\tau).
\end{align*}
Using this together with Corollary \ref{cor:Mtransf1},
Proposition \ref{prop:eta_transf},
and \eqref{eqn:OviaM}, we obtain the transformation for 
$\widetilde{\mathcal{O}}_d(a,b,c;\tau)$
after cancellations.

In order to understand how $\widetilde{\mathcal{O}}_d(a,b,c;\tau)$ transforms when $d\equiv 0 \pmod4$, we see from \eqref{eqn:OviaM} that we need to consider how
\[
\frac{ \eta(2\tau) \eta(\frac{d^2\tau}{2})^2 f_{d^2, \frac{d^2}{2}+2dk}(\tau)} {\eta(\tau)^2 \eta(d^2\tau) f_{\frac{d^2}{2}, \frac{d^2}{4}+dk}(\tau)} 
\]
transforms.  First, we observe that  $\frac{\eta(\frac{d^2\tau}{2})^2}{\eta(d^2\tau)} = f_{d^2,\frac{d^2}{2}}(\tau)$. Using the fact that $A \in \G_0(4d^2)\cap\Gamma_1(2d)$, we have by \eqref{EqBiagioliTransform} that 
\begin{equation}\label{eq:eta2_transf}
 f_{d^2,\frac{d^2}{2}}(A\tau) = \nu(\mA{d^2})^{3} \sqrt{\gamma\tau+\delta} f_{d^2,\frac{d^2}{2}}(\tau).
\end{equation}
Furthermore,
\begin{align}\label{prop:f_transf4}
f_{d^2, \frac{d^2}{2} + 2dk}(A\tau) &= (-1)^{ \frac{(\alpha-1)}{2}} \nu( \mA{d^2} )^3 \sqrt{\gamma\tau+\delta}  
	f_{d^2, \frac{d^2}{2} + 2dk}(\tau), \nonumber \\
f_{\frac{d^2}{2}, \frac{d^2}{4} + dk}(A\tau) &= (-1)^{ \frac{(\alpha-1)}{2}} \nu( \mA{\frac{d^2}{2}} )^3 \sqrt{\gamma\tau+\delta} 
	f_{\frac{d^2}{2}, \frac{d^2}{4} +2dk}(\tau).
\end{align}
With Proposition \ref{prop:eta_transf}, \eqref{prop:f_transf4}, and \eqref{eq:eta2_transf} we find that 
\[
\frac{ \eta(2A\tau) \eta(\frac{d^2A\tau}{2})^2 f_{d^2, \frac{d^2}{2}+2dk}(A\tau)} {\eta(A\tau)^2 \eta(d^2A\tau) f_{\frac{d^2}{2}, \frac{d^2}{4}+dk}(A\tau)}  = (-1)^\beta i^{-\alpha\beta} \left( \frac{\nu( \mA{d^2} )^6}{\nu( \mA{2} )^3 \nu(\mA{\frac{d^2}{2}})^3} \right) \frac{ \eta(2\tau) \eta(\frac{d^2\tau}{2})^2 f_{d^2, \frac{d^2}{2}+2dk}(\tau)} {\eta(\tau)^2 \eta(d^2\tau) f_{\frac{d^2}{2}, \frac{d^2}{4}+dk}(\tau)}. 
\]

By the discussion following \eqref{EqEtaMultipler}, the fact that $\frac{d}{2}$ is even yields that
$\nu( \mA{\frac{d^2}{2}} )^3 = i^{-\alpha\beta}\nu(\mA{2})^3$ on $\Gamma_0(4d^2)$.
Furthermore, by Theorem 1.64 of \cite{Ono1},
the eta-quotient $\frac{\eta(d^2\tau)^6}{\eta(8\tau)^6}$ is a weight $0$ modular form
on $\G_0(4d^2)$,  and so $\nu(\mA{d^2})^6 = \nu(\mA{8})^6$.  This gives that
\[
\frac{\nu( \mA{d^2} )^6}{\nu( \mA{2} )^3 \nu(\mA{\frac{d^2}{2}})^3} = (-1)^\beta i^{\alpha\beta},
\]
which yields that
\begin{equation}\label{cor:mult4}
\frac{ \eta(2A\tau) \eta(\frac{d^2A\tau}{2})^2 f_{d^2, \frac{d^2}{2}+2dk}(A\tau)} {\eta(A\tau)^2 \eta(d^2A\tau) f_{\frac{d^2}{2}, \frac{d^2}{4}+dk}(A\tau)}  =  \frac{ \eta(2\tau) \eta(\frac{d^2\tau}{2})^2 f_{d^2, \frac{d^2}{2}+2dk}(\tau)} {\eta(\tau)^2 \eta(d^2\tau) f_{\frac{d^2}{2}, \frac{d^2}{4}+dk}(\tau)}. 
\end{equation}

Using \eqref{cor:mult4} together with 
Corollary \ref{cor:Mtransf1}, 
Proposition \ref{prop:eta_transf},
and \eqref{eqn:OviaM}, 
we obtain the stated transformation for $\widetilde{\mathcal{O}}_d(a,b,c;\tau)$
after cancellations and noting that $A\in\Gamma_1(4)$.

We see that the multiplier for 
$\widetilde{\mathcal{O}}_d(a,b,c;\tau)$ is identical to that of
$\frac{\eta(2\tau)}{\eta(\tau)^2}$ as stated in Proposition \ref{prop:eta_transf}
on the given congruence subgroups, and as such
$\frac{\eta(\tau)^2}{\eta(2\tau)}\widetilde{\mathcal{O}}_d(a,b,c;\tau)$ 
has trivial multiplier.
\end{proof}

\section{Dissection terms and their modularity}

We begin by determining an alternate form for the non-holomorphic parts of the 
$\widetilde{M}_{d,k}(a,b,c; \tau)$ for use in dissection formulas.  
We have the following $R$ functions appearing in the definition of 
$\widetilde{\mathcal{O}}_d(a,b,c;\tau)$ in \eqref{DefinitionOfTildeO}:
\begin{align*}
&R\left(  \left( \tfrac{2b}{c} - d^2 \right) \tau + \tfrac{2a}{c}; 2d^2\tau \right) \mbox{ when } d \mbox{ odd},\\
&R\left( \left( \tfrac{b}{c} - \tfrac{d^2}{4}\right)\tau + \left(\tfrac{a}{c} -1 \right); \tfrac{d^2\tau}{2} \right) \mbox{ when } d\equiv 2 \!\! \pmod 4,\\
&R\left( \left( \tfrac{b}{c} - \tfrac{d^2}{4}\right)\tau + \left(\tfrac{a}{c} - \tfrac12 \right); \tfrac{d^2\tau}{2}\right) \mbox{ when } d\equiv 0 \!\! \pmod 4.
\end{align*} 
For convenience, we make the following definitions to use in the dissection of these $R$ functions.  Let $a,c,r,s$ be integers such that 
$r$ is nonnegative and $s$ is positive.  We then define
\[
\epsilon_{d,r,s}:= 
\begin{cases}
(-1)^r					& \text{ if } s \text { odd}, d \not\equiv 0 \!\! \pmod 4, \\
(-1)^{\frac{s-1}{2}} 	&\text{ if } s \text { odd}, d \equiv 0 \!\! \pmod 4, \\
-i(-1)^{r+d} 			&\text{ if } s \text { even}, d \not\equiv 0 \!\! \pmod 4, \\
-(-1)^{\frac{s}{2}} 	&\text{ if } s \text { even}, d \equiv 0 \!\! \pmod 4,
\end{cases}
\]
and 
\[
v_{a,c,d,s}:= 
\begin{cases}
\tfrac{2as}{c} 									&\text{ if } s \text { odd}, d \text{ odd}, \\
\tfrac{as}{c} - s 								&\text{ if } s \text { odd}, d \equiv 2 \!\! \pmod 4, \\
\tfrac{as}{c} - \tfrac{s}{2} 					&\text{ if } s \text { odd}, d \equiv 0 \!\! \pmod 4, \\
\tfrac{2as}{c} + \tfrac12 						&\text{ if } s \text { even}, d \text{ odd}, \\
\tfrac{as}{c} - s + \tfrac12 					&\text{ if } s \text { even}, d \equiv 2 \!\! \pmod 4, \\
\tfrac{as}{c} - \tfrac{s}{2} + \tfrac12 		&\text{ if } s \text { even}, d \equiv 0 \!\! \pmod 4.
\end{cases}
\]

\begin{proposition}\label{prop:Rdissection}
Let $s$ be a positive integer. If $d$ is odd, then
\begin{multline*}
R\left( \left(\tfrac{2b}{c}-d^2\right)\tau + \tfrac{2a}{c}; 2d^2\tau \right) = \\
\sum_{r=0}^{s-1} \epsilon_{d,r,s} \z{c}{-a(2r+1-s)}q^{-\frac{d^2(2r+1-s)^2}{4} - \frac{(2r+1-s)(2b-cd^2)}{2c} } R\left( ( 2d^2rs + \tfrac{2bs}{c} - d^2s^2)\tau + v_{a,c,d,s}; 2d^2s^2\tau \right),
\end{multline*}
if $d \equiv 2 \pmod 4$, then
\begin{multline*}
R\left( \left( \tfrac{b}{c} - \tfrac{d^2}{4}\right)\tau + \left(\tfrac{a}{c} -1 \right); \tfrac{d^2\tau}{2} \right) = \\
\sum_{r=0}^{s-1} \epsilon_{d,r,s} \zeta_{2c}^{-a(2r+1-s)}q^{-\frac{d^2(2r+1-s)^2}{16} -\frac{(2r+1-s)(4b-cd^2)}{8c}} \cdot R\left( \left( \tfrac{d^2rs}{2} + \tfrac{bs}{c} - \tfrac{d^2s^2}{4}\right)\tau + v_{a,c,d,s}; \tfrac{d^2s^2\tau}{2} \right),
\end{multline*}
and if $d \equiv 0 \pmod 4$, then
\begin{multline*}
R\left( \left( \tfrac{b}{c} - \tfrac{d^2}{4}\right)\tau + \left(\tfrac{a}{c} -\tfrac12 \right); \tfrac{d^2\tau}{2} \right) = \\
\sum_{r=0}^{s-1} \epsilon_{d,r,s} \zeta_{2c}^{-a(2r+1-s)}q^{-\frac{d^2(2r+1-s)^2}{16} - \frac{(2r+1-s)(4b-cd^2)}{8c}} \cdot R\left( \left( \tfrac{d^2rs}{2} + \tfrac{bs}{c} - \tfrac{d^2s^2}{4}\right)\tau + v_{a,c,d,s}; \tfrac{d^2s^2\tau}{2} \right).
\end{multline*}
\end{proposition}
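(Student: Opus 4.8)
The plan is to prove all three identities by directly dissecting the defining series \eqref{Rdef} of $R(u;\tau)$ according to residue classes of the summation index. I will treat the case $d$ odd in detail; the cases $d\equiv 2$ and $d\equiv 0\pmod 4$ are identical after replacing the modulus $2d^2\tau$ by $\frac{d^2\tau}{2}$ and the real shift $\frac{2a}{c}$ by $\frac{a}{c}-1$ or $\frac{a}{c}-\frac12$. Write $\mathcal{T}=2d^2\tau$ and $U=\left(\frac{2b}{c}-d^2\right)\tau+\frac{2a}{c}$, so that $R(U;\mathcal{T})=\sum_{n\in\mathbb{Z}+\frac12}\left(\mathrm{sgn}(n)-E((n+A)\sqrt{2Y})\right)(-1)^{n-\frac12}\exp\!\left(-\pi in^2\mathcal{T}-2\pi inU\right)$, with $Y=\mathrm{Im}(\mathcal{T})$ and $A=\mathrm{Im}(U)/Y=\frac{b}{cd^2}-\frac12$. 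For each $r\in\{0,1,\dots,s-1\}$ I substitute $n=sN+c_r$ with $c_r:=r+\frac{1-s}{2}=\frac{2r+1-s}{2}$ and $N\in\mathbb{Z}+\frac12$; as $r$ and $N$ range over these sets, $n$ runs exactly once over $\mathbb{Z}+\frac12$, so this partitions the sum into $s$ pieces.

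Next I expand $n^2=s^2N^2+2sc_rN+c_r^2$ and collect the exponent, which factors as $\exp\!\left(-\pi iN^2(s^2\mathcal{T})-2\pi iN\,U_r\right)\cdot\exp\!\left(-\pi ic_r^2\mathcal{T}-2\pi ic_rU\right)$, where $U_r:=s(c_r\mathcal{T}+U)=\left(2d^2rs+\frac{2bs}{c}-d^2s^2\right)\tau+\frac{2as}{c}$. The first factor is precisely the quadratic exponential of an $R$-function of modulus $s^2\mathcal{T}=2d^2s^2\tau$ and argument $U_r$, while the second factor is independent of $N$; evaluating it produces exactly the prefactor $\z{c}{-a(2r+1-s)}\,q^{-\frac{d^2(2r+1-s)^2}{4}-\frac{(2r+1-s)(2b-cd^2)}{2c}}$ recorded in the statement. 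The sign $(-1)^{n-\frac12}=(-1)^{sN'+r}$, where $N'=N-\frac12\in\mathbb{Z}$, splits as $(-1)^r(-1)^{N-\frac12}$ when $s$ is odd; the residual $(-1)^{N-\frac12}$ is the sign belonging to the new $R$, so $R(U_r;s^2\mathcal{T})$ appears with no shift and $\epsilon_{d,r,s}=(-1)^r$. When $s$ is even, $(-1)^{sN'}=1$ and the factor $(-1)^{N-\frac12}$ is absent; I recover it by shifting the argument by $\frac12$, using that $(-1)^{N-\frac12}e^{-\pi iN}=-i$ for $N\in\mathbb{Z}+\frac12$, which simultaneously produces the extra $+\frac12$ in $v_{a,c,d,s}$ and contributes a factor of $i$ to $\epsilon_{d,r,s}$.

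The remaining point is that the non-holomorphic data reassemble correctly. Setting $A_r=\mathrm{Im}(U_r)/\mathrm{Im}(s^2\mathcal{T})=\frac{c_r+A}{s}$, a short computation gives $(N+A_r)\sqrt{2\,\mathrm{Im}(s^2\mathcal{T})}=s(N+A_r)\sqrt{2Y}=(n+A)\sqrt{2Y}$, so the error-function argument matches term by term. The only genuinely delicate step is the factor $\mathrm{sgn}(n)$ inside $R$, which must become $\mathrm{sgn}(N)$ for the inner sum to close up into $R(\,\cdot\,;s^2\mathcal{T})$; this is exactly where the centered range of $c_r$ is essential, since $|c_r|\le\frac{s-1}{2}<\frac{s}{2}\le|sN|$ for every $N\in\mathbb{Z}+\frac12$ forces $\mathrm{sgn}(n)=\mathrm{sgn}(sN+c_r)=\mathrm{sgn}(N)$ with no exceptional terms. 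Collecting the prefactors, signs, and half-shift then yields the identity for $d$ odd, and the same bookkeeping --- with the parity of $d$ and the different constant shifts entering only through $\exp(-2\pi ic_rU)$ and the choice of root of unity $\z{c}{\cdot}$ versus $\zeta_{2c}^{\cdot}$ --- produces the values of $\epsilon_{d,r,s}$ and $v_{a,c,d,s}$ listed for $d\equiv2$ and $d\equiv0\pmod4$. I expect the main obstacle to be the careful tracking of the $q$-powers, roots of unity, and powers of $i$ through the quadratic expansion and the even-$s$ half-shift, rather than anything conceptual.
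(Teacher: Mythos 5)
Your proposal is correct and is essentially the paper's own proof: the paper likewise dissects the defining series of $R$ into the $s$ residue classes of the index (writing $n=sn'+r$ over integer indices rather than your centered half-integer representatives $c_r=\tfrac{2r+1-s}{2}$, a purely cosmetic difference), completes the square to produce the prefactor and the new argument $U_r$, uses the same bound $0<\tfrac{r}{s}+\tfrac{1}{2s}<1$ (equivalently your $|c_r|<|sN|$) to match the sign terms, and handles even $s$ by the same half-shift yielding the factor $i$. The paper also, like you, carries out only the $d$ odd case in detail and dismisses the two even cases as elementary rearrangements of the same computation.
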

\begin{proof}
We only give the proof for $d$ odd, as all three cases follow from only elementary rearrangements. We have
\begin{align*}
	&R\left( \left(\tfrac{b}{cd^2}-\tfrac{1}{2}\right)\tau + \tfrac{2a}{c}; \tau \right)
	\\
	&=
		\sum_{n=-\infty}^\infty
		\left(
			\mbox{sgn}(n+\tfrac{1}{2}) - E\left( (n+\tfrac{b}{cd^2})\sqrt{2\mbox{Im}(\tau)} \right)
		\right)
		\\&\quad\times			
		(-1)^n
		\exp\left(
			-\pi i( n+\tfrac{1}{2} )^2\tau 
			-2\pi i( n+\tfrac{1}{2} )\left( (\tfrac{b}{cd^2}-\tfrac{1}{2})\tau + \tfrac{2a}{c}   \right) 			
		\right)		
	\\
	&=
		\sum_{r=0}^{s-1}
		\sum_{n=-\infty}^\infty
		\left(
			\mbox{sgn}(sn+r+\tfrac{1}{2}) - E\left( (sn+r+\tfrac{b}{cd^2})\sqrt{2\mbox{Im}(\tau)} \right)
		\right)
		\\&\quad\times			
		(-1)^{sn+r}
		\exp\left(
			-\pi i( sn+r+\tfrac{1}{2} )^2\tau 
			-2\pi i( sn+r+\tfrac{1}{2} )\left( (\tfrac{b}{cd^2}-\tfrac{1}{2})\tau + \tfrac{2a}{c}   \right) 			
		\right)		
	\\
	&=
		\sum_{r=0}^{s-1}
		\sum_{n=-\infty}^\infty
		\left(
			\mbox{sgn}(n+\tfrac{r}{s}+\tfrac{1}{2s}) 
			- 
			E\left( (n+\tfrac{1}{2}+(\tfrac{r}{s}+\tfrac{b}{cd^2s}-\tfrac{1}{2}))\sqrt{2\mbox{Im}(s^2\tau)} \right)
		\right)
		\\&\quad\times			
		(-1)^{sn+r}
		\exp\left(
			-\pi i( n+\tfrac{1}{2})^2 s^2\tau 
			-2\pi i( n+\tfrac{1}{2} )
			\left( (\tfrac{b}{cd^2}-\tfrac{1}{2})s\tau + \tfrac{2as}{c} 
				 +(\tfrac{r}{s}+\tfrac{1}{2s}-\tfrac{1}{2})s^2\tau    
			\right) 			
		\right)		
		\\&\quad\times
		\exp\left(
			-\pi i(\tfrac{r}{s}+\tfrac{1}{2s}-\tfrac{1}{2} )^2 s^2\tau 
			-2\pi i(\tfrac{r}{s}+\tfrac{1}{2s}-\tfrac{1}{2} ) 
			\left( (\tfrac{b}{cd^2}-\tfrac{1}{2})s\tau + \tfrac{2as}{c}   \right) 			
		\right)		
	\\
	&=
		\sum_{r=0}^{s-1}
		(-1)^{r}
		\z{c}{-a(2r-s+1)}
		q^{-\frac{(2r-s+1)^2}{8} - \frac{(2r-s+1)(2b-cd^2)}{4cd^2}  }
		\\&\quad\times
		\sum_{n=-\infty}^\infty
		\left(
			\mbox{sgn}(n+\tfrac{1}{2}) 
			- 
			E\left( (n+\tfrac{1}{2}+(\tfrac{r}{s}+\tfrac{b}{cd^2s}-\tfrac{1}{2}))\sqrt{2\mbox{Im}(s^2\tau)} \right)
		\right)
		\\&\quad\times			
		(-1)^{sn}
		\exp\left(
			-\pi i( n+\tfrac{1}{2})^2 s^2\tau 
			-2\pi i( n+\tfrac{1}{2} )
			\left( (\tfrac{r}{s} + \tfrac{b}{cd^2s} - \tfrac{1}{2})s^2\tau + \tfrac{2as}{c} 
			\right) 			
		\right)		
	,
\end{align*}
where in the last equality we have used the fact that
$\mbox{sgn}(n+\tfrac{1}{2}) = \mbox{sgn}(n+\frac{r}{s}+\frac{1}{2s})$ which follows from
$0 < \frac{r}{s}+\frac{1}{2s} < 1$. When $s$ is odd, the above is exactly
\begin{align*}
	R\left( \left(\tfrac{b}{cd^2}-\tfrac{1}{2}\right)\tau + \tfrac{2a}{c}; \tau \right)
	&=
		\sum_{r=0}^{s-1}
		(-1)^{r}
		\z{c}{-a(2r-s+1)}
		q^{-\frac{(2r-s+1)^2}{8} - \frac{(2r-s+1)(2b-cd^2)}{4cd^2}  }
		R\left(
			(\tfrac{r}{s} + \tfrac{b}{cd^2s} - \tfrac{1}{2})s^2\tau + \tfrac{2as}{c}
			;
			s^2\tau
		\right)
	.
\end{align*}
However, when $s$ is even we have $(-1)^{sn}=1$, so we instead find that
\begin{align*}
	&R\left( \left(\tfrac{b}{cd^2}-\tfrac{1}{2}\right)\tau + \tfrac{2a}{c}; \tau \right)
	\\
	&=
		i\sum_{r=0}^{s-1}
		(-1)^{r}
		\z{c}{-a(2r-s+1)}
		q^{-\frac{(2r-s+1)^2}{8} - \frac{(2r-s+1)(2b-cd^2)}{4cd^2}  }
		R\left(
			(\tfrac{r}{s} + \tfrac{b}{cd^2s} - \tfrac{1}{2})s^2\tau + \tfrac{2as}{c} +\tfrac{1}{2}
			;
			s^2\tau
		\right)
	.
\end{align*}
\end{proof}

\subsection{Definition of $\widetilde{S}(r,c; \tau)$ and modular properties}

We are ultimately interested in when $b=0$ and $s=c$,
so that we can work with the $c$-dissection of $\mathcal{O}_d(\z{c}{a};q)$.  
To handle the functions appearing in the dissections, we let
\begin{align}
	S(r,c;\tau)
	&=	
		\frac{2(-1)^{c+1} q^{\frac{2cr-r^2}{2c^2}} }
			{\aqprod{q^{\frac{1}{2}}, q^{\frac{1}{2}}, q }{q}{\infty}}
		\sum_{n=-\infty}^\infty
		\frac{(-1)^{n}q^{\frac{n(n+2)}{2} } }
			{1-(-1)^{c+1}q^{n+\frac{r}{c}}}						
	\label{EqDefinitionOfS}
	=
	\left\{
	\begin{array}{ll}
		\displaystyle
		-2iq^{-\frac{(c-2r)^2}{8c^2}} 
		\mu\left( \tfrac{r\tau}{c}, \tfrac{\tau}{2}; \tau \right)
		&
		\mbox{ if } c \mbox{ is odd},		
		\\
		\displaystyle
		2q^{-\frac{(c-2r)^2}{8c^2}} 
		\mu\left( \tfrac{r\tau}{c}+\tfrac{1}{2}, \tfrac{\tau}{2}; \tau \right)
		&
		\mbox{ if } c \mbox{ is even},		
	\end{array}
	\right.
	\\
	\label{EqDefinitionOfSTilde}
	\widetilde{S}(r,c;\tau)
	&=
	\left\{
	\begin{array}{ll}
		\displaystyle
		-2iq^{-\frac{(c-2r)^2}{8c^2}} 
		\tmu\left( \tfrac{r\tau}{c}, \tfrac{\tau}{2}; \tau \right)
		&
		\mbox{ if } c \mbox{ is odd},		
		\\
		\displaystyle
		2q^{-\frac{(c-2r)^2}{8c^2}} 
		\tmu\left( \tfrac{r\tau}{c}+\tfrac{1}{2}, \tfrac{\tau}{2}; \tau \right)
		&
		\mbox{ if } c \mbox{ is even}.		
	\end{array}
	\right.
\end{align}
We note that for $c$ odd, we cannot take $r$ with $c\mid r$. However, we will 
see this case does not occur in our calculations and identities.

We now determine a transformation for $\widetilde{S}(r,c; \tau)$, under the action of $\SLTwo$, in terms of $\tmu(u,v;\tau)$.

\begin{proposition}\label{PropositionSTransformationInitial}
Suppose $c$ and $r$ are integers and 
$A=\begin{pmatrix}\alpha & \beta \\ \gamma & \delta \end{pmatrix}\in\SLTwo$.
If $c$ is odd and $c\nmid r$, then
\begin{align*}
	\widetilde{S}(r,c;A\tau)
	&=
		-2i\exp\left(-\frac{\pi i\alpha\beta(c-2r)^2}{4c^2}\right) \nu(A)^{-3}
		\sqrt{\gamma\tau+\delta}
		q^{-\frac{\alpha^2(c-2r)^2}{8c^2}}
		\tmu\left( \tfrac{r(\alpha\tau+\beta)}{c}, 
			\tfrac{(\alpha\tau+\beta)}{2}; \tau \right)
	.
\end{align*}
If $c$ is even, then
\begin{align*}
	\widetilde{S}(r,c;A\tau)
	&=
		2\exp\left(-\pi i\left(
			\frac{\alpha\beta(c-2r)^2}{4c^2}
			+
			\frac{\beta\gamma(2r-c)}{2c}
			+
			\frac{\gamma\delta}{4}
		\right)	\right) 
		\nu(A)^{-3}
		\sqrt{\gamma\tau+\delta}
		\\&\quad\times
		q^{-\frac{\alpha^2(c-2r)^2}{8c^2} - \frac{\alpha\gamma(2r-c)}{4c}-\frac{\gamma^2}{8} }
		\tmu\left( \tfrac{r(\alpha\tau+\beta)}{c} + \tfrac{(\gamma\tau+\delta)}{2}, 
			\tfrac{(\alpha\tau+\beta)}{2} 
			; \tau \right)
	.
\end{align*}
\end{proposition}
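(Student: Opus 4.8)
The plan is to substitute the definitions \eqref{EqDefinitionOfSTilde} directly and then invoke Zwegers' modular transformation \eqref{EqZTheorem1.11P2} for $\tmu$, the only nontrivial input being a determinant identity that removes an apparent $\tau$-dependent denominator. I would first treat the case $c$ odd. Replacing $\tau$ by $A\tau$ in the definition gives
$$\widetilde{S}(r,c;A\tau) = -2i\,e^{-\frac{\pi i(c-2r)^2}{4c^2}\cdot A\tau}\,\tmu\left(\tfrac{r(A\tau)}{c}, \tfrac{A\tau}{2}; A\tau\right).$$
Since $A\tau = \frac{\alpha\tau+\beta}{\gamma\tau+\delta}$, the two arguments of $\tmu$ are precisely $\frac{u}{\gamma\tau+\delta}$ and $\frac{v}{\gamma\tau+\delta}$ with $u=\frac{r(\alpha\tau+\beta)}{c}$ and $v=\frac{\alpha\tau+\beta}{2}$, which is exactly the shape demanded by \eqref{EqZTheorem1.11P2}. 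Applying it produces the factor $\nu(A)^{-3}\sqrt{\gamma\tau+\delta}$ together with a Gaussian factor $\exp\!\left(-\frac{\pi i\gamma(u-v)^2}{\gamma\tau+\delta}\right)$, where $u-v=(\alpha\tau+\beta)\frac{2r-c}{2c}$.

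The key step is then to combine this Gaussian factor with the power of $q$ evaluated at $A\tau$. Collecting both contributions, the total exponent is $-\frac{\pi i(c-2r)^2}{4c^2}\cdot\frac{(\alpha\tau+\beta)\bigl(1+\gamma(\alpha\tau+\beta)\bigr)}{\gamma\tau+\delta}$. Here I would use the identity $1+\gamma(\alpha\tau+\beta)=\alpha(\gamma\tau+\delta)$, which is nothing but the condition $\alpha\delta-\beta\gamma=1$ written out, to cancel the denominator completely. The exponent collapses to $-\frac{\pi i(c-2r)^2}{4c^2}\alpha(\alpha\tau+\beta)$, whose $\tau$-linear part is exactly the stated power $q^{-\frac{\alpha^2(c-2r)^2}{8c^2}}$ and whose constant part is the stated phase $\exp\!\left(-\frac{\pi i\alpha\beta(c-2r)^2}{4c^2}\right)$, completing the odd case.

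For $c$ even I would proceed identically. The extra $+\tfrac12$ in \eqref{EqDefinitionOfSTilde} yields $u=\frac{r(\alpha\tau+\beta)}{c}+\frac{\gamma\tau+\delta}{2}$ and $v=\frac{\alpha\tau+\beta}{2}$, so writing $P=\alpha\tau+\beta$, $Q=\gamma\tau+\delta$, and $\lambda=\frac{2r-c}{2c}$ gives $u-v=\lambda P+\frac{Q}{2}$ and hence $(u-v)^2=\lambda^2P^2+\lambda PQ+\frac{Q^2}{4}$. The term $\frac{\gamma\lambda^2P^2}{Q}$ combines with the power of $q$ at $A\tau$ and simplifies by the same determinant identity to produce $q^{-\frac{\alpha^2(c-2r)^2}{8c^2}}$ and $\exp\!\left(-\frac{\pi i\alpha\beta(c-2r)^2}{4c^2}\right)$, exactly as before. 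The remaining pieces $\gamma\lambda P$ and $\frac{\gamma Q}{4}$ carry no denominator and contribute directly: their $\tau$-linear parts give the additional factor $q^{-\frac{\alpha\gamma(2r-c)}{4c}-\frac{\gamma^2}{8}}$, while their constant parts give the phase $\exp\!\left(-\pi i\bigl(\frac{\beta\gamma(2r-c)}{2c}+\frac{\gamma\delta}{4}\bigr)\right)$, matching the statement.

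I expect the main obstacle to be purely organizational: keeping the several exponential factors straight and carefully separating each one's $\tau$-linear part, which becomes a power of $q$, from its constant part, which becomes a phase prefactor. No case distinction on $\gamma$ is needed, since $1+\gamma(\alpha\tau+\beta)=\alpha(\gamma\tau+\delta)$ holds identically in $\tau$ for every $A\in\SLTwo$; in particular the apparent pole at $\gamma\tau+\delta=0$ inside the Gaussian factor is spurious and cancels against the power of $q$ at $A\tau$.
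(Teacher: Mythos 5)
Your proposal is correct and is essentially the paper's own proof: substitute the definition \eqref{EqDefinitionOfSTilde} at $A\tau$, apply \eqref{EqZTheorem1.11P2}, and collapse the Gaussian factor against the $q$-power via $1+\gamma(\alpha\tau+\beta)=\alpha(\gamma\tau+\delta)$. The only difference is cosmetic — the paper writes out only the odd case and declares the even case "similar," whereas you carry out the even-case bookkeeping (the cross terms $\gamma\lambda PQ$ and $\gamma Q^2/4$) explicitly, and your computed phases and $q$-exponents all match the statement.
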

\begin{proof}
Let $c$ be odd.  By (\ref{EqZTheorem1.11P2}) we have that
\begin{align*}
	\widetilde{S}(r,c;A\tau)
	&=
		-2i\exp\left(-\frac{\pi iA\tau(c-2r)^2}{4c^2}\right)
		\tmu\left( \tfrac{rA\tau}{c}, 
			\tfrac{A\tau}{2}; A\tau \right)
	\\
	&=	
		-2i\exp\left(-\frac{\pi iA\tau(c-2r)^2}{4c^2}\right)
		\nu(A)^{-3}
		\exp\left(-\frac{\pi i\gamma}{(\gamma\tau+\delta)}\left(
			\frac{r(\alpha\tau+\beta)}{c}-\frac{(\alpha\tau+\beta)}{2}			
		\right)^2\right)
		\\&\quad\times
		\sqrt{\gamma\tau+\delta}
		\tmu\left( \tfrac{r(\alpha\tau+\beta)}{c}, 
			\tfrac{(\alpha\tau+\beta)}{2}; \tau \right)
	\\
	&=	
		-2i\exp\left(-\frac{\pi i\alpha\beta(c-2r)^2}{4c^2}\right) \nu(A)^{-3}
		\sqrt{\gamma\tau+\delta}
		q^{-\frac{\alpha^2(c-2r)^2}{8c^2}}
		\tmu\left( \tfrac{r(\alpha\tau+\beta)}{c}, 
			\tfrac{(\alpha\tau+\beta)}{2}; \tau \right)
.
\end{align*}	
The proof for $c$ even follows in a similar fashion.
\end{proof}

Using Proposition \ref{PropositionSTransformationInitial} we deduce the following transformations for $\widetilde{S}(r,c;mc^2\tau)$ under the action of a certain subgroup of $\SLTwo$.

\begin{proposition}\label{PropositionSTransformationFinal}
Suppose $c$ and $m$ are integers with $m$ even. For
$A=\begin{pmatrix}\alpha&\beta\\\gamma&\delta\end{pmatrix}\in
\Gamma_0(m\gcd(c,2)c^2)\cap\Gamma_1(c)$,
\begin{align*}
\widetilde{S}(r,c;mc^2A\tau)
&=
\begin{cases}
	(-1)^{\frac{m\beta}{2}+\frac{(\alpha-1)}{2}} \exp\left(-\frac{\pi im\alpha\beta}{4}\right)\nu( \mA{mc^2})^{-3}
	\sqrt{\gamma\tau+\delta} \widetilde{S}(r,c;mc^2\tau)
	&\mbox{if $c$ is odd}
	,\\
	(-1)^{\frac{(\alpha-1)}{2}} \exp\left(-\frac{\pi i\gamma\delta}{4mc^2}\right)\nu( \mA{mc^2})^{-3}
	\sqrt{\gamma\tau+\delta} \widetilde{S}(r,c;mc^2\tau)
	&\mbox{if $c$ is even}
	.
\end{cases}
\end{align*}
\end{proposition}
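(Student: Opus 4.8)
The plan is to follow the same strategy used for the $\widetilde M_{d,k}$ functions in Corollary~\ref{cor:Mtransf1}. The key identity is $mc^2 A\tau = \mA{mc^2}(mc^2\tau)$, which lets me reduce a transformation of the dilated variable to the already-known $\SLTwo$-transformation of Proposition~\ref{PropositionSTransformationInitial}, after which I fold the arguments of $\tmu$ back to their standard shape using the elliptic property \eqref{EqZTheorem1.11P1}. First I would note that $\mA{mc^2}=\TwoTwoMatrix{\alpha}{mc^2\beta}{\gamma/(mc^2)}{\delta}$ lies in $\SLTwo$, which requires $mc^2\mid\gamma$; this holds because $A\in\Gamma_0(m\gcd(c,2)c^2)$ for either parity of $c$. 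Applying Proposition~\ref{PropositionSTransformationInitial} to the matrix $\mA{mc^2}$ with input variable $mc^2\tau$, and simplifying $\alpha(mc^2\tau)+mc^2\beta=mc^2(\alpha\tau+\beta)$ and $\tfrac{\gamma}{mc^2}(mc^2\tau)+\delta=\gamma\tau+\delta$, rewrites $\widetilde S(r,c;mc^2A\tau)$ as an explicit prefactor times $\tmu\!\left(\tfrac{rmc^2(\alpha\tau+\beta)}{c},\tfrac{mc^2(\alpha\tau+\beta)}{2};mc^2\tau\right)$ when $c$ is odd, with the analogous $+\tfrac12$- and $+\tfrac{\gamma\tau+\delta}{2}$-shifted expression when $c$ is even.

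Next I would strip $\alpha\tau+\beta$ down to $\tau$ inside $\tmu$ via \eqref{EqZTheorem1.11P1}, writing the arguments in the lattice $\Z+(mc^2\tau)\Z$ as $u_0+k(mc^2\tau)+l$ and $v_0+m_0(mc^2\tau)+n$, where $u_0=\tfrac{r(mc^2\tau)}{c}$, $v_0=\tfrac{mc^2\tau}{2}$, and $k=\tfrac{r(\alpha-1)}{c}$, $l=rmc\beta$, $m_0=\tfrac{\alpha-1}{2}$, $n=\tfrac{mc^2\beta}{2}$. The crux of why the hypotheses are exactly as stated is the integrality of these parameters: $k\in\Z$ needs $\alpha\equiv1\pmod c$, i.e. $A\in\Gamma_1(c)$; $m_0\in\Z$ needs $\alpha$ odd, which follows from $\det A=1$ together with $\gamma$ even; and $n\in\Z$ needs $m$ even, as assumed.

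Finally I would collect the phases. The exponential and $q$-power from Proposition~\ref{PropositionSTransformationInitial} are $\exp(-\tfrac{\pi i m\alpha\beta(c-2r)^2}{4})$ and $q^{-m\alpha^2(c-2r)^2/8}$, while \eqref{EqZTheorem1.11P1} supplies $(-1)^{k+l+m_0+n}$ together with $\exp(\pi i(mc^2\tau)(k-m_0)^2+2\pi i(k-m_0)(u_0-v_0))$. Using $k-m_0=(\alpha-1)\tfrac{2r-c}{2c}$ and $u_0-v_0=\tfrac{mc\tau(2r-c)}{2}$, the coefficient of $\tfrac{\pi i m(c-2r)^2\tau}{4}$ in the three $\tau$-exponentials works out to $-\alpha^2+(\alpha-1)^2+2(\alpha-1)=-1$, so all $\tau$-dependence collapses to exactly the power $q^{-m(c-2r)^2/8}$ already housed in $\widetilde S(r,c;mc^2\tau)$ from \eqref{EqDefinitionOfSTilde}. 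For $c$ odd, reducing $(c-2r)^2\equiv c^2\pmod{8}$ (using $m$ even) turns the remaining constant phase into $\exp(-\tfrac{\pi i m\alpha\beta}{4})$, and noting that $(\alpha-1)/c$ is even (even over odd) while $l$ is even and $n\equiv\tfrac{m\beta}{2}\pmod 2$ yields the sign $(-1)^{m\beta/2+(\alpha-1)/2}$; the even-$c$ case runs identically except that the $+\tfrac12$ and $+\tfrac{\gamma\tau+\delta}{2}$ shifts feed a $\gamma,\delta$-dependent remainder that condenses to $\exp(-\tfrac{\pi i\gamma\delta}{4mc^2})$. The main obstacle is precisely this phase bookkeeping: confirming that the quadratic and linear $\tau$-terms cancel to leave a holomorphic automorphy factor, and that the surviving roots of unity reduce under $\det A=1$, $m$ even, and the $\Gamma_0\cap\Gamma_1$ congruences to the stated multipliers, without any residual dependence on $r$.
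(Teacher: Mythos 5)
Your proposal is correct and follows essentially the same route as the paper's own proof: both pass from $mc^2A\tau$ to $\mA{mc^2}(mc^2\tau)$, apply Proposition \ref{PropositionSTransformationInitial} to $\mA{mc^2}$, fold the arguments of $\tmu$ back via \eqref{EqZTheorem1.11P1} with exactly the shift parameters you list, and then collect the phases (your computation $-\alpha^2+(\alpha-1)^2+2(\alpha-1)=-1$ and the reduction $(c-2r)^2\equiv 1\pmod 8$ for odd $c$ with $m$ even match the paper's simplifications). Like the paper, you treat the even-$c$ case as a parallel bookkeeping exercise, so there is nothing to flag.
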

\begin{proof}
Let $c$ be odd.  Since $\mA{mc^2}\in\SLTwo$, we may apply Proposition \ref{PropositionSTransformationInitial}
along with (\ref{EqZTheorem1.11P1})
to obtain
\begin{align*}
&\widetilde{S}(r,c;mc^2A\tau)
=
	\widetilde{S}(r,c;\mA{mc^2}(mc^2\tau))
\\
&=
	-2i\exp\left(-\frac{\pi im\alpha\beta(c-2r)^2}{4}\right)
	\nu(\mA{mc^2})^{-3} \sqrt{\gamma\tau+\delta}
	q^{-\frac{\alpha^2m(c-2r)^2}{8}}
	\\&\quad\times
	\tmu\left(
		\frac{r(\alpha mc^2\tau+mc^2\beta)}{c},
		\frac{\alpha mc^2\tau+mc^2\beta}{2}; mc^2\tau
	\right)
\\
&=
	-2i(-1)^{\frac{m\beta}{2} + \frac{r(\alpha-1)}{c}+\frac{(\alpha-1)}{2}}
	\exp\left(-\frac{\pi im\alpha\beta}{4}\right)
	\nu(\mA{mc^2})^{-3} \sqrt{\gamma\tau+\delta}
	q^{-\frac{\alpha^2m(c-2r)^2}{8}}
	\\&\quad\times
	\exp\left( \pi imc^2\tau\left( \frac{r(\alpha-1)}{c}-\frac{(\alpha-1)}{2} \right)^2  \right)	
	\exp\left( 2\pi i\left( \frac{r(\alpha-1)}{c}-\frac{(\alpha-1)}{2} \right)\left(\frac{rmc^2\tau}{c}-\frac{mc^2\tau}{2}\right)  \right)		
	\\&\quad\times
	\tmu\left( \frac{rmc^2\tau}{c}, \frac{mc^2\tau}{2} ; mc^2\tau \right)
\\
&=
	(-1)^{\frac{m\beta}{2} +\frac{(\alpha-1)}{2}}
	\exp\left(-\frac{\pi im\alpha\beta}{4}\right)
	\nu(\mA{mc^2})^{-3} \sqrt{\gamma\tau+\delta}
	\widetilde{S}(r,c;mc^2\tau)
.
\end{align*}
Again, the proof for $c$ even is similar.
\end{proof}

We now establish that $\widetilde{S}(r,c;\tau)$ is a harmonic Maass form and determine the order at cusps of the homomorphic part.

\begin{proposition}\label{PropSMassFormAndInvariantOrder}
Suppose $c$ and $m$ are integers with $m$ even.
Then $\widetilde{S}(r,c;mc^2\tau)$ is annihilated by $\Delta_{\frac{1}{2}}$
and has at most linear exponential growth at the cusps. In particular,
the invariant order of the holomorphic part of $\widetilde{S}(r,c;mc^2\tau)$ 
at the cusp 
$\frac{\alpha}{\gamma}$ is at least
\[
\frac{g^2}{mc^2}\left( -\frac{\ell^2(c-2r)^2}{8c^2}+ \widetilde{\nu}\left(\frac{r\ell}{c},\frac{\ell}{2}\right) \right)
\]
if $c$ is odd, and 
\[
\frac{g^2}{mc^2}\left( -\frac{\ell^2(c-2r)^2}{8c^2} -\frac{\ell u(2r-c)}{4c}-\frac{u^2}{8}
+\widetilde{\nu}\left(\frac{r\ell}{c}+\frac{u}{2},\frac{\ell}{2}\right) \right)
\]
if $c$ is even, where $g=\gcd(mc^2,\gamma)$, $\ell=\frac{mc^2\alpha}{g}$, and $u=\frac{\gamma}{g}$.
\end{proposition}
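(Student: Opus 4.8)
The plan is to mirror the arguments of Propositions~\ref{PropDOddMdkMaassForm}, \ref{prop:devenMaass}, and \ref{PropDOddMTildeInvariantOrder}, with Proposition~\ref{PropositionSTransformationInitial} playing the role that Proposition~\ref{prop:MtransformationSL2} played there. Fix integers $\alpha,\gamma$ with $\gcd(\alpha,\gamma)=1$ and take $A=\TwoTwoMatrix{\alpha}{\beta}{\gamma}{\delta}\in\SLTwo$; to read off the behaviour of $\widetilde{S}(r,c;mc^2\tau)$ at the cusp $\frac{\alpha}{\gamma}$ I study the growth of $\widetilde{S}(r,c;mc^2 A\tau)$ as $\mathrm{Im}(\tau)\to\infty$. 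Setting $g=\gcd(mc^2,\gamma)$, $\ell=\frac{mc^2\alpha}{g}$, and $u=\frac{\gamma}{g}$, so that $\gcd(\ell,u)=1$, I choose $L=\TwoTwoMatrix{\ell}{n}{u}{v}\in\SLTwo$ and put $B=L^{-1}\TwoTwoMatrix{mc^2\alpha}{mc^2\beta}{\gamma}{\delta}=\TwoTwoMatrix{g}{mc^2\beta v-\delta n}{0}{mc^2/g}$, so that $mc^2A\tau=LB\tau$ and $\exp(2\pi iB\tau)=\varepsilon\,q^{g^2/(mc^2)}$ for some $\varepsilon$ with $|\varepsilon|=1$.

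Applying Proposition~\ref{PropositionSTransformationInitial} with the matrix $L$ and argument $B\tau$ rewrites $(\gamma\tau+\delta)^{-1/2}\widetilde{S}(r,c;mc^2A\tau)$ as a nonzero constant times $\exp(2\pi i\sigma B\tau)\,\tmu(u_1 B\tau+u_2,\,v_1 B\tau+v_2;\,B\tau)$. For $c$ odd this gives $\sigma=-\frac{\ell^2(c-2r)^2}{8c^2}$, $u_1=\frac{r\ell}{c}$, $v_1=\frac{\ell}{2}$, while for $c$ even the extra factors in Proposition~\ref{PropositionSTransformationInitial} contribute $\sigma=-\frac{\ell^2(c-2r)^2}{8c^2}-\frac{\ell u(2r-c)}{4c}-\frac{u^2}{8}$ and shift the first elliptic variable so that $u_1=\frac{r\ell}{c}+\frac{u}{2}$, $v_1=\frac{\ell}{2}$. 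A direct computation shows $\sigma=-\frac{(u_1-v_1)^2}{2}$ in both cases, so this is exactly a function of the shape $q^{-(u_1-v_1)^2/2}\tmu(u_1\tau+u_2,v_1\tau+v_2;\tau)$ discussed after \eqref{eqn:growth_check}, whence the required linear exponential growth. The annihilation by $\Delta_{\frac12}$ is checked precisely as in Proposition~\ref{PropDOddMdkMaassForm}: one differentiates the $R$-part of $\widetilde{S}(r,c;mc^2\tau)$ in $\overline{\tau}$ and invokes Lemma~1.8 of \cite{Zwegers} to see the result is a fixed power of $q$ times a function holomorphic in $\overline{\tau}$, so that $\Delta_{\frac12}\widetilde{S}(r,c;mc^2\tau)=0$.

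For the invariant order I apply Corollary~\ref{PropTMuOrders} to $\exp(2\pi i\sigma w)\,\tmu(u_1 w+u_2,v_1 w+v_2;w)$ in the variable $w=B\tau$, which bounds below the least power of $\exp(2\pi i w)$ in its holomorphic part by $\sigma+\widetilde{\nu}(u_1,v_1)$. Since $\exp(2\pi iB\tau)=\varepsilon\,q^{g^2/(mc^2)}$, passing from $w$ back to $\tau$ multiplies every exponent by $\frac{g^2}{mc^2}$, so the invariant order of the holomorphic part of $\widetilde{S}(r,c;mc^2\tau)$ at $\frac{\alpha}{\gamma}$ is at least $\frac{g^2}{mc^2}\big(\sigma+\widetilde{\nu}(u_1,v_1)\big)$. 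Substituting the two sets of values of $\sigma$, $u_1$, $v_1$ then yields exactly the stated bounds.

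The only delicate point is the bookkeeping: I must verify that under $L$ and $B\tau$ the arguments produced by Proposition~\ref{PropositionSTransformationInitial} collapse — after discarding the integer shifts $u_2,v_2$, which $\widetilde{\nu}$ ignores — to precisely the pairs $\big(\frac{r\ell}{c},\frac{\ell}{2}\big)$ and $\big(\frac{r\ell}{c}+\frac{u}{2},\frac{\ell}{2}\big)$, and that the accumulated quadratic exponent is exactly $\sigma=-\frac{(u_1-v_1)^2}{2}$ so that the prefactor genuinely has the harmonic Maass form shape. Once these identities are confirmed, the remainder is a transcription of the odd and even $d$ arguments already carried out in Propositions~\ref{PropDOddMdkMaassForm}, \ref{prop:devenMaass}, and \ref{PropDOddMTildeInvariantOrder}.
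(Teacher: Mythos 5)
Your proposal is correct and follows essentially the same route as the paper's proof: the same $L$, $B$ factorization of $\begin{psmallmatrix}mc^2\alpha & mc^2\beta\\ \gamma & \delta\end{psmallmatrix}$, the same application of Proposition~\ref{PropositionSTransformationInitial} at $B\tau$, the annihilation by $\Delta_{\frac12}$ via Lemma~1.8 of Zwegers, and the invariant-order bound from Corollary~\ref{PropTMuOrders} rescaled by $\exp(2\pi iB\tau)=\varepsilon q^{g^2/(mc^2)}$. Your check that $\sigma=-\frac{(u_1-v_1)^2}{2}$ in both parity cases is exactly the verification implicit in the paper's identification of the prefactors, so no gap remains.
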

\begin{proof}
\sloppy First we verify that $\widetilde{S}(r,c;mc^2\tau)$ is annihilated by 
$\Delta_{\frac{1}{2}}$. For this we need only verify that
$\sqrt{\mbox{Im}(\tau)}\frac{\partial}{\partial\overline{\tau}}
q^{-\frac{m(c-2r)^2}{8}}R\left( rmc\tau+\star-\frac{mc^2\tau}{2} \right)$,
where $\star=0$ when $c$ is odd and $\star=\frac{1}{2}$ is even, is holomorphic
in $\overline{\tau}$. However, both cases follow immediately from Lemma 1.8 of
\cite{Zwegers}.

\fussy Next we check the growth condition at the cusps.  Suppose that $\alpha$ and $\gamma$ are integers with $\gcd(\alpha,\gamma)=1$, then take $A=\begin{psmallmatrix}\alpha&\beta\\\gamma&\delta\end{psmallmatrix}\in \SLTwo$.   To obtain the growth of $\widetilde{S}(r,c;mc^2\tau)$ at $\tau=\frac{\alpha}{\gamma}$, we study the growth of $\widetilde{S}(r,c;mc^2A\tau)$ at infinity.  We set $g=\gcd(mc^2,\gamma)$, $\ell=\frac{mc^2\alpha}{g}$,
and $u=\frac{\gamma}{g}$. Since $\gcd(\alpha,\gamma)=\gcd(\ell,u)=1$, we can
take
$L=\begin{psmallmatrix}\ell&n\\ u&v\end{psmallmatrix}\in\SLTwo$
and set
\begin{align*}
	B
	&=
		L^{-1}\TwoTwoMatrix{mc^2\alpha}{mc^2\beta}{\gamma}{\delta}
	=
		\TwoTwoMatrix{g}{mc^2\beta v- \delta n}{0}{mc^2/g}
.
\end{align*}
As the transformation formulas for $\widetilde{S}(r,c;\tau)$ depend on the 
parity of $c$, we now consider cases. 

When $c$ is odd, by Proposition 
\ref{PropositionSTransformationInitial} we have that
\begin{align*}
(\gamma\tau+\delta)^{-\frac{1}{2}}\widetilde{S}(r,c;mc^2A\tau)
&=
(\gamma\tau+\delta)^{-\frac{1}{2}}\widetilde{S}(r,c;LB\tau)
\\
&=
\epsilon_1 \exp\left(-2\pi iB\tau\frac{\ell^2(c-2r)^2}{8c^2} \right)
\tmu\left(\frac{r(\ell B\tau+n)}{c}, \frac{(\ell B\tau+n)}{2} ; B\tau \right),
\end{align*}
where $\epsilon_1$ is some nonzero constant. However, we see that 
$\exp(2\pi iB\tau)=\epsilon_2q^{\frac{g^2}{mc^2}}$, where $|\epsilon_2|=1$.
From the definition of $\tmu$ we deduce that $\widetilde{S}(r,c;mc^2\tau)$
has at most linear exponential growth at $\frac{\alpha}{\gamma}$
and by Corollary \ref{PropTMuOrders} we find that the invariant order
of the holomorphic part is at least
\begin{align*}
\frac{g^2}{mc^2}\left( -\frac{\ell^2(c-2r)^2}{8c^2} + \widetilde{\nu}\left(\frac{r\ell}{c},\frac{\ell}{2}\right) \right).
\end{align*}

Similarly, when $c$ is even, we have that
\begin{align*}
&(\gamma\tau+\delta)^{-\frac{1}{2}}\widetilde{S}(r,c;mc^2A\tau)
\\
&=
\epsilon_1 \exp\left(-2\pi iB\tau\left( \frac{\ell^2(c-2r)^2}{8c^2}+\frac{\ell u(2r-c)}{4c}+\frac{u^2}{8}\right) \right)
\tmu\left(\frac{r(\ell B\tau+n)}{c}+\frac{(uB\tau+v)}{2}, \frac{(\ell B\tau+n)}{2} ; B\tau \right),
\end{align*}
where $\epsilon_1$ is some nonzero constant. 
Again we see that $\widetilde{S}(r,c;mc^2\tau)$
has at most linear exponential growth at $\frac{\alpha}{\gamma}$,
but now the invariant order of the holomorphic part is at least
\begin{align*}
\frac{g^2}{mc^2}\left( -\frac{\ell^2(c-2r)^2}{8c^2} -\frac{\ell u(2r-c)}{4c} - \frac{u^2}{8}
+\widetilde{\nu}\left(\frac{r\ell}{c}+\frac{u}{2},\frac{\ell}{2}\right) \right).
\end{align*}
\end{proof}

\section{Proofs of Main Theorems }

We are now able to prove our main theorems, Theorems 
\ref{TheoremOverpartitionDRankModularity} and \ref{thm:devenMain}, which 
correspond to the cases when $d$ odd and $d$ even, respectively. 

\begin{proof}[Proof of Theorem \ref{TheoremOverpartitionDRankModularity}]
First, we note that Proposition \ref{prop:FullOTransf} together with 
\eqref{eqn:OviaM} and Proposition \ref{PropDOddMdkMaassForm} imply that 
$\widetilde{\mathcal{O}}_{d}(a,b,c;\tau)$ is a harmonic Maass form of weight 
$\frac12$ on a congruence subgroup of $\SLTwo$.  The rest of part 
(\ref{TheoremOverpartitionDRankModularityPartDOddMockModular}) of Theorem 
\ref{TheoremOverpartitionDRankModularity} follows from Proposition 
\ref{PropositionOddDMdRankModularAndNonModular}. 

It is important to note that while $\widetilde{M}_{d,k}(a,b,c;2d^2\tau)$ and 
$\widetilde{\mathcal{O}}_{d}(a,b,c;2d^2\tau)$ are harmonic Maass forms, the functions
$\frac{\eta(2\tau)f_{2d^2,d^2+2dk}}{\eta(\tau)^2}\widetilde{M}_{d,k}(a,b,c;2d^2\tau)$ 
are not harmonic Maass forms as they need not be annihilated by 
the hyperbolic Laplacian.

To prove part (\ref{TheoremOverpartitionDRankModularityPartDOddDissection}), by Propositions \ref{prop:FullOTransf} and \ref{PropositionSTransformationFinal} the functions
$\frac{\eta(\tau)^2}{\eta(2\tau)}\widetilde{\mathcal{O}}_d(a,0,c;\tau)$ and
$\frac{\eta(\tau)^2}{\eta(2\tau)}\widetilde{S}(r,c;2c^2d^2\tau)$ all 
transform like a weight $1$ modular forms on $\Gamma$,
where 
\begin{align*}
	\Gamma &=
	\left\{\begin{array}{cl}
		\Gamma_0(2c^2d^2)\cap\Gamma_1(\normalfont{\mbox{lcm}}(c,d))
		&
		\mbox{ if } c\equiv 1 \pmod{2}
		,\\
		\Gamma_0(4c^2d^2)\cap\Gamma_1(\normalfont{\mbox{lcm}}(c,d))
		&
		\mbox{ if } c\equiv 0 \pmod{2}
		.		
	\end{array}
	\right.
\end{align*}	
If the equality
\begin{align}\label{eqn:equality_odd}
	\frac{(1+\z{c}{a})}{(1-\z{c}{a})}\mathcal{O}_d(\z{c}{a};q)
	-
	\sum_{r=1}^{c-1}(-1)^r\z{c}{-2ar}S(r,c;2c^2d^2\tau)
	&=		
	\widetilde{\mathcal{O}}_d(a,0,c;\tau)
	-
	\sum_{r=1}^{c-1}(-1)^r\z{c}{-2ar}\widetilde{S}(r,c;2c^2d^2\tau)
\end{align}
holds, then the function
\begin{align*}
	\frac{(1+\z{c}{a})}{(1-\z{c}{a})}\mathcal{O}_d(\z{c}{a};q)
	-
	\sum_{r=1}^{c-1}(-1)^r\z{c}{-2ar}S(r,c;2c^2d^2\tau)
\end{align*}
is a holomorphic harmonic Maass form, and as such is a modular form and part (\ref{TheoremOverpartitionDRankModularityPartDOddDissection}) follows.

To prove \eqref{eqn:equality_odd} in the case when $c$ is odd, we use Proposition 
\ref{prop:Rdissection} to find that
\begin{align*}
	\frac{(1+\z{c}{a})}{(1-\z{c}{a})}\mathcal{O}_d(\z{c}{a};q)
	&=
		\widetilde{\mathcal{O}}_d(a,0,c;\tau)
		-
		\left(
			\z{c}{a}q^{-\frac{d^2}{4}}
			R\left(\tfrac{2a}{c}-d^2\tau	;2d^2\tau\right)		
			-
			1
		\right)
	\\
	&=
		\widetilde{\mathcal{O}}_d(a,0,c;\tau) 
		+ 
		1
		-
		\sum_{r=0}^{c-1}
		(-1)^r \z{c}{-2ar} q^{ - \tfrac{d^2(c-2r)^2}{4}  }
		R\left( (2cd^2r-c^2d^2)\tau ; 2c^2d^2\tau \right)
.
\end{align*}
Using \eqref{eqn:prop1.9Z}, we deduce that
$R\left( -\tfrac{\tau}{2}; \tau \right) = q^{\frac{1}{8}}$.
We then have
\begin{align*}
	&\frac{(1+\z{c}{a})}{(1-\z{c}{a})}\mathcal{O}_d(\z{c}{a};q)
	\\
	&=
		\widetilde{\mathcal{O}}_d(a,0,c;\tau) 
		- 
		\sum_{r=1}^{c-1}
		(-1)^r \z{c}{-2ar} q^{ - \tfrac{d^2(c-2r)^2}{4}  }
		R\left( (2cd^2r-c^2d^2)\tau ; 2c^2d^2\tau \right)
	\\	
	&=
		\widetilde{\mathcal{O}}_d(a,0,c;\tau) 
		- 
		\sum_{r=1}^{c-1}
		(-1)^r \z{c}{-2ar} q^{ - \tfrac{d^2(c-2r)^2}{4}  }
		\left(		
			2i\mu\left(2cd^2r\tau, c^2d^2\tau; 2c^2d^2 \right)
			-
			2i\tmu\left(2cd^2r\tau, c^2d^2\tau; 2c^2d^2\tau\right)		
		\right)
	\\	
	&=
		\widetilde{\mathcal{O}}_d(a,0,c;\tau) 
		+ 
		\sum_{r=1}^{c-1}
		(-1)^r \z{c}{-2ar} 
		\left(		
			S(r,c;2c^2d^2\tau)
			-
			\widetilde{S}(r,c;2c^2d^2\tau)
		\right)
	,
\end{align*}
so that
\begin{align*}
	\frac{(1+\z{c}{a})}{(1-\z{c}{a})}\mathcal{O}_d(\z{c}{a};q)
	-
	\sum_{r=1}^{c-1}(-1)^r\z{c}{-2ar}S(r,c;2c^2d^2\tau)
	&=		
	\widetilde{\mathcal{O}}_d(a,0,c;\tau)
	-
	\sum_{r=1}^{c-1}(-1)^r\z{c}{-2ar}\widetilde{S}(r,c;2c^2d^2\tau)
.
\end{align*}
To prove \eqref{eqn:equality_odd} in the case when $c$ is even, we again use Proposition 
\ref{prop:Rdissection} to find that
\begin{align*}
	\frac{(1+\z{c}{a})}{(1-\z{c}{a})}\mathcal{O}_d(\z{c}{a};q)
	&=
		\widetilde{\mathcal{O}}_d(a,0,c;\tau)
		-
		\left(
			\z{c}{a}q^{-\frac{d^2}{4}}
			R\left(\tfrac{2a}{c}-d^2\tau	;2d^2\tau\right)		
			-
			1
		\right)
	\\
	&=
		\widetilde{\mathcal{O}}_d(a,0,c;\tau) 
		+ 
		1
		-
		i\sum_{r=0}^{c-1}
		(-1)^r \z{c}{-2ar} q^{ - \tfrac{d^2(c-2r)^2}{4}  }
		R\left( (2cd^2r-c^2d^2)\tau + \tfrac{1}{2}; 2c^2d^2\tau \right)
.
\end{align*}
Using \eqref{eqn:prop1.9Z}, we deduce that
$R\left( -\tfrac{\tau}{2} + \tfrac{1}{2}; \tau \right) = -iq^{\frac{1}{8}}$.
We then have
\begin{align*}
	&\frac{(1+\z{c}{a})}{(1-\z{c}{a})}\mathcal{O}_d(\z{c}{a};q)
	\\
	&=
		\widetilde{\mathcal{O}}_d(a,0,c;\tau) 
		- 
		i\sum_{r=1}^{c-1}
		(-1)^r \z{c}{-2ar} q^{ - \tfrac{d^2(c-2r)^2}{4}  }
		R\left( (2cd^2r-c^2d^2)\tau + \tfrac{1}{2}; 2c^2d^2\tau \right)
	\\	
	&=
		\widetilde{\mathcal{O}}_d(a,0,c;\tau) 
		\\&\quad
		- 
		i\sum_{r=1}^{c-1}
		(-1)^r \z{c}{-2ar} q^{ - \tfrac{d^2(c-2r)^2}{4}  }
		\left(		
			2i\mu\left(2cd^2r\tau + \tfrac{1}{2}, c^2d^2\tau; 2c^2d^2 \right)
			-
			2i\tmu\left(2cd^2r\tau + \tfrac{1}{2}, c^2d^2\tau; 2c^2d^2\tau\right)		
		\right)
	\\	
	&=
		\widetilde{\mathcal{O}}_d(a,0,c;\tau) 
		+ 
		\sum_{r=1}^{c-1}
		(-1)^r \z{c}{-2ar} 
		\left(		
			S(r,c;2c^2d^2\tau)
			-
			\widetilde{S}(r,c;2c^2d^2\tau)
		\right)
	,
\end{align*}
so that
\begin{align*}
	\frac{(1+\z{c}{a})}{(1-\z{c}{a})}\mathcal{O}_d(\z{c}{a};q)
	-
	\sum_{r=1}^{c-1}(-1)^r\z{c}{-2ar}S(r,c;2c^2d^2\tau)
	&=		
	\widetilde{\mathcal{O}}_d(a,0,c;\tau)
	-
	\sum_{r=1}^{c-1}(-1)^r\z{c}{-2ar}\widetilde{S}(r,c;2c^2d^2\tau)
.
\end{align*}
\end{proof}

Next we prove Theorem \ref{thm:devenMain}.  

\begin{proof}[Proof of Theorem \ref{thm:devenMain}]
First, we note that Proposition \ref{prop:FullOTransf} together with \eqref{eqn:OviaM} and Proposition \ref{prop:devenMaass} imply that $\widetilde{\mathcal{O}}_{d}(a,b,c;\tau)$ is a harmonic Maass form of weight $\frac12$ on a congruence subgroup of $\SLTwo$.  The rest of part (1) of Theorem \ref{thm:devenMain} follows from Propositions \ref{prop:Oerr2} and \ref{prop:Oerr4}. 

To prove part (2), we observe that the functions $\frac{\eta(\tau)^2}{\eta(2\tau)} \widetilde{O}_d(a,0,c;\tau)$,
$\frac{\eta(\tau)^2}{\eta(2\tau)} \widetilde{S}(r,c;\tfrac{c^2d^2}{2}\tau)$ for $d\equiv 2\pmod{4}$,
and $\frac{\eta(\tau)^2}{\eta(2\tau)} \widetilde{S}(2r,2c;\tfrac{c^2d^2}{2}\tau)$ for $d\equiv 0\pmod{4}$
all transform like weight $1$ modular forms on $\G_{c,d}$, where
\[
\G_{c,d} :=
\begin{cases}
\G_0(c^2d^2) \cap \G_1({\rm lcm}(c,2d)) & \text{ if } d\equiv 2 \pmod 4,\\
\G_0( {\rm lcm}(2,c)^2 d^2) \cap \G_1({\rm lcm}(2c,2d)) & \text{ if } d\equiv 0\pmod4. 
\end{cases}
\]
This follows from letting $b=0$ in the groups appearing in Proposition \ref{prop:FullOTransf}, and considering the groups in Proposition \ref{PropositionSTransformationFinal}, case by case (when $d\equiv 2\pmod{4}$ we use $m=\frac{d^2}{2}$ and when
$d\equiv 0\pmod{4}$ we use $r\rightarrow 2r$, $c\rightarrow 2c$, and $m=\frac{d^2}{8}$).  

If when $d\equiv 2\pmod 4$,
\begin{equation}\label{eqn:equality_2}
\frac{(1+\zeta_c^a)}{(1-\zeta_c^a)}\mathcal{O}_d(\zeta_c^a;q) - \sum_{r=1}^{c-1}(-1)^r\zeta_c^{-ar} S(r,c;\tfrac{c^2d^2}{2}\tau)
= \widetilde{\mathcal{O}}_d(a,0,c;\tau) - \sum_{r=1}^{c-1}(-1)^r\zeta_c^{-ar} \widetilde{S}(r,c;\tfrac{c^2d^2}{2}\tau),
\end{equation}
and when $d\equiv 0 \pmod 4$,
\begin{equation}\label{eqn:equality_0}
\frac{(1+\zeta_c^a)}{(1-\zeta_c^a)}\mathcal{O}_d(\zeta_c^a;q) - \sum_{r=1}^{c-1}\zeta_c^{-ar} S(2r,2c;\tfrac{c^2d^2}{2}\tau)
= \widetilde{\mathcal{O}}_d(a,0,c;\tau) - \sum_{r=1}^{c-1}\zeta_c^{-ar} \widetilde{S}(2r,2c;\tfrac{c^2d^2}{2}\tau),
\end{equation}
then the functions
\begin{align*}
	\frac{(1+\z{c}{a})}{(1-\z{c}{a})}\mathcal{O}_d(\z{c}{a};q)
	- 
	\sum_{r=1}^{c-1}(-1)^r\zeta_c^{-ar} S(r,c;\tfrac{c^2d^2}{2}\tau)
\end{align*}
when $d\equiv 2\pmod 4$, and 
\begin{align*}
	\frac{(1+\z{c}{a})}{(1-\z{c}{a})}\mathcal{O}_d(\z{c}{a};q)
	- 
	\sum_{r=1}^{c-1}\zeta_c^{-ar} S(2r,2c;\tfrac{c^2d^2}{2}\tau)
\end{align*}
when $d\equiv 0 \pmod 4$, are holomorphic harmonic Maass forms, and as such are modular forms and part (2) follows.

By definition, we see that
\[
\frac{(1+\zeta_c^a)}{(1-\zeta_c^a)}\mathcal{O}_d(\zeta_c^a;q) - \widetilde{\mathcal{O}}_d(a,0,c;\tau) 
= 
\begin{cases}
	1 + \zeta_{2c}^aq^{\frac{-d^2}{16}} R(\tfrac{-d^2}{4}\tau + (\tfrac{a}{c}-1); \tfrac{d^2}{2}\tau)
	& \text{ if } d\equiv 2 \!\!\! \pmod 4, \\
	1 + i\zeta_{2c}^aq^{\frac{-d^2}{16}} R(\tfrac{-d^2}{4}\tau + (\tfrac{a}{c}-\tfrac12); \tfrac{d^2}{2}\tau)
	& \text{ if } d\equiv 0 \!\!\! \pmod 4.
\end{cases}
\]
We now apply the dissections in Proposition \ref{prop:Rdissection} with $s=c$ and observe that in each case the $r=0$ term cancels with the summand $1$ above.  We obtain that
\begin{multline*}
\frac{(1+\zeta_c^a)}{(1-\zeta_c^a)}\mathcal{O}_d(\zeta_c^a;q) - \widetilde{\mathcal{O}}_d(a,0,c;\tau) = 
\\
\begin{cases}
	(-1)^a \sum_{r=1}^{c-1} (-1)^r \zeta_c^{-ar} q^{\frac{-d^2}{16}(2r-c)^2} 
	R((\tfrac{cd^2r}{2} - \tfrac{c^2d^2}{4})\tau + (a - c); \tfrac{c^2d^2}{2}\tau)
	& \text{ if } d\equiv 2 \pmod 4, c \text{ odd},
	\\
	-i(-1)^a \sum_{r=1}^{c-1} (-1)^r \zeta_c^{-ar} q^{\frac{-d^2}{16}(2r-c)^2} 
	R((\tfrac{cd^2r}{2} - \tfrac{c^2d^2}{4})\tau + (a - c + \frac12); \tfrac{c^2d^2}{2}\tau)
	& \text{ if }d\equiv 2 \pmod 4, c \text{ even}, 
	\\
	i(-1)^{a+\frac{c-1}{2}} \sum_{r=1}^{c-1}  \zeta_c^{-ar} q^{\frac{-d^2}{16}(2r-c)^2} 
	R((\tfrac{cd^2r}{2} - \tfrac{c^2d^2}{4})\tau + (a - \tfrac{c}2); \tfrac{c^2d^2}{2}\tau) 
	& \text{ if }d\equiv 0 \pmod 4, c \text{ odd}, 
	\\
	-i(-1)^{a+\frac{c}{2}} \sum_{r=1}^{c-1}  \zeta_c^{-ar} q^{\frac{-d^2}{16}(2r-c)^2} 
	R((\tfrac{cd^2r}{2} - \tfrac{c^2d^2}{4})\tau + (a - \tfrac{c}2 + \tfrac12); \tfrac{c^2d^2}{2}\tau) 
	& \text{ if }d\equiv 0 \pmod 4, c \text{ even}.
\end{cases}
\end{multline*}
Now we rewrite the $R$ terms that appear using that $R(u-v;\tau) = 2i(\mu(u,v;\tau) - \widetilde{\mu}(u,v;\tau))$.  We obtain that
\begin{multline*}
\frac{(1+\zeta_c^a)}{(1-\zeta_c^a)}\mathcal{O}_d(\zeta_c^a;q) - \widetilde{\mathcal{O}}_d(a,0,c;\tau) 
\\
=
\begin{cases}
	-2i\sum_{r=1}^{c-1} (-1)^r \zeta_c^{-ar} q^{\frac{-d^2}{16}(2r-c)^2} 
		&\\\quad\times
		\left( \mu(\tfrac{cd^2r\tau}{2}, \tfrac{c^2d^2\tau}{4} ; \tfrac{c^2d^2\tau}{2}) 
		- 
		\widetilde{\mu}(\tfrac{cd^2r\tau}{2}, \tfrac{c^2d^2\tau}{4} ; \tfrac{c^2d^2\tau}{2}) \right)
		& 
		\text{if } d\equiv 2 \pmod{4}, c \text{ odd},
	\\
	2\sum_{r=1}^{c-1} (-1)^r \zeta_c^{-ar} q^{\frac{-d^2}{16}(2r-c)^2}
		&\\\quad\times
		\left( \mu(\tfrac{cd^2r\tau}{2} +\tfrac12 , \tfrac{c^2d^2\tau}{4} ; \tfrac{c^2d^2\tau}{2}) 
		- \widetilde{\mu}(\tfrac{cd^2r\tau}{2} +\tfrac12 , \tfrac{c^2d^2\tau}{4} ; \tfrac{c^2d^2\tau}{2}) \right)
		& \text{if }d\equiv 2 \pmod 4, c \text{ even}, 
	\\
	2\sum_{r=1}^{c-1}  \zeta_c^{-ar} q^{\frac{-d^2}{16}(2r-c)^2}
		&\\\quad\times
		\left( \mu(\tfrac{cd^2r\tau}{2} +\tfrac12 , \tfrac{c^2d^2\tau}{4} ; \tfrac{c^2d^2\tau}{2}) 
		- \widetilde{\mu}(\tfrac{cd^2r\tau}{2} +\tfrac12 , \tfrac{c^2d^2\tau}{4} ; \tfrac{c^2d^2\tau}{2}) \right)
		& \text{if }d\equiv 0 \pmod 4.
\end{cases}
\end{multline*}
Thus using our definitions in \eqref{EqDefinitionOfS} and \eqref{EqDefinitionOfSTilde}, we obtain \eqref{eqn:equality_2} and \eqref{eqn:equality_0} as desired.
\end{proof}

\section{The 3-dissection of $\mathcal{O}_3(\zeta_3;q)$}
To begin we determine a certain class of generalized eta quotients that transform
in the same fashion as $\widetilde{\mathcal{O}}_d(a,0,c;\tau)$. These are
functions of the type that appear on the right hand side of our dissection formulas in Theorem \ref{TheoremOverpartition3Rank3Dissection}.

\begin{theorem}\label{TheoremDOddCOddGetaQuotients}
For positive odd integers $c$ and $d$, define
\begin{align*}
	f(\tau)
	&=
		\eta(2c^2d^2\tau)^{r_0}\eta(2c^2\tau)^{s_0}
		\prod_{k=1}^{cd} f_{2c^2d^2,dck}(\tau)^{r_k}
		\prod_{k=1}^{c} f_{2c^2,ck}(\tau)^{s_k}
	,
\end{align*}
and suppose 
$A=\begin{pmatrix}\alpha & \beta \\ \gamma & \delta \end{pmatrix}\in\Gamma_0(2c^2d^2)\cap\Gamma_1(dc)$.
Then
\begin{align*}
	f(A\tau)
	&=
		(-1)^{(\beta+\frac{(\alpha-1)}{2})T} i^{\alpha\beta T}
		\nu(\mA{2c^2d^2})^{r_0+3R}	\nu(\mA{2c^2})^{s_0+3S}
		(\gamma\tau+\delta)^{\frac{r_0+s_0+R+S}{2}}
		f(\tau)
	,
\end{align*}
where
\begin{align*}
R &= \sum_{k=1}^{cd} r_k, &
S &= \sum_{k=1}^c s_k, &
T &=  \sum_{\substack{k=1 \\ k\text{ odd} }}^{cd} r_k + \sum_{\substack{k=1 \\ k\text{ odd} }}^{c} s_k.
\end{align*}	
In particular, if we have that $r_0\equiv s_0\equiv 0\pmod{3}$ and $T\equiv -1\pmod{4}$, and also that $r_0+s_0+R+S=1$ and $1+2R+2S\equiv -3\pmod{24}$, then
\begin{align*}
	f(A\tau)
	&=
		(-1)^{\beta+\frac{(\alpha-1)}{2}} i^{-\alpha\beta}
		\nu(\mA{2})^{-3}\sqrt{\gamma\tau+\delta}\,
		f(\tau)
.
\end{align*}
\end{theorem}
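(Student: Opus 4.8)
The plan is to transform $f$ one factor at a time, applying the $\eta$-transformation \eqref{eq:eta_mult} to the two pure $\eta$ factors and the Biagioli formula \eqref{EqBiagioliTransform} to each generalized $\eta$ factor, and then to collect the resulting roots of unity and automorphy factors. For the $\eta$ factors I would write $2c^2d^2A\tau = \mA{2c^2d^2}(2c^2d^2\tau)$ and $2c^2A\tau = \mA{2c^2}(2c^2\tau)$, so that \eqref{eq:eta_mult} gives $\eta(2c^2d^2A\tau) = \nu(\mA{2c^2d^2})\sqrt{\gamma\tau+\delta}\,\eta(2c^2d^2\tau)$ and similarly for $\eta(2c^2A\tau)$. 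Since $A\in\Gamma_0(2c^2d^2)\subseteq\Gamma_0(2c^2)$, \eqref{EqBiagioliTransform} applies to each $f_{2c^2d^2,dck}$ and $f_{2c^2,ck}$, and the decisive point is that the output index collapses back to the input index. Indeed $\gamma$ is even and $\alpha\delta-\beta\gamma=1$ force $\alpha$ odd; since $A\in\Gamma_1(dc)$ we have $\alpha\equiv 1\pmod{dc}$, and because $c,d$ are odd (so $\gcd(2,cd)=1$) the CRT forces $\alpha\equiv 1\pmod{2cd}$. Hence $\alpha\cdot dck\equiv dck\pmod{2c^2d^2}$ and $\alpha\cdot ck\equiv ck\pmod{2c^2}$, so by \eqref{eq:f_mod} each generalized $\eta$ factor returns to itself, i.e. $f_{2c^2d^2,\alpha dck}=f_{2c^2d^2,dck}$ and $f_{2c^2,\alpha ck}=f_{2c^2,ck}$.

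Next I would simplify the two explicit prefactors in \eqref{EqBiagioliTransform}. The exponential reduces as $\exp\!\big(\pi i\alpha\beta(dck)^2/(2c^2d^2)\big)=\exp(\pi i\alpha\beta k^2/2)=i^{\alpha\beta k^2}$, which equals $1$ for even $k$ and $i^{\alpha\beta}$ for odd $k$; multiplied across the product this contributes exactly $i^{\alpha\beta T}$. For the sign, writing $\alpha=1+2cd\,s$ one computes $\lfloor dck\alpha/(2c^2d^2)\rfloor=\lfloor k/(2cd)+ks\rfloor=ks$ and $\lfloor dck/(2c^2d^2)\rfloor=0$ for $1\le k\le cd$, so the sign is $(-1)^{dck\beta+ks}=(-1)^{k(\beta+s)}$ using $dc$ odd. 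This is trivial for even $k$ and equals $(-1)^{\beta+s}$ for odd $k$; since $\tfrac{\alpha-1}{2}=cds\equiv s\pmod 2$ ($cd$ odd) this is $(-1)^{\beta+\frac{\alpha-1}{2}}$, contributing $(-1)^{(\beta+\frac{\alpha-1}{2})T}$ over the product. The same analysis applies verbatim to the $s_k$ product with modulus $2c^2$. Collecting the $\nu$-cubes as $\nu(\mA{2c^2d^2})^{r_0+3R}\nu(\mA{2c^2})^{s_0+3S}$ and the automorphy factors as $(\gamma\tau+\delta)^{(r_0+s_0+R+S)/2}$ yields the general transformation formula.

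For the specialization, the four hypotheses are designed so these factors collapse cleanly. Because $cd$ and $c$ are odd, the remarks preceding \eqref{EqEtaMultipler} give $\nu(\mA{2c^2d^2})^3=\nu(\mA{2c^2})^3=\nu(\mA{2})^3$ on $\Gamma_0(2c^2d^2)$; combined with $3\mid r_0,s_0$ this rewrites the entire $\nu$-contribution as $\nu(\mA{2})^{(r_0+s_0)+3(R+S)}$. Using $r_0+s_0+R+S=1$ we get $(r_0+s_0)+3(R+S)=1+2(R+S)=1+2R+2S\equiv -3\pmod{24}$, so the $\nu$-contribution is exactly $\nu(\mA{2})^{-3}$. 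The hypothesis $T\equiv -1\pmod 4$ turns $(-1)^{(\beta+\frac{\alpha-1}{2})T}$ into $(-1)^{\beta+\frac{\alpha-1}{2}}$ and $i^{\alpha\beta T}$ into $i^{-\alpha\beta}$, while $r_0+s_0+R+S=1$ fixes the weight at $\tfrac12$, producing the stated identity.

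I expect the main obstacle to be the bookkeeping in the sign computation: correctly evaluating both floor functions across the full product range and tracking the parity identifications ($dc$ odd, $cds\equiv s$, and $\alpha\equiv 1\pmod{2cd}$) so that the per-term signs assemble into the single exponent $(\beta+\frac{\alpha-1}{2})T$. By contrast the index-collapse step, though essential, is short once the congruence $\alpha\equiv 1\pmod{2cd}$ is established.
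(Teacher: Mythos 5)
Your proposal is correct and follows essentially the same route as the paper's own proof: apply Biagioli's formula \eqref{EqBiagioliTransform} factor by factor, use $\alpha\equiv 1\pmod{2cd}$ (forced by $\Gamma_1(dc)$, $\gamma$ even, and $cd$ odd) to collapse the indices via \eqref{eq:f_mod}, reduce the signs and exponentials to $(-1)^{(\beta+\frac{\alpha-1}{2})T}i^{\alpha\beta T}$, and then specialize using $\nu(\mA{2m^2})^3=\nu(\mA{2})^3$ for odd $m$ together with the fact that $\nu$ is a $24$th root of unity. Your floor-function computation via $\alpha=1+2cds$ is just a more explicit rendering of the paper's congruence argument.
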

\begin{proof}
We note that $2dck\alpha \equiv 2dck \pmod{2c^2d^2}$ and $ck\alpha \equiv cd \pmod{2c^2}$, so that by \eqref{EqBiagioliTransform} we have that
\begin{align*}
	f_{2c^2d^2,cdk}(A\tau)
	&=
		(-1)^{\beta k + \Floor{\frac{\alpha k}{2dc}}}
		\exp\left(\frac{\pi i\alpha\beta k^2}{2}\right)
		\nu(\mA{2c^2d^2})^3 \sqrt{\gamma\tau+\delta}
		f_{2c^2d^2,cdk}(\tau)
	,\\
	f_{2c^2,ck}(A\tau)
	&=
		(-1)^{\beta k + \Floor{\frac{\alpha k}{2c}}}
		\exp\left(\frac{\pi i\alpha\beta k^2}{2}\right)
		\nu(\mA{2c^2})^3 \sqrt{\gamma\tau+\delta}
		f_{2c^2,ck}(\tau)
	.
\end{align*}
Next we note that
\begin{align*}
	\Floor{\frac{\alpha k}{2dc}}
	&=
		\Floor{
			\frac{k}{2dc}
			+
			\frac{(\alpha-1)k}{2dc}			
		}
		=
		\frac{(\alpha-1)k}{2dc}
		\equiv
		\frac{(\alpha-1)k}{2} \pmod{2}
		,\\		
	\Floor{\frac{\alpha k}{2c}}
	&=
		\Floor{
			\frac{k}{2c}
			+
			\frac{(\alpha-1)k}{2c}			
		}
		=
		\frac{(\alpha-1)k}{2c}
		\equiv
		\frac{(\alpha-1)k}{2}	\pmod{2}
	.
\end{align*}
Thus we have that
\begin{align*}
	f(A\tau)
	&=
		(-1)^{(\beta + \frac{(\alpha-1)}{2})T}
		i^{\alpha\beta T}
		\nu(\mA{2c^2d^2})^{r_0+3R}
		\nu(\mA{2c^2})^{s_0+3S}
		(\gamma\tau+\delta)^{\frac{r_0+s_0+R+S}{2}}
		f(\tau).
\end{align*}

If we assume the additional conditions on $r_0$, $s_0$, $R$, $S$, and $T$ then
we find that
\begin{align*}
	f(A\tau)
	&=
		(-1)^{\beta + \frac{(\alpha-1)}{2}}
		i^{-\alpha\beta }
		\nu(\mA{2})^{-3}
		\sqrt{\gamma\tau+\delta}
		f(\tau),
\end{align*}
using the fact that $\nu(\mA{2c^2d^2})^{r_0+3R}\nu(\mA{2c^2})^{s_0+3S}
= \nu(\mA{2})^{r_0+s_0+3R+3S} = \nu(\mA{2})^{-3}$.
\end{proof}


We now give the proof of Theorem \ref{TheoremOverpartition3Rank3Dissection}.
\begin{proof}[proof of Theorem \ref{TheoremOverpartition3Rank3Dissection}]
Upon careful inspection, we find that Theorem \ref{TheoremOverpartition3Rank3Dissection}
is equivalent to
\begin{align}
	&\mathcal{O}_3(\z{3}{};q)-3iS_3(1,3;162\tau)
	\nonumber \\	
	&=	
		\frac{\eta(18\tau)^3}{f_{18;3,3,6,9}}\left(
			-\frac{3f_{162;27,36,63,81}(\tau)}{f_{162;9,72}(\tau)}
			-\frac{6f_{162;27,36,45,81}(\tau)}{f_{162;9,72}(\tau)}
			+\frac{4f_{162;18,36,54,72,81}(\tau)}{f_{162;9,45,63}(\tau)}
			+\frac{3f_{162;18,27,45,81}(\tau)}{f_{162;9,72}(\tau)}
			\right. \nonumber \\&\quad
			-\frac{2f_{162;18,27,36,54,72}(\tau)}{f_{162;9,45,63}(\tau)}
			+\frac{12f_{162;27,45,54}(\tau)}{f_{162;18}(\tau)}
			-\frac{6f_{162;27,36,81}(\tau)}{f_{162;18}(\tau)}
			+\frac{12f_{162;45,63,72}(\tau)}{f_{162;54}(\tau)}
			-\frac{6f_{162;27,72,81}(\tau)}{f_{162;36}(\tau)}
			\nonumber \\&\quad			
			+\frac{12f_{162;27,45,63,72}(\tau)}{f_{162;54,81}(\tau)}		
			+\frac{6f_{162;18,45,63,81}(\tau)}{f_{162;9,72}(\tau)}
			-\frac{4f_{162;18,36,54,72}(\tau)}{f_{162;9,45}(\tau)}
			-\frac{2f_{162;18,36,54,72}(\tau)}{f_{162;9,63}(\tau)}
			+\frac{6f_{162;27,36,63}(\tau)}{f_{162;18}(\tau)}
			\nonumber \\&\quad\left.			
			+\frac{2f_{162;45,54,63}(\tau)}{f_{162;72}(\tau)}
			+\frac{4f_{162;27,45,72}(\tau)}{f_{162;36}(\tau)}
			-\frac{2f_{162;27,36,63}(\tau)}{f_{162;72}(\tau)}
		\right)
		+
		\frac{\eta(18\tau)^3}{f_{18;3,6,9,9}}\left(
			\frac{12f_{162;18,45,63,81}(\tau)}{f_{162;9,72}(\tau)}
			\right. \nonumber \\&\quad			
			-\frac{8f_{162;18,36,54,72}(\tau)}{f_{162;9,45}(\tau)}
			-\frac{4f_{162;18,36,54,72}(\tau)}{f_{162;9,63}(\tau)}
			+\frac{12f_{162;27,36,63}(\tau)}{f_{162;18}(\tau)}
			+\frac{4f_{162;45,54,63}(\tau)}{f_{162;72}(\tau)}
			+\frac{8f_{162;27,45,72}(\tau)}{f_{162;36}(\tau)}
			\nonumber \\&\quad\left.			
			-\frac{4f_{162;27,36,63}(\tau)}{f_{162;72}(\tau)}
		\right) \label{eqn:big_thing}
,
\end{align}
where 
\begin{align*}
	f_{N;\rho_1,\dots,\rho_k}(\tau) 
	&= 
		f_{N,\rho_1}(\tau)\dots f_{N,\rho_k}(\tau)
	.
\end{align*}
However, since $S_3(2,3;162\tau)=S_3(1,3;162\tau)$, we find that the lefthand side of \eqref{eqn:big_thing} is a modular form by Theorem \ref{TheoremOverpartitionDRankModularity}
part (\ref{TheoremOverpartitionDRankModularityPartDOddDissection}).
By Theorem \ref{TheoremDOddCOddGetaQuotients} we find the righthand side
to be a modular form as well. In particular both are weight $\frac{1}{2}$ and have the
same multiplier on $\Gamma_0(162)\cap\Gamma_1(9)$. We then divide both sides
by $\frac{\eta(18\tau)^3f_{162;27,36,63,81}(\tau)}{f_{18;3,3,6,9}f_{162;9,72}(\tau)}$
to obtain an identity between two modular forms of weight zero on 
$\Gamma_0(162)\cap\Gamma_1(9)$. We let $LHS$ denote the lefthand side and
$RHS$ the righthand side of this resulting identity.
We verify that $LHS=RHS$ according to the valence formula \eqref{eqn:valence}.

A complete set of inequivalent cusps, along with their widths, 
for $\Gamma_0(162)\cap\Gamma_1(9)$ is
$$
\renewcommand{\arraystretch}{1.1}	
\setlength\arraycolsep{3pt}
	\begin{array}{l|cccccccccccccccccccc}
		\mbox{cusp}&
			0,& \tfrac{1}{18},& \tfrac{2}{33},& \tfrac{1}{16},& \tfrac{2}{31},& 
			\tfrac{1}{15},& \tfrac{2}{29},& \tfrac{5}{72},& \tfrac{1}{14},& \tfrac{1}{12},& 
			\tfrac{2}{21},& \tfrac{7}{72},& \tfrac{1}{10},& \tfrac{1}{9},& \tfrac{7}{54},& 
			\tfrac{2}{15},& \tfrac{5}{36},& \tfrac{1}{6},& \tfrac{8}{45},& \tfrac{17}{90}, 
		\\		
		\mbox{width}&				
			162,& 1,& 18,& 81,& 162,& 
			18,& 162,& 1,& 81,& 9,& 
			18,& 1,& 81,& 2,& 1,& 
			18,& 1,& 9,& 2,& 1, 
		\\			
		\hline
		\mbox{cusp}&
			\tfrac{7}{36},& \tfrac{2}{9},& \tfrac{7}{30},& \tfrac{13}{54},& \tfrac{11}{45},& 
			\tfrac{7}{27}&, \tfrac{43}{162}&, \tfrac{17}{63},& \tfrac{5}{18},& \tfrac{13}{45},& 
			\tfrac{7}{24},& \tfrac{8}{27},& \tfrac{11}{36},& \tfrac{17}{54},& \tfrac{1}{3},& 
			\tfrac{13}{36},& \tfrac{23}{63},& \tfrac{10}{27},& \tfrac{31}{81},& \tfrac{7}{18},
		\\
		\mbox{width}&
			1,& 2,& 9,& 1,& 2,& 
			2,& 1,& 2,& 1,& 2,& 
			9,& 2,& 1,& 1,& 18,& 
			1,& 2,& 2,& 2,& 1, 
		\\
		\hline			
		\mbox{cusp}&
			\tfrac{5}{12},& \tfrac{19}{45},& \tfrac{4}{9},& \tfrac{37}{81},& \tfrac{29}{63},& 
			\tfrac{38}{81},& \tfrac{17}{36},& \tfrac{77}{162},& \tfrac{13}{27},& \tfrac{14}{27},& 
			\tfrac{19}{36},& \tfrac{34}{63},& \tfrac{5}{9},& \tfrac{37}{63},& \tfrac{11}{18},& 
			\tfrac{40}{63},& \tfrac{2}{3},& \tfrac{37}{54},& \tfrac{32}{45},&\tfrac{13}{18}, 			
		\\
		\mbox{width}&
			9,& 2,& 2,& 2,& 2,& 
			2,& 1,& 1,& 2,& 2,& 
			1,& 2,& 2,& 2,& 1,& 
			2,& 18,& 1,& 2,& 1, 
		\\
		\hline			
		\mbox{cusp}&
			\tfrac{20}{27},& \tfrac{34}{45},& \tfrac{41}{54},& \tfrac{7}{9},& \tfrac{73}{90},& 
			\tfrac{5}{6},& \tfrac{47}{54},& \tfrac{8}{9},& \tfrac{65}{72},& \tfrac{67}{72},& 
			\tfrac{17}{18},& \infty&
			&&&&&&&
		\\
		\mbox{width}&
			2,& 2,& 1,& 2,& 1,& 
			9,&	1,& 2,& 1,& 1,& 
			1,& 1&
			&&&&&&&
	\end{array}
$$
We let $\mathcal{D}$ denote these cusps along with a fundamental region of the 
action of $\Gamma$.

We note $LHS-RHS$ has no poles in $\mathcal{H}$, but it may have zeros in $\mathcal{H}$. 
We take a lower bound on the
orders at the non-infinite cusps by taking the minimum order of each of the 
individual summands, which we compute with
Propositions 
\ref{PropDOddMTildeInvariantOrder},
\ref{PropBiagioliInvariantOrder},
\ref{PropSMassFormAndInvariantOrder}, and
\ref{PropDOddPInvariantOrder}.

This lower bound yields
\begin{align*}
	\sum_{\zeta\in\mathcal{D}} ORD_{\Gamma}\left( LHS-RHS ; \zeta \right)
	\ge	
	ORD_{\Gamma}\left( LHS-RHS   ,\infty\right)
	-119.
\end{align*}
However, we can expand $LHS-RHS$ as a series in $q$
and find the coefficients of $LHS-RHS$ are zero to at least 
$q^{120}$. Thus
\begin{align*}
	\sum_{\zeta\in\mathcal{D}} ORD_{\Gamma}\left(LHS-RHS;\zeta\right)
	\ge	
	1,
\end{align*}
and so $LHS-RHS$ must be identically zero by the valence formula \eqref{eqn:valence}.
This establishes Theorem \ref{TheoremOverpartition3Rank3Dissection}

\end{proof}

\section{Additional results}

We give two additional results of interest about $\mathcal{O}_d(z;q)$.
The next theorem shows that as a function of $q$, the odd part of 
$\mathcal{O}_4(z;q)$ is a simple infinite product for all complex $z$.

\begin{theorem}
We have that
\begin{align*}	
	\sum_{n=0}^\infty \sum_{m=-\infty}^\infty
		M_4(m,2n+1)z^mq^{2n+1}	
	&=
	\frac{2q \aqprod{-zq^8,-q^8/z}{q^8}{\infty} \aqprod{q^4}{q^4}{\infty}^4\aqprod{q^8}{q^8}{\infty} }
	{\aqprod{zq^4,q^4/z}{q^4}{\infty} \aqprod{q^2}{q^2}{\infty}^4}
,
\end{align*}
where $M_d(m,n)$ is the coefficient of $z^mq^n$ in $\mathcal{O}_d(z;q)$, as in \eqref{EqDefinitionOfRank}.
\end{theorem}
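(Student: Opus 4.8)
The plan is to isolate the odd-in-$q$ part of $\mathcal{O}_4(z;q)$ directly from the bilateral Lambert series and then recognize the result as the stated product. First I would invoke the even case of Corollary \ref{CorollaryMdRankInitialForm} with $d=4$ (where $(-1)^{\frac{d}{2}n}=1$), which gives
\[
\mathcal{O}_4(z;q)=\frac{(1-z)\aqprod{-q}{q}{\infty}}{(1+z)\aqprod{q}{q}{\infty}}\bigl(A(z;q)-qB(z;q)\bigr),
\]
where $A(z;q)=\sum_{m\in\Z}\frac{q^{4m^2}(1+zq^{8m})}{1-zq^{8m}}$ and $B(z;q)=\sum_{m\in\Z}\frac{q^{4m^2+4m}(1+zq^{8m+4})}{1-zq^{8m+4}}$ are both power series in $q^4$ with coefficients in $\mathbb{C}[z,z^{-1}]$, hence invariant under $q\mapsto -q$. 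Thus $A$ is even and $qB$ is odd as functions of the parity of the exponent of $q$.

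Next I would record the $2$-dissection of the overpartition generating function. Using $\frac{\aqprod{-q}{q}{\infty}}{\aqprod{q}{q}{\infty}}=\frac{\aqprod{-q}{q}{\infty}^2}{\aqprod{q^2}{q^2}{\infty}}=\varphi(q)\,\frac{\aqprod{-q^2}{q^2}{\infty}^2}{\aqprod{q^2}{q^2}{\infty}^2}$, where $\varphi(q)=\sum_{n}q^{n^2}=\aqprod{-q}{q^2}{\infty}^2\aqprod{q^2}{q^2}{\infty}$, together with the classical splitting $\varphi(q)=\varphi(q^4)+2q\psi(q^8)$ with $\psi(q)=\sum_{n\ge0}q^{n(n+1)/2}$, I get that the even and odd parts of $\frac{\aqprod{-q}{q}{\infty}}{\aqprod{q}{q}{\infty}}$ are $C(q^2)\varphi(q^4)$ and $2qC(q^2)\psi(q^8)$ respectively, where $C(q^2):=\frac{\aqprod{-q^2}{q^2}{\infty}^2}{\aqprod{q^2}{q^2}{\infty}^2}$. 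Extracting the coefficients of odd powers of $q$ from $\mathcal{O}_4(z;q)$ then yields
\[
\sum_{n=0}^{\infty}\sum_{m=-\infty}^{\infty}M_4(m,2n+1)z^mq^{2n+1}=\frac{(1-z)q}{1+z}\,C(q^2)\bigl(2\psi(q^8)A(z;q)-\varphi(q^4)B(z;q)\bigr).
\]

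The crux is therefore the theta identity asserting that $2\psi(q^8)A(z;q)-\varphi(q^4)B(z;q)$ equals the $z$-dependent quotient obtained by clearing the prefactor $\frac{(1-z)q}{1+z}C(q^2)$ from the claimed answer, namely
\[
2\psi(q^8)A(z;q)-\varphi(q^4)B(z;q)=\frac{2(1+z)}{1-z}\cdot\frac{\aqprod{-zq^8,-q^8/z}{q^8}{\infty}\,\aqprod{q^4}{q^4}{\infty}^4\,\aqprod{q^8}{q^8}{\infty}}{\aqprod{zq^4,q^4/z}{q^4}{\infty}\,\aqprod{-q^2}{q^2}{\infty}^2\,\aqprod{q^2}{q^2}{\infty}^2}.
\]
I would prove this by the elliptic-function method in the variable $z$ (with $q$ fixed): both sides are meromorphic on $\mathbb{C}^\ast$ with at worst simple poles at $z=q^{4m}$, $m\in\Z$. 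Using the quasi-periodicity of the Appell--Lerch sums $A$ and $B$ under $z\mapsto zq^{8}$ — which is governed by the transformation \eqref{EqMuTranformation} of $\mu$, since $A$ and $B$ are, up to theta factors, values of $\mu$ — one checks that the two sides carry the same automorphy factor. It then suffices to match the residue at $z=1$ (coming only from the $m=0$ term $\frac{1+z}{1-z}$ of $A$) and at the single representative pole $z=q^{4}$, and to note that both sides vanish at $z=-1$: the pairings $m\leftrightarrow -m$ in $A$ and $m\leftrightarrow -1-m$ in $B$ force $A(-1;q)=B(-1;q)=0$. With the poles, residues, quasi-periods, and the $z=-1$ zero all matched, the ratio of the two sides is an elliptic function with no poles, hence constant in $z$, and the constant is pinned down by comparing the leading $q$-coefficient.

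Finally I would substitute this identity into the displayed formula for the odd part, cancel the factors $\frac{1\pm z}{1\mp z}$ and $C(q^2)=\frac{\aqprod{-q^2}{q^2}{\infty}^2}{\aqprod{q^2}{q^2}{\infty}^2}$, and collect the remaining products into the advertised form $\frac{2q\,\aqprod{-zq^8,-q^8/z}{q^8}{\infty}\aqprod{q^4}{q^4}{\infty}^4\aqprod{q^8}{q^8}{\infty}}{\aqprod{zq^4,q^4/z}{q^4}{\infty}\aqprod{q^2}{q^2}{\infty}^4}$. I expect the main obstacle to be exactly the crux identity: although $A$ and $B$ are individually genuinely mock Appell--Lerch sums, the particular combination $2\psi(q^8)A-\varphi(q^4)B$ must be a pure product, so the entire argument hinges on the cancellation of the non-elliptic (mock) contributions of $A$ and $B$ in this combination, and the delicate bookkeeping is to compute the automorphy factors and residues precisely enough that the poles and quasi-periods cancel. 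As an alternative to the elliptic-function approach, one could instead derive the identity computationally by applying the partial-fraction expansion \eqref{EqChanIdent} of Chan to each of $A$ and $B$ after the substitution $q\mapsto q^4$.
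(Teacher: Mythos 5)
Your proposal is correct, and while its skeleton coincides with the paper's, the decisive step is handled by a genuinely different argument. Like the paper, you reduce the theorem to a single Lambert-series/theta identity: your parity split of $\mathcal{O}_4(z;q)$ into $A-qB$ and your dissection $\varphi(q)=\varphi(q^4)+2q\psi(q^8)$ are the same computation the paper performs with $\tfrac{1}{2}\big(\tfrac{1}{\phi(-q)}\mp\tfrac{1}{\phi(q)}\big)$ and $\phi(q)\phi(-q)=\phi(-q^2)^2$, and your crux identity for $2\psi(q^8)A(z;q)-\varphi(q^4)B(z;q)$ is the paper's \eqref{eqn:brackets} in disguise. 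The difference is how that identity is proved: the paper specializes Chan's generalized Lambert series identity (Theorem 2.3 of \cite{Chan} with $r=2$, $s=3$, $q\mapsto q^8$) and takes a limit $b\to 1$, whereas you give a self-contained elliptic-function proof. Your proof does go through: writing $F(z)=\sum_m\frac{q^{4m^2}}{1-zq^{8m}}$ and $G(z)=\sum_m\frac{q^{4m^2+4m}}{1-zq^{8m+4}}$, one computes $A(zq^8)=zq^4A(z)+(1+zq^4)\varphi(q^4)$ and $B(zq^8)=zq^4B(z)+2(1+zq^4)\psi(q^8)$, so in $H(z)=2\psi(q^8)A(z)-\varphi(q^4)B(z)$ the additive (mock) corrections cancel and $H(zq^8)=zq^4H(z)$, which matches the automorphy of the product side; the residues at $z=1$ agree by $\aqprod{q^4}{q^4}{\infty}=\aqprod{q^2}{q^2}{\infty}\aqprod{-q^2}{q^2}{\infty}$. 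In fact your argument can be streamlined: once the residues at representatives of the two pole orbits $q^{8\Z}$ and $q^{4+8\Z}$ are matched, the difference $D$ of the two sides is holomorphic on $\mathbb{C}^*$ and satisfies $D(zq^8)=zq^4D(z)$, which already forces $D\equiv 0$ (its Laurent coefficients would have to be $c_0q^{-4n^2}$, incompatible with convergence), so the vanishing at $z=-1$ and the Liouville step on the ratio are not even needed. What each route buys: the paper's is shorter given the citation to Chan; yours is self-contained and makes explicit exactly why the mock contributions of $A$ and $B$ cancel. Two small corrections to your write-up: the shift $z\mapsto zq^8$ moves the \emph{first} argument of $\mu$, so the relevant elliptic transformation is \eqref{EqZTheorem1.11P1} (or \eqref{EqMuTranformation} combined with the symmetry $\mu(u,v;\tau)=\mu(v,u;\tau)$), not \eqref{EqMuTranformation} alone; and your parenthetical fallback should be rephrased — one cannot apply \eqref{EqChanIdent} ``to each of $A$ and $B$,'' since individually they are mock and equal no product, but a \emph{single} application of \eqref{EqChanIdent} with $q\mapsto q^8$, $a=-z$, $b_1=z$, $b_2=zq^4$ yields precisely $2\psi(q^8)F(z)-\varphi(q^4)G(z)=\tfrac{1}{2}H(z)$ as an infinite product, which is arguably cleaner than the paper's use of the three-parameter Theorem 2.3 with a limit (the latter is forced on the paper only because it works with the double-pole form of the Lambert series).
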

\begin{proof}
The proof will follow from a generalized Lambert series identity of Chan \cite[Theorem 2.3]{Chan}
along with the fact that the $2$-dissection of $\frac{\aqprod{-q}{q}{\infty}}{\aqprod{q}{q}{\infty}}$
is trivial to deduce.
To begin we let 
\begin{align*}
	\phi(q)
	&=
		\sum_{n=-\infty}^\infty q^{n^2}
	.
\end{align*}
By the Jacobi triple product identity \cite[Theorem 2.8]{Andrews1} we know that
$
	\phi(-q)
	=
	\frac{\aqprod{q}{q}{\infty}}{\aqprod{-q}{q}{\infty}}
$.
By (\ref{EqDefinitionOfRank}) we then see that
\begin{align*}
	\sum_{n=0}^\infty \sum_{m=-\infty}^\infty
		M_4(m,2n+1)z^mq^{2n+1}	
	&=
	\left(\frac{1}{2\phi(-q)}-\frac{1}{2\phi(q)}\right)	
	\left(
		1
		+
		2\sum_{n=1}^\infty
		\frac{(1-z)(1-z^{-1})q^{4n^2+8n}}
		{(1-zq^{8n})(1-z^{-1}q^{8n})}
	\right)	
	\\&\quad+
	\left(\frac{1}{2\phi(-q)}+\frac{1}{2\phi(q)}\right)	
	\left(
		-2\sum_{n=0}^\infty
		\frac{(1-z)(1-z^{-1})q^{4n^2+12n+5}}
		{(1-zq^{8n+4})(1-z^{-1}q^{8n+4})}
	\right)	
	.
\end{align*}
Also by the Jacobi triple product identity we have that
that
\begin{align*}
	\phi(q)
	&=
		\sum_{n=-\infty}^\infty q^{4n^2}
		+
		\sum_{n=-\infty}^\infty q^{4n^2+4n+1}
	=
		\phi(q^4)
		+
		2q\aqprod{-q^8,-q^8,q^8}{q^8}{\infty}	
	.
\end{align*}
Thus
\begin{align*}
	\frac{\phi(q)-\phi(-q)}{2}
	&=
		2q\aqprod{-q^8,-q^8,q^8}{q^8}{\infty}	
	,&
	\frac{\phi(q)+\phi(-q)}{2}
	&=
		\phi(q^4)
	,
\end{align*}
and so
\begin{align*}
	\frac{1}{\phi(-q)}-\frac{1}{\phi(q)}
	&=
		\frac{\phi(q)-\phi(-q)}{\phi(-q)\phi(q)}		
	=
	\frac{ 4q \aqprod{-q^8,-q^8,q^8}{q^8}{\infty} }
		{\phi(-q)\phi(q)}
	,\\
	\frac{1}{\phi(-q)}+\frac{1}{\phi(q)}
	&=
		\frac{\phi(q)+\phi(-q)}{\phi(-q)\phi(q)}		
	=
	\frac{ 2\phi(q^4) }
		{\phi(-q)\phi(q)}
	.
\end{align*}
By \cite[Entry 25(iii)]{Berndt} we have that
$
	\phi(-q)\phi(q)
	=
	\phi(-q^2)^2
	=
		\frac{\aqprod{q^2}{q^2}{\infty}^2}{\aqprod{-q^2}{q^2}{\infty}^2}
$.
We let
\begin{align*}
	F_0(q)
	&=
	\frac{\phi(q^4)}{\phi(q)\phi(-q)}
	=
	\frac{\aqprod{q^8}{q^8}{\infty}^5}{\aqprod{q^2}{q^2}{\infty}^4 \aqprod{q^{16}}{q^{16}}{\infty}^2}
	,\\
	F_1(q)
	&=
	\frac{2q\aqprod{-q^8,-q^8,q^8}{q^8}{\infty}}{\phi(q)\phi(-q)}
	=
	\frac{2q\aqprod{q^4}{q^4}{\infty}^2 \aqprod{q^{16}}{q^{16}}{\infty}^2}
		{\aqprod{q^2}{q^2}{\infty}^4 \aqprod{q^8}{q^8}{\infty}}	
	.
\end{align*}
Thus
\begin{align}\label{eqn:newM_4}
	\sum_{n=0}^\infty \sum_{m=-\infty}^\infty
		M_4(m,2n+1)z^mq^{2n+1}	
	&=
		F_1(q)	
		\sum_{n=-\infty}^\infty
		\frac{(1-z)(1-z^{-1})q^{4n^2+8n}}
		{(1-zq^{8n})(1-z^{-1}q^{8n})}	
	\nonumber \\&\quad
	-
	qF_0(q)	
	\sum_{n=-\infty}^\infty
	\frac{(1-z)(1-z^{-1})q^{4n^2+12n+4}}
	{(1-zq^{8n+4})(1-z^{-1}q^{8n+4})}	
	.
\end{align}

In Theorem 2.3 of \cite{Chan}, we take $r=2$, $s=3$,
$q\mapsto q^8$, $b_1=z$, $b_2=zq^4$, $b_3=bz^2$, $a_1=z^2$
and $a_2=-bz$ to arrive at
\begin{align}
	\label{Eq4RankDissectionChanIdentity1}
	\frac{\aqprod{z^2q^8,z^{-2}q^8,q^8,q^8}{q^8}{\infty} \jacprod{-bz}{q^8}  } 
		{\jacprod{z,zq^{4}, bz^2}{q^8}}
	&=
		-
		\frac{z^{-1}  \jacprod{z,-b}{q^8} }{ \jacprod{q^4,bz}{q^8} }	
		\sum_{n=-\infty}^\infty
		\frac{q^{4n^2+8n} }
		{(1-zq^{8n})(1-z^{-1}q^{8n})}	
		\nonumber\\&\quad		
		-
		\frac{z^{-1}  \jacprod{zq^{-4},-bq^{-4}}{q^8} }{ \jacprod{q^{-4},bzq^{-4}}{q^8} }	
		\sum_{n=-\infty}^\infty
		\frac{q^{4n^2+12n+4} }
		{(1-zq^{8n+4})(1-z^{-1}q^{8n+4})}	
		\nonumber\\&\quad
		- 
		\frac{b \jacprod{b^{-1},-z^{-1}}{q^8} }{ \jacprod{b^{-1}z^{-1},b^{-1}z^{-1}q^{4}}{q^8} }	
		\sum_{n=-\infty}^\infty
		\frac{q^{4n^2+8n} (zb)^n }
		{(1-bz^2q^{8n})(1-bq^{8n})}	
	.
\end{align}
Letting $b\rightarrow 1$ in (\ref{Eq4RankDissectionChanIdentity1}) gives that
\begin{align*}
	\frac{\jacprod{-z}{q^8} \aqprod{q^8}{q^8}{\infty}^2 }
		{\jacprod{z,zq^4}{q^8}(1-z^2)}
	&=
		-
		\frac{z^{-1}  \jacprod{-1}{q^8} }{ \jacprod{q^4}{q^8} }	
		\sum_{n=-\infty}^\infty
		\frac{q^{4n^2+8n} }
		{(1-zq^{8n})(1-z^{-1}q^{8n})}	
		\nonumber\\&\quad		
		-
		\frac{z^{-1}  \jacprod{-q^{-4}}{q^8} }{ \jacprod{q^{-4}}{q^8} }	
		\sum_{n=-\infty}^\infty
		\frac{q^{4n^2+12n+4} }
		{(1-zq^{8n+4})(1-z^{-1}q^{8n+4})}	
		-
		\frac{\jacprod{-z}{q^8} \aqprod{q^8}{q^8}{\infty}^2 }
		{\jacprod{z,zq^4}{q^8}(1-z^2)}
,
\end{align*}
which we simplify to
\begin{align}\label{eqn:brackets}
	\frac{2\jacprod{-z}{q^8} \aqprod{q^8}{q^8}{\infty}^2 }
		{\jacprod{z}{q^4}(1-z^2)}
	&=
		-
		\frac{z^{-1}  \jacprod{-1}{q^8} }{ \jacprod{q^4}{q^8} }	
		\sum_{n=-\infty}^\infty
		\frac{q^{4n^2+8n} }
		{(1-zq^{8n})(1-z^{-1}q^{8n})}	
		\nonumber\\&\quad		
		+
		\frac{z^{-1}  \jacprod{-q^{4}}{q^8} }{ \jacprod{q^{4}}{q^8} }	
		\sum_{n=-\infty}^\infty
		\frac{q^{4n^2+12n+4} }
		{(1-zq^{8n+4})(1-z^{-1}q^{8n+4})}	
	.
\end{align}
By elementary rearrangements we have that
\begin{align}\label{eqn:products}
	-\frac{z^{-1}\jacprod{-1}{q^8}}{\jacprod{q^4}{q^8}}
	\cdot
	\frac{\jacprod{q^4}{q^8}}{z^{-1}\jacprod{-q^4}{q^8}}
	&=
	\frac{-F_1(q)}{qF_0(q)}
	.
\end{align}
By \eqref{eqn:newM_4}, \eqref{eqn:brackets}, and \eqref{eqn:products}, we have that
\begin{align*}
	&\sum_{n=0}^\infty \sum_{m=-\infty}^\infty
		M_4(m,2n+1)z^mq^{2n+1}	
	\\
	&=
		-qF_0(q)(1-z)(1-z^{-1})
		\left(
			-\frac{F_1(q)}{qF_0(q)}
			\sum_{n=-\infty}^\infty
			\frac{q^{4n^2+8n}}
			{(1-zq^{8n})(1-z^{-1}q^{8n})}	
			+
			\sum_{n=-\infty}^\infty
			\frac{q^{4n^2+12n+4}}
			{(1-zq^{8n+4})(1-z^{-1}q^{8n+4})}
		\right)
	\\
	&=
		-
		\frac{ 2qz F_0(q) (1-z)(1-z^{-1})
			\jacprod{-z,q^4}{q^8} \aqprod{q^8}{q^8}{\infty}^2 }
		{\jacprod{z}{q^4}\jacprod{-q^4}{q^8}(1-z^2)}
	\\
	&=	
		\frac{2q \aqprod{-zq^8,-q^8/z}{q^8}{\infty} \aqprod{q^4}{q^4}{\infty}^4\aqprod{q^8}{q^8}{\infty} }
		{\aqprod{zq^4,q^4/z}{q^4}{\infty} \aqprod{q^2}{q^2}{\infty}^4}
	.
\end{align*}

\end{proof}

Lastly, we note that certain linear combinations of $\mathcal{O}_d(z;q)$
will always be modular, rather than mock modular.
\begin{theorem}
The following functions are modular forms of weight 
$\frac{1}{2}$ on some congruence subgroup of $\SLTwo$:
\begin{enumerate}
\item
$\frac{(1+\z{c}{a}q^{\frac{b}{c}})}{(1-\z{c}{a}q^{\frac{b}{c}})} q^{-\frac{b^2}{c^2d^2}} 
\mathcal{O}_d(\z{c}{a}q^{\frac{b}{c}};q)
-
\frac{(1+\z{c}{2a}q^{\frac{2b}{c}})}{(1-\z{c}{2a}q^{\frac{2b}{c}})}q^{-\frac{b^2}{c^2d^2}} 
\mathcal{O}_{2d}(\z{c}{2a}q^{\frac{2b}{c}};q)$
when $d$ is odd,
\item
$\frac{(1+\z{c}{a}q^{\frac{b}{c}})}{(1-\z{c}{a}q^{\frac{b}{c}})} q^{-\frac{b^2}{c^2d^2}} 
\mathcal{O}_d(\z{c}{a}q^{\frac{b}{c}};q)
-
\frac{(1-\z{c}{a}q^{\frac{b}{c}})}{(1+\z{c}{a}q^{\frac{b}{c}})}q^{-\frac{b^2}{c^2d^2}} 
\mathcal{O}_{2d}\left(-\z{c}{a}q^{\frac{b}{c}}; q^{\frac{1}{4}}\right)$
when $d\equiv 2\pmod{4}$, 
\item
$\frac{(1+\z{c}{a}q^{\frac{b}{c}})}{(1-\z{c}{a}q^{\frac{b}{c}})} q^{-\frac{b^2}{c^2d^2}} 
\mathcal{O}_d(\z{c}{a}q^{\frac{b}{c}};q)
-
\frac{(1+\z{c}{a}q^{\frac{b}{c}})}{(1-\z{c}{a}q^{\frac{b}{c}})}q^{-\frac{b^2}{c^2d^2}} 
\mathcal{O}_{2d}\left(\z{c}{a}q^{\frac{b}{c}};q^{\frac{1}{4}}\right)$
when $d\equiv 0\pmod{4}$.
\end{enumerate}
\end{theorem}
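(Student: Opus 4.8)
The plan is to exploit the fact, established in Propositions \ref{PropositionOddDMdRankModularAndNonModular}, \ref{prop:Oerr2}, and \ref{prop:Oerr4}, that each expression $\frac{(1+\z{c}{a}q^{b/c})}{(1-\z{c}{a}q^{b/c})}q^{-b^2/(c^2d^2)}\mathcal{O}_d(\z{c}{a}q^{b/c};q)$ is, up to explicit holomorphic correction terms, the harmonic Maass form $\widetilde{\mathcal{O}}_d(a,b,c;\tau)$ whose entire non-holomorphic part is a single Eichler integral of the unary theta $g_{a,b}$. Since a harmonic Maass form with vanishing non-holomorphic part is a classical modular form, it suffices to show that in each of the three pairings the non-holomorphic part of the $\mathcal{O}_d$-term coincides with that of the corresponding $\mathcal{O}_{2d}$-term, so that the difference has trivial shadow. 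The common weight-$\frac12$ multiplier is not an issue: Proposition \ref{prop:FullOTransf} shows every $\widetilde{\mathcal{O}}_D(a',b',c;\tau)$ transforms with the multiplier of $\eta(2\tau)/\eta(\tau)^2$, independent of $D$, so on the intersection of the relevant congruence subgroups the two terms transform identically.

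For part (1) I would reparametrize $\mathcal{O}_{2d}(\z{c}{2a}q^{2b/c};q)$ as $\widetilde{\mathcal{O}}_{2d}$ with parameters $(2a,2b,c)$; since $d$ is odd we have $2d\equiv2\pmod4$, so Proposition \ref{prop:Oerr2} applies with $(D,a',b')=(2d,2a,2b)$. One checks directly that $\Floor{2b'/(cD^2)}=\Floor{b/(cd^2)}$, that $q^{-b'^2/(c^2D^2)}=q^{-b^2/(c^2d^2)}$, and—comparing against Proposition \ref{PropositionOddDMdRankModularAndNonModular}—that the finite correction sums agree term by term, because $\z{2c}{2a(\cdots)}=\z{c}{a(\cdots)}$ and the $q$-exponents coincide after the substitution $D=2d$, $b'=2b$. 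The two Eichler integrals likewise agree: their integrands differ only by the index shift $g_{\,\cdot\,,\,3/2-2a/c}=e^{2\pi i(b/(cd^2)-n)}g_{\,\cdot\,,\,1/2-2a/c}$, while the prefactor phases in the two propositions differ by exactly the compensating factor $e^{-2\pi ib/(cd^2)}$. Hence $\widetilde{\mathcal{O}}_d(a,b,c;\tau)-\widetilde{\mathcal{O}}_{2d}(2a,2b,c;\tau)$ is precisely the difference in the theorem, has vanishing non-holomorphic part, and is therefore a weight-$\frac12$ modular form.

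For parts (2) and (3) the pairing is $d\mapsto2d$ with $2d\equiv0\pmod4$, so Proposition \ref{prop:Oerr4} governs the $\mathcal{O}_{2d}$-term; the nome $q^{1/4}$ (that is, $\tau\mapsto\tau/4$) is present to reconcile the internal scalings $\tfrac{d^2}{2}$ for $d$ against $\tfrac{(2d)^2}{2}=2d^2$ for $2d$. After the change of variable $w\mapsto w/4$ in the Eichler integral the $g$-argument $\tfrac{(2d)^2w}{2}$ becomes $\tfrac{d^2w}{2}$ and the limit $-\overline{\tau/4}$ becomes $-\overline{\tau}$, while the Jacobian contributes a factor $\tfrac12$ that is cancelled exactly by the prefactor ratio $\tfrac{2d}{d}=2$. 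When $d\equiv0\pmod4$ (part (3)) both terms already use Proposition \ref{prop:Oerr4}, the theta-indices match ($1-a/c$ in each), and no sign twist is needed. When $d\equiv2\pmod4$ (part (2)) the shadow indices differ by $\tfrac12$ (namely $\tfrac32-a/c$ from Proposition \ref{prop:Oerr2} versus $1-a/c$ from Proposition \ref{prop:Oerr4}); this half-period shift is supplied by the twist $z\mapsto-z$, which inserts the needed $(-1)^n$ into the theta sum and simultaneously inverts the prefactor to $\frac{(1-\z{c}{a}q^{b/c})}{(1+\z{c}{a}q^{b/c})}$, as in the statement. In each case one verifies that the finite correction sums and the exponential phases match, so again the stated difference equals $\widetilde{\mathcal{O}}_d-\widetilde{\mathcal{O}}_{2d}$ with cancelling shadows and is a weight-$\frac12$ modular form.

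The main obstacle is the phase bookkeeping in the even cases: one must confirm that the sign twist $z\mapsto-z$ and the rescaling $q\mapsto q^{1/4}$ conspire to align not only the unary theta shadows but also every root-of-unity factor and $q$-exponent in the finite holomorphic corrections. Tracking the half-integer index shift of $g_{a,b}$ induced by $z\mapsto-z$, and checking that the prefactor phases $\exp\!\big(2\pi i(\tfrac{2ab}{c^2d^2}-\cdots)\big)$ from Propositions \ref{prop:Oerr2} and \ref{prop:Oerr4} coincide after reparametrization, is the delicate step; by contrast the odd case is clean because $2d\equiv2\pmod4$ calls for neither a rescaling nor a sign twist.
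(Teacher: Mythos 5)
Your proposal is correct and takes essentially the same route as the paper's proof: both identify each bracketed expression with its completion $\widetilde{\mathcal{O}}$, choose the pairings $(2a,2b,c)$, the $-z$ twist together with $\tau\mapsto\tau/4$, and the plain rescaling $\tau\mapsto\tau/4$ so that the non-holomorphic parts cancel, and then conclude that the resulting holomorphic harmonic Maass form is a weight-$\frac{1}{2}$ modular form. The only difference is presentational: the paper cancels the $R(u;\tau)$ error terms directly at the level of the definition \eqref{DefinitionOfTildeO}, which spares it the separate term-by-term matching of Eichler integrals and finite correction sums that your route through Propositions \ref{PropositionOddDMdRankModularAndNonModular}, \ref{prop:Oerr2}, and \ref{prop:Oerr4} requires.
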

\begin{proof}
\sloppy
We note that the only contribution to the non-holomorphic part of
$\widetilde{\mathcal{O}}_d(a,b,c;\tau)$ is from
$\z{c}{a}q^{-\frac{d^2}{4}-\frac{b^2}{c^2d^2}+\frac{b}{c}}
R\left(\frac{2a}{c}+\left(\frac{2b}{c}-d^2\right)\tau;2d^2\tau  \right)$
when $d$ is odd,
$-\z{2c}{a}q^{-\frac{d^2}{16}-\frac{b^2}{c^2d^2}+\frac{b}{2c}}
R\left(\left(\frac{a}{c}-1\right)+\left(\frac{b}{c}-\frac{d^2}{4}\right)\tau;\frac{d^2\tau}{2}  \right)$
when $d\equiv 2\pmod{4}$, and
$-i\z{2c}{a}q^{-\frac{d^2}{16}-\frac{b^2}{c^2d^2}+\frac{b}{2c}}
R\left(\left(\frac{a}{c}-\frac{1}{2}\right)+\left(\frac{b}{c}-\frac{d^2}{4}\right)\tau;\frac{d^2\tau}{2}  \right)$
when $d\equiv 0\pmod{4}$.

\fussy
The $R(u;\tau)$ terms cancel trivially to give
\begin{align*}
\widetilde{\mathcal{O}}_d(a,b,c;\tau)-\widetilde{\mathcal{O}}_{2d}(2a,2b,c;\tau)
&=
\frac{(1+\z{c}{a}q^{\frac{b}{c}})}{(1-\z{c}{a}q^{\frac{b}{c}})} q^{-\frac{b^2}{c^2d^2}} 
\mathcal{O}_d(\z{c}{a}q^{\frac{b}{c}};q)
-
\frac{(1+\z{c}{2a}q^{\frac{2b}{c}})}{(1-\z{c}{2a}q^{\frac{2b}{c}})}q^{-\frac{b^2}{c^2d^2}} 
\mathcal{O}_{2d}(\z{c}{2a}q^{\frac{2b}{c}};q)
\end{align*}
when $d$ is odd,
\begin{align*}
\widetilde{\mathcal{O}}_d(a,b,c;\tau)-\widetilde{\mathcal{O}}_{2d}\left(2a+c,8b,2c;\frac{\tau}{4}\right)
&=
\frac{(1+\z{c}{a}q^{\frac{b}{c}})}{(1-\z{c}{a}q^{\frac{b}{c}})} q^{-\frac{b^2}{c^2d^2}} 
\mathcal{O}_d(\z{c}{a}q^{\frac{b}{c}};q)
-
\frac{(1-\z{c}{a}q^{\frac{b}{c}})}{(1+\z{c}{a}q^{\frac{b}{c}})}q^{-\frac{b^2}{c^2d^2}} 
\mathcal{O}_{2d}\left(-\z{c}{a}q^{\frac{b}{c}};q^{\frac{1}{4}}\right)
\end{align*}
when $d\equiv 2\pmod{4}$, and
\begin{align*}
\widetilde{\mathcal{O}}_d(a,b,c;\tau)-\widetilde{\mathcal{O}}_{2d}\left(2a,8b,2c;\frac{\tau}{4}\right)
&=
\frac{(1+\z{c}{a}q^{\frac{b}{c}})}{(1-\z{c}{a}q^{\frac{b}{c}})} q^{-\frac{b^2}{c^2d^2}} 
\mathcal{O}_d(\z{c}{a}q^{\frac{b}{c}};q)
-
\frac{(1+\z{c}{a}q^{\frac{b}{c}})}{(1-\z{c}{a}q^{\frac{b}{c}})}q^{-\frac{b^2}{c^2d^2}} 
\mathcal{O}_{2d}\left(\z{c}{a}q^{\frac{b}{c}};q^{\frac{1}{4}}\right)
\end{align*}
when $d\equiv 0\pmod{4}$.

As such, the three functions in the statement of the theorem are
holomorphic harmonic Maass forms of weight $\frac{1}{2}$.  As such they are modular forms.
\end{proof}

\section*{Acknowledgements}

We would like to thank the Institute for Computational and Experimental Research in Mathematics (ICERM) for the special semester program on Computational Aspects of the Langlands Program where this work was initiated.

\end{document}